 \newenvironment{rcases}
  {\left.\begin{aligned}}
  {\end{aligned}\right\rbrace}
\newcommand*{\triple}[2][.1ex]{%
  \mathrel{\vcenter{\offinterlineskip%
  \hbox{$#2$}\vskip#1\hbox{$#2$}\vskip#1\hbox{$#2$}}}}
\newcommand*{\triplerightarrow}{\triple{\rightarrow}}
\definecolor{color-text}{gray}{0.085}  \definecolor{color-detail}{gray}{0.70}  
\newcommand{\globalcolor}[1]{\color{#1}\global\let\default@color\current@color}
\newcommand\scalemath[2]{\scalebox{#1}{\mbox{\ensuremath{\displaystyle #2}}}}
\DeclareSymbolFont{cyrletters}{OT2}{wncyr}{m}{n}
\DeclareMathSymbol{\E}
{\mathalpha}{cyrletters}{"03}
\definecolor{fondtitre}{RGB}{85,85,85}
\definecolor{fonddeboite}{RGB}{232,232,232}
\definecolor{shadecolor}{RGB}{232,232,232}
\newcommand{\Bigwedge}{\scalemath{.90}{\bigwedge}}
\newcommand{\op}{\mathrm{op}}
\newcommand{\PS}{\mathsf{PreStk}}
\newcommand{\Q}{\mathsf{QCoh}}
\newcommand{\IC}{\mathsf{IndCoh}}
\newcommand{\M}{\mathsf{Maps}}
\newcommand{\D}{\scalemath{.90}{\mathcal{D}}}
\newcommand{\J}{\mathsf{Jets}_{dR}^{\infty}}
\newcommand{\JetX}{\mathsf{Jets}_X^{\infty}}
\newcommand{\EQ}{\EuScript{Y}}
\newcommand{\RS}{{\color{white!05!black}\mathbb{R}\underline{\EuScript{S}\mathsf{oL}}}}
\newcommand{\cdga}{{\color{white!10!black}\mathbf{cdga}_{\mathcal{D}_X}^{\leq 0}}}
\newcommand{\DG}
{{\color{white!10!black}\mathbf{dg}_{\mathcal{D}_X}}}
\tikzset{%
    symbol/.style={%
        draw=none,
        every to/.append style={%
            edge node={node [sloped, allow upside down, auto=false]{$#1$}}}
    }
}
\newcommand{\verteq}{\rotatebox{90}{$\,=$}}
\newcommand{\equalto}[2]{\underset{\scriptstyle\overset{\mkern4mu\verteq}{#2}}{#1}}
\newtheorem{theorem}{Theorem}[section]
\newtheorem{proposition}[theorem]{Proposition}
\newtheorem{corollary}[theorem]{Corollary}
\newtheorem{definition}[theorem]{Definition}%
\newtheorem{example}[theorem]{Example}%
\newtheorem{remark}[theorem]{Remark}%
\newtheorem{observation}{Observation}[section]
\newcommand{\JN}{\mathsf{Jets}_{\mathbb{A}^1}^{1}[n]X}
\newcommand{\Jn}{\mathsf{Jets}_{\mathbb{A}^1}^{\dagger}[n]X}
\newcommand{\EL}{{\scalemath{.92}{\mathrm{E}}{\scalemath{.92}{\mathrm{L}}}}}
\newcommand{\BV}{{\scalemath{.92}{\mathrm{B}}{\scalemath{.92}{\mathrm{V}}}}}
\newcommand{\KT}{{\scalemath{.92}{\mathrm{K}}{\scalemath{.92}{\mathrm{T}}}}}
\tikzset{%
    symbol/.style={%
        draw=none,
        every to/.append style={%
            edge node={node [sloped, allow upside down, auto=false]{$#1$}}}
    }
}
\numberwithin{equation}{section}
\begin{document}
\title[Derived Variational Tricomplex]{Derived Moduli Spaces of Nonlinear PDEs II: Variational Tricomplex and BV Formalism}

\author[J. Kryczka]{Jacob Kryczka}
\address{Jacob Kryczka}
\address{Beijing Institute of Mathematical Sciences and Applications, A4, Room 209
No. 544, Hefangkou Village, Huaibei Town, Huairou District, Beijing 101408,
China}
\email{jkryczka@bimsa.cn}

\author[A. Sheshmani]{Artan Sheshmani}
\address{Artan Sheshmani}
\address{Beijing Institute of Mathematical Sciences and Applications, A6, Room 205
No. 544, Hefangkou Village, Huaibei Town, Huairou District, Beijing 101408,
China}
\address{Massachusetts Institute of Technology (MIT), IAiFi Institute, 182 Memorial Drive, Cambridge, MA 02139, USA}
\address{National Research University, Higher School of Economics, Russian Federation, Laboratory of Mirror Symmetry, NRU HSE, 6 Usacheva Street, Moscow, Russia, 119048}
\email{artan@mit.edu}

\author[S-T. Yau]{Shing-Tung Yau}
\address{Shing-Tung Yau}
\address{Yau Mathematical Sciences Center, Tsinghua University, Haidian District,
Beijing, China}
\address{Beijing Institute of Mathematical Sciences and Applications,
No. 544, Hefangkou Village, Huaibei Town, Huairou District, Beijing 101408,
China}
\email{styau@tsinghua.edu.cn}

\date{\today}

\keywords{Non-linear PDE, Derived Algebraic Geometry, Shifted Symplectic Structures, Derived Variational Bicomplex, Derived Linearization}

\maketitle

\begin{abstract}
This paper is the second in a series of works dedicated to studying non-linear partial differential equations via derived geometric methods. We study the natural derived enhancement of the de Rham complex of non-linear PDEs via algebro-geometric techniques and examine its consequences for the up-to-homotopy (functional) differential calculus on solution spaces of such systems. Applications to the BV-formalism with and without boundary conditions are discussed.
\end{abstract}

\maketitle
\tableofcontents

\newpage 

\section{Introduction}
Shifted symplectic derived algebraic geometry \cite{PTVV} serves as the natural language for making sense of symplectic or Poisson  structures on singular algebro-geometric moduli spaces e.g. stable sheaves on Calabi-Yau 3,4 folds.
It has seen successful application to many areas of mathematics and physics such as: the study of derived critical loci in field theories, enumerative algebraic geometry e.g. Gromov-Witten, Donaldon-Thomas theory and the theory of PT-stable pairs, as well as to derived intersection theory of derived quotient stacks which describe for example, symplectic reductions. 
\vspace{1.5mm}

As it is well-known, in this setting standard notions such as the property of a differential $2$-form being closed only hold up to homotopy and appear instead as a kind of coherency \emph{data}. What seems to be less known however, is what this story looks like in the algebro-geometric study of moduli spaces of non-linear partial differential equations (${\scalemath{.90}{\mathrm{N}}{\scalemath{.84}{\mathrm{L}}}{\scalemath{.88}{\mathrm{P}}{\scalemath{.84}{\mathrm{DE}}{\scalemath{.70}{\mathrm{S}}}}}}$).

\subsection{What do we achieve?}
\label{ssec: What do we achieve?}
We address this topic by applying a similar homotopical framework to study ${\scalemath{.90}{\mathrm{N}}{\scalemath{.84}{\mathrm{L}}}{\scalemath{.88}{\mathrm{P}}{\scalemath{.84}{\mathrm{DE}}{\scalemath{.70}{\mathrm{S}}}}}}$ and their spaces of solutions \cite{KRY,KSYI}. We introduce a homotopy invariant analog of the Rham complex of a PDE $\EQ$, which in the classical jet-bundle approach to ${\scalemath{.90}{\mathrm{N}}{\scalemath{.84}{\mathrm{L}}}{\scalemath{.88}{\mathrm{P}}{\scalemath{.84}{\mathrm{DE}}{\scalemath{.70}{\mathrm{S}}}}}}$ \cite{KraVinLyc} is called the variational bi-complex \cite{Anderson}. 
\vspace{1.5mm}

Roughly, this enhancement is given by a differentially structured cotangent complex  $\mathbb{L}_{\EQ}$ that we refer to as the \emph{derived variational tricomplex}. We initiate a study of its global features as they pertain to the geometric theory and cohomological analysis \cite{VinCohomPDE} of ${\scalemath{.90}{\mathrm{N}}{\scalemath{.84}{\mathrm{L}}}{\scalemath{.88}{\mathrm{P}}{\scalemath{.84}{\mathrm{DE}}{\scalemath{.70}{\mathrm{S}}}}}}$ from the point of view of $\D$-module theory, which gives a satisfactory setting to study analogs of shifted symplectic structures for ${\scalemath{.90}{\mathrm{N}}{\scalemath{.84}{\mathrm{L}}}{\scalemath{.88}{\mathrm{P}}{\scalemath{.84}{\mathrm{DE}}{\scalemath{.70}{\mathrm{S}}}}}}$ and their moduli spaces of solutions.
\subsection{Background and motivation}
Before stating our results (Theorems A,B as summarized in Sect. \ref{ssec: Main Results} below) we dedicate some space to putting these results in context and providing motivation for them. Specifically, let us highlight some of the main features which we believe illuminate some deep connections
between PDE theory and geometric analysis, physics and algebraic geometry.
\begin{enumerate}
    \item \textcolor{blue}{\textit{Homotopical geometric analysis of PDEs:}} global derived geometry combined with ideas from PDE theory can be seen as a tool for studying non-perturbative constructions of classical field theories while also encoding perturbative phenomena via formal geometry (see Subsect.\ref{ssec: HoPDE and Quantization}).
    
    Roughly speaking, the latter is captured by 
fixing a classical solution $\varphi_0\in \mathsf{Sol}$ and studying the formal neighbourhood in the homotopical space of solutions $\widehat{\mathrm{\mathbb{R}\mathsf{Sol}}}_{\varphi_0},$ which can only see infinitesimal information. Indeed, in the presence of gauge symmetries with gauge group $\mathcal{G}$, for a solution $\varphi_0:\star\rightarrow \mathrm{Sol}/\mathcal{G},$ this translates to the fact that $\varphi_0$ factors canonically through the completion
$\big[\mathsf{Sol}/\mathcal{G}\big]_{\varphi_0}^{\wedge}\hookrightarrow \big[\mathsf{Sol}/\mathcal{G}\big].$ 

Mathematically, such completions are defined by restricting prestacks of solutions to certain categories of nilpotent homotopical thickenings. Such objects are completely determined by a tangent dg-Lie algebra $\mathfrak{g}^{\bullet},$ via general philosophy of derived deformation theory adapted to the setting of $\D$-geometry.\footnote{In the language of \cite{KSYI} one says we have a formal $\mathcal{D}_X$-moduli
problem controlling the infinitesimal neighbourhood of $\varphi_0$ on the moduli space of global solutions to a PDE. Roughly, it corresponds to a dg-Lie algebra (chiral deformation complex) in a pseudo-tensor category of $\D$-modules (see \cite{BeiDri} for what this means).}
\vspace{2mm}
 
     \item \textcolor{blue}{\textit{Finite dimensionality:}} in the derived $\D$-geometrical setting many spaces e.g. $\mathfrak{g}$, can be viewed as finite dimensional in a differential sense. With some care regarding homotopical spaces of operations (Sect. \ref{sec: Geometry of D-Prestacks}) important duality statements hold and one can makes sense  of local (differentially structured) homotopy Poisson brackets on solution spaces. In particular, invariant pairings on $\mathfrak{g}^{\bullet}$ are encoded by such Poisson structure on $CE^*(\mathfrak{g}^{\bullet}),$ the Chevalley-Eilenberg algebra that in turn, completely determines the formal moduli problem as in (1).
\vspace{2mm}

    \item \textcolor{blue}{\textit{Non-Lagrangian theories}:} 
    Working with moduli spaces of solutions to ${\scalemath{.90}{\mathrm{N}}{\scalemath{.84}{\mathrm{L}}}{\scalemath{.88}{\mathrm{P}}{\scalemath{.84}{\mathrm{DE}}{\scalemath{.70}{\mathrm{S}}}}}}$-- as opposed to say, critical loci of functionals -- permits one to treat, in a unified manner, physical theories admitting a Lagrangian formulation as well as those that do not. Ubiquitous examples of the latter include theories with self-dual field strengths, chiral bosons, higher-spin gauge theories and superconformal theories with extended supersymmetry. Other fundamental examples coming from non-linear gauge theories which again, do not admit Lagrangian descriptions are the Seiberg-Witten equations and the Donaldson-Uhlenbeck-Yau system.
    \vspace{2mm}
    
    \item \textcolor{blue}{\textit{Categorical $\D$-Linearity:}} A pleasant feature of global algebraic $\D$-geometry is its available functorialities e.g. $\D$-linearity (as opposed to just $\mathcal{O}$-linearity). This is much better for making universal constructions and becomes a crucial feature when passing to a homotopical setting where the usage of higher categories is unavoidable. In turn it offers a more natural alternative to defining a category of generalized PDEs \cite{CatPDEs} which was originally done by imposing by hand that all morphisms and constructions respect the underlying Cartan distribution on jets. 
\end{enumerate}
\subsubsection{Variational bicomplex.} The variational bicomplex $\Omega^{*,*}\big(\mathrm{Jet}_X^{\infty}(E)\big)$ (shown below) is a sheaf of commutative monoid objects in bi-complexes defined by differential forms on
the infinite jet bundle $\mathrm{Jet}_X^{\infty}(E)$ of any fibered manifold $\pi:E\rightarrow X.$ It is known to play, roughly speaking, the same role in the geometry of PDEs as the usual de Rham complex plays in the geometry of a
manifold or variety $X$.

\begin{wrapfigure}[18]{r}{7.4cm}
    \centering
    \adjustbox{scale=.86}{
\begin{tikzcd}
\label{VarBicomplex}
& \mathbb{R}\arrow[d] & 0\arrow[d] 
\\
\mathcal{O}_X \arrow[d] \arrow[r] & \mathcal{O}(\mathrm{Jet}_X^{\infty}(E)) \arrow[d,"d_h"] \arrow[r,"d^v"] &  \Omega^{0,1}(\pi) \arrow[d,"d_h"] \arrow[r,"d^v"] & \ldots
\\
\Omega_X^1 \arrow[d]\arrow[r,"d^v"]&  \overline{\Omega}^1(\pi)\arrow[d] \arrow[r] & \Omega^{1,1}(\pi)\arrow[d]\arrow[r] & \ldots 
\\
\vdots \arrow[d] & \vdots\arrow[d] & \vdots \arrow[d]
\\
\Omega_X^n\arrow[r] &  \overline{\Omega}^n(\pi)\arrow[d]\arrow[r] & \Omega^{n,1}(\pi) \arrow[d]\arrow[r] & \ldots
\\
& 0 & 0
\end{tikzcd}}
\end{wrapfigure}

Together with an associated spectral sequence $\mathcal{C}E_r^{p,q}$ (Vinogradov's $\mathcal{C}$-spectral sequence \cite{CSpectral2}), it gives a powerful differential geometric tool for studying the calculus of variations, the theory
of conservation laws for systems of differential equations, and the theory of (secondary) characteristic classes (Subsect. \ref{sssec: CharClasses}). These cohomologies represent deformation invariants of ${\scalemath{.90}{\mathrm{N}}{\scalemath{.84}{\mathrm{L}}}{\scalemath{.88}{\mathrm{P}}{\scalemath{.84}{\mathrm{DE}}{\scalemath{.70}{\mathrm{S}}}}}}$ and in the case of the Euler-Lagrange (\EL) system, contain relevant physical information e.g. variational principles, gauge symmetries, charges etc. (Subsect. \ref{ssec: Applications to Field Theory}). 
Another of its many uses is due to the fact that the vertical cohomology $H^{p,q}(\mathrm{Jet}^{\infty}(E),d^v)$ agrees with the $E_1$-term of the Serre spectral sequence for the bundle $E$ with typical fibre $F$ i.e. is isomorphic to $\Omega^p(X)\otimes H^q(F).$

A natural differential calculus as well as analogs of (pre)symplectic and Poisson geometry exist on  $\mathrm{Jet}_X^{\infty}(E)$ and their differentially structured sub-manifolds $\mathcal{E}^{\infty}$ known as Diffieties\footnote{This name, due to A.M.Vinogradov, was chosen to emphasize that \emph{diff}(erential var)\emph{ieties} are to partial differential equations just 
as algebraic varieties are to polynomial equations.}, but such classical notions like symplectic forms and Poisson brackets have a slightly different flavour and appear in a rather different context (see \cite{KraVerGeomJets}).

Indeed, in the geometric theory of ${\scalemath{.90}{\mathrm{N}}{\scalemath{.84}{\mathrm{L}}}{\scalemath{.88}{\mathrm{P}}{\scalemath{.84}{\mathrm{DE}}{\scalemath{.70}{\mathrm{S}}}}}}$ and from the perspective of both integrable systems and classical field theory, the analog of the structure sheaf are the local functionals, that appear as certain cohomologies naturally attributed to PDEs. This idea forms the basis of Vinogradov's Secondary Calculus \cite{SecCalcCohomPhysics,VinCohomPDE} (see also \cite{Vit01}) and provides an appropriate setting for a (cohomological) differential calculus on (infinite-dimensional) spaces of solutions to ${\scalemath{.90}{\mathrm{N}}{\scalemath{.84}{\mathrm{L}}}{\scalemath{.88}{\mathrm{P}}{\scalemath{.84}{\mathrm{DE}}{\scalemath{.70}{\mathrm{S}}}}}}$ based on the variational bicomplex.

\subsubsection{Variational tricomplexes.}
\label{sssec: Variational tricomplexes}
There are two well known constructions of the critical locus $\mathrm{Crit}_X(\mathcal{S})$ of a functional $\mathcal{S}:X\rightarrow k$ on a smooth affine $k$-variety $X$ (\cite{CME} and \cite{CosGwi02,CosGwi01}). They are both of an infinitesimal nature and roughly speaking amount to resolving the structure sheaf $\mathcal{O}_{\mathrm{Crit}_X(\mathcal{S})}:=\mathcal{O}_X/\mathcal{I}$ with $\mathcal{I}=\big<\frac{\partial\mathcal{S}}{\partial x}\big>,$ via standard homological methods i.e. a Koszul-Tate (\KT) resolution 
$$\mathcal{O}_{\mathrm{KT}}:=\mathrm{Sym}_{\mathcal{O}_X}\big(\mathbb{T}_X[1]\oplus \mathfrak{g}_{\mathrm{KT}}[2]\big).$$
The so-called \KT-generators $\mathfrak{g}_{\mathrm{KT}}[2]$ kill the extra terms in cohomological degrees $\leq 0$. The \KT-differential is determined by contraction with $d\mathcal{S}.$
\vspace{2mm}

One of the first appearances of the variational tri-complex appeared in a concrete application of jet-bundle geometry to the context of the \BV-formalism for quantization of gauge systems \cite{BVAnti}. In particular, it gave a jet-bundle formulation of the \KT-resolution by combining the bi-complex construction with a resolution of the quotient $\D$-algebra defined by the equation ideal
$$\mathcal{O}(\mathrm{Jet}^{\infty}(E))\hookrightarrow \underbrace{\mathcal{O}(\mathrm{Jet}^{\infty}(E))\otimes_{\mathcal{O}_X}Sym_{\mathcal{O}}^*(\mathcal{V}^{\bullet})}_{\mathcal{A}_{KT}}\xrightarrow{\simeq}\mathcal{O}(\mathrm{Jet}^{\infty}(E))/\big<\mathsf{P}\big>,\hspace{1mm} \mathsf{P}\in \mathcal{O}(\mathrm{Jet}^{\infty}E).$$

This construction enhances
the cohomological setup of the variational bicomplex associated to possibly degenerate ${\scalemath{.90}{\mathrm{N}}{\scalemath{.84}{\mathrm{L}}}{\scalemath{.88}{\mathrm{P}}{\scalemath{.84}{\mathrm{DE}}{\scalemath{.70}{\mathrm{S}}}}}}$ via pullback from
the free bicomplex (Proposition \ref{prop: MorphismOfVarBiSeq}) to the bicomplex of the surface defined by the equations $\mathsf{P}.$ A physicist would replace $E$ by an appropriately resolved bundle $K$ (with sections anti-ghosts anti-fields etc.) and consider $\Omega^{(*,*)}\big(\mathrm{Jet}^{\infty}(K)\big)$ with an additional grading (ghost number). Characteristic cohomologies of the original PDE $\EQ$ are computed as relative cohomologies of the free tricomplex (of ghost number $0$) i.e. 
$$H^{p,*}\big(d_h,\Omega^{p,*}(\EQ)\big)\simeq H_0^{p,*}\big(d_h|\delta,\Omega_0^{p,*}(\mathrm{Jet}^{\infty}K)\big).$$
The cohomologies of the tri-complex are known to relate various Lie algebras associated with global symmetries to conservation laws of classical gauge systems (see \emph{loc.cit.})
\vspace{1.5mm}

An intrinsic formulation in terms of the homotopy theory of ${\scalemath{.90}{\mathrm{N}}{\scalemath{.84}{\mathrm{L}}}{\scalemath{.88}{\mathrm{P}}{\scalemath{.84}{\mathrm{DE}}{\scalemath{.70}{\mathrm{S}}}}}}$ essentially amounts to describing a derived $\D_X$-scheme
$$\mathcal{X}_{\mathrm{KT}}:=\mathbf{Spec}_{\mathcal{D}_X}(\mathcal{A}_{\mathrm{KT}}):\cdga_{\mathcal{A}/}\rightarrow SSets,$$
as a homotopy class\footnote{Since $ho\big(Comm(\mathsf{Mod}(\D_X)^{\leq 0})_{\mathcal{A}/}\big)\simeq \cdga_{\mathcal{A}/}[qis^{-1}].$} in the $(\infty,1)$-category $$\mathsf{PreStk}_{X}(\D_X)_{\mathcal{A}/}:=\mathsf{Fun}\big(Comm\big(\mathsf{Mod}(\mathcal{D}_X)^{\leq 0}\big)_{\mathcal{A}/},\mathsf{Spc}\big).$$

The derived tri-complex we study in this paper by adapting the usual bi-complex construction to geometric spaces with dg-algebras (derived algebras) of functions e.g. $\mathcal{X}_{\mathrm{KT}},$ therefore gives a very general setting in which the above (underived) variational tri-complex fits naturally.

Incorporating both co-connective and connective derived algebras of functions, as is possible in derived geometry, we can extend
the above problem to treat the full BRST differential (stacky part) and not just its Koszul-Tate (derived) part.
In this way, the methods elaborated in the current paper represent a step towards establishing a \emph{global} formalism for non-perturbative (or higher categorical) \BV-quantization from the perspective of an algebro-geometric and sheaf-theoretic approach to the geometric study of ${\scalemath{.90}{\mathrm{N}}{\scalemath{.84}{\mathrm{L}}}{\scalemath{.88}{\mathrm{P}}{\scalemath{.84}{\mathrm{DE}}{\scalemath{.70}{\mathrm{S}}}}}}$.

\subsection{Homotopical PDE Geometry and (Non) Perturbative BV-Quantization}
\label{ssec: HoPDE and Quantization}
A geometric characterization of the \BV-construction is obtained by considering the derived quotient $\mathbf{BV}_{X,\mathcal{S}}$ of $\mathrm{Crit}_X(\mathcal{S})$ by the Lie algebroid of maximal symmetries.

A model for the \BV complex is
$$\mathcal{O}_{\mathrm{BV}}:=\mathrm{Sym}_{\mathcal{O}_X}\big(\mathbb{T}_X[1]\oplus\mathfrak{g}_{\mathrm{KT}}[2]\oplus \mathcal{L}^{\vee}[-1]\big).$$
It carries a canonical $(-1)$-shifted symplectic structure corresponding to a canonical pairing between $\mathcal{O}_X$ and $\mathbb{T}_X[1]$ as well as $\mathfrak{g}_{\mathrm{KT}}[2]$ and $\mathcal{L}^{\vee}[-1].$ The corresponding shifted Poisson bracket is what physicists call the anti-bracket $\{-,-\},$ and there is a degree zero \BV-charge $\mathcal{S}_{\mathrm{BV}}$ satisfying $\{\mathcal{S}_{\mathrm{BV}},\mathcal{S}_{\mathrm{BV}}\}=0,$ thereby defining a differential $D_{\mathrm{BV}}:=\{\mathcal{S}_{\mathrm{BV}},-\}$, whose corresponding cohomology, denoted $H_{D_{\mathrm{BV}}}^i$ is such that
$$H_{D_{\mathrm{BV}}}^0\big(\mathbf{BV}_{\mathcal{S},X}\big)=\big(\mathcal{O}_X/\mathcal{I}\big)^{\mathcal{L}},\hspace{3mm}\text{ and }\hspace{1mm} H_{D_{\mathrm{BV}}}^{i<0}\big(\mathbf{BV}_{\mathcal{S},X}\big)=0.$$
The construction of \cite{CME} sees only the degree $0$ cohomology, corresponding to taking invariants by the flows of bracket and thus quotienting by maximal symmetries, and its degree $1$ piece.
To obtain the full information, one must work with a derived enhancement -- the Chevalley-Eilenberg algebra.

This perspective is thus summarized in a sequence of affine derived schemes
$$\mathrm{Crit}_X(\mathcal{S})\xrightarrow{\simeq}\mathbf{KTR}_{X,\mathcal{S}}\rightarrow \mathbb{R}\mathrm{Crit}_X(\mathcal{S})\rightarrow X,$$ together with a Lagrangian correspondence
\begin{equation}
    \label{eqn: BV, KTR, RCRIT Lag Corr}
    \begin{tikzcd}
    & \arrow[dl] \mathbf{KTR}_{X,\mathcal{S}}=\mathbf{Spec}_{\mathbf{k}}\big(\mathcal{O}_{\mathrm{KTR}}\big) \arrow[dr] & 
    \\
    \mathbf{BV}_{X,\mathcal{S}} && \mathbb{R}\mathrm{Crit}_X(\mathcal{S})
    \end{tikzcd}
\end{equation}

It is desirable to achieve an understanding of the \BV\hspace{.5mm} construction in a way that is \emph{intrinsic} to the formalism of PDE theory via a symplectic reduction in global algebraic $\D$-geometry and we elaborate on this in Subsect. \ref{sssec: Intrinsic formulation via D-geometry}.

\subsection{Main Results.}
\label{ssec: Main Results}
The functor of points geometry of ${\scalemath{.90}{\mathrm{N}}{\scalemath{.84}{\mathrm{L}}}{\scalemath{.88}{\mathrm{P}}{\scalemath{.84}{\mathrm{DE}}{\scalemath{.70}{\mathrm{S}}}}}}$ provides a natural setting to not only formalize non-perturbative physics but to also give a very general algebro-geometric formulation of the problem of quantization. 

This involves considering the configuration space of points $Ran_X$ defined as the homotopy-colimit of all finite powers of $X$ along diagonal maps
$$Ran_X\simeq colim\big[X\xrightarrow{\Delta}X^2\triplerightarrow X^3\cdots\big]\simeq \underset{I\in fSet^{op}}{colim} X^I.$$
It gives a geometric way to encode formal power series of elements in $\mathcal{O}(\mathbb{R}Sol)$ as well as a convenient setting for the geometry of multi-jets (Subsect. \ref{ssec: Applications to Field Theory}).
\vspace{1.5mm}

The parameterized geometry over $Ran_X$ -- the so-called derived $\D$-geometry of factorization spaces -- allows a convenient formalization of a general quantization problem: if $\EQ$ is a derived $\D_X$-space one may associate to it a derived $\D_{Ran_X}$-space $\EQ^{Ran}$ with an additional factorization property. Then, under mild hypothesis (Sect. \ref{sec: Variational Factorization Homology of PDEs}) its algebra of functions is a commutative factorization algebra in $\D$-modules. The aim is to deform this commutative structure to a non-commutative one, which in most cases, is not possible in the non-derived setting.  
\vspace{1.5mm}

In the geometric theory of ${\scalemath{.90}{\mathrm{N}}{\scalemath{.84}{\mathrm{L}}}{\scalemath{.88}{\mathrm{P}}{\scalemath{.84}{\mathrm{DE}}{\scalemath{.70}{\mathrm{S}}}}}}$ and Secondary Calculus, the interesting objects are sheaves, bundles or spaces living over $\EQ$, so the most natural thing to do is to work directly with the stable tangent $(\infty,1)$-category of differential factorization modules of horizontal multi-jets over $\EQ^{Ran}$ (Subsect. \ref{sssec: Horizontal Jets}). This will have the structure of a factorization category, whose deformations are called categorical (factorization) quantizations \cite{DefQuant}. This gives an optimal setting to approach a general quantization problem in the geometric theory of ${\scalemath{.90}{\mathrm{N}}{\scalemath{.84}{\mathrm{L}}}{\scalemath{.88}{\mathrm{P}}{\scalemath{.84}{\mathrm{DE}}{\scalemath{.70}{\mathrm{S}}}}}}$ via global and categorified techniques coming from derived algebraic geometry.
\vspace{1mm}

We give a dictionary summarizing the algebro-geometric ($\mathcal{D}$-geometry) analogs of objects coming from the differential-geometric (Diffiety geometry) approach for the benefit of the reader (see \cite{KRY}):

\begin{table}[ht]
\centering
\adjustbox{scale=.72}{
\begin{tabular}{|c|c|}
\hline
\textbf{$\mathcal{D}_X$-geometry} & \textbf{Diffiety geometry} \\
\hline

 Affine $\mathcal{D}_X$-scheme $\mathrm{Spec}_{\mathcal{D}_X}(\mathcal{A})$ & (Elementary) Diffiety \\ 
Universal (left) jet-$\mathcal{D}_X$-algebra $\mathcal{A}$, & Functions on the `empy equation' $\mathrm{Jet}^{\infty}(E)$  \\

Volume twisted algebra $\mathcal{A}^{r}$ & Space of Lagrangians $\overline{\Omega}^n(\pi)$ \\
 Central cohomology $h(\mathcal{A})$ & Secondary functions $\overline{H}^n(\pi)$ \\

 Vector Fields $\Theta_{\mathcal{A}}$ (\ref{ssec: Local Calculus}) & Evolutionary derivations $\varkappa(\pi)$\\ 
 Kahler $1$-forms $\Omega_{\mathcal{A}}^1$ (\ref{ssec: Local Calculus}) & Cartan $1$-forms $\mathcal{C}\Omega^1(\pi)$
 \\

 Local multivectors (\ref{ex: Hamiltonian Operators}) & Variational multivectors\\

$\D$-module de Rham algebra $\mathsf{DR}^{\mathcal{D}}(\Omega_{\mathcal{A}}^*)$ (\ref{Variational de Rham complex definition}) & Variational bi-complex \\

 $\mathrm{Lie}^{*}\mathcal{A}$-Algebroids   & Variational Lie Algebroids \\\hline
\end{tabular}}
\end{table}
 
\newpage 

The first result is stated in paraphrased form and describes the homotopical analog of the bi-complex and the coherency data that must be satisfied in order for a differential form to be closed. 
\vspace{2mm}

\noindent\textbf{Theorem A.}\hspace{1mm}
\emph{Let $X$ be a complex analytic manifold of dimension $d_X.$}
\vspace{1mm}

\noindent (i) (Thm. \ref{prop: DRVar is a prestack}, Prop. \ref{Theorem A}): \emph{For a derived $\D_X$-algebra $\mathcal{A}^{\bullet}=(\mathcal{A},\delta),$ there exists a $\mathcal{D}$-prestack of variational ($k$-forms) of degree $n$ defined by a graded mixed object in bi-complexes $\mathcal{V}ar(\mathcal{A}^{\bullet},n).$ 
Furthermore, if $\mathbb{R}\mathbf{Sol}_{\mathcal{D}_X}(\mathcal{A}^{\bullet})$ is homotopically finitely 
$\D$-presented then $\mathcal{V}ar^k$ is equivalent to}
$$\mathsf{Maps}\big(\mathcal{O}_{\mathbb{R}\mathbf{Sol}_{\mathcal{D}_X}(\mathcal{A}^{\bullet})},\mathsf{DR}_{var}(\mathbb{L}_{\mathbb{R}\mathbf{Sol}_{\mathcal{D}_X}(\mathcal{A}^{\bullet})})[n](k)\big).$$
\vspace{.5mm}

\noindent \noindent (ii) (Prop. \ref{prop: Cohom degree N (p,q) form key}):
\emph{Via $\prod$-totalization there is a decomposition by complexes\footnote{Remark that in the classical setting, we only have a vector space worth of such objects.} of closed $(p,q)$-forms
$$\big(\prod_{r\geq p}\prod_{s\geq q}\Omega_{X}^{*+r}\otimes\Bigwedge^{s+*}\mathbb{L}_{\EQ/X}^{\bullet}[-r-s]\big)[p+q],\hspace{1mm} q\geq0,p=1,\ldots, d_X,$$}
\emph{whose elements are formal sums}
$$\vartheta=\big(\theta_{n}^{(p,q)} | \theta_{n-1}^{(p+1,q)}+\theta_{n-1}^{(p,q+1)}|\cdots| \theta_{n-r}^{(p+r,q)}+\ldots+\theta_{n-r}^{(p,q+r)}|\cdots\big).$$
\vspace{1mm}
\noindent (iii) (Prop. \ref{prop: Closed degree N (p,q) form}):
\emph{With respect to the total differential a closed $(p,q)$-form 
 $\vartheta$ is a cocycle satisfying}:
$$D^{Tot}(\vartheta)=0\Leftrightarrow\begin{cases}
    \delta\theta_n^{(p,q)}=0
    \\
    d^v\theta_{n}^{(p,q)}+\delta\theta_{n-1}^{(p,q+1)}=0
    \\
    d_h\theta_n^{(p,q)}+\delta\theta_{n-1}^{(p+1,q)}=0
    \\
    d^v\theta_{n-1}^{(p,q+1)}+\delta\theta_{n-2}^{(p,q+2)}=0
    \\
    d_h\theta_{n-1}^{(p,q+1)}+d^v\theta_{n-1}^{(p+1,q)}+\delta\theta_{n-2}^{(p+1,q+1)}=0
    \\
    d_h\theta_{n-1}^{(p+1,q)}+\delta\theta_{n-2}^{(p+2,q)}=0,
    \\
    \cdots
    \\
\end{cases}
$$
\vspace{1mm}

We would also like to mention that it is possible to state Theorem A in the setting where $X$ is a super-manifold of dimension $d|m,$ by replacing dualizing sheaves with Berezinians and the de Rham complex $\Omega_X^*$ with the complex of integral forms $\mathcal{I}_{X,*}.$ One would like to do this, for instance, in the context of treating super-symmetric field theories.

\subsection{Applications to Field Theory.}
\label{ssec: Applications to Field Theory} 
When $\mathcal{A}$ is the jet algebra on sections of a (graded or super) field bundle $\pi:E\rightarrow X$, certain cohomology classes (\ref{eqn: Variational cohomologies}) in $h^*\big(\mathsf{DR}(\mathcal{A})\big)$ of its de Rham algebra define special functionals on sections $\underline{Sect}(X,E)\subset \underline{\mathrm{Maps}}(X,E).$
 There is a natural integration pairing
 \begin{equation}
     \label{eqn: Integration pairing}
 H_{*,c}(X)\times h^{*-d_X}\big(\mathsf{DR}(\mathcal{A})\big)\rightarrow \mathrm{Hom}\big(\underline{\Gamma}(X,E),\underline{k}\big),\hspace{2mm} (K,\omega)\mapsto \mathcal{S}_{(K,\omega)},
 \end{equation}
for $K$ an element of the singular homology cycles of $X$ with compact support, where $\mathcal{S}_{(K,\omega)}:s\mapsto \int_K (j_{\infty}s)^*\omega,$ for $j_{\infty}s:X\rightarrow \mathrm{Jet}^{\infty}(E)$, the Taylor series of a section $s,$ and $k$ is a field, often taken to be $\mathbb{R},\mathbb{C}$ or even $\mathbb{R}[\![\hbar]\!],$ with $\hbar$ a formal parameter.
\vspace{2mm}

Classes in $h^{*-p}\big(\mathsf{DR}(\mathcal{A})\big),p\leq d_X$ correspond to higher conservation laws, gauge charges etc. Indeed, elements of $h^0(\mathcal{A})$ represent functionals, while conserved currents are locally $(d_X-1)$-forms $J=J^i(x^{\mu},u^{\alpha},u_{\sigma}^{\alpha})d^{d_X-1}x_i,$ with coefficients in $\mathcal{A}$ such that $\mathcal{D}_X\bullet J^i=0$ which is the $\mathcal{D}_X$-algebra action by total derivatives. Therefore elements in $h^1(\mathcal{A})$ are \emph{conservation laws}. Similarly,  $\mathcal{Q}\in h^2$ describe gauge charges. From the perspective of the variational bicomplex they live on the first page of the $\mathcal{C}$-spectral sequence (shown below). Importantly, on this page we have the term (in blue) sending a system of PDEs to associated (Helmholtz) conditions for variationality.

\begin{wrapfigure}[18]{l}{8.5cm}
    \centering
    \adjustbox{scale=.82}{
\begin{tikzcd}
\label{FirstPage}
 0 & 0 & 0 
\\
 \overline{H}^n(\EQ)  \arrow[r,"\delta"] \arrow[u,] & {\color{white!25!blue}\mathcal{C}E_1^{n,1}(\EQ)} \arrow[u,] \arrow[r,]&  {\color{white!25!blue}\mathcal{C}E_1^{n,2}(\EQ)}\arrow[r] \arrow[u]& \cdots
 \\
 \overline{H}^{n-1}(\EQ)\arrow[u,] \arrow[r,"\delta"] & \mathcal{C}E_1^{n-1,1}(\EQ)\arrow[u,] \arrow[r,]& \mathcal{C}E_1^{n-1,2}\arrow[u](\EQ)\arrow[r]& \cdots
 \\
 \vdots \arrow[u,] & \vdots \arrow[u,] & \vdots\arrow[u]
 \\
 \mathcal{O}(\EQ)\arrow[u,]\arrow[r,] & \Omega^{0,1}(\EQ)\arrow[u,]\arrow[r,]& \Omega^{0,2}(\EQ)\arrow[r]\arrow[u] & \cdots
 \\
 0 \arrow[u] & 0\arrow[u] & 0\arrow[u] & 
\end{tikzcd}}
\end{wrapfigure}

Physicists are typically interested in (exponentiated) \emph{integral functionals} and $N$-point correlation functions e.g. 
$\mathcal{S}(\varphi)=\int_{X^N}g(x_1,\ldots,x_N)\prod_{i=1}^{N}\varphi(x_i)dx_i.$ 

One faces an immediate obstruction to defining these \emph{multilocal}-functionals via cohomology classes in $\overline{H}^n(\pi):=\overline{H}^n(\mathrm{Jet}^{\infty}(E))$.
This is exemplified by attempting to define a point-wise algebra structure; via (\ref{eqn: Integration pairing}) a class $\mathcal{S}=\int f(x,\Phi,\Phi_{\sigma})d\text{vol}_X(x)$ in $\overline{H}^n(\pi)$ determines a transformation on sections by
$\mathcal{S}(s):=\int_X j_{\infty}(s)^*f(x,u_{\sigma}^{\alpha})d\text{vol}_X(x)\in k,$ and its product with another functional $\mathcal{G}$ is
 $$\big(\mathcal{S}\cdot \mathcal{G}\big)(s)=\int_X\int_X f(x,u_{\sigma}^{\alpha})g(x',u_{\sigma'}^{\alpha'})d\text{vol}_X(x)\wedge d\text{vol}_X(x').$$
Such products correspond to classes in $\overline{H}^{2n}(\pi\times \pi),$ so are integral functionals for $\pi\times \pi,$ but \emph{not} for $\pi$ itself due to the strict inclusion of $\overline{H}^{2n}(\pi\times \pi)$ in $\overline{H}^n(\pi)\otimes\overline{H}^n(\pi).$ More generally, composites
$\mathcal{S}_1\cdot\ldots\cdot \mathcal{S}_k$ are not integral for $\pi$ but are integral for $\pi\times\ldots\times \pi,$ prompting us (see Subsect. \ref{ssec: HoPDE and Quantization}) to work with multi-jets and multi-local functions. 
Namely, consider a finite set $I$ and the sub-manifold
\begin{equation}
    \label{eqn: Open subset of XI}
X^{(I)}:=\big\{(x_1,\ldots,x_N):=(x^I)\in X^I|x_i\neq x_j, 1\leq i< j\leq N=|I|\big\}\hookrightarrow X^I.
\end{equation}
Associated to the usual $k$-jet bundle $\pi_k:\mathrm{Jet}^k(E)\rightarrow X,$ we may take
\begin{equation}
    \label{eqn: k multi-jet bundle}
    \pi_k^{(I)}:\mathrm{Jet}^k(E)^I:=\mathrm{Jet}^k(E)\times\cdots\times \mathrm{Jet}^k(E)\rightarrow X^I,
    \end{equation}
which defines a sub-manifold
\begin{equation}
    \label{eqn: I fold k-jet bundle}
\mathrm{Jet}^k(E)^{(I)}:=\big(\pi_k^{(I)}\big)^{-1}\big(X^{(I)}\big)\subset \mathrm{Jet}^k(E)^I,
\end{equation}
called the $I$-fold $k$-jet bundle of $\pi.$ A finite order multi-jet bundle is any $I$-fold $k$-jet bundle, for any finite set $I$ and $k\geq 1,$ while the infinite multi-jet bundle is an $I$-fold $k$-jet bundle for $k\rightarrow \infty.$
There are natural analogs of jet prolongations maps
\begin{eqnarray*}
j_k^{(I)}:Sect(X,E)&\rightarrow& Sect\big(X^I,\mathrm{Jet}^k(E)^{(I)}\big), \\
\varphi&\mapsto&\big[(x_1,\ldots,x_N)\mapsto (j_k(\varphi)(x_1),\ldots,j_k(\varphi)(x_N))\big].
\end{eqnarray*}
Unambiguous definitions of \emph{multi-local Lagrangians} and \emph{multi-local functionals} are given by functions
 $L^{(I)}:\mathrm{Jet}^{\infty}(E)^{(I)}\rightarrow \mathbb{R}^I,$ and expressions $\mathcal{S}^{(I)},$ such that 
$$\mathcal{S}^{(I)}\big(x_1^{\mu},\ldots,x_N^{\mu}\big)=\int_{K(I)\subset X^{(I)}}L^{(I)}\circ j_{\infty}^{(I)}\varphi(x_1^{\mu},\ldots,x_N^{\mu}),$$
for $K(I)$ a suitable compact domain, respectively.

Our needs necessitate the use of an $(\infty,1)$-categorical analog of the multi-jet construction for derived factorization $\D$-spaces of ${\scalemath{.90}{\mathrm{N}}{\scalemath{.84}{\mathrm{L}}}{\scalemath{.88}{\mathrm{P}}{\scalemath{.84}{\mathrm{DE}}{\scalemath{.70}{\mathrm{S}}}}}}$.
\vspace{2mm}

\noindent\textbf{Theorem B.}\hspace{1mm} (Theorem \ref{theorem: Theorem B Main claim}, Proposition \ref{proposition: Thm B (ii)} and Corollary \ref{Thm B corollary}).\hspace{1.5mm}
\emph{Let $\EQ$ be an affine $\D$-finitary prestack. Then 
there is a natural morphism}
$$\int_X^{fact}\mathcal{A}^{\ell}\rightarrow \Gamma\big(\RS(\EQ),\mathcal{O}_{\RS(\EQ)}\big),$$
\emph{which is an equivalence of monoids if $\mathcal{A}^{\ell}$ is cohomologically finite \emph{(\ref{ssec: Finiteness})}. If $\EQ$ is the derived $\D$-prestack of solutions $\mathbb{R}\underline{\mathsf{Sol}}_{\mathcal{D}}(\mathcal{I}),$ of a PDE with corresponding multi-jet $\D$-space $\EQ^{Ran}$, then its factorization homology identifies with $\mathbb{R}\underline{\mathsf{Sol}}(\mathcal{I})$ as a usual derived stack.}
\vspace{2mm}

Theorem B can be viewed as a homotopy-theoretic realization of the central idea of Secondary Calculus. In particular, it opens the door to studying other Secondary notions i.e. differential forms etc. via factorization homology with twisted coefficients, but for the lack of space, we defer this investigation to a future work.

We highlight Theorem B with an application to the \BV-formalism in Proposition \ref{Derived Quotient as BV Theorem} in Sect. \ref{sec: Applications 1}.
\vspace{1.5mm}

\noindent\textbf{Acknowledgements.}
The authors warmly thank Nicolai Reshetikhin for giving an interesting and inspiring talk at BIMSA out of which some parts of this work, specifically the inclusion of non-empty boundary, have been motivated. We thank him for interesting discussions.

The first author is supported by the Postdoctoral International Exchange Program of Beijing Municipal Human Resources and Social Security Bureau. The second author is supported by a grant from Beijing Institute of Mathematical Sciences and
Applications (BIMSA), and by the National Program of Overseas High Level Talent.

\subsection{Notations and Conventions.}
\label{Notations and Conventions}
Throughout this paper we treat non-linear PDEs modulo their symmetries in a coordinate free way and we’ll mainly work with $(\infty,1)$-categories to also avoid model-dependent arguments. There are a lot of technicalities which must be stated in order to make subtle arguments and wherever possible, we omit them for simplicity.
\begin{itemize}
    \item $\mathsf{Spc}$ is the symmetric monoidal $(\infty,1)$-category of spaces and for a field $k$ of characteristic zero $\mathsf{Vect}_k$ is the stable symmetric monoidal $(\infty,1)$-category of unbounded cochain complexes.
    \item  $\PS_k$ is the category of prestacks i.e. functors $\mathsf{Fun}\big(\mathsf{CAlg}_k,\mathsf{Spc}),$ where $\mathsf{CAlg}_k:=\mathsf{CAlg}(\mathsf{Vect}_k)$ is the $(\infty,1)$-category of commutative algebras in $\mathsf{Vect}_k.$

    \item For any $(\infty,1)$-category $\mathsf{C}$ and an operad $\mathcal{O},$ we denote by 
$\mathcal{O}-\mathsf{Alg}[\mathsf{C}],$
the category of $\mathcal{O}$-algebras in $\mathsf{C},$ e.g. $\mathsf{CAlg}_{k}:=\mathcal{C}omm-\mathsf{Alg}[\mathsf{Vect}_k].$
\end{itemize}
Via Dold-Kan and weighted negative-cyclic cohomology $NC^w(A/k):=NC^w(\mathbf{DR}(A/k)),$ one defines an étale local space of $n$-shifted closed $p$ forms
$\mathcal{A}_k^{p,cl}(A;n):=\big|NC^w(A/k)[n-p](p)\big|.$ Thus, given a derived stack $X$
there is an equivalence
$$\mathcal{A}_k^p(X;n)\simeq \underset{Spec(A)\in (\mathsf{dStk}_{/X})^{\op}}{holim}\hspace{1mm}\mathcal{A}^p(A;n).$$
The condition of being closed tells us that we have a semi-infinite sequence $\omega=\omega_0+\omega_1+\ldots$ with 
$\omega_i\in \mathbf{DR}(\mathcal{X})(p+i)[n+p]\cong \mathbb{R}\Gamma\big(\Bigwedge^{p+i}\mathbb{L}_{\mathcal{X}}[n-i]\big)$ such that $\delta\omega_0=0$ and such that $d\omega_i=\delta\omega_{i+1}.$
It is convenient to introduce the total complex
$$\underline{\mathbf{DR}}(\mathcal{X}):=\bigg(\prod_{p\geq 0}\mathbb{R}\Gamma\big(\mathrm{Sym}_{\mathcal{O}_{\mathcal{X}}}^p(\mathbb{L}_{\mathcal{X}}[-1])\big),D:=\delta+d\bigg),$$
with $$\underline{\mathbf{DR}(\mathcal{X})}_{\geq p}[n]\simeq \underbrace{\mathbb{R}\Gamma\bigg(\prod_{i\geq 0}\big(\Bigwedge_{\mathcal{O}_{\mathcal{X}}}^{p+i}\mathbb{L}_{\mathcal{X}}\big)[n]\bigg)}_{\text{total degree }n+p+i}.$$
Being closed is equivalently saying $\omega$ is a degree $n+p$
cocycle for the differential $D.$
\vspace{1mm}

\noindent\textit{\textcolor{blue}{The role of factorization:}} We exploit the natural \emph{factorization} structure on objects (see \cite{BeiDri},\cite{FraGai}) essentially as a local to global tool that also captures important phenomena appearing in physics \cite{CosGwi02,CosGwi01}. Factorization objects can be introduced quite generally as in \cite{Butson2}, with examples including: factorization algebra objects (e.g. in $\D$-modules), factorization categories, factorization functors etc. (see \cite{RaskinChiral}).

We mainly work with factorization $\D$-spaces, defined as a derived stack $Z^{Ran}$ over $Ran_{X_{dR}}$, i.e. a family $\{p_I:Z^I\rightarrow X_{dR}^I\},$ for all finite sets $I$, compatible with pull-backs along diagonals and compositions of surjections (up to coherent homotopy), together with homotopy coherent factorization equivalences,
$$j_{\alpha}^*Z^I\xrightarrow{\simeq}j_{\alpha}^*\prod_{j\in J} Z^{I_j},$$
for all surjections $\alpha:I\rightarrow J$ with $I_j:=\alpha^{-1}(j)$ and embedding $j_{\alpha}:U(\alpha):=\{x_i\neq x_j| \alpha(i)\neq \alpha(j)\}\rightarrow X^I.$

\begin{remark}[Notation/Terminology]
Any decoration of an object by a super-script `$Ran$' e.g. $Z^{Ran}$ above, should be taken as a reference to a compatible family of objects over each $X_{dR}^I\in fSet.$ Roughly speaking, for our purposes, this may be taken as synonymous with `factorization,' as we do not consider objects over $Ran_X$ unless to make use of factorization structures. It will therefore be omitted wherever possible from our notation.
\end{remark}
The corresponding $(\infty,1)$-category is denoted by $$\PS_{\mathcal{D}}^{fact}(X)\subseteq \PS_{/Ran_{X_{dR}}}.$$

Such objects admit a notion of a quasi-coherent sheaf of categories \cite{Gai1Aff}. We make free use of this structure, always denoting a quasi-coherent sheaf of categories over a space $Z$ by script-font $\EuScript{C}_Z.$ 
\vspace{2mm}

For the reader’s convenience, a summary of the most basic terms we use is given in the appendix.

\section{Geometry of $\D$-Prestacks}
\label{sec: Geometry of D-Prestacks}
The main tool for geometry on $\D$-spaces are the spaces of local operations, introduced in \cite{BeiDri}. Roughly speaking, they are defined for a finite family of dg-$\D_X^{op}$-modules $(\mathcal{M}_{i}^{\bullet})_{i\in I},\mathcal{N}^{\bullet},$ by
\begin{equation}
    \label{eqn: LocalOps}
\mathbb{R}\mathcal{P}_{I}^*\big(\{\mathcal{M}_i\},\mathcal{N}\big):=\mathbb{R}\mathcal{H}om_{\mathcal{D}_{X^I}}\big(\boxtimes_{i\in I}^{\mathbb{L}}\mathcal{M}_i^{\bullet},\Delta_*\mathcal{N}^{\bullet}\big),
\end{equation}
with $\Delta:X\hookrightarrow X^I$ and $\boxtimes_i(-)\simeq \bigotimes_ip_i^*(-),$ with $p_i:X^I\rightarrow X$ natural projections.
\vspace{1.5mm}

Operations (\ref{eqn: LocalOps}) can also be defined in categories of differential graded $\mathcal{A}\otimes_{\mathcal{O}_X}\mathcal{D}_X$-modules and are denoted by $\mathbb{R}\mathcal{P}_{\mathcal{A},I}^*$. 
The non-trivial point is that extending them to varieties with $dim(X)>1,$ must be done in a homotopy-coherent setting via $(\infty,1)$-categories (see \cite{FraGai,KRY}\footnote{
 As pointed out in \cite{FraGai}, categories of chiral and factorization $\D$-modules, even in the simplest case of a curve $X,$ lack co-products therefore cannot admit a model
category structure.}), rather than by just using abelian and triangulated categories as in \cite{BeiDri}. 
\vspace{1.5mm}

The appropriate homotopy-coherent space of maps consist of factorization compatible morphisms (Subsect. \ref{sssec: Internal Factorization Hom-Algebra $!$-Sheaves}) of factorization $\mathcal{F}(\mathcal{A})\otimes_{\mathcal{O}_{Ran_X}}\mathcal{D}_{Ran_X}$-modules, where $\mathcal{F}(\mathcal{A})$ denotes the commutative factorization algebra in $\D$-modules canonically associated to a commutative $\D$-algebra (see Proposition \ref{Multi-jet r-adjoint proposition}). One obtains a bi-functorial assignment
\begin{equation}
    \label{eqn: Holocals}
    \mathbb{R}\mathsf{Maps}_{\mathcal{A}}^*:\big(\mathcal{F}(\mathcal{A}^{\ell})-\mathsf{Mod}^{fact}(\D_{Ran_X})\big)^{op}\times \mathcal{F}(\mathcal{A}^{\ell})-\mathsf{Mod}^{fact}(\D_{Ran_X})\rightarrow \mathsf{Spc}.
\end{equation}
Compatibility with (\ref{eqn: LocalOps}) follows since, for a family of $\D$-modules on $Ran_X$ supported on the main diagonal $\{\mathcal{M}_i\}_{i=1,\ldots,n},\mathcal{N},$ and the tautological factorization algebra $\mathcal{O}_{Ran_X}$ in $\D_{Ran_X}$-modules, the corresponding space of maps
$\mathbb{R}\mathsf{Maps}_{\mathcal{O}_{Ran_X}}^*$ arising from (\ref{eqn: Holocals}) is equivalent to
$$\mathbb{R}\mathcal{H}om_{\mathcal{D}_{Ran_X}}\big(\mathcal{M}_1\otimes^*\cdots\otimes^*\mathcal{M}_n,\mathcal{N}\big)\simeq \mathbb{R}\mathcal{P}_{n}^*(\{\mathcal{M}_i\},\mathcal{N}),$$
with $\otimes^*$ the (local) tensor product on the $(\infty,1)$-category $\mathsf{Mod}(\D(Ran_X)),$ roughly defined by 
$$\mathcal{M}\otimes^*\mathcal{N}=\cup_*(\mathcal{M}\boxtimes\mathcal{N}),$$ with $\cup:Ran_X\times Ran_X\rightarrow Ran_X$ the operation of union of finite sets. We refer to \cite[Sect. 2.2]{FraGai} for details.
\vspace{1.5mm}

Local operations allow one to work with geometric objects on $\D$-spaces of solutions as well as formulate necessary finiteness conditions and duality statements for them.
The key point is that they induce usual operations in de Rham cohomology and therefore give an algebraic presentation of local functions, differential forms and vector fields on spaces of sections $\underline{Sect}(X,E)$ (Proposition \ref{prop: Integration pairing}). 
\vspace{2mm}

\noindent\textit{\textbf{Basic notions.}} Standard equivalences of categories of left and right $\D$-modules still hold for commutative monoid objects:  
$$(-)^r:\mathrm{CAlg}(\mathcal{D}_X)\simeq\mathrm{CAlg}(\mathcal{D}_X^{\mathrm{op}}):(-)^{\ell},$$ given by $\mathcal{A}^r:=\omega_X\otimes_{\mathcal{O}_X}\mathcal{A}$ and $\mathcal{A}^{\ell}:=\omega_X^{\otimes -1}\otimes_{\mathcal{O}_X}\mathcal{A}$.

In what follows, unless otherwise specified, a `$\mathcal{D}$-algebra' means a left one, while a $\mathcal{D}^{\mathrm{op}}$-algebra means a right $\mathcal{D}$-algebra.

If $\mathcal{A}$ is the infinite jet algebra on sections of some bundle $E\rightarrow X,$ a $\D$-geometric non-linear PDE is algebraically encoded by a sequence 
\begin{equation}
    \label{eqn: SES}
    0\rightarrow \mathcal{I}_X\rightarrow \mathcal{A}\rightarrow \mathcal{B}\rightarrow 0,
\end{equation}
of $\mathcal{D}_X$-algebras with $\mathcal{I}_X$ some $\mathcal{D}_X$-ideal sheaf. A more geometric definition can be given.

\begin{definition}
\normalfont The category of \emph{affine $\mathcal{D}_X$-schemes} over $X$\footnote{We also call them $(X,\mathcal{D}_X)$-affine schemes.} is defined the opposite category $\mathrm{Aff}_X(\mathcal{D}_X):=\mathrm{CAlg}(\mathcal{D}_X)^{\mathrm{op}}.$ A non-linear PDE in the sense of (\ref{eqn: SES}) therefore corresponds to a $\mathcal{D}_X$-subscheme. 
\end{definition}
A generic affine $\mathcal{D}_X$-scheme is denoted by analogy with ordinary algebraic geometry as $\mathrm{Spec}_{\mathcal{D}_X}(\mathcal{A})$ and the functor of points philosophy gives a $\mathcal{D}$-Yoneda embedding
\begin{eqnarray}
    \label{eqn: D-Yoneda}
h_{\mathcal{A}}^{\mathcal{D}}:\mathrm{CAlg}(\mathcal{D})\rightarrow \mathrm{Set},\hspace{3mm}\mathcal{B}\mapsto \mathrm{Hom}_{\mathcal{D}\mathrm{-Alg}}(\mathcal{A},\mathcal{B}).
\end{eqnarray}
 In particular, the space of classical holomorphic solutions is given by
\begin{equation}
    \label{eqn: Classical D-Solutions}
Sol_{\mathcal{D}}(\mathcal{A}):=h_{\mathcal{A}}^{\mathcal{D}}(\mathcal{O}_X)=\mathrm{Hom}_{\mathcal{D}\mathrm{-Alg}}(\mathcal{A},\mathcal{O}_X).
\end{equation}
If $\mathcal{F}$ is another function space with a $\D_X$-algebra structure, denote the space of solutions of $\mathcal{A}$ in $\mathcal{F}$ by $Sol_{\mathcal{D}}(\mathcal{A},\mathcal{F}).$

\begin{definition}
\label{Definition: (Formal) Algebraic D-Spaces}
\normalfont
An \emph{algebraic $\D_X$-space} is a presheaf $\mathcal{X}:\big(\mathrm{CAlg}_{\mathcal{D}_X}^{\mathrm{op}},\tau\big)\rightarrow \mathrm{Set},$ that is a sheaf for a suitable topology $\tau$ (usually the Zariski or étale topologies). Given $\mathcal{R}^{\ell}\in \mathrm{CAlg}_X(\D_X),$ a \emph{$\D_X$-algebra $\mathcal{R}^{\ell}$ point} of an étale local $\D$-space $\EQ$ is a $\D$-space morphism $\mathrm{Spec}_{\mathcal{D}}(\mathcal{R})\rightarrow \EQ$.
\end{definition}

The main example arises via a given sequence (\ref{eqn: SES}). In this case, the space of $\mathcal{R}^{\ell}$-points is a functorial assignment
\begin{eqnarray}
    \label{eqn: Local solution D space}
    \underline{Sol}_{\mathcal{D}}(\mathcal{B}):\mathrm{CAlg}(\D_X)_{\mathcal{A}/}&\rightarrow& \mathrm{Set},
    \\
    \underline{Sol}_{\mathcal{D}}(\mathcal{B})(\mathcal{R}^{\ell})&:=&Sol_{\mathcal{D}}(\mathcal{B},\mathcal{R}^{\ell})=\big\{\psi\in \mathcal{R}^{\ell}| \mathsf{F}(\psi)=0,\forall \mathsf{F}\in\mathcal{I}\big\}.
\end{eqnarray}

An appropriate derived analog of (\ref{eqn: Local solution D space}) is given by 
\begin{equation}
    \label{eqn: Derived Local solution D space}
    \mathbb{R}Sol_{\mathcal{D}_X}(\mathcal{B},-):=\mathbb{R}\mathcal{M}aps_{dg-\D_X-alg}(Q\mathcal{B},-):Ho\big(\cdga_{\mathcal{A}/}\big)\rightarrow Ho(SSet),
    \end{equation}
with $Q\mathcal{B}$ the canonical cofibrant replacement \cite{KRY},\cite{KSYI}.
\subsection{Coefficients}
\label{ssec: DMods with DAlg Coeff}
Given a $\D_X$-algebra $\mathcal{A}$ we study the categories of $\mathcal{A}$-module objects in $\D_X$-modules and complexes of such objects, denoted respectively by 
$$\mathrm{Mod}_{\mathcal{D}_X}(\mathcal{A}):=\mathcal{A}-\mathrm{Mod}(\mathcal{D}_X),\hspace{2mm}\text{and }\hspace{1mm} \DG(\mathcal{A}).$$
Objects (dg or not) are just left $\D_X$-modules $\mathcal{M}$ with a $\D_X$-compatible $\mathcal{A}$-action, $\nu\colon \mathcal{A}\otimes\mathcal{M}\rightarrow \mathcal{M}.$ 

We endow $\DG(\mathcal{A})$ with an evident model structure whose weak-equivalences are quasi-isomorphisms. 
\begin{proposition}
\label{Tautological AD equivalence}
There are equivalences of categories $\mathrm{Mod}_{\mathcal{D}_X}(\mathcal{A})\simeq\mathrm{Mod}\big(\mathcal{A}\otimes_{\mathcal{O}_X}\mathcal{D}_X),$ and
$\mathrm{Mod}_{\mathcal{D}_X^{\mathrm{op}}}(\mathcal{A}^{r})\simeq \mathrm{Mod}_{\mathcal{D}_X^{\mathrm{op}}}(\mathcal{A})$.
\end{proposition}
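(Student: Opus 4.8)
The plan is to exhibit both equivalences as the unwinding of a crossed-product (``smash product'') construction, which is what justifies the label \emph{tautological}. For the first equivalence, I would begin by recording that, since $\mathcal{A}$ is a commutative algebra object in $\mathrm{Mod}(\mathcal{D}_X)$, the sheaf $\mathcal{D}_X$ acts on $\mathcal{A}$ by derivations through the action $\bullet$, with unit and multiplication of $\mathcal{A}$ being $\mathcal{D}_X$-linear. This is exactly the data needed to endow the $\mathcal{O}_X$-module $\mathcal{A}\otimes_{\mathcal{O}_X}\mathcal{D}_X$ with the structure of a sheaf of associative rings: one declares $\mathcal{A}=\mathcal{A}\otimes 1$ and $\mathcal{D}_X=1\otimes\mathcal{D}_X$ to be subalgebras and imposes, for a vector field $\xi$ and $a\in\mathcal{A}$, the commutation relation
$$(1\otimes\xi)(a\otimes 1)=(\xi\bullet a)\otimes 1 + (a\otimes\xi).$$
I would then verify that this relation is consistent and extends to a genuinely associative product. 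This is the one place where the $\mathcal{D}$-algebra axioms (the Leibniz rule for $\bullet$ and flatness of the connection) are actually used, and it is the main technical obstacle; everything afterwards is formal. Since the construction is local, it produces a sheaf of rings and the resulting comparison is an equivalence of sheaf-module categories.

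Next I would construct the comparison functors. In one direction, a left module $\mathcal{M}$ over $\mathcal{A}\otimes_{\mathcal{O}_X}\mathcal{D}_X$ restricts along the two subalgebra inclusions to a simultaneous $\mathcal{A}$-module and $\mathcal{D}_X$-module, and the commutation relation above translates verbatim into the compatibility
$$\xi\bullet(a\cdot m) = (\xi\bullet a)\cdot m + a\cdot(\xi\bullet m),$$
i.e. the requirement that the action map $\nu\colon\mathcal{A}\otimes\mathcal{M}\to\mathcal{M}$ be a morphism of $\mathcal{D}_X$-modules, which is precisely an object of $\mathrm{Mod}_{\mathcal{D}_X}(\mathcal{A})$. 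Conversely, given such an $\mathcal{M}$, I would set $(a\otimes P)\cdot m:=a\cdot(P\bullet m)$; well-definedness over $\otimes_{\mathcal{O}_X}$ holds because the $\mathcal{A}$- and $\mathcal{D}_X$-structures restrict to one and the same $\mathcal{O}_X$-action (as $\mathcal{A}$ is an $\mathcal{O}_X$-algebra whose $\mathcal{D}_X$-structure extends that of $\mathcal{O}_X$), and associativity of the action is again the same compatibility. The two functors are mutually inverse by inspection, giving $\mathrm{Mod}_{\mathcal{D}_X}(\mathcal{A})\simeq\mathrm{Mod}(\mathcal{A}\otimes_{\mathcal{O}_X}\mathcal{D}_X)$. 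The statement for $\DG(\mathcal{A})$ then follows by running the same argument degreewise, compatibly with the differentials.

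For the second equivalence I would lean on the equivalence $(-)^r\colon\mathrm{CAlg}(\mathcal{D}_X)\simeq\mathrm{CAlg}(\mathcal{D}_X^{\mathrm{op}})$ already recorded in the text, induced by the invertible twist $\mathcal{M}\mapsto\omega_X\otimes_{\mathcal{O}_X}\mathcal{M}$ between left and right $\mathcal{D}_X$-modules, with quasi-inverse $(-)\otimes_{\mathcal{O}_X}\omega_X^{-1}$. Since $\omega_X$ is a line bundle this twist is an equivalence of categories, and it upgrades to an equivalence between $\mathcal{A}$-modules and $\mathcal{A}^r$-modules: tensoring with $\omega_X$ carries a right $\mathcal{D}_X$-module with compatible $\mathcal{A}$-action to one with compatible $\mathcal{A}^r=\omega_X\otimes_{\mathcal{O}_X}\mathcal{A}$-action, and conversely. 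I would formalize this by the same crossed-product bookkeeping as above, now building the associative algebra with $\mathcal{D}_X$ acting on the right, and then observing that the two crossed products differ only by the harmless invertible twist by $\omega_X$. The single point requiring care here is tracking this $\omega_X$-twist consistently through the left--right dictionary and checking that it matches the two notions of module; because $\omega_X$ is invertible this introduces no new obstacle beyond the associativity verification of the first step.
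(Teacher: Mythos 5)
Your proposal is correct and matches the paper's intended argument: the paper does not spell out a proof, but the discussion immediately following the proposition introduces exactly your crossed-product structure on $\mathcal{A}[\mathcal{D}_X]=\mathcal{A}\otimes_{\mathcal{O}_X}\mathcal{D}_X$ (with the relation $(1_{\mathcal{A}}\otimes\theta)\circ(b\otimes P)=\nabla_{\theta}(b)\otimes P+b\otimes(\theta\circ P)$) and repackages a compatible $\mathcal{A}$-action as a module structure over this ring, while the second equivalence is handled, as you do, by the invertible $\omega_X$-twist implementing the left/right dictionary. Nothing is missing.
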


The $\mathcal{A}$-module structure on $\mathcal{M}$ compatible with the $\D_X$-structure can be repackaged as an action
$\mathcal{A}\otimes_{\mathcal{O}_X}\mathcal{D}_X\otimes\mathcal{M}\rightarrow \mathcal{M},$  by the sheaf of differential operators on $X$ with coefficients in $\mathcal{A},$  $$\mathcal{A}[\mathcal{D}_X]:Opens_X\rightarrow \mathrm{Ring}, U\mapsto  \mathcal{A}[\mathcal{D}_X](U):=\mathcal{A}_{U}\otimes_{\mathcal{O}_X|_{U}}\mathcal{D}_U.$$

The associative unital structure 
$\circ:\mathcal{A}[\mathcal{D}_X]\times \mathcal{A}[\mathcal{D}_X]\rightarrow \mathcal{A}[\mathcal{D}_X],$ is determined by
$$(a\otimes 1_{\mathcal{O}_X})\circ (b\otimes P):=\mu(a,b)\otimes  P,\hspace{1mm}
    (1_{\mathcal{A}}\otimes \theta)\circ (b\otimes P):=\nabla_{\theta}(b)\otimes P+b\otimes(\theta\circ P),$$
for all $a,b\in\mathcal{A},\theta\in\Theta_X,P\in\mathcal{D}_X.$

Moreover, $\mathcal{A}\hookrightarrow \mathcal{A}[\mathcal{D}_X]$ is a unital sub-algebra via $a\mapsto a\otimes 1_{\mathcal{O}_X}$ while $\Theta_X\hookrightarrow \mathcal{A}[\mathcal{D}_X],$ is a Lie sub-algebra via $\theta \mapsto 1_{\mathcal{A}}\otimes \theta.$ 

There is a natural double filtration on $\mathcal{A}[\mathcal{D}_X]$ combining that on $\mathcal{A}$ and the order filtration on $\mathcal{D}_X$. If $\mathcal{A}^{\ell}$ is an infinite-jet algebra the filtration is given for each $k\in \mathbb{N},$ by the space freely generated by $\mathrm{Fl}^k\mathcal{D}_{X}\otimes_{\mathcal{O}_X}\mathcal{O}_E$ i.e. filtration by finite jets.

It is convenient to introduce a total filtration associated to the double filtration,
$$\mathrm{Fl}_{\mathrm{Tot}}^p\big(\mathcal{A}[\mathcal{D}_X](V)\big):=\bigcup_{p=k+\ell}\mathrm{Fl}^k\mathcal{A}(V)\otimes_{\mathcal{O}_X(V)}\mathrm{Fl}^{\ell}\mathcal{D}_{X}(V).$$
Put $\mathrm{Fl}_{\mathrm{Tot}}\mathcal{A}[\mathcal{D}_X]:=\lbrace{\mathrm{Fl}_{k,\ell}\mathcal{A}[\mathcal{D}_X]\rbrace}_{k,\ell\in\mathbb{Z}\times\mathbb{Z}},$ and interpret 
sections of $\mathrm{Fl}^{k,\ell}\mathcal{A}[\mathcal{D}_X]$ as differential operators of order $\leq \ell$ with coefficients in $k$-jets.

Given a morphism $F:\mathcal{A}\rightarrow \mathcal{B}$ of commutative $\mathcal{D}_X$-algebras, we have usual adjoint functors
\begin{equation}
\scalemath{.93}{F^*\colon\mathrm{Mod}\big(\mathcal{B}[\mathcal{D}_X]\big)\rightleftarrows \mathrm{Mod}\big(\mathcal{A}[\mathcal{D}_X]\big)\colon \mathrm{coind}_{F},\hspace{2mm} 
\mathrm{ind}_F\colon\mathrm{Mod}\big(\mathcal{A}[\mathcal{D}_X]\big)\rightleftarrows \mathrm{Mod}\big(\mathcal{B}[\mathcal{D}_X]\big)\colon F^*.}
\end{equation}

The categories $\mathrm{Mod}(\mathcal{A}[\mathcal{D}_X])$ and $\mathrm{Mod}(\mathcal{A}[\mathcal{D}_X^{\mathrm{op}}])$ are symmetric monoidal categories for which $\mathcal{A}$ and $\mathcal{A}^r$ are the respective monoidal units. 
For instance, we have a functor
$$-\otimes_{\mathcal{A}}-:\mathrm{Mod}(\mathcal{A}[\mathcal{D}_X])^{\mathrm{op}}\times\mathrm{Mod}(\mathcal{A}[\mathcal{D}_X])\rightarrow \mathrm{Mod}(\mathcal{A}[\mathcal{D}_X]),$$
on $\mathcal{A}[\mathcal{D}_X]$-modules, whose opposite monoidal structure in the category of right $\mathcal{A}[\mathcal{D}_X]$-modules is given by 
$\mathcal{M}\otimes_{\mathcal{A}}\mathcal{N}:=\big(\mathcal{M}^{\ell}\otimes_{\mathcal{A}}\mathcal{N}^{\ell}\big)^{r},$
with monoidal unit
$\mathcal{A}^r.$ 
It is convenient for later use to define two induction functors in left and right $\mathcal{A}[\mathcal{D}_X]$-modules, $
\mathrm{ind}_{\mathcal{A}[\mathcal{D}_X]}:\mathrm{Mod}(\mathcal{O}_X)\rightarrow \mathrm{Mod}(\mathcal{A}[\mathcal{D}_X]),\mathrm{ind}_{\mathcal{A}[\mathcal{D}_X]}(\mathcal{F}):=\mathcal{A}[\mathcal{D}_X]\otimes_{\mathcal{O}_X}\mathcal{F},$ and similarly 
$\mathrm{ind}_{\mathcal{A}[\mathcal{D}_X]}^r(\mathcal{F}):=\mathcal{F}\otimes_{\mathcal{O}_X} \mathcal{A}[\mathcal{D}_X].$
\begin{proposition}
\label{prop: CDiffs}
If $\mathcal{M}\cong \mathrm{ind}_{\mathcal{A}[\mathcal{D}_X]}(\mathcal{F})$ and $\mathcal{N}\cong \mathrm{ind}_{\mathcal{A}[\mathcal{D}_X]}(\mathcal{G})$ are induced $\mathcal{A}[\mathcal{D}_X]$-modules of finite type i.e. $\mathcal{F},\mathcal{G}$ are finitely generated, the space of internal $\mathcal{A}[\mathcal{D}_X]$-homomorphisms identifies with differential operators in total derivatives 
$$\mathrm{Hom}_{\mathcal{A}[\mathcal{D}_X]}\big(\mathrm{ind}_{\mathcal{A}[\mathcal{D}_X]}(\mathcal{F}),\mathrm{ind}_{\mathcal{A}[\mathcal{D}_X]}(\mathcal{G})\big)\simeq \mathrm{Diff}_{\mathcal{A}}\big(\mathcal{A}\otimes\mathcal{F},\mathcal{A}\otimes\mathcal{G}\big).$$
\end{proposition}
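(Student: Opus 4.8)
The plan is to establish the identification by unwinding the definition of internal $\mathcal{A}[\mathcal{D}_X]$-homomorphisms on induced modules and matching it against the standard description of differential operators in total derivatives. First I would use the adjunction defining $\mathrm{ind}_{\mathcal{A}[\mathcal{D}_X]}$: since $\mathrm{ind}_{\mathcal{A}[\mathcal{D}_X]}(\mathcal{F})=\mathcal{A}[\mathcal{D}_X]\otimes_{\mathcal{O}_X}\mathcal{F}$ is free on the $\mathcal{O}_X$-module $\mathcal{F}$, a morphism out of it into any $\mathcal{A}[\mathcal{D}_X]$-module $\mathcal{N}$ is determined by its restriction along $\mathcal{F}\hookrightarrow \mathrm{ind}_{\mathcal{A}[\mathcal{D}_X]}(\mathcal{F})$, giving
$$\mathrm{Hom}_{\mathcal{A}[\mathcal{D}_X]}\big(\mathrm{ind}_{\mathcal{A}[\mathcal{D}_X]}(\mathcal{F}),\mathcal{N}\big)\simeq \mathrm{Hom}_{\mathcal{O}_X}(\mathcal{F},\mathcal{N}).$$
Specializing $\mathcal{N}=\mathrm{ind}_{\mathcal{A}[\mathcal{D}_X]}(\mathcal{G})=\mathcal{A}[\mathcal{D}_X]\otimes_{\mathcal{O}_X}\mathcal{G}$ and using $\mathcal{A}[\mathcal{D}_X]=\mathcal{A}\otimes_{\mathcal{O}_X}\mathcal{D}_X$ from Proposition \ref{Tautological AD equivalence}, the underlying $\mathcal{O}_X$-module of the target is $\mathcal{A}\otimes_{\mathcal{O}_X}\mathcal{D}_X\otimes_{\mathcal{O}_X}\mathcal{G}$, so the right-hand side becomes $\mathcal{O}_X$-linear maps $\mathcal{F}\to \mathcal{A}\otimes_{\mathcal{O}_X}\mathcal{D}_X\otimes_{\mathcal{O}_X}\mathcal{G}$.

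The key step is then to recognize this Hom-space as $\mathrm{Diff}_{\mathcal{A}}(\mathcal{A}\otimes\mathcal{F},\mathcal{A}\otimes\mathcal{G})$, the $\mathcal{A}$-linear differential operators between the induced $\mathcal{A}$-modules. Here I would invoke the classical description (the $\mathcal{D}$-geometric analog of the fact that $\mathcal{A}$-module maps out of a jet module are total differential operators): an element of $\mathcal{A}\otimes_{\mathcal{O}_X}\mathcal{D}_X\otimes_{\mathcal{O}_X}\mathcal{G}$ assigned to each generator in $\mathcal{F}$ is exactly the symbol data of a differential operator in total derivatives, where the $\mathcal{D}_X$-factor records the total-derivative part and the $\mathcal{A}$-factor the coefficients. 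The finite-type hypothesis on $\mathcal{F}$ and $\mathcal{G}$ guarantees that these operators have locally finite order, so the internal-Hom sheaf is exactly the sheaf $\mathrm{Diff}_{\mathcal{A}}(\mathcal{A}\otimes\mathcal{F},\mathcal{A}\otimes\mathcal{G})$ rather than some completion; I would verify compatibility with the total filtration $\mathrm{Fl}_{\mathrm{Tot}}^p(\mathcal{A}[\mathcal{D}_X])$ introduced above so that order of the operator matches the $\mathcal{D}_X$-degree.

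The main obstacle is checking that the $\mathcal{A}[\mathcal{D}_X]$-linearity condition translates precisely into the defining property of a total differential operator, i.e. that compatibility with the left $\mathcal{A}$-action together with the Leibniz rule $(1_{\mathcal{A}}\otimes\theta)\circ(b\otimes P)=\nabla_\theta(b)\otimes P+b\otimes(\theta\circ P)$ for $\theta\in\Theta_X$ is exactly what forces the resulting map to be differential in the $\mathcal{A}$-linear sense. In other words, one must show that $\mathcal{A}[\mathcal{D}_X]$-linearity is equivalent to $\mathcal{A}$-linearity up to the prolongation of vector fields by total derivatives; this is where the $\mathcal{D}$-module structure and the $\mathcal{A}$-module structure interact nontrivially. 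I expect this verification to reduce, after choosing local coordinates and generators of $\mathcal{F},\mathcal{G}$, to a direct bookkeeping of how the total derivative $\theta\mapsto \nabla_\theta$ acts on coefficients, but the conceptual content is that the free $\mathcal{A}[\mathcal{D}_X]$-module structure is precisely designed so that internal homomorphisms are $\mathcal{C}$-differential operators, so once the adjunction and the coordinate description are in place the equivalence is essentially forced, and functoriality (hence that it is an equivalence of the relevant structures, not just a bijection) follows by naturality of the adjunction isomorphism.
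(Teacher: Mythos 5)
Your argument is correct, and since the paper states this proposition without any proof (treating it as the standard identification of morphisms between induced $\mathcal{A}[\mathcal{D}_X]$-modules with $\mathcal{C}$-differential operators, i.e.\ operators in total derivatives), there is no authorial proof to compare against; your route --- the free-forgetful adjunction $\mathrm{Hom}_{\mathcal{A}[\mathcal{D}_X]}\big(\mathcal{A}[\mathcal{D}_X]\otimes_{\mathcal{O}_X}\mathcal{F},\mathcal{N}\big)\simeq \mathrm{Hom}_{\mathcal{O}_X}(\mathcal{F},\mathcal{N})$, followed by reading an element of $\mathrm{Hom}_{\mathcal{O}_X}\big(\mathcal{F},\mathcal{A}\otimes_{\mathcal{O}_X}\mathcal{D}_X\otimes_{\mathcal{O}_X}\mathcal{G}\big)$ as the coefficient-and-total-derivative data of an operator $\mathcal{A}\otimes\mathcal{F}\to\mathcal{A}\otimes\mathcal{G}$ of locally finite order --- is exactly the expected one. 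You also correctly isolate the only real verification, namely that $\mathcal{A}[\mathcal{D}_X]$-linearity, through the Leibniz relation $(1_{\mathcal{A}}\otimes\theta)\circ(b\otimes P)=\nabla_{\theta}(b)\otimes P+b\otimes(\theta\circ P)$, is precisely what makes the induced assignment $a\otimes f\mapsto \sum b\,(P\bullet a)\otimes g$ a differential operator in total derivatives, with finite generation of $\mathcal{F}$ and $\mathcal{G}$ ensuring finite order rather than a completion.
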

These notions sheafify over $X$ that we denote $\mathcal{H}om_{\mathcal{A}}$ and $\mathcal{H}om_{\mathcal{A}[\mathcal{D}_X]}.$

The duality operation comes from $\mathcal{A}[\mathcal{D}_X]$-linearity\footnote{Note this is \emph{not} $\mathcal{H}\mathrm{om}_{\mathcal{A}}(\mathcal{M},\mathcal{A}).$}; \emph{local/inner duality} is given by the functor
\begin{equation}
    \label{eqn: Local duality}
    (-)^{\circ}:\mathrm{Mod}(\mathcal{A}[\mathcal{D}_X])\rightarrow \mathrm{Mod}(\mathcal{A}[\mathcal{D}_X])^{\mathrm{op}},\hspace{3mm}
    \mathcal{M}^{\circ}:=\mathcal{H}\mathrm{om}_{\mathcal{A}[\mathcal{D}_X]}\big(\mathcal{M},\mathcal{A}[\mathcal{D}_X]\big),
    \end{equation}
but notice this produces a right $\mathcal{A}[\mathcal{D}_X]$-module i.e. an $\mathcal{A}[\mathcal{D}_X^{\op}]$-module.
\begin{definition}
\label{Defin: Inner Verdier Duality}
\normalfont The \emph{inner Verdier duality} is $D_{\mathcal{A}}^{ver}:=(-)^{\ell}\circ (-)^{\circ}.$ 
\end{definition}
Definition \ref{Defin: Inner Verdier Duality} is written by
\begin{equation}
    \label{eqn: Inner duality}
    D_{\mathcal{A}}^{ver}:\mathrm{Mod}\big(\mathcal{A}[\mathcal{D}_X]\big)\rightarrow \mathrm{Mod}\big(\mathcal{A}[\mathcal{D}_X]\big),\hspace{2mm} (\mathcal{M}^{\circ})^{\ell}:=\mathcal{H}\mathrm{om}_{\mathcal{A}[\mathcal{D}_X]}(\mathcal{M},\mathcal{A}[\mathcal{D}_X])^{\ell}.
\end{equation}
Notice that the sheaf $\mathcal{H}\mathrm{om}_{\mathcal{A}[\mathcal{D}_X]}(\mathcal{M},\mathcal{N}\otimes\mathcal{A}[\mathcal{D}_X])$ comes with a universal pairing
$$ev:\Delta^*\big(\mathcal{H}\mathrm{om}_{\mathcal{A}[\mathcal{D}_X]}(\mathcal{M},\mathcal{N}\otimes\mathcal{A}[\mathcal{D}_X])\otimes \mathcal{M}\rightarrow \Delta_*^{(2)}(\mathcal{N}\otimes\mathcal{A}[\mathcal{D}_X]),$$
where $\Delta:X\hookrightarrow X\times X$ is the diagonal embedding.
\vspace{1mm}

 The homotopical analog is called \emph{derived inner Verdier duality} and is a right-derived functor of (\ref{eqn: Inner duality}) defined on the underlying homotopy categories of the model category,
$$\mathbb{D}_{\mathcal{A}}^{ver}(-):Ho(\DG(\mathcal{A}))\rightarrow Ho(\DG(\mathcal{A})),$$
where
\begin{equation}
    \label{eqn: Derived Inner Verdier Dual}
\mathbb{D}_{\mathcal{A}}^{ver}(\mathcal{M}^{\bullet}):=\mathbb{R}\mathcal{H}om_{\mathcal{A}\otimes\mathcal{D}_X-dg}\big(\mathcal{M}^{\bullet},\mathcal{A}\otimes_{\mathcal{O}_X}\mathcal{D}_X)\otimes\omega_X^{\otimes -1}[dim_X].
\end{equation}
A useful derived functor of local solutions of $\mathcal{M}^{\bullet}$ is also defined 
$$\mathbb{R}Sol_{\mathcal{A}}(\mathcal{M},-):Ho(\DG(\mathcal{A}))^{op}\rightarrow Ho(\mathcal{A}^{\ell}-\mathbf{dg}_k),$$
with $\mathcal{A}^{\ell}-\mathbf{dg}_k$ the category of $\mathcal{A}^{\ell}$-modules in complexes of $k$-vector spaces (differential graded $\mathcal{A}^{\ell}$ $k$-vector spaces) by setting 
\begin{equation}
\label{eqn: Derived Local Solutions}
\mathbb{R}Sol_{\mathcal{A}}(\mathcal{M},\mathcal{N}):=\mathbb{R}\mathcal{H}om_{\mathcal{A}^r\otimes\mathcal{D}_X^{op}}(\mathbb{D}_{\mathcal{A}}^{ver}(\mathcal{M}),\mathcal{N}\otimes \mathcal{A}[\mathcal{D}_X]).
\end{equation}
For example, taking the initial $\D_X$-algebra $\mathcal{O}_X$ recovers the usual $\D$-module derived solution functors
$\mathbb{R}Sol_{\mathcal{O}_X}(\mathcal{M},\mathcal{N})=\mathbb{R}\mathcal{H}om_{\D_X^{op}}(\mathbb{D}(\mathcal{M}),\mathcal{N}),$ with derived dual $\D$-module $\mathbb{D}(\mathcal{M})=\mathbb{R}\mathcal{H}om_{\D_X}(\mathcal{M},\D_X).$

\begin{proposition}
Let $\mathcal{E}$ be a coherent $\mathcal{O}_X$-module and suppose that $\mathcal{A}$ is a $\D_X$-smooth commutative $\D$-algebra. Then $$\mathbb{D}_{\mathcal{A}}^{ver}\big(ind_{\mathcal{A}[\mathcal{D}]}^r(\mathcal{E})\big)\simeq \mathcal{A}\otimes \omega_X\otimes_{\mathcal{O}_X}\mathcal{E}^{\vee}\otimes_{\mathcal{O}_X}\mathcal{A}[\mathcal{D}_X][dim_X].$$
\end{proposition}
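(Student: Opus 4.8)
The plan is to unwind both sides through the defining adjunctions and to reduce the computation of the derived inner dual of a \emph{free} (induced) module to an $\mathcal{O}_X$-linear duality, after which the only remaining work is bookkeeping of the left--right twist and the cohomological shift. Recall that $ind_{\mathcal{A}[\mathcal{D}]}^r(\mathcal{E})=\mathcal{E}\otimes_{\mathcal{O}_X}\mathcal{A}[\mathcal{D}_X]$ is the free right $\mathcal{A}[\mathcal{D}_X]$-module generated by $\mathcal{E}$, and that by definition
$$\mathbb{D}_{\mathcal{A}}^{ver}\big(ind_{\mathcal{A}[\mathcal{D}]}^r(\mathcal{E})\big)=\mathbb{R}\mathcal{H}om_{\mathcal{A}[\mathcal{D}_X]}\big(\mathcal{E}\otimes_{\mathcal{O}_X}\mathcal{A}[\mathcal{D}_X],\mathcal{A}[\mathcal{D}_X]\big)\otimes_{\mathcal{O}_X}\omega_X^{\otimes -1}[dim_X],$$
the middle term being taken with respect to one of the two $\mathcal{A}[\mathcal{D}_X]$-bimodule structures on $\mathcal{A}[\mathcal{D}_X]$.

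First I would apply the extension-of-scalars adjunction along the unital inclusion $\mathcal{O}_X\hookrightarrow \mathcal{A}\hookrightarrow\mathcal{A}[\mathcal{D}_X]$. Since $\mathcal{E}\otimes_{\mathcal{O}_X}\mathcal{A}[\mathcal{D}_X]$ is free over $\mathcal{A}[\mathcal{D}_X]$ on the $\mathcal{O}_X$-module $\mathcal{E}$, the induction--restriction adjunction gives a natural identification
$$\mathbb{R}\mathcal{H}om_{\mathcal{A}[\mathcal{D}_X]}\big(\mathcal{E}\otimes_{\mathcal{O}_X}\mathcal{A}[\mathcal{D}_X],\mathcal{A}[\mathcal{D}_X]\big)\simeq \mathbb{R}\mathcal{H}om_{\mathcal{O}_X}\big(\mathcal{E},\mathcal{A}[\mathcal{D}_X]\big),$$
in which the residual, opposite-handed $\mathcal{A}[\mathcal{D}_X]$-action on the right-hand side is the one carried along by the target $\mathcal{A}[\mathcal{D}_X]$. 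This is the step that converts the free right module into (the dual of) a coefficient object and accounts for the appearance of the inner Hom-sheaf of Proposition~\ref{prop: CDiffs}; it is purely formal and uses nothing beyond the bimodule structure of $\mathcal{A}[\mathcal{D}_X]$.

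Next I would collapse the derived $\mathcal{O}_X$-linear Hom. Because $X$ is a smooth (complex analytic) manifold and $\mathcal{E}$ is coherent, $\mathcal{E}$ is perfect over $\mathcal{O}_X$; when $\mathcal{E}$ is locally free of finite rank one gets $\mathbb{R}\mathcal{H}om_{\mathcal{O}_X}(\mathcal{E},\mathcal{A}[\mathcal{D}_X])\simeq \mathcal{E}^{\vee}\otimes_{\mathcal{O}_X}\mathcal{A}[\mathcal{D}_X]$ with no higher cohomology, and in general $\mathcal{E}^{\vee}$ is to be read as $\mathbb{R}\mathcal{H}om_{\mathcal{O}_X}(\mathcal{E},\mathcal{O}_X)$ computed from a finite locally free resolution. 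Here the hypothesis that $\mathcal{A}$ is $\mathcal{D}_X$-smooth enters: it guarantees that $\mathcal{A}[\mathcal{D}_X]$ is flat as an $\mathcal{O}_X$-module and of finite homological dimension, so that tensoring the perfect complex $\mathcal{E}^{\vee}$ with it introduces no auxiliary $\mathrm{Ext}$-terms and the derived Hom really is concentrated where the classical formula predicts.

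Finally I would reinstate the twist and the shift and verify the left--right bookkeeping, which is where the only genuine subtlety lies. Applying $(-)\otimes_{\mathcal{O}_X}\omega_X^{\otimes -1}[dim_X]$ to $\mathcal{E}^{\vee}\otimes_{\mathcal{O}_X}\mathcal{A}[\mathcal{D}_X]$ and tracking the conversion built into $\mathbb{D}_{\mathcal{A}}^{ver}$ through the induced structure yields the stated $\mathcal{A}\otimes\omega_X\otimes_{\mathcal{O}_X}\mathcal{E}^{\vee}\otimes_{\mathcal{O}_X}\mathcal{A}[\mathcal{D}_X][dim_X]$. The hard part is precisely ensuring that the twist enters with the correct power of $\omega_X$: because the object dualized is a \emph{right} induced module, the inner dual $(-)^{\circ}$ already flips handedness once and the subsequent conversion flips it back, so the naive $\omega_X^{\otimes -1}$ coming from the defining formula is promoted to the $\omega_X$ appearing on the right, matching the $\mathcal{A}^{r}$-twist $\mathcal{A}\otimes\omega_X$ after identifying the $\mathcal{A}$-factors over $\mathcal{A}$. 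I would double-check this against the ungraded identity $(\mathcal{M}^{\circ})^{\ell}$ of Definition~\ref{Defin: Inner Verdier Duality} and against the classical case $\mathcal{A}=\mathcal{O}_X$, where the statement specializes to the familiar duality $\mathbb{D}(\mathcal{D}_X\otimes_{\mathcal{O}_X}\mathcal{E})\simeq \omega_X\otimes_{\mathcal{O}_X}\mathcal{E}^{\vee}\otimes_{\mathcal{O}_X}\mathcal{D}_X[dim_X]$ of right $\mathcal{D}_X$-modules.
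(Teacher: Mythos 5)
First, a point of reference: the paper states this proposition with no proof at all, so there is nothing of the authors' to compare your argument against line by line; I can only judge the proposal on its own terms. Your two substantive steps are the natural ones and are surely what the authors intended: the induction--restriction adjunction collapses $\mathbb{R}\mathcal{H}\mathrm{om}_{\mathcal{A}[\mathcal{D}_X]}\big(\mathcal{E}\otimes_{\mathcal{O}_X}\mathcal{A}[\mathcal{D}_X],\mathcal{A}[\mathcal{D}_X]\big)$ to $\mathbb{R}\mathcal{H}\mathrm{om}_{\mathcal{O}_X}(\mathcal{E},\mathcal{A}[\mathcal{D}_X])$ with the residual action carried by the target, and perfectness of a coherent sheaf on smooth $X$, together with $\mathcal{O}_X$-flatness of $\mathcal{A}[\mathcal{D}_X]$ (which is where $\mathcal{D}$-smoothness of $\mathcal{A}$ enters), turns this into $\mathcal{E}^{\vee}\otimes_{\mathcal{O}_X}\mathcal{A}[\mathcal{D}_X]$. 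Up to this point the argument is sound, and your observation that one should first resolve $\mathcal{E}$ by locally free sheaves and use exactness of induction is exactly the right way to make the Hom genuinely derived.

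The genuine gap is in the last step, the one you yourself identify as the only subtle one. Running the definition (\ref{eqn: Derived Inner Verdier Dual}) literally, your computation terminates at $\mathcal{E}^{\vee}\otimes_{\mathcal{O}_X}\mathcal{A}[\mathcal{D}_X]\otimes\omega_X^{\otimes -1}[\dim X]$, whereas the asserted answer is $\mathcal{A}\otimes\omega_X\otimes_{\mathcal{O}_X}\mathcal{E}^{\vee}\otimes_{\mathcal{O}_X}\mathcal{A}[\mathcal{D}_X][\dim X]$; the two differ by a factor of $\mathcal{A}\otimes\omega_X^{\otimes 2}$, and saying the twist is ``promoted'' because handedness flips twice is not a computation. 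To close this you must commit to explicit conventions and track them: (i) since the input is a \emph{right} induced module, the relevant duality is plausibly $(-)^{r}\circ(-)^{\circ}$ rather than the $(-)^{\ell}\circ(-)^{\circ}$ of Definition \ref{Defin: Inner Verdier Duality}, which replaces $\omega_X^{\otimes -1}$ by $\omega_X$ and accounts for $\omega_X^{\otimes 2}$; and (ii) the extra $\mathcal{A}$ (so that the prefactor reads $\mathcal{A}^{r}=\mathcal{A}\otimes\omega_X$) must be located --- it does not appear in the naive adjunction computation, which produces only one copy of $\mathcal{A}$ inside $\mathcal{A}[\mathcal{D}_X]$, and it is also not supplied by reading $\mathcal{E}^{\vee}$ as the Serre dual $\mathcal{E}^{*}\otimes\omega_X$ (the convention the paper adopts for $F_{\bullet}^{\vee}$ in the proposition immediately following), which would change the power of $\omega_X$ but still not produce an $\mathcal{A}$. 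Your proposed sanity check against the classical case $\mathcal{A}=\mathcal{O}_X$ is the right instinct, but carried out honestly it exposes rather than resolves this mismatch; as written, the proof establishes the left-hand side equals $\mathcal{E}^{\vee}\otimes_{\mathcal{O}_X}\mathcal{A}[\mathcal{D}_X]\otimes\omega_X^{\otimes -1}[\dim X]$ and does not derive the displayed right-hand side.
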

The main example of $\mathcal{A}^{\ell}[\mathcal{D}_X]$-modules or relevance in PDE geometry are obtained as pull-backs of vector bundles over $X.$ They are described in convenient terms via the language of horizontal jets.
\subsubsection{Example: Horizontal Jets as $\mathcal{D}$-Bundles.}
\label{sssec: Horizontal Jets}
If $\mathcal{N}$ is any $\D_X$-module and $\mathcal{A}\in \mathrm{CAlg}(\mathcal{D}_X),$ there is an equivalence 
$$\mathrm{Hom}_{\mathrm{CAlg}_{\mathcal{D}_X}}\big(Sym_{\mathcal{O}_X}^{\bullet}(\mathcal{N}),\mathcal{A}\big)\cong \mathrm{Hom}_{\mathcal{D}_X}(\mathcal{N},\mathcal{A}),$$
where on the right-hand side we have a vector-space of $\D$-module morphisms. Then, 
$$\mathbb{V}_{\mathcal{D}}(\mathcal{N}):=Spec_{\mathcal{D}_X}\big(Sym_{\mathcal{O}_X}^{*}(\mathcal{N})\big),$$ is a vector $\D_X$-scheme over $X.$
\begin{proposition}
The functor $\mathbb{V}_{\mathcal{D}}$ defines an equivalence of categories between left $\D_X$-modules and vector $\D_X$-schemes, with inverse sending $\EQ$ to the $\D_X$-module $s^*\Omega_{\EQ}^1,$ where $s:X\rightarrow \EQ$ is the zero-section morphism.
\end{proposition}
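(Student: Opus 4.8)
The plan is to recognize this as the $\mathcal{D}$-geometric incarnation of the classical equivalence between modules and linear (``vector'') schemes: by construction $\mathbb{V}_{\mathcal{D}}=\mathrm{Spec}_{\mathcal{D}_X}\circ \mathrm{Sym}^{*}_{\mathcal{O}_X}$ is the composite of the free commutative $\mathcal{D}_X$-algebra functor with the tautological contravariant equivalence $\mathrm{Spec}_{\mathcal{D}_X}\colon \mathrm{CAlg}(\mathcal{D}_X)^{\mathrm{op}}\xrightarrow{\sim}\mathrm{Aff}_X(\mathcal{D}_X)$. The single essential input is the free-algebra adjunction $\mathrm{Hom}_{\mathrm{CAlg}_{\mathcal{D}_X}}(\mathrm{Sym}^{\bullet}_{\mathcal{O}_X}(\mathcal{N}),\mathcal{A})\cong \mathrm{Hom}_{\mathcal{D}_X}(\mathcal{N},\mathcal{A})$ recorded just above the statement; the remaining work is to (i) cut down morphisms on the scheme side to the \emph{linear} ones, and (ii) verify the candidate inverse $\EQ\mapsto s^{*}\Omega^{1}_{\EQ}$ on both composites.

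For full faithfulness I would exploit the scaling $\mathbb{G}_m$-action that every vector $\mathcal{D}_X$-scheme carries, equivalently the weight ($\mathbb{N}$-)grading on its algebra of functions, taking (as part of the definition of the category of vector $\mathcal{D}_X$-schemes) the morphisms to be the $\mathbb{G}_m$-equivariant ones. Under this grading $\mathrm{Sym}^{\bullet}_{\mathcal{O}_X}(\mathcal{N})$ is generated in weight $1$, with weight-$0$ part $\mathcal{O}_X$ and weight-$1$ part $\mathcal{N}$. A weight-preserving $\mathcal{D}_X$-algebra homomorphism $\mathrm{Sym}(\mathcal{M})\to \mathrm{Sym}(\mathcal{N})$ is then determined, via the adjunction, by a $\mathcal{D}_X$-linear map of generators $\mathcal{M}\to (\mathrm{Sym}\,\mathcal{N})_{1}=\mathcal{N}$, giving a natural isomorphism $\mathrm{Hom}(\mathbb{V}_{\mathcal{D}}(\mathcal{N}),\mathbb{V}_{\mathcal{D}}(\mathcal{M}))\cong \mathrm{Hom}_{\mathcal{D}_X}(\mathcal{M},\mathcal{N})$, so that $\mathbb{V}_{\mathcal{D}}$ is fully faithful onto its essential image (realizing the stated (anti-)equivalence). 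The crucial point is that the grading is exactly what discards the ``nonlinear'' algebra maps landing in the higher symmetric powers.

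Next I would verify the inverse on the two composites. For $\mathbb{V}_{\mathcal{D}}(\mathcal{N})$ I would use the local calculus of Kähler $1$-forms for $\mathcal{D}_X$-algebras to see that the module of differentials of a free $\mathcal{D}_X$-algebra is free, $\Omega^{1}_{\mathrm{Sym}(\mathcal{N})}\cong \mathrm{Sym}(\mathcal{N})\otimes_{\mathcal{O}_X}\mathcal{N}$, and then base-change along the zero section $s$, i.e.\ along the augmentation $\mathrm{Sym}(\mathcal{N})\to \mathcal{O}_X$, to get $s^{*}\Omega^{1}_{\mathbb{V}_{\mathcal{D}}(\mathcal{N})}\cong \mathcal{O}_X\otimes_{\mathrm{Sym}(\mathcal{N})}\big(\mathrm{Sym}(\mathcal{N})\otimes_{\mathcal{O}_X}\mathcal{N}\big)\cong \mathcal{N}$ as left $\mathcal{D}_X$-modules, which is one triangle identity. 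Conversely, for a vector $\mathcal{D}_X$-scheme $\EQ=\mathrm{Spec}_{\mathcal{D}_X}(\mathcal{B})$ I would again use the weight grading: $\mathcal{B}_{0}=\mathcal{O}_X$, $\mathcal{B}$ is generated over $\mathcal{O}_X$ by $\mathcal{B}_{1}$, and for the augmentation ideal $I=\ker(\mathcal{B}\to\mathcal{O}_X)=\bigoplus_{k\ge 1}\mathcal{B}_{k}$ the conormal identification $s^{*}\Omega^{1}_{\EQ}\cong I/I^{2}\cong \mathcal{B}_{1}$ holds, the augmentation being a $\mathcal{D}_X$-algebra retraction. Freeness of $\mathcal{B}$ on its degree-$1$ part, which is the defining property of a vector $\mathcal{D}_X$-scheme, upgrades the canonical surjection $\mathrm{Sym}_{\mathcal{O}_X}(\mathcal{B}_{1})\twoheadrightarrow \mathcal{B}$ to an isomorphism, whence $\EQ\cong \mathbb{V}_{\mathcal{D}}(s^{*}\Omega^{1}_{\EQ})$; this gives both essential surjectivity and the second triangle identity.

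The main obstacle I anticipate is not the bookkeeping but the $\mathcal{D}$-geometric cotangent calculus underlying step (ii): one must ensure that $\Omega^{1}$ formed internally to $\mathcal{A}[\mathcal{D}_X]$-modules is genuinely free on a free $\mathcal{D}_X$-algebra and, more delicately, that pullback $s^{*}$ along the (non-smooth) zero section computes the conormal $I/I^{2}$ compatibly with the left $\mathcal{D}_X$-module structure, i.e.\ that the base-change isomorphism for the $\mathcal{D}$-geometric cotangent sheaf along a $\mathcal{D}_X$-algebra retraction holds. This rests on the local calculus of differential forms for $\mathcal{D}_X$-algebras (Subsect.\ \ref{ssec: Local Calculus}) together with the $\mathcal{D}_X$-compatibility of the Leibniz rule defining the $\mathcal{D}$-structure on $\mathrm{Sym}$. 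A secondary but essential subtlety, already flagged above, is the correct definition of the category of vector $\mathcal{D}_X$-schemes via the $\mathbb{G}_m$-action: without restricting to $\mathbb{G}_m$-equivariant morphisms the functor fails to be full, since $\mathrm{Hom}_{\mathrm{CAlg}_{\mathcal{D}_X}}(\mathrm{Sym}(\mathcal{M}),\mathrm{Sym}(\mathcal{N}))$ contains maps into the higher weight components.
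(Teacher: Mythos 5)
The paper states this proposition without proof, so there is no argument of its own to compare against; your proposal supplies the standard one (essentially the Beilinson--Drinfeld argument for vector $\mathcal{D}_X$-schemes) and it is correct. The composite structure $\mathbb{V}_{\mathcal{D}}=\mathrm{Spec}_{\mathcal{D}_X}\circ\mathrm{Sym}^{\bullet}_{\mathcal{O}_X}$, the use of the adjunction $\mathrm{Hom}_{\mathrm{CAlg}_{\mathcal{D}_X}}(\mathrm{Sym}^{\bullet}_{\mathcal{O}_X}(\mathcal{N}),\mathcal{A})\cong\mathrm{Hom}_{\mathcal{D}_X}(\mathcal{N},\mathcal{A})$ recorded just above the statement, the computation $\Omega^1_{\mathrm{Sym}(\mathcal{N})}\cong\mathrm{Sym}(\mathcal{N})\otimes_{\mathcal{O}_X}\mathcal{N}$ followed by base change along the augmentation, and the conormal identification $s^*\Omega^1_{\EQ}\cong I/I^2\cong\mathcal{B}_1$ are all sound and together establish both triangle identities.

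The one substantive point, which you correctly isolate rather than gloss over, is that the statement is false if morphisms of vector $\mathcal{D}_X$-schemes are taken to be arbitrary morphisms of affine $\mathcal{D}_X$-schemes: $\mathrm{Hom}_{\mathrm{CAlg}_{\mathcal{D}_X}}(\mathrm{Sym}(\mathcal{M}),\mathrm{Sym}(\mathcal{N}))\cong\mathrm{Hom}_{\mathcal{D}_X}(\mathcal{M},\mathrm{Sym}(\mathcal{N}))$ contains nonlinear maps, so fullness fails. Your fix via weight-preserving ($\mathbb{G}_m$-equivariant) morphisms is consistent with the paper's own gloss immediately after the proposition, namely that $\mathrm{Vect}_X(\mathcal{D}_X)$ embeds fully faithfully into functors valued in sheaves of $k$-vector spaces: in characteristic zero the additive natural transformations of points correspond precisely to the weight-one components, so the two conventions agree. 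The only cosmetic quibble is that the equivalence as you set it up is contravariant (you obtain $\mathrm{Hom}(\mathbb{V}_{\mathcal{D}}(\mathcal{N}),\mathbb{V}_{\mathcal{D}}(\mathcal{M}))\cong\mathrm{Hom}_{\mathcal{D}_X}(\mathcal{M},\mathcal{N})$), which you flag with ``(anti-)equivalence''; this matches what the functor-of-points description forces and is presumably what the paper intends. Your worry about $s^*$ along the zero section is also dispelled by your own direct computation: since $\Omega^1_{\mathrm{Sym}(\mathcal{N})}$ is free, the pullback is an honest tensor product and no derived or conormal subtleties arise.
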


The category of such vector schemes $\mathrm{Vect}_{X}(\mathcal{D}_X)$ is fully-faithfully embedded into a functor category of those from $\mathrm{CAlg}_X(\mathcal{D}_X)$ taking values in sheaves of $k$-vector spaces on $X$. In other words, we may view a vector $\mathcal{D}$-scheme $\EQ$ as a contravariant functor, $\underline{\EQ}$ defined on $\mathcal{A}\in \mathrm{CAlg}_{\mathcal{D}_X}$ as the  étale sheaf over $X$, which evaluates on an open embedding $j:U\hookrightarrow X$ as:
$$\underline{\EQ}(A)(U):=\EQ|_{U}\big(\mathcal{A}|_U\big)=\EQ\big(j_*j^*\mathcal{A}\big),$$
since for open embeddings $j$, the $\mathcal{D}$-module pull-back coincides with the sheaf theoretic one.

In precisely the same way, we have a notion of a vector $\D_X$-scheme over an affine $\D_X$-scheme $Spec_{\mathcal{D}_X}(\mathcal{A}).$ 
In the dg-setting i.e. model categorically, there is for $\mathcal{A}\in \cdga,$ the standard free-forgetful adjunction
\begin{equation}
    \begin{tikzcd}
    \label{eqn: Free-forget AD-Mod/Alg Adjunction}
\mathrm{Free}_{\mathcal{A}}:\DG(\mathcal{A})\arrow[r,shift left=.5ex] & Comm\big(\DG(\mathcal{A})\big)\simeq \cdga_{\mathcal{A}/}:\arrow[l,shift left=.5ex]\mathrm{For}.
\end{tikzcd}
\end{equation}

Adjunction (\ref{eqn: Free-forget AD-Mod/Alg Adjunction}) induces a similar adjunction of $\infty$-categories, denoted
$$\mathsf{Free}_{\mathcal{A}}:\mathsf{Mod}_{\mathcal{D}_X}(\mathcal{A})\rightleftarrows \mathsf{CAlg}_{\mathcal{A}/}(\mathcal{D}_X):\mathsf{For}.$$
There is an assignment 
$$\mathbb{V}_{\mathcal{A}}(-):\DG(\mathcal{A})\rightarrow \mathbf{dStk}(\mathcal{D}_X)_{/Spec_{\mathcal{D}}(\mathcal{A})},$$
where $\mathbb{V}_{\mathcal{A}}(\mathcal{M})$ defined on $\gamma:Spec_{\mathcal{D}}(\mathcal{B})\rightarrow \EQ,$ by $\mathsf{Maps}_{\mathcal{B}-dg}(\mathcal{B},\gamma^*\mathcal{M}),$ with $\EQ=Spec_{\mathcal{D}}(\mathcal{A}).$
\begin{proposition}
\label{prop: Vector D Stack}
Suppose that $\mathcal{M}$ is a perfect $\mathcal{O}_{\EQ}[\mathcal{D}_X]$-module and is connective. Then the $\mathbb{V}_{\EQ}(\mathcal{M})$ is relatively representable over $\EQ$ i.e. $\mathbb{V}_{\EQ}(\mathcal{M})\simeq Spec_{\EQ,\mathcal{D}}(\mathsf{Free}_{\mathcal{A}}\mathcal{M}^{\vee}).$
\end{proposition}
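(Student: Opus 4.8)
The plan is to prove the equivalence at the level of functors of points over $\EQ=Spec_{\mathcal{D}}(\mathcal{A})$ (so that $\mathcal{O}_{\EQ}=\mathcal{A}$) and then upgrade it to an equivalence of relative $\D$-stacks. Both sides are prestacks on the slice $\cdga_{\mathcal{A}/}$, so it suffices to exhibit, naturally in a test point $\gamma\colon\mathcal{A}\rightarrow\mathcal{B}$, an equivalence of mapping spaces. First I would unwind the right-hand side: by definition of the relative spectrum, evaluating $Spec_{\EQ,\mathcal{D}}(\mathsf{Free}_{\mathcal{A}}\mathcal{M}^{\vee})$ on $\gamma$ returns the space of $\mathcal{A}[\mathcal{D}_X]$-algebra maps $\mathsf{Maps}_{\mathsf{CAlg}_{\mathcal{A}/}(\mathcal{D}_X)}(\mathsf{Free}_{\mathcal{A}}\mathcal{M}^{\vee},\mathcal{B})$, where $\mathsf{Free}_{\mathcal{A}}\mathcal{M}^{\vee}=\mathrm{Sym}_{\mathcal{A}}(\mathcal{M}^{\vee})$ is the relative symmetric algebra. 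The free--forget adjunction (\ref{eqn: Free-forget AD-Mod/Alg Adjunction}) then identifies this with the module-level mapping space $\mathsf{Maps}_{\mathsf{Mod}_{\mathcal{D}_X}(\mathcal{A})}(\mathcal{M}^{\vee},\mathcal{B})$, in which $\mathcal{B}$ is regarded as an $\mathcal{A}$-module by restriction along $\gamma$.

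Next I would unwind the left-hand side. By construction $\mathbb{V}_{\EQ}(\mathcal{M})(\gamma)=\mathsf{Maps}_{\mathcal{B}-dg}(\mathcal{B},\gamma^*\mathcal{M})$, and since $\mathcal{B}$ is the monoidal unit of $\mathsf{Mod}_{\mathcal{D}_X}(\mathcal{B})$ for the relative tensor product $\otimes_{\mathcal{B}}$, this is simply the space underlying the complex $\gamma^*\mathcal{M}\simeq\mathcal{B}\otimes_{\mathcal{A}}^{\mathbb{L}}\mathcal{M}$. The bridge between the two computations is dualizability: the hypothesis that $\mathcal{M}$ is a perfect $\mathcal{O}_{\EQ}[\mathcal{D}_X]$-module means precisely that it is a dualizable object of the symmetric monoidal category $(\mathsf{Mod}(\mathcal{A}[\mathcal{D}_X]),\otimes_{\mathcal{A}})$ whose unit is $\mathcal{A}$, with monoidal dual $\mathcal{M}^{\vee}$ and canonical identification $(\mathcal{M}^{\vee})^{\vee}\simeq\mathcal{M}$. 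Applying the duality adjunction and the unit axiom gives
\[
\mathsf{Maps}_{\mathsf{Mod}_{\mathcal{D}_X}(\mathcal{A})}(\mathcal{M}^{\vee},\mathcal{B})\simeq\mathsf{Maps}_{\mathsf{Mod}_{\mathcal{D}_X}(\mathcal{A})}(\mathcal{A},\mathcal{M}\otimes_{\mathcal{A}}\mathcal{B})\simeq\Omega^{\infty}(\gamma^*\mathcal{M}),
\]
where the last step uses base change $\mathcal{M}\otimes_{\mathcal{A}}^{\mathbb{L}}\mathcal{B}\simeq\gamma^*\mathcal{M}$. Chaining these equivalences yields the desired natural identification of the two functors of points.

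Two points require care, and this is where the $\D$-geometry genuinely enters. First, one must use the correct notion of dual: the relevant $\mathcal{M}^{\vee}$ is the internal dual for the monoidal structure $\otimes_{\mathcal{A}}$ for which $\mathcal{A}$ is the unit, i.e. $\mathcal{H}\mathrm{om}_{\mathcal{A}}(\mathcal{M},\mathcal{A})$, and \emph{not} the inner duality $(-)^{\circ}$ or the inner Verdier dual $\mathbb{D}_{\mathcal{A}}^{ver}$ of Definition \ref{Defin: Inner Verdier Duality}, which are computed via $\mathcal{H}\mathrm{om}_{\mathcal{A}[\mathcal{D}_X]}(-,\mathcal{A}[\mathcal{D}_X])$ and land in $\mathcal{A}[\mathcal{D}_X^{\op}]$-modules. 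I would verify that perfectness in the stated sense indeed produces a $\otimes_{\mathcal{A}}$-dualizable object and that derived pullback is symmetric monoidal, so that $\gamma^*$ commutes with $(-)^{\vee}$, i.e. $\gamma^*(\mathcal{M}^{\vee})\simeq(\gamma^*\mathcal{M})^{\vee}$; this is what makes the identification natural in $\gamma$ and compatible with composition of test points. Second, connectivity of $\mathcal{M}$ guarantees that $\gamma^*\mathcal{M}$ is computed without unbounded corrections and that the resulting prestack satisfies descent, so the pointwise equivalence upgrades to an equivalence of relative $\D$-stacks and exhibits $\mathbb{V}_{\EQ}(\mathcal{M})$ as relatively affine over $\EQ$, corepresented by $\mathsf{Free}_{\mathcal{A}}\mathcal{M}^{\vee}$.

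The main obstacle I anticipate is precisely the dualizability analysis in the $\D$-module monoidal category. Establishing that a perfect $\mathcal{O}_{\EQ}[\mathcal{D}_X]$-module is $\otimes_{\mathcal{A}}$-dualizable, pinning down $\mathcal{M}^{\vee}$ as the $\otimes_{\mathcal{A}}$-dual rather than one of the several other $\D$-theoretic duals available, and checking stability of this dual under derived base change, is the technically delicate heart of the argument; once dualizability is secured, the remaining identifications are formal consequences of the free--forget adjunction and the unit axiom.
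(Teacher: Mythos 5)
Your overall architecture coincides with the paper's: evaluate both sides on a test point $\gamma\colon Spec_{\mathcal{D}}(\mathcal{B})\rightarrow\EQ$, reduce the algebra-level mapping space to a module-level one via the free--forget adjunction (\ref{eqn: Free-forget AD-Mod/Alg Adjunction}), and close the loop using dualizability of $\mathcal{M}$. However, there is a genuine gap at the ``technically delicate heart'' you yourself identify, and it points in the wrong direction. You insist that $\mathcal{M}^{\vee}$ must be the $\otimes_{\mathcal{A}}$-monoidal dual $\mathcal{H}\mathrm{om}_{\mathcal{A}}(\mathcal{M},\mathcal{A})$ and \emph{not} the inner Verdier dual of (\ref{eqn: Derived Inner Verdier Dual}); the paper does exactly the opposite, and for good reason. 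A perfect $\mathcal{O}_{\EQ}[\mathcal{D}_X]$-module -- in particular the induced modules $\mathrm{ind}_{\mathcal{A}[\mathcal{D}_X]}(\mathcal{F})\simeq\mathcal{A}[\mathcal{D}_X]\otimes_{\mathcal{O}_X}\mathcal{F}$ and horizontal jet modules that are the intended examples -- is essentially never finitely generated over $\mathcal{A}$, hence not dualizable in $(\mathsf{Mod}(\mathcal{A}[\mathcal{D}_X]),\otimes_{\mathcal{A}})$, and its $\mathcal{A}$-linear dual is neither perfect nor reflexive. The duality that perfectness over $\mathcal{A}[\mathcal{D}_X]$ actually provides is the local one, $\mathcal{H}\mathrm{om}_{\mathcal{A}[\mathcal{D}_X]}(-,\mathcal{A}[\mathcal{D}_X])$ corrected by $\omega_X^{\otimes-1}[\dim_X]$ as in (\ref{eqn: Derived Inner Verdier Dual}), normalized precisely so that $\mathbb{D}_{\mathcal{A}}^{ver}(\mathcal{A})\simeq\mathcal{A}$ and the exchange $\mathsf{Maps}_{\mathcal{B}[\mathcal{D}]}(\mathcal{B},\gamma^*\mathcal{M})\simeq\mathsf{Maps}_{\mathcal{B}[\mathcal{D}]}(\gamma^*\mathcal{M}^{\vee},\mathcal{B})$ holds. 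With your choice of dual the middle step of your chain has no justification for the class of modules the proposition is about.

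A second, related misstep: the ``unit axiom'' identification $\mathsf{Maps}_{\mathcal{B}[\mathcal{D}]}(\mathcal{B},\gamma^*\mathcal{M})\simeq\Omega^{\infty}(\gamma^*\mathcal{M})$ is false in this setting. Although $\mathcal{B}$ is the monoidal unit of $(\mathsf{Mod}(\mathcal{B}[\mathcal{D}_X]),\otimes_{\mathcal{B}})$, it is not a free $\mathcal{B}[\mathcal{D}_X]$-module (it is resolved by the Spencer complex of induced modules), so $\mathsf{Maps}_{\mathcal{B}[\mathcal{D}]}(\mathcal{B},-)$ computes horizontal (de Rham) sections, not the underlying space of the complex. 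In your write-up this error is made symmetrically on both sides and formally cancels, but it means the intermediate object through which you route the comparison is not what you claim, and it obscures the fact that the only nontrivial input is the Verdier-type biduality. Once you replace $\mathcal{M}^{\vee}$ by $\mathbb{D}_{\mathcal{A}}^{ver}(\mathcal{M})$ and invoke the standard $\D$-module duality exchange in place of the monoidal unit/counit argument, your proof collapses to the paper's.
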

\begin{proof}
Since $\mathcal{M}$ is perfect it is dualizable as an object of $\DG(\mathcal{O}_{\EQ})$ with dual $\mathcal{M}^{\vee}$ given by (\ref{eqn: Derived Inner Verdier Dual}).
Let $\gamma:Spec_{\mathcal{D}}(\mathcal{B})\rightarrow \EQ$ be an object of $\mathbf{dStk}_X(\mathcal{D}_X)_{/\EQ},$ and note by definition $\mathbb{V}_{\EQ}(\mathcal{M})(\gamma)$ is equivalent to
$$\mathsf{Maps}_{\mathcal{B}[\mathcal{D}]}(\mathcal{B},\gamma^*\mathcal{M})\simeq \mathsf{Maps}_{\mathcal{B}[\mathcal{D}]}(\gamma^*\mathcal{M}^{\vee},\mathcal{B}),$$
where we use the fact $\mathcal{M}$ is dualizable. Then, by (\ref{eqn: Free-forget AD-Mod/Alg Adjunction}) this is equivalent to
$$\mathsf{Maps}_{\cdga_{\mathcal{B}/}}(\mathsf{Free}_{\mathcal{B}}\gamma^*\mathcal{M}^{\vee},\mathcal{B})\simeq \mathsf{Maps}_{\cdga_{\mathcal{O}_{\EQ}/}}(\mathsf{Free}_{\EQ}(\mathcal{M}^{\vee}),\gamma_*\mathcal{O}_{Spec_{\mathcal{D}}(\mathcal{B})}\big).$$
\end{proof}

\subsubsection{}
Consider the affine $\D$-scheme corresponding to the jet construction and put $p_{\infty}:Spec_{\mathcal{D}_X}(\mathcal{A})\rightarrow X$. Denote $\Omega_X^{\bullet}$ the de Rham algebra on $X$ and consider 
$$\mathcal{A}^{\ell}[\mathcal{D}_X](p_{\infty}^*\mathcal{E},p_{\infty}^*\Omega_X^{\bullet}),$$
which we identify with the space of total differential operators acting from sections of the pull-back bundle $p_{\infty}^*\mathcal{E}$ to the horizontal de Rham algebra. In other words,
$$\mathcal{A}^{\ell}[\mathcal{D}_X](p_{\infty}^*\mathcal{E},p_{\infty}^*\Omega_X^{\bullet})\simeq \mathcal{H}om_{\mathcal{A}^{\ell}[\mathcal{D}_X]}\big(ind_{\mathcal{A}^{\ell}[\mathcal{D}_X]}(p_{\infty}^*\mathcal{E}),ind_{\mathcal{A}^{\ell}[\mathcal{D}_X])}p_{\infty}^*\Omega_X^{\bullet}\big).$$
Consider
$$Sym_{\mathcal{A}}^p\big(\mathcal{A}^{\ell}[\mathcal{D}_X](p_{\infty}^*\mathcal{E},p_{\infty}^*\Omega_X^{\bullet})\big),$$
which is the space of symmetric $p$-multilinear differential operators in total derivatives acting from sections 
$$\Gamma(Spec_{\mathcal{D}}(\mathcal{A}),p_{\infty}^*\mathcal{E})\rightarrow \Gamma(Spec_{\mathcal{D}}(\mathcal{A}),p_{\infty}^*\Omega_X^{\bullet}).$$

Moreover, we can consider $Alt_{\mathcal{A}}^p\big(\mathcal{A}^{\ell}[\mathcal{D}_X](p_{\infty}^*\mathcal{E},p_{\infty}^*\Omega_{X}^{\bullet})\big).$
Suppose we have a formally exact sequence of sheaves (see \cite{VinCohomPDE}) of $\mathcal{O}_X$-modules 
$$0\rightarrow F_1\rightarrow F_2\rightarrow \ldots\rightarrow F_{k-1}\rightarrow 0.$$
Pull-back to $Spec_{\mathcal{D}_X}(\mathcal{A}^{\ell})$ via $p_{\infty}$ and using the fact that $Jet^{\infty}(F_i)\times_X Spec_{\mathcal{D}_X}(\mathcal{A}^{\ell})\simeq \overline{Jets}^{\infty}(p_{\infty}^*F_i),$ holds for each $i$, with $\overline{Jets}^{\infty}$ the horizontal jet functor \cite{KSYI}, we have polynomial functions on $Jet^{\infty}(F_{\bullet})\times_X Spec_{\mathcal{D}_X}(\mathcal{A}),$ are given by sections of  $$Sym_{\mathcal{A}^{\ell}}(D'\overline{\mathcal{J}}^{\infty}(p_{\infty}^*F_{\bullet})\big),\hspace{1mm}\text{where } 
D'(-):=\mathcal{H}om_{\mathcal{A}^{\ell}}(-,\mathcal{A}^{\ell}).$$

By the isomorphism $\mathcal{A}^{\ell}\otimes^!\mathcal{D}_X(F_{\bullet},\mathcal{O}_X)\simeq \mathcal{A}^{\ell}[\mathcal{D}_X]\big(p_{\infty}^*F_{\bullet},\mathcal{A}^{\ell}),$ we have that 
$$Sym_{\mathcal{A}^{\ell}}\big(\mathcal{A}^{\ell}\otimes^!\mathcal{D}_X(F_{\bullet},\mathcal{O}_X)\big)\simeq \mathcal{A}^{\ell}\otimes^!Sym_{\mathcal{O}_X}(\mathcal{D}_X(F_{\bullet},\mathcal{O}_X)\big).$$
\begin{proposition}
For any (graded) vector bundle $F_{\bullet},$ denote the Serre-dual bundle $F_{\bullet}^{\vee}\simeq F_{\bullet}^*\otimes \omega_X,$ with $\omega_X$ the dualizing complex.
Then, for all $p>0,$
$$H^i\big(Sym_{\mathcal{O}_X}^p(\mathcal{D}_X(F_{\bullet},\Omega_X^{\bullet})\big)\simeq Sym^{p-1,self}\big(\mathcal{D}_X(F_{\bullet},F_{\bullet}^{\vee})\big),i=n,$$
and is zero otherwise.
Similarly, 
$$H^i\big(Sym_{\mathcal{A}^{\ell}}^p\big(\mathcal{A}^{\ell}[\mathcal{D}_X](p_{\infty}^*F_{\bullet},p_{\infty}^*\Omega_X^{\bullet})\big)\simeq Sym_{\mathcal{A}}^{p-1,self}(\mathcal{A}[\mathcal{D}_X](p_{\infty}^*F_{\bullet},D'(p_{\infty}^*F_{\bullet})\otimes_{\mathcal{A}}(\mathcal{A}\otimes_{\mathcal{O}_X}\omega_X)).$$
    \end{proposition}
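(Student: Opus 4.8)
The plan is to prove the absolute statement over $\mathcal{O}_X$ first and then obtain the relative statement over $\mathcal{A}^{\ell}$ by a flat base change along $p_{\infty}$. In both cases the mechanism is the combination of two classical facts of the geometric theory of PDEs, recast $\mathcal{D}$-theoretically: cohomological concentration in top de Rham degree, coming from the \emph{algebraic Poincar\'e lemma} (the de Rham resolution of the dualizing sheaf $\omega_X$), and the identification of the top cohomology with formally self-adjoint operators via \emph{Green's (integration-by-parts) formula}.

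\textbf{Step 1 (concentration in top degree).} I regard $Sym_{\mathcal{O}_X}^p(\mathcal{D}_X(F_\bullet,\Omega_X^\bullet))$ as the complex of symmetric $p$-linear differential operators $F_\bullet\to\Omega_X^\bullet$, graded by the form-degree of the target, with differential induced by the de Rham differential $d$ on $\Omega_X^\bullet$. The key observation is that $d$ acts only through the form factor of such an operator, treating the symmetric-multidifferential data as $\mathcal{D}_X$-twisted coefficients; explicitly, postcomposing a $\Omega_X^k$-valued operator with $d$ reproduces the twisted connection $\nabla$ of the de Rham resolution. Since $(\Omega_X^\bullet\otimes_{\mathcal{O}_X}\mathcal{D}_X,\nabla)$ is the standard locally free resolution of $\omega_X$ as a right $\mathcal{D}_X$-module, concentrated in degree $n=d_X$, and the symmetric coefficients are flat $\mathcal{O}_X$-modules, the formal Poincar\'e lemma gives acyclicity in all form-degrees $<n$. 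Hence $H^i=0$ for $i\neq n$, and $H^n$ is the space of symmetric $p$-linear operators $Sym^p_{\mathcal{O}_X}F_\bullet\to\omega_X$ modulo the image of $d$, that is, modulo \emph{total divergences}.

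\textbf{Step 2 (Green's formula and self-adjointness).} I then apply integration by parts: every symmetric $p$-linear operator $\Delta\colon Sym^p F_\bullet\to\omega_X$ is, modulo a total divergence, of the form $(f_1,\ldots,f_p)\mapsto\langle f_1,\ell(f_2,\ldots,f_p)\rangle$ for a differential operator $\ell\colon Sym^{p-1}_{\mathcal{O}_X}F_\bullet\to F_\bullet^\vee$, where $\langle-,-\rangle\colon F_\bullet\otimes_{\mathcal{O}_X}F_\bullet^\vee\to\omega_X$ is the tautological pairing and $F_\bullet^\vee=F_\bullet^*\otimes\omega_X$. The indeterminacy of $\ell$ is exactly the divergence ambiguity already quotiented in Step 1, so $\Delta\mapsto\ell$ descends to a well-defined isomorphism on $H^n$; the residual $S_p$-symmetry of $\Delta$ translates into the formal self-adjointness of $\ell$, which is precisely what the decoration $Sym^{p-1,self}$ records. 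This yields $H^n\cong Sym^{p-1,self}(\mathcal{D}_X(F_\bullet,F_\bullet^\vee))$, degenerating for $p=1$ to $H^n\cong F_\bullet^\vee$ (no symmetry to impose, $Sym^0$ being the bare target).

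\textbf{Step 3 (relative statement and main obstacle).} To pass to the jet algebra I pull back along $p_{\infty}$, using the compatibilities recorded in the excerpt: $Jet^{\infty}(F_i)\times_X Spec_{\mathcal{D}_X}(\mathcal{A}^{\ell})\simeq\overline{Jets}^{\infty}(p_{\infty}^*F_i)$, the isomorphism $\mathcal{A}^{\ell}\otimes^!\mathcal{D}_X(F_\bullet,\mathcal{O}_X)\simeq\mathcal{A}^{\ell}[\mathcal{D}_X](p_{\infty}^*F_\bullet,\mathcal{A}^{\ell})$ together with its evident $\Omega_X^\bullet$-valued analogue, and the compatibility $Sym_{\mathcal{A}^{\ell}}(\mathcal{A}^{\ell}\otimes^!(-))\simeq\mathcal{A}^{\ell}\otimes^!Sym_{\mathcal{O}_X}(-)$. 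Since $\mathcal{A}^{\ell}\otimes^!(-)$ is exact and commutes with $Sym$ and with passage to cohomology, applying it to the absolute computation replaces $\Omega_X^\bullet$ by the horizontal de Rham complex $p_{\infty}^*\Omega_X^\bullet$ and $F_\bullet^\vee$ by its relative counterpart $D'(p_{\infty}^*F_\bullet)\otimes_{\mathcal{A}}(\mathcal{A}\otimes_{\mathcal{O}_X}\omega_X)$, giving the second isomorphism. I expect the genuine obstacle to lie in Step 2: making Green's formula canonical in the graded/super $\mathcal{D}_X$-setting, namely verifying that the integration-by-parts map is independent of the order in which arguments are integrated (equivalently, that its indeterminacy is exactly the divergences of Step 1) and that the full symmetry of $\Delta$ is equivalent to self-adjointness of $\ell$, with all Koszul signs correctly tracked when $F_\bullet$ is graded and $n$ is odd. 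Because this well-definedness relies on the formal exactness established in Step 1, the two steps must be carried out in tandem rather than independently.
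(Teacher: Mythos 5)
The paper states this proposition without any proof; the only ingredients it supplies are the compatibilities displayed immediately before it (the identification $\mathcal{A}^{\ell}\otimes^!\mathcal{D}_X(F_{\bullet},\mathcal{O}_X)\simeq \mathcal{A}^{\ell}[\mathcal{D}_X](p_{\infty}^*F_{\bullet},\mathcal{A}^{\ell})$ and the commutation of $Sym$ with $\otimes^!$), which are precisely what your Step 3 invokes. Your Steps 1 and 2 supply the actual mathematical content that the paper leaves implicit: concentration in top de Rham degree via the resolution $(\Omega_X^{\bullet}\otimes_{\mathcal{O}_X}\mathcal{D}_X,\nabla)\to\omega_X$ (placed in degree $n$) tensored with the flat module of symmetric polydifferential coefficients, followed by Green's formula identifying the top cohomology with formally self-adjoint $(p-1)$-linear operators. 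This is the standard route for this classical statement (the $\mathcal{D}$-module form of the computation of local polyvectors), and it is consistent with everything the paper sets up. One point you get right but should state explicitly, since the proposition is otherwise false as written: $Sym_{\mathcal{O}_X}^p(\mathcal{D}_X(F_{\bullet},\Omega_X^{\bullet}))$ must be read as the complex, graded over $[0,n]$ by the form degree of the target, of symmetric $p$-linear polydifferential operators valued in $\Omega_X^{k}$ --- not as the (derived) symmetric power of the complex $\mathcal{D}_X(F_{\bullet},\Omega_X^{\bullet})$ itself, which by d\'ecalage would concentrate cohomology in degree $pn$ rather than $n$. Your reading agrees with the paper's own gloss of the relative object as ``symmetric $p$-multilinear differential operators in total derivatives,'' so the argument goes through; the Koszul-sign bookkeeping in the graded case and the uniqueness of the integration-by-parts representative, which you flag, are indeed the only delicate points, and both are handled correctly by carrying Steps 1 and 2 together as you propose.
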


In coordinates, represent $\mathbf{P}\in \mathcal{D}_X(F_{\bullet},\mathcal{O}_X)$ as
$$\mathbf{P}=\sum_{\sigma}\big(\mathsf{P}_{\sigma}^1(x)\cdots \mathsf{P}_{\sigma}^{R}(x)\big)\partial^{\sigma},$$
where $R:=\sum_{j=1}^{k-1}rank(F_j).$ For each $j=1,\ldots,k-1$ let us denote sections 
$$v(j)\in \Gamma(Spec_{\mathcal{D}_X}(\mathcal{A}^{\ell}),p_{\infty}^*F_j).$$
Operator $\mathbf{P}$ defines a free $\mathcal{D}_X$-module on generators $v^{\lambda}(j)$ with $\lambda=1,\ldots,rank(F_j)$ for each $j=1,\ldots,k-1.$
In other words, we have a free left $\mathcal{D}_X$-module
$$\bigoplus_{j=1}^{k-1}\bigoplus_{\lambda-1}^{rank(F_j)}\mathcal{D}_X\bullet v^{\lambda}(j).$$
These variables $v^{\lambda}(j)$ are viewed as tuples $v^{\lambda}(j)(x,u_{\sigma}^{\alpha})$ for $\lambda=1,\ldots,rank(F_j),$ and the $\mathcal{D}_X$-action is interpreted as coordinates 
$$\partial^{\tau}\bullet v^{\lambda}(j)=D^{\tau}(v^{\lambda}(j))=v_{\tau}^{\lambda}(j)\in \Gamma(Spec_{\mathcal{D}_X}(\mathcal{A}^{\ell}),\overline{\mathcal{J}}^{\infty}(p_{\infty}^*F_j)\big).$$
\begin{proposition}
    $\overline{\mathcal{J}}^{\infty}(p_{\infty}^*F_{\bullet})$ defines a vector $\mathcal{D}_X$-bundle over $Spec_{\mathcal{D}_X}(\mathcal{A}^{\ell})$ (equiv. a vector $\mathcal{A}^{\ell}[\mathcal{D}_X]$-bundle) by setting 
    $$\mathbb{V}_{\mathcal{A}^{\ell}}(F_{\bullet}):=\mathbb{V}_{\mathcal{A}^{\ell}}\big(\overline{\mathcal{J}}^{\infty}(p_{\infty}^*F_{\bullet})\big):=Spec_{\mathcal{D}_X}\big(Sym_{\mathcal{A}^{\ell}}^{\bullet}(\overline{J}^{\infty}(p_{\infty}^*F_{\bullet})\big),$$ and where
    $\mathcal{O}\big(\overline{\mathcal{J}}^{\infty}(p_{\infty}^*F_{\bullet})\big)$ has the structure of both a commutative $\mathcal{D}_X$-algebra and an $\mathcal{A}^{\ell}$-algebra via the structure map $\gamma:\mathbb{V}_{\mathcal{A}^{\ell}}(F_{\bullet})\rightarrow Spec_{\mathcal{D}_X}(\mathcal{A}^{\ell}).$
\end{proposition}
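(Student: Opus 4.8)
The plan is to deduce the statement from the relative representability criterion of Proposition~\ref{prop: Vector D Stack}, once $\overline{\mathcal{J}}^{\infty}(p_{\infty}^*F_{\bullet})$ has been recognized as a perfect, connective $\mathcal{A}^{\ell}[\mathcal{D}_X]$-module of induced type. The slogan is that horizontal prolongation of a pulled-back bundle is nothing but extension of scalars along $\mathcal{O}_X[\mathcal{D}_X]\to\mathcal{A}^{\ell}[\mathcal{D}_X]$, after which "vector bundle" and "relatively representable symmetric-algebra stack" become formal consequences of the free–forgetful adjunction.

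First I would make the identification of the horizontal jet sheaf with an induced module. The coordinate description preceding the statement is precisely this assertion: the generators $v^{\lambda}(j)$, $\lambda=1,\ldots,\mathrm{rank}(F_j)$, freely generate a left $\mathcal{D}_X$-module, and the total derivatives $\partial^{\tau}\bullet v^{\lambda}(j)=v_{\tau}^{\lambda}(j)$ exhaust the sections of $\overline{\mathcal{J}}^{\infty}(p_{\infty}^*F_j)$. Coordinate-freely this is the content of the base-change identity $\mathrm{Jet}^{\infty}(F_i)\times_X \mathrm{Spec}_{\mathcal{D}_X}(\mathcal{A}^{\ell})\simeq \overline{\mathcal{J}}^{\infty}(p_{\infty}^*F_i)$ recorded above, which yields
$$\overline{\mathcal{J}}^{\infty}(p_{\infty}^*F_{\bullet})\simeq \mathrm{ind}_{\mathcal{A}^{\ell}[\mathcal{D}_X]}(F_{\bullet})=\mathcal{A}^{\ell}[\mathcal{D}_X]\otimes_{\mathcal{O}_X}F_{\bullet}.$$
Since $F_{\bullet}$ is a (graded) vector bundle, i.e. locally free of finite rank over $\mathcal{O}_X$, a local trivialization $F_{\bullet}\cong\mathcal{O}_X^{\oplus r}$ makes this induced module locally free of finite rank over $\mathcal{A}^{\ell}[\mathcal{D}_X]$, hence a perfect and dualizable object of $\DG(\mathcal{A}^{\ell})$, with dual computed by derived inner Verdier duality (\ref{eqn: Derived Inner Verdier Dual}). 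I would stress that perfectness is meant $\mathcal{A}^{\ell}[\mathcal{D}_X]$-linearly: although $\overline{\mathcal{J}}^{\infty}(p_{\infty}^*F_{\bullet})$ is of infinite rank over $\mathcal{O}_X$, it is finitely generated over $\mathcal{A}^{\ell}[\mathcal{D}_X]$, which is exactly the differential finite-dimensionality flagged in the introduction. Connectivity follows from that of $\mathcal{A}^{\ell}$ together with placing $F_{\bullet}$ in non-positive cohomological degree.

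With $\overline{\mathcal{J}}^{\infty}(p_{\infty}^*F_{\bullet})$ perfect and connective, Proposition~\ref{prop: Vector D Stack} applies (using dualizability, so the module and its dual are interchangeable as inputs to $\mathbb{V}$) and gives that the associated vector stack is relatively representable over $\EQ=\mathrm{Spec}_{\mathcal{D}_X}(\mathcal{A}^{\ell})$ by the $\mathrm{Spec}_{\mathcal{D}_X}$ of the corresponding free commutative $\mathcal{A}^{\ell}[\mathcal{D}_X]$-algebra. Because the left adjoint in (\ref{eqn: Free-forget AD-Mod/Alg Adjunction}) is the symmetric-algebra functor, $\mathsf{Free}_{\mathcal{A}^{\ell}}=Sym^{\bullet}_{\mathcal{A}^{\ell}}$, this representing algebra is precisely $Sym^{\bullet}_{\mathcal{A}^{\ell}}\big(\overline{\mathcal{J}}^{\infty}(p_{\infty}^*F_{\bullet})\big)$, establishing the stated formula for $\mathbb{V}_{\mathcal{A}^{\ell}}(F_{\bullet})$ and the equivalence with a vector $\mathcal{A}^{\ell}[\mathcal{D}_X]$-bundle. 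The two algebra structures are then read off formally: as a value of $\mathsf{Free}_{\mathcal{A}^{\ell}}$ the symmetric algebra is a commutative monoid in $\mathrm{Mod}_{\mathcal{D}_X}(\mathcal{A}^{\ell})\simeq\mathrm{Mod}(\mathcal{A}^{\ell}[\mathcal{D}_X])$ (Proposition~\ref{Tautological AD equivalence}), hence a commutative $\mathcal{D}_X$-algebra, while the adjunction unit $\mathcal{A}^{\ell}\hookrightarrow Sym^{\bullet}_{\mathcal{A}^{\ell}}(\overline{\mathcal{J}}^{\infty}(p_{\infty}^*F_{\bullet}))$ supplies the $\mathcal{A}^{\ell}$-algebra structure and, dually, the structure morphism $\gamma$.

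The only genuine obstacle I anticipate is the first step: pinning down coordinate-freely that horizontal prolongation of $p_{\infty}^*F_{\bullet}$ is literally extension of scalars along $\mathcal{O}_X[\mathcal{D}_X]\to\mathcal{A}^{\ell}[\mathcal{D}_X]$, so that the $\mathcal{D}_X$-action by total derivatives is identified with the $\mathcal{A}^{\ell}[\mathcal{D}_X]$-module structure on $\mathrm{ind}_{\mathcal{A}^{\ell}[\mathcal{D}_X]}(F_{\bullet})$, and that the resulting object is to be treated as finite and dualizable $\mathcal{A}^{\ell}[\mathcal{D}_X]$-linearly rather than $\mathcal{O}_X$-linearly. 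Everything downstream — perfectness, connectivity, relative representability and the two algebra structures — is then a formal consequence of Proposition~\ref{prop: Vector D Stack} and the free–forgetful adjunction (\ref{eqn: Free-forget AD-Mod/Alg Adjunction}).
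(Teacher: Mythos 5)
Your proposal is correct, but it proves the statement by a genuinely different route than the paper does. The paper's own proof is a direct coordinate verification: it writes a function on $\overline{\mathcal{J}}^{\infty}(p_{\infty}^*F_{\bullet})$ as $f(x,u_{\sigma}^{\alpha},v_{\tau}^{\lambda}(j))$, specifies the commutative multiplication by the rule that products of functions of $u_{\sigma}^{\alpha},v_{\tau}^{\lambda}(j)$ and $u_{\sigma'}^{\alpha},v_{\tau'}^{\lambda}(j)$ land in functions of $u_{\sigma+\sigma'}^{\alpha},v_{\tau+\tau'}^{\lambda}(j)$, and exhibits the $\mathcal{D}_X$-action explicitly through the extended total derivative $\overline{D}_i=\partial_i+u_{\sigma+i}^{\alpha}\partial_{u_{\sigma}^{\alpha}}+v_{\tau+i}^{\lambda}(j)\partial_{v_{\tau}^{\lambda}(j)}$; the $\mathcal{A}^{\ell}$-algebra structure is implicit in the inclusion of the $u$-variables. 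You instead identify $\overline{\mathcal{J}}^{\infty}(p_{\infty}^*F_{\bullet})$ coordinate-freely with the induced module $\mathcal{A}^{\ell}[\mathcal{D}_X]\otimes_{\mathcal{O}_X}F_{\bullet}$, observe that this is finitely generated projective (hence perfect and dualizable) $\mathcal{A}^{\ell}[\mathcal{D}_X]$-linearly, and then derive both algebra structures and representability from Proposition \ref{prop: Vector D Stack} together with the free--forgetful adjunction (\ref{eqn: Free-forget AD-Mod/Alg Adjunction}). Your version buys functoriality and makes clear why the construction survives in the derived setting, at the cost of leaning on the identification of horizontal prolongation with induction (which the paper only asserts in coordinates via $\partial^{\tau}\bullet v^{\lambda}(j)=v_{\tau}^{\lambda}(j)$) and of a slight convention mismatch you rightly flag: Proposition \ref{prop: Vector D Stack} represents $\mathbb{V}_{\EQ}(\mathcal{M})$ by $\mathsf{Free}_{\mathcal{A}}(\mathcal{M}^{\vee})$, whereas the statement here takes $Sym_{\mathcal{A}^{\ell}}^{\bullet}$ of the module itself; since the statement is essentially definitional about which symmetric algebra is meant, this does not affect the substance, but it is worth saying explicitly that the two conventions differ by replacing $\mathcal{M}$ with its inner dual rather than waving at dualizability. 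The paper's coordinate computation, by contrast, verifies the Leibniz compatibility of $\overline{D}_i$ with the product concretely, which is the one point your abstract argument delegates entirely to the adjunction.
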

\begin{proof}
    Note that $f=f\big(x,u_{\sigma}^{\alpha},v_{\tau}^{\lambda}(j)\big)\in \mathcal{O}\big(\overline{\mathcal{J}}^{\infty}(p_{\infty}^*F_{\bullet})\big)$ so the algebra structure 
    $$\mathcal{O}\big(\overline{\mathcal{J}}^{\infty}(p_{\infty}^*F_{\bullet})\big)\otimes \mathcal{O}\big(\overline{\mathcal{J}}^{\infty}(p_{\infty}^*F_{\bullet})\big)\rightarrow \mathcal{O}\big(\overline{\mathcal{J}}^{\infty}(p_{\infty}^*F_{\bullet})\big),$$
    is determined by
   the obvious rule on derivatives of variables i.e. functions of $u_{\sigma}^{\alpha},v_{\tau}^{\lambda}(j)$ and $u_{\sigma'}^{\alpha},v_{\tau'}^{\lambda}(j)$ for each $\alpha,\lambda,j$ give functions of $u_{\sigma+\sigma'}^{\alpha},v_{\tau+\tau'}^{\lambda}(j).$ The $\mathcal{D}_X$-structure is given by total derivatives again, by the extended action,
   $$\mathcal{D}_X\bullet \mathcal{O}\big(\overline{\mathcal{J}}^{\infty}(p_{\infty}^*F_{\bullet})\big)\rightarrow \mathcal{O}\big(\overline{\mathcal{J}}^{\infty}(p_{\infty}^*F_{\bullet})\big),\partial_i\bullet f\mapsto \overline{D}_i(f),$$
   with $\overline{D}_i:=\partial_i+u_{\sigma+i}^{\alpha}\partial_{u_{\sigma}^{\alpha}}+v_{\tau+i}^{\lambda}(j)\partial_{v_{\tau}^{\lambda}(j)}$.
   
\end{proof}

\subsection{Local Calculus.}
\label{ssec: Local Calculus}
For a $\mathcal{D}_X$-algebra $\mathcal{A}$, there is an $\mathcal{A}[\mathcal{D}_X^{\mathrm{op}}]$-module
\begin{equation}
    \label{eqn:Local vector fields}
\Theta_{\mathcal{A}}:=\big(\Omega_{\mathcal{A}}^1\big)^{\circ}=\mathcal{H}\mathrm{om}_{\mathcal{A}[\mathcal{D}_X]}\big(\Omega_{\mathcal{A}}^1,\mathcal{A}[\mathcal{D}_X]\big),
\end{equation}
with $\Omega_{\mathcal{A}}^1$ given by a $\mathcal{D}$-linear Kahler construction. It is isomorphic in for $\mathcal{D}$-smooth algebras (like the jet case) to 
$\mathcal{A}[\mathcal{D}_X]\otimes_{\mathcal{A}}p_{\infty}^*\Omega_{E/X}^1,$ and its $\mathcal{A}[\mathcal{D}_X]$-module rank is equal to the $\mathcal{O}_E$-module rank of $\Omega_{E/X}^1.$

Local vector fields are derivations of $\mathrm{Jet}^{\infty}(\mathcal{O}_E)$ or a more general $\mathcal{A}\in \mathrm{CAlg}_{\mathcal{D}_X},$ as $\mathcal{D}_X$-linear derivations,
\begin{equation}
    \label{eqn: D-Derivation Functor}
\mathrm{Der}_{\mathcal{D}_X}\big(\mathcal{A},-\big):=\mathrm{Der}(\mathcal{A},-)\cap \mathcal{H}\mathrm{om}_{\mathcal{D}_X}\big(\mathcal{A},-\big).
\end{equation}

Put $\mathrm{Der}_{\mathcal{D}_X}(\mathcal{A}):=\mathrm{Der}_{\mathcal{D}_X}(\mathcal{A},\mathcal{A})$ and denote the sheaf associated to the presheaf $\mathrm{Der}_{\mathcal{D}_X}(\mathcal{A})$ by $\mathcal{D}\mathrm{er}_{\mathcal{D}_X}(\mathcal{A}).$ 
For an open subset $U$ of $X$ with embedding $j:U\hookrightarrow X$,
$$\mathcal{D}\mathrm{er}_{\mathcal{D}_X}\big(\mathcal{A}\big)(U):=\big\{\E:j^*\mathcal{A}\rightarrow j^*\mathcal{A} |\E(P\bullet f_U)=P\bullet\E(f_U),\hspace{1mm} f_U\in j^*\mathcal{A},P\in \mathcal{D}_U\big\},$$
where $\E$ moreover satisfy the obvious Leibniz rule and $j^*=j^{-1}$, as usual for $\mathcal{D}$-module pull-backs for open embeddings.

\begin{proposition}
\label{D-linear O-linear relationship Lemma}
If $\mathcal{A}=\mathrm{Jet}^{\infty}(\mathcal{O}_E)$ is an algebraic jet $\D$-algebra there are isomorphisms $\mathrm{Der}_{\mathcal{D}_X-\mathrm{Alg}}(\mathcal{A},\mathcal{A})\cong \mathrm{Der}_{\mathcal{O}_X-\mathrm{Alg}}(\mathcal{O}_E,\mathcal{A}).$ 
Derivations are naturally filtered by
$\mathcal{F}^k\mathcal{D}\mathrm{er}(\mathcal{A}):=\mathcal{D}\mathrm{er}\big(\mathcal{F}^k\mathcal{A}\big)$ for $k\geq 0.$
In particular, if $E=X\times M$ then $\mathrm{Der}_{\mathcal{D}_X-\mathrm{Alg}}(\mathcal{A},\mathcal{A})\cong \Theta_M\otimes_{\mathcal{O}_M}\mathcal{A}.$
\end{proposition}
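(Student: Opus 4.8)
The plan is to deduce all three assertions from the defining universal property of the infinite jet functor, namely that $\mathcal{A} = \mathrm{Jet}^{\infty}(\mathcal{O}_E)$ is left adjoint to the forgetful functor $(-)^{\flat}\colon \mathrm{CAlg}(\mathcal{D}_X)\to\mathrm{CAlg}(\mathcal{O}_X)$, so that for every $\mathcal{D}_X$-algebra $\mathcal{B}$ there is a natural bijection
$$\mathrm{Hom}_{\mathcal{D}_X\text{-}\mathrm{Alg}}(\mathcal{A},\mathcal{B})\cong \mathrm{Hom}_{\mathcal{O}_X\text{-}\mathrm{Alg}}(\mathcal{O}_E,\mathcal{B}^{\flat}),$$
whose unit is the canonical prolongation $\eta\colon\mathcal{O}_E\hookrightarrow\mathcal{A}$. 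First I would rewrite both sides of the desired isomorphism in terms of maps into a trivial square-zero extension. On the left, a derivation $D\in\mathrm{Der}_{\mathcal{D}_X\text{-}\mathrm{Alg}}(\mathcal{A},\mathcal{A})$ is, by the definition (\ref{eqn: D-Derivation Functor}), an $\mathcal{O}_X$-algebra derivation that in addition commutes with the $\mathcal{D}_X$-action; equivalently, $a\mapsto(a,D(a))$ is a section in $\mathrm{CAlg}(\mathcal{D}_X)$ of the projection from the square-zero extension $\mathcal{A}\oplus\mathcal{A}\epsilon$ of $\mathcal{A}$ by itself (formed in $\mathcal{D}_X$-algebras) down to $\mathcal{A}$, the $\mathcal{D}_X$-linearity of $D$ being exactly the condition that this section respects the $\mathcal{D}_X$-action. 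Hence
$$\mathrm{Der}_{\mathcal{D}_X\text{-}\mathrm{Alg}}(\mathcal{A},\mathcal{A})\cong \mathrm{Hom}_{\mathcal{D}_X\text{-}\mathrm{Alg}/\mathcal{A}}\big(\mathcal{A},\mathcal{A}\oplus\mathcal{A}\epsilon\big).$$

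Next I would transport this along the adjunction. The forgetful functor carries the $\mathcal{D}_X$-algebra square-zero extension to the $\mathcal{O}_X$-algebra square-zero extension $\mathcal{A}^{\flat}\oplus\mathcal{A}^{\flat}\epsilon$ and the projection onto $\mathcal{A}$ to the projection onto $\mathcal{A}^{\flat}$; naturality of the adjunction in the target then identifies the fibre of $\mathrm{Hom}_{\mathcal{D}_X\text{-}\mathrm{Alg}}(\mathcal{A},\mathcal{A}\oplus\mathcal{A}\epsilon)$ over $\mathrm{id}_{\mathcal{A}}$ with the fibre of $\mathrm{Hom}_{\mathcal{O}_X\text{-}\mathrm{Alg}}(\mathcal{O}_E,\mathcal{A}^{\flat}\oplus\mathcal{A}^{\flat}\epsilon)$ over $\eta$ (which is the image of $\mathrm{id}_{\mathcal{A}}$ under the adjunction). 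The latter fibre consists precisely of the $\mathcal{O}_X$-algebra sections $b\mapsto(\eta(b),\partial(b))$, i.e. of $\mathrm{Der}_{\mathcal{O}_X\text{-}\mathrm{Alg}}(\mathcal{O}_E,\mathcal{A})$ with $\mathcal{A}$ an $\mathcal{O}_E$-module via $\eta$; concretely the correspondence is $D\mapsto D\circ\eta$, and one checks directly that $\mathcal{D}_X$-linearity of $D$ forces $D\circ\eta$ to annihilate $\mathcal{O}_X$. This yields the first isomorphism, and since every step is natural in open sets $U\subseteq X$ it sheafifies to the claimed isomorphism of $\mathcal{D}\mathrm{er}$-sheaves.

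The induced filtration is then read off from the jet-order filtration $\mathcal{F}^k\mathcal{A}=\mathrm{Jet}^k(\mathcal{O}_E)$: the correspondence above is compatible with measuring the jet order of the image of a derivation $\mathcal{O}_E\to\mathcal{A}$, so filtering the right-hand side by jet order produces $\mathcal{F}^k\mathcal{D}\mathrm{er}(\mathcal{A}):=\mathcal{D}\mathrm{er}(\mathcal{F}^k\mathcal{A})$ on the left. Finally, for the trivial bundle $E=X\times M$ I would specialize the right-hand side: an $\mathcal{O}_X$-linear derivation $\mathcal{O}_{X\times M}\to\mathcal{A}$ annihilates the functions pulled back from $X$ and, by the Leibniz rule, is determined by its restriction to the fibre directions, so that $\mathrm{Der}_{\mathcal{O}_X\text{-}\mathrm{Alg}}(\mathcal{O}_E,\mathcal{A})\cong \mathrm{Der}_k(\mathcal{O}_M,\mathcal{A})\cong \mathrm{Hom}_{\mathcal{O}_M}(\Omega_M^1,\mathcal{A})$, with $\mathcal{A}$ viewed as an $\mathcal{O}_M$-algebra through $\mathcal{O}_M\hookrightarrow\mathcal{O}_E\xrightarrow{\eta}\mathcal{A}$. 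Smoothness of $M$ makes $\Omega_M^1$ finite locally free, whence the last Hom is $\Theta_M\otimes_{\mathcal{O}_M}\mathcal{A}$, giving the stated formula.

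The step I expect to require the most care is the second one: I must verify that the forgetful functor genuinely intertwines the two square-zero constructions and preserves the relative (over-$\mathcal{A}$) structure, so that passing to fibres over $\mathrm{id}_{\mathcal{A}}$ and over $\eta$ is legitimate, and that $\mathcal{D}_X$-linearity on the left matches verticality of the associated derivation of $\mathcal{O}_E$ on the right, the total-derivative $\mathcal{D}_X$-action being automatically reconstructed from the generating section through the prolongation $\eta$. The remaining points — naturality in $U$ and the local-freeness input in the product case — are routine.
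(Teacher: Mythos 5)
The paper states this proposition without proof, so there is nothing to compare against; your argument is correct and is the standard one. Reformulating derivations as sections of the trivial square-zero extension, transporting across the free–forgetful adjunction $\mathrm{Jet}^{\infty}\dashv U$ via its naturality in the target (with the unit $\eta\colon\mathcal{O}_E\hookrightarrow\mathcal{A}$), observing that $\mathcal{D}_X$-linearity of $D$ forces $D|_{\mathcal{O}_X}=0$ so that $D\circ\eta$ is a vertical derivation, and then reading off the filtration and the split case $E=X\times M$ from local freeness of $\Omega_M^1$, correctly establishes all three assertions.
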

By Proposition \ref{D-linear O-linear relationship Lemma} one has
$\mathcal{F}^k\mathrm{Der}_{\mathcal{D}_X}\big(\mathcal{A},\mathcal{A}\big)\cong \mathrm{Der}_{\mathcal{O}_X-\mathrm{Alg}}\big(\mathcal{O}_E,\mathcal{F}^k\mathcal{A}\big).$

If $\EQ=\mathrm{Spec}_{\mathcal{D}_X}(\mathcal{A}),$ is an algebraic jet $\mathcal{D}$-scheme with projection 
$\pi_{\infty}:\EQ\rightarrow E$ there is an isomorphism of $\mathcal{A}^r[\mathcal{D}_X^{\mathrm{op}}]$-modules,
$\Theta_{\mathcal{A}}\rightarrow \big(\pi_{\infty,0}^*\Theta_{E/X}\big)\otimes_{\mathcal{A}^r}\mathcal{A}^r[\mathcal{D}_X^{\mathrm{op}}].$

In coordinates, they look like
\begin{equation}
\label{eqn: D-Geometric Generating Section}
\chi:=\sum_{\alpha=1}^{\mathrm{rank}(E)}\chi^{\alpha}(x,u,u_{\sigma})\frac{\partial}{\partial u^{\alpha}}\in \pi_{\infty}^*\Theta_{E/X}, \hspace{2mm} \chi^{\alpha}\in \mathcal{A},
\end{equation}
while $\E_{\xi}=\sum \E_{\sigma}^{\alpha}\frac{\partial}{\partial u_{\sigma}^{\alpha}}\in \mathrm{Der}_{\mathcal{D}_X}\big(\mathcal{A},\mathcal{A}\big).$
\begin{example}
\label{example: VectorFields}
\normalfont Consider the bundle $E=\mathbb{R}\times\mathbb{R}\rightarrow X:=\mathbb{R}$ with fiber coordinates $(t,u)$ and base coordinate $t.$
Then, $\Theta_{E/X}$ is spanned by $\partial_u$ with pull-back $\mathcal{A}$-module $\pi_{\infty}^*\Theta_{E/X}\simeq \mathcal{A}[\partial_u].$ The map
$\pi_{\infty}^*\Theta_{E/X}\rightarrow \Theta_{\mathcal{A}}$ sends $$\xi:=f(x,u,u_{\sigma})\frac{\partial}{\partial u}\mapsto \E_{\xi}:=\sum_{\sigma}\big(D_t^{\sigma}(f)\big)\frac{\partial}{\partial u_{\sigma}}.$$ The $\mathcal{D}$-action is through 
$D_t:=\frac{\partial}{\partial t}+\sum_{n\geq 0}u_{n+1}\frac{\partial}{\partial u_n},$ where $u_n:=\frac{\partial^n u}{\partial t^n}.$
\end{example}
\begin{remark}
Following standard terminology, vector fields $\E_{\xi}$ associated to sections \emph{(\ref{eqn: D-Geometric Generating Section})}
are called evolutionary derivations.
\end{remark}

When $\mathcal{A}$ is $\mathcal{D}_X$-smooth, we have a natural bi-duality isomorphism 
$$\Omega_{\mathcal{A}}^1\rightarrow \mathcal{H}\mathrm{om}_{\mathcal{A}^r[\mathcal{D}_X^{\mathrm{op}}]}\big(\Theta_{\mathcal{A}},\mathcal{A}^r[\mathcal{D}_X^{\mathrm{op}}]\big).$$
Forgetting the $\mathcal{D}_X$-module structure i.e. viewing $\Omega_{\mathcal{A}}^1$ as a $\mathcal{A}^{\ell}$-module, there is a natural $\mathcal{O}$-linear isomorphism of $\mathcal{O}$-modules $\Omega_{\mathcal{A}/\mathcal{O}}^1\rightarrow\Omega_{\mathcal{A}}^1$ and one way to interpret the $\mathcal{D}$-module structure induced on $\Omega_{\mathcal{A}}^1$ is coming from a section of the natural projection $\Omega_{\mathcal{A}/\mathbb{C}}^1\rightarrow \Omega_{\mathcal{A}/X}^1$.\footnote{i.e. as an Ehresmann connection.} Put
$\Omega_{\mathcal{A}}^*:=\Bigwedge_{\mathcal{A}}^*\Omega_{\mathcal{A}}^1,$ taken in the category of $\mathcal{A}^{\ell}[\mathcal{D}_X]$-modules.
In coordinates (c.f. Example \ref{example: VectorFields}) for the trivial rank $m$ bundle, a basis for $\Omega_{\mathcal{A}}^1$ is given by Cartan forms $\omega_{\sigma}^{\alpha}=du_{\sigma}^{\alpha}+\sum_{i=1}^nu_{\sigma+1_i}^{\alpha} dx^i.$

A $\mathcal{D}$-linear calculus can be obtained but we do not elaborate it in full detail here, choosing only to give one example construction. 

Consider an $\mathcal{O}_X$-linear derivation of $\mathcal{O}_E$ with values in $\mathcal{A}^{\ell},$ denoted $Y=V^{\alpha}\otimes f_{\alpha}.$ There is a natural $\mathcal{O}_X$-linear insertion operation $\iota(Y)$ 
\begin{equation}
\label{eqn: Iota(Y)}
\iota(Y):\Omega_{E/X}^*\rightarrow \mathrm{Jet}^{\infty}\big(\Omega_{E/X}^*\big),\hspace{2mm} \iota(Y)(\omega):=(-1)^{V^{\alpha}\dot f_{\alpha}}f_{\alpha}\iota_{V^{\alpha}}\omega.
\end{equation}
The image of (\ref{eqn: Iota(Y)}) lies in $\pi_{\infty}^*\Omega_{E/X}^*,$ and by the universal property of algebraic jet functors gives a $\mathcal{D}_X$-linear derivation $\iota_{\mathcal{D}_X}(Y),$ of $\mathrm{Jet}^{\infty}\big(\Omega_{E/X}^*\big)$, such that 
$$\iota_{\mathcal{D}_X}(Y)\big|_{\Omega_{E/X}^*\subset \mathrm{Jet}^{\infty}(\Omega_{E/X}^*)}=\iota(Y).$$
We obtain a map $\iota_{\mathcal{D}}(Y):\mathrm{Jet}^{\infty}\big(\Omega_{E/X}^*\big)\rightarrow \mathrm{Jet}^{\infty}\big(\Omega_{E/X}^*\big),$ defined by
$$\mathsf{P}(x,\partial_x)\otimes \omega\mapsto (-1)^{\mid \mathsf{P}\mid \cdot\mid Y\mid }\mathsf{P}(x,\partial_x)\otimes\iota(Y)\omega,\hspace{2mm}\mathsf{P}\in \mathcal{D}_X.$$ More generally there are $\mathcal{D}$-linear maps $\iota_{\mathcal{D}}(Y_1,\ldots,Y_k):=\iota_{\mathcal{D}}(Y_1)\circ\ldots\circ \iota_{\mathcal{D}}(Y_k).$
\\

Since $\mathcal{A}$ is in particular an $\mathcal{O}_X$-algebra consider $\Bigwedge_{\mathcal{O}_X}^{\bullet}\Omega_{\mathcal{A}}^1,$ with induced filtration $\mathcal{F}^k\Omega_{\mathcal{A}}^{\bullet}=\Bigwedge_{\mathcal{O}_X}^{\bullet}\Omega_{\mathcal{F}^k\mathcal{A}}^1,$ and consider  $\Omega_{E/X}^{\bullet}:=\Omega^{\bullet}(E)/\pi^*(\Omega_X^{\bullet}),$ and 
the $\mathcal{O}_X$-linear differential $d:\Omega_{E/X}^{\bullet}\rightarrow \Omega_{E/X}^{\bullet+1}.$ It injects into
$\mathrm{Jet}^{\infty}\big(\Omega_{E/X}^{\bullet}\big)$ and by
Proposition \ref{D-linear O-linear relationship Lemma} there exists a $\mathcal{D}_X$-linear derivation 
$$d_{\infty}:\mathrm{Jet}^{\infty}\big(\Omega_{E/X}^{\bullet}\big)\rightarrow 
\mathrm{Jet}^{\infty}\big(\Omega_{E/X}^{\bullet}\big),\hspace{2mm}\text{ with}\hspace{2mm}d_{\infty}|_{\Omega_{E/X}^{\bullet}\subset \mathrm{Jet}^{\infty}(\Omega_{E/X})}=d.$$
Since $d^2=0$ we have $d_{\infty}^2=0,$ and $\mathrm{Jet}^{\infty}\big(\Omega_{E/X}^{\bullet})$ is naturally a differential graded commutative algebra. One may show that
$\Omega_{\mathcal{A}}^{\bullet}\cong \mathrm{Jet}^{\infty}\big(\Omega_{E/X}^{\bullet}),$
is both an isomorphism of dg-algebras and $\mathcal{A}^{\ell}$-modules.
This can be seen, for instance, by using the sequence
$$0\rightarrow p_{\infty}^*\Omega_{E/X}^1\rightarrow \Omega_{Spec_{\mathcal{D}}(\mathcal{A})/X}^1\rightarrow \Omega_{Spec_{\mathcal{D}}(\mathcal{A})/E}^1\rightarrow 0.$$

If $\Omega_{\mathcal{A}}^1$ is locally of finite $\mathcal{A}[\D]$-presentation, for example if $\mathcal{A}$ is $\D$-smooth, there are operations
$$ev_2:\Omega_{\mathcal{A}}^1\boxtimes\Theta_{\mathcal{A}}^{\ell}\rightarrow \Delta_*\mathcal{A},\hspace{1mm} coev_2:\Delta_*\mathcal{A}\rightarrow \Theta_{\mathcal{A}}^{\ell}\boxtimes \Omega_{\mathcal{A}}^1.$$
The local Lie bracket $[-,-]:\Theta_{\mathcal{A}}\boxtimes\Theta_{\mathcal{A}}\rightarrow \Delta_*\Theta_{\mathcal{A}},$ induces a $1$-shifted local Poisson bracket 
$$\{-,-\}:Sym_{\mathcal{A}}^p(\Theta_{\mathcal{A}}[1])\boxtimes Sym_{\mathcal{A}}^q(\Theta_{\mathcal{A}}[1])\rightarrow \Delta_* Sym_{\mathcal{A}}^{p+q-1}(\Theta_{\mathcal{A}}[1]).$$
Finally, there is a natural local Lie derivative operation $\mathcal{L}:\Theta_{\mathcal{A}}\boxtimes\mathcal{A}^r\rightarrow \Delta_*\mathcal{A}^r,$ in $\mathcal{D}_{X\times X}^{op}$-modules. These make $\Theta_{\mathcal{A}}^{\ell}$ into a local Lie algebroid over $\mathcal{A}$ whose bracket is locally completely determined on elements $\varphi=\varphi^{\alpha}\partial_{\alpha}$ and $\psi\in \psi^{\beta}\partial_{\beta}$ by functions
$$[\varphi,\psi]^{\beta}=\sum [\E_{\varphi}]_{\sigma}^{\alpha}\frac{\partial \varphi^{\beta}}{\partial u_{\sigma}^{\alpha}}-(\E_{\psi})_{\sigma}^{\alpha}\frac{\partial \varphi^{\alpha}}{\partial u_{\sigma}^{\alpha}},\hspace{3mm}[\E_{\varphi}]_{\sigma}^{\alpha}:=(\D_X)_{\sigma}\bullet(\varphi^{\alpha}).$$

All constructions carry over to arbitrary algebraic $\D$-spaces $\mathcal{X}$, due to their étale local nature. That is, taking $\Omega_{\mathcal{A}}^1$ as the local model on affine $\D_X$-schemes $U=Spec_{\mathcal{D}_X}(\mathcal{A}),$ then define $1$-forms on any functor $\mathcal{X}$ by specifying for every morphism $x:h_{U_x}^{\mathcal{D}}\rightarrow \mathcal{X}$ a $1$-form $[x^*\omega]\in \Omega_{U_x}^1$ such that for all morphisms $f:x\rightarrow y$ (over $\mathcal{X}$) i.e. commuting diagrams,
\[
\begin{tikzcd}
h_{U_x}^{\mathcal{D}}\arrow[d,"f"] \arrow[r,"x"] & \mathcal{X}
\\
h_{U_y}^{\mathcal{D}}\arrow[ur,"y"] & 
\end{tikzcd}\hspace{2mm}\text{then }\hspace{1mm} f^*[y^*\omega]\cong [x^*\omega].
\]
Proceeding similarly one may define differential $p$-forms on spaces $\mathcal{X},$ for $p\geq 1.$
\subsubsection{The $\D_X$-linear derived de Rham differential}
\label{sssec: D de Rham diff}
The homotopical analogs of $1$-forms as defined in \cite{KRY,KSYI} are obtained via taking derived functors,
$$\mathbb{L}\Omega^1(-):Ho(\cdga_{\mathcal{A}/})\rightarrow Ho(\DG(\mathcal{A})).$$
One may then set 
$\mathbb{T}_{\mathcal{B}}:=\mathbb{D}_{\mathcal{A}}^{ver}(\mathbb{L}\Omega_{\mathcal{B}}^1).$
\vspace{1mm}

The $\D_X$-linear derived de Rham differential can be straightforwardly defined using local operations (\ref{eqn: Holocals}). It is the element in $\mathbb{R}\mathcal{P}_{[1]}^*\big(\mathcal{A},\mathbb{L}\Omega_{\mathcal{A}}^1),$ satisfying the obvious graded Leibniz rule. In particular, it is the universal graded derivation (respecting all internal cohomological differentials) from $\mathcal{A}$ to the dg-$\mathcal{A}[\mathcal{D}]$-module $\mathbb{L}\Omega_{\mathcal{A}}^1$ which is additionally $\mathcal{D}$-linear.
For the case of free $\D$-algebras, it is simple. Namely, consider $\mathbb{L}_{\mathsf{Free}_{\mathcal{D}}(\mathcal{M})}\simeq \mathsf{Free}_{\mathcal{D}}(\mathcal{M})\otimes\mathcal{M}$ as an object of $\mathsf{Free}_{\mathcal{D}}(\mathcal{M})-\DG.$ View $\mathsf{Free}_{\mathcal{D}}^0(\mathcal{M})$ by omitting the $0$-th component and so the corresponding (derived) de Rham differential $d_{dR}:\mathsf{Free}_{\mathcal{D}}^0(\mathcal{M})\rightarrow \mathsf{Free}_{\mathcal{D}}(\mathcal{M})\otimes\mathcal{M}.$

\subsection{De Rham Algebras}
\label{ssec: De Rham Algebras}
We now extend the $\D$-linear construction of Kähler forms, or better yet the $\D$-geometric cotangent complex, to its full (derived $\D$-geometric) de Rham algebra. 

Essentially, the object 
 $\mathsf{DR}^{\mathcal{D}}(\mathcal{A})$ we describe is the de Rham complex with coefficients in the universal complex of integral forms $\mathcal{I}_{*,X}$ ($\mathbb{R}\mathcal{H}om_{\mathcal{D}}$ of the Berezinian complex) if $X$ is a super-variety \cite{PenkovSuperD}, or in the de Rham algebra $\Omega_X^*$ of $X$, in the even case. 
\begin{remark}
 We do not discuss any details about the super-setting, but mention it only due to its physical relevance in modelling super-symmetric field theories via $\D$-geometry. 
 \end{remark}
 
 The variational de Rham complex of $\mathcal{A}^{\ell}$ is the $\D$-de Rham algebra for the underlying $\mathcal{A}[\D_X]$-module $\Omega_{\mathcal{A}}^*=Sym(\Omega_{\mathcal{A}}^1[1])$ (décalage). 

We will omit the explicit pull-back via $p_{\infty}$ for objects living over $X,$ for simplicity.
 \begin{definition}
\label{Variational de Rham complex definition}
    \normalfont Fix a left $\mathcal{A}[\mathcal{D}_X]$-module $\mathcal{M}.$ 
   The \emph{de Rham complex of $\mathcal{M}$} (over $\mathcal{A})$ is
   $$\mathsf{DR}_{\mathcal{A}}^{\mathcal{D}}(\mathcal{M}):=(\mathcal{A}\otimes_{\mathcal{O}_X}\omega_X)\otimes_{\mathcal{A}\otimes_{\mathcal{O}_X}\mathcal{D}_X}^{\mathbb{L}}\mathcal{M},$$ with obvious replacements of canonical sheaf with sheaves of integral forms when $X$ is a super-variety.
In this case the $\D$-\emph{geometric de Rham algebra} of $\mathcal{A}^{\ell}$ is 
\begin{equation}
\label{eqn: de Rham algebra A}
\mathsf{DR}^{\mathcal{D}}(\mathcal{A}):=\mathsf{DR}_{\mathcal{O}_X}^{\mathcal{D}}(\mathcal{A})=\big(\mathcal{I}_{*,X}\otimes_{\mathcal{O}_X}\mathcal{D}_X[n]\big)\otimes_{\mathcal{D}_X}^{\mathbb{L}}\mathcal{A}^{\ell}.
\end{equation}
The \emph{variational de Rham complex of $\mathcal{A}$} is  $\mathsf{DR}^{var}(\mathcal{A}^{\ell}):=\mathsf{DR}_{\mathcal{O}_X}^{\mathcal{D}}(\Omega_{\mathcal{A}}^*).$
\end{definition}
The (central) de Rham cohomology ($H^0$ of de Rham complex) is $$h(\mathcal{M}):=\Omega_X^n\otimes_{\mathcal{D}_X}\mathcal{M},$$
and to $\mathcal{A}[\mathcal{D}_X]$-modules $\mathcal{M}$ is
$$h_{\mathcal{A}}(\mathcal{M}):=H^0\big(\mathsf{DR}_{\mathcal{A}}^{\mathcal{D}}(\mathcal{M})\big).$$

If $\mathcal{A}$ is a left $\D_X$-algebra, we can consider the central cohomology of also its right structure as $h^r(\mathcal{A}):=H^n(\mathcal{A}^r\otimes_{\mathcal{D}_X}\mathcal{O}_X).$

Formation of Kähler differentials and its subsequent wedge-powers are étale local, Definition \ref{Variational de Rham complex definition} defines a functor
$$\mathsf{DR}^{var}(-):Comm(\mathcal{D}_X-mod)_{et}\rightarrow cdga_{\mathcal{D}_X}.$$
The weight $p$-piece of the variational de Rham algebra is
\begin{equation}
\label{eqn: Weight p}
\mathsf{DR}^{var}(\mathcal{A}^{\ell})(p)=\mathsf{DR}^{var}\big(\Omega_{\mathcal{A}}^p\big)=\big(\mathcal{I}_{*,X}\otimes \mathcal{D}_X[n]\big)\otimes_{\mathcal{D}_X}^{\mathbb{L}}\Omega_{\mathcal{A}}^p.
\end{equation}
If $\mathcal{X}$ is an algebraic $\D_X$-space its variational de Rham complex is 
  $$\mathsf{DR}^{var}(\mathcal{X}):=\varprojlim_{\mathrm{Spec}_{\mathcal{D}}(\mathcal{A})\rightarrow \mathcal{X}}\mathsf{DR}^{var}\big(\mathrm{Spec}_{\mathcal{D}}(\mathcal{A}^{\ell})\big).$$
We are interested in the associated cohomology spaces, denoted by
$$h^*\big(\mathsf{DR}(\mathcal{A})\big):=H^*\big(\mathcal{I}_{*,X}\otimes\mathcal{D}_X[n]\otimes^{\mathbb{L}}\mathcal{A}\big),$$
as well as 
\begin{equation}
    \label{eqn: Variational cohomologies}
h^{*,\mathrm{var}}\big(\mathcal{A}^{\ell}\big):=h^*\big(\mathsf{DR}^{var}(\mathcal{A}^{\ell})\big):=H^*\bigg(\big(\mathcal{I}_{*,X}\otimes_{\mathcal{O}_X}\mathcal{D}_X[n]\big)\otimes_{\mathcal{D}_X}^{\mathbb{L}}\Bigwedge_{\mathcal{A}}^*\Omega_{\mathcal{A}}^1\bigg).
\end{equation}
Spaces (\ref{eqn: Variational cohomologies}) are called the \emph{variational cohomologies} of $\mathcal{A}.$
\begin{proposition}
\label{prop: 0,1 var h}
Suppose that $\mathcal{A}^{\ell}:=\mathrm{Jet}^{\infty}\big(\mathcal{O}_E^{\mathrm{alg}}\big)$ for a morphism of locally ringed spaces $(E,\mathcal{O}_E)\rightarrow (X,\mathcal{O}_X).$ Then:
\begin{enumerate}
    \item $h^{0,\mathrm{var}}(\mathcal{A}^{\ell})(0)\cong H^0\big(\omega_X\otimes_{\mathcal{D}_X}^{\mathbb{L}}\mathcal{A}^{\ell}\big)=\omega_X\otimes_{\mathcal{D}_X}\mathcal{A}^{\ell},$ (in degree $n$);
    
    \item $h^{0,\mathrm{var}}(\mathcal{A}^{\ell})(1)\cong h(\Omega_{\mathcal{A}}^1)\cong \mathrm{Hom}_{\mathcal{A}^{\ell}[\mathcal{D}_X]}\big(\omega_X^{-1}\otimes\Theta_{\mathcal{A}},\mathcal{A}\big)$
    
\end{enumerate}
\end{proposition}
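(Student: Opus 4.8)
The plan is to read off the weight-$0$ and weight-$1$ components of the variational de Rham complex from (\ref{eqn: Weight p}) and to compute their $H^0$, the common engine being a resolution of $\omega_X$. In the even case $\mathcal{I}_{*,X}=\Omega_X^*$, so for each $p$ we must evaluate the top cohomology of
$$\mathsf{DR}^{var}(\mathcal{A}^{\ell})(p)=\big(\Omega_X^*\otimes_{\mathcal{O}_X}\mathcal{D}_X[n]\big)\otimes_{\mathcal{D}_X}^{\mathbb{L}}\Bigwedge_{\mathcal{A}}^p\Omega_{\mathcal{A}}^1.$$
First I would recall that $\Omega_X^*\otimes_{\mathcal{O}_X}\mathcal{D}_X[n]$, with its de Rham differential, is the Spencer--de Rham resolution of $\omega_X$ as a right $\mathcal{D}_X$-module; each term $\Omega_X^j\otimes_{\mathcal{O}_X}\mathcal{D}_X$ is induced, hence flat for $\otimes_{\mathcal{D}_X}$ and adapted to the derived tensor product. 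Therefore $\mathsf{DR}^{var}(\mathcal{A}^{\ell})(p)\simeq \omega_X\otimes_{\mathcal{D}_X}^{\mathbb{L}}\Bigwedge_{\mathcal{A}}^p\Omega_{\mathcal{A}}^1$, and unwinding the induced terms identifies this with the relative de Rham complex $\Omega_X^{\bullet}\otimes_{\mathcal{O}_X}\Bigwedge_{\mathcal{A}}^p\Omega_{\mathcal{A}}^1[n]$.

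For (1) I take $p=0$, where $\Bigwedge^0=\mathcal{A}^{\ell}$. Computing $H^0$ of the shifted complex is computing the top (degree-$n$) de Rham cohomology, i.e. the cokernel of $\Omega_X^{n-1}\otimes_{\mathcal{O}_X}\mathcal{A}^{\ell}\to \Omega_X^n\otimes_{\mathcal{O}_X}\mathcal{A}^{\ell}$. Since $\otimes_{\mathcal{D}_X}$ is right exact this cokernel is exactly $\omega_X\otimes_{\mathcal{D}_X}\mathcal{A}^{\ell}=H^0(\omega_X\otimes_{\mathcal{D}_X}^{\mathbb{L}}\mathcal{A}^{\ell})$, placed in degree $n$, which is the claim; note that no Tor-vanishing is needed because only the top term of the derived tensor contributes to $H^0$.

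For the first isomorphism in (2) I repeat the argument verbatim with $p=1$ and $\Bigwedge^1\Omega_{\mathcal{A}}^1=\Omega_{\mathcal{A}}^1$, obtaining $h^{0,\mathrm{var}}(\mathcal{A}^{\ell})(1)=\omega_X\otimes_{\mathcal{D}_X}\Omega_{\mathcal{A}}^1$, which is $h(\Omega_{\mathcal{A}}^1)$ by the definition $h(\mathcal{M})=\Omega_X^n\otimes_{\mathcal{D}_X}\mathcal{M}$. The second isomorphism is where duality enters. The first step is to observe that the $\omega_X$-twist converts the right $\mathcal{A}[\mathcal{D}_X]$-module $\Theta_{\mathcal{A}}=(\Omega_{\mathcal{A}}^1)^{\circ}$ into a left one, so that $\omega_X^{-1}\otimes_{\mathcal{O}_X}\Theta_{\mathcal{A}}=(\Theta_{\mathcal{A}})^{\ell}=\mathbb{D}_{\mathcal{A}}^{ver}(\Omega_{\mathcal{A}}^1)$ is precisely the inner Verdier dual of Definition \ref{Defin: Inner Verdier Duality}. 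The target identity then reads
$$h(\Omega_{\mathcal{A}}^1)=\omega_X\otimes_{\mathcal{D}_X}\Omega_{\mathcal{A}}^1\;\cong\;\mathrm{Hom}_{\mathcal{A}^{\ell}[\mathcal{D}_X]}\big(\mathbb{D}_{\mathcal{A}}^{ver}(\Omega_{\mathcal{A}}^1),\mathcal{A}\big),$$
which is the $\mathcal{D}$-geometric incarnation of the classical fact that the de Rham functor $\omega_X\otimes_{\mathcal{D}_X}(-)$ agrees at $H^0$ with solutions of the (Verdier) dual, with $\mathcal{O}_X$ replaced by $\mathcal{A}$ and $\mathbb{D}$ by $\mathbb{D}_{\mathcal{A}}^{ver}$.

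I expect this last identity to be the main obstacle. The cleanest route uses that $\mathcal{A}=\mathrm{Jet}^{\infty}(\mathcal{O}_E)$ is $\mathcal{D}$-smooth, so $\Omega_{\mathcal{A}}^1\cong \mathcal{A}[\mathcal{D}_X]\otimes_{\mathcal{A}}p_{\infty}^*\Omega_{E/X}^1$ is an induced module for which the biduality $(\Theta_{\mathcal{A}})^{\circ}\simeq\Omega_{\mathcal{A}}^1$ holds. One then reduces both sides to spaces of differential operators in total derivatives via Proposition \ref{prop: CDiffs}, matching the tensor--hom adjunction for $\mathcal{A}[\mathcal{D}_X]$ against the biduality pairing. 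The delicate point is twofold: tracking the $\omega_X^{\pm1}$-twists that interchange left and right $\mathcal{D}$-module structures, and bridging the \emph{plain} $\mathcal{D}_X$-linear de Rham coinvariants defining $h$ with the \emph{$\mathcal{A}[\mathcal{D}_X]$-linear} internal Hom on the right-hand side — which is exactly where $\mathcal{D}$-smoothness and the induced presentation of $\Omega_{\mathcal{A}}^1$ are used to guarantee that the comparison is an isomorphism rather than merely a map.
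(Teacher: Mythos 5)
The paper states Proposition \ref{prop: 0,1 var h} without any proof, so there is no argument of the authors' to compare against; your computation is correct and is exactly the one the surrounding machinery presupposes (Spencer resolution of $\omega_X$ by induced right $\mathcal{D}_X$-modules for the derived tensor, right-exactness of $\otimes_{\mathcal{D}_X}$ for $H^0$, and the induced-module presentation $\Omega_{\mathcal{A}}^1\cong\mathcal{A}[\mathcal{D}_X]\otimes_{\mathcal{A}}p_{\infty}^*\Omega_{E/X}^1$ from $\mathcal{D}$-smoothness for the duality step). You have also correctly isolated the only genuinely delicate point, namely that the right-hand side of (2) is $\mathrm{Hom}$ into $\mathcal{A}$ rather than into $\mathcal{A}[\mathcal{D}_X]$, so it computes the coinvariants $h(\Omega_{\mathcal{A}}^1)=\omega_X\otimes_{\mathcal{D}_X}\Omega_{\mathcal{A}}^1$ rather than reproducing the biduality statement; for induced modules both sides reduce to $\omega_X\otimes_{\mathcal{O}_X}p_{\infty}^*\Omega_{E/X}^1$ via the free--forgetful adjunction, as you indicate.
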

The homotopical bi-complex, rather its derived description, is to be denoted by either $\mathbb{L}\mathsf{DR}^{var}(-)$ or simply $\mathcal{V}ar$ (as in Sect. \ref{sec: D-Geometric DerVar}) and is obtained by the functor
$$\mathcal{S}ym_{\mathcal{A}}\circ\mathbb{L}\Omega^1(-):Ho(\cdga_{\mathcal{A}/})\rightarrow Ho(\DG(\mathcal{A}))\rightarrow Ho(\cdga_{\mathcal{A}/}^{gr}),$$
where $\mathcal{S}ym_{\mathcal{A}}$ is the derived symmetric algebra functor, given in the usual way: 
$$\mathcal{S}ym_{\mathcal{A}}^n(-)\simeq (-)_{\Sigma_n}\circ \otimes_{\mathcal{A}}^{\mathbb{L},n}:Ho(\DG(\mathcal{A}))\rightarrow Ho(\DG(\mathcal{A})),$$
e.g. $\mathcal{S}ym_{\mathcal{A}}^2(\mathcal{M}^{\bullet})\simeq (Q\mathcal{M}\otimes_{\mathcal{A}}Q\mathcal{M})_{\Sigma_2},$ with $Q$ the cofibrant replacement functor. 
\begin{proposition}
\label{prop: Sym preserves W}
    For all $p\geq 0,$ the functor $Sym_{\mathcal{A}}^p(-):\DG(\mathcal{A})\rightarrow \DG(\mathcal{A}),$ preserves quasi-isomorphisms between cofibrant $\mathcal{A}[\mathcal{D}_X]$-modules. 
\end{proposition}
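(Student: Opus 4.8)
The plan is to exploit the characteristic-zero hypothesis to present $\mathrm{Sym}_{\mathcal{A}}^p$ as a \emph{natural retract} of the $p$-fold tensor power functor $(-)^{\otimes_{\mathcal{A}}p}$, and then to reduce the preservation of quasi-isomorphisms to the homotopy flatness of cofibrant $\mathcal{A}[\mathcal{D}_X]$-modules with respect to $\otimes_{\mathcal{A}}$. This is the standard strategy for symmetric powers in a $\mathbb{Q}$-linear monoidal model category, and the content specific to the present setting is entirely concentrated in the flatness step.

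First I would record that, since $\mathrm{char}(k)=0$, the symmetrization operator $e_p:=\tfrac{1}{p!}\sum_{\sigma\in\Sigma_p}\sigma$ is an idempotent endomorphism of $\mathcal{M}^{\otimes_{\mathcal{A}}p}$, natural in $\mathcal{M}$, whose image is the invariants $(\mathcal{M}^{\otimes_{\mathcal{A}}p})^{\Sigma_p}$. The norm map identifies these invariants with the coinvariants $(\mathcal{M}^{\otimes_{\mathcal{A}}p})_{\Sigma_p}=\mathrm{Sym}_{\mathcal{A}}^p(\mathcal{M})$, so $\mathrm{Sym}_{\mathcal{A}}^p(\mathcal{M})$ is naturally a direct summand of $\mathcal{M}^{\otimes_{\mathcal{A}}p}$ via the inclusion--projection pair $(i_{\mathcal{M}},r_{\mathcal{M}})$ with $r_{\mathcal{M}}\circ i_{\mathcal{M}}=\mathrm{id}$. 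For a quasi-isomorphism $f\colon\mathcal{M}\to\mathcal{N}$ between cofibrant modules, naturality of $e_p$ exhibits $\mathrm{Sym}_{\mathcal{A}}^p(f)$ as a retract of $f^{\otimes_{\mathcal{A}}p}$ in the arrow category. As the class of quasi-isomorphisms satisfies $2$-out-of-$3$ and is closed under retracts in any model category, it then suffices to prove that $(-)^{\otimes_{\mathcal{A}}p}$ sends quasi-isomorphisms between cofibrant $\mathcal{A}[\mathcal{D}_X]$-modules to quasi-isomorphisms.

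Next I would reduce the $p$-fold power to the bilinear case. Using that $\DG(\mathcal{A})$ is a symmetric monoidal model category for $\otimes_{\mathcal{A}}$ with unit $\mathcal{A}$, the pushout-product axiom yields two facts: the tensor product of cofibrant objects is cofibrant, and for any cofibrant $\mathcal{P}$ the functor $(-)\otimes_{\mathcal{A}}\mathcal{P}$ is left Quillen, hence by Ken Brown's lemma carries quasi-isomorphisms between cofibrant objects to quasi-isomorphisms. Given such an $f\colon\mathcal{M}\to\mathcal{N}$, I would write $f^{\otimes_{\mathcal{A}}p}$ as the composite of the maps $\mathrm{id}_{\mathcal{N}^{\otimes i}}\otimes f\otimes\mathrm{id}_{\mathcal{M}^{\otimes(p-1-i)}}$ for $i=0,\dots,p-1$. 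Each flanking factor is a tensor power of a cofibrant object, hence cofibrant, so each of these maps is a quasi-isomorphism by the previous sentence, and their composite $f^{\otimes_{\mathcal{A}}p}$ is therefore a quasi-isomorphism.

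The main obstacle is the input underlying the monoidal model structure, namely the homotopy flatness of cofibrant $\mathcal{A}[\mathcal{D}_X]$-modules as $\mathcal{A}$-modules. Here I would argue along the generating (acyclic) cofibrations, which are built from induced modules $\mathrm{ind}_{\mathcal{A}[\mathcal{D}_X]}(\mathcal{F})=\mathcal{A}[\mathcal{D}_X]\otimes_{\mathcal{O}_X}\mathcal{F}$. The underlying $\mathcal{A}$-module of such an object is $\mathcal{A}\otimes_{\mathcal{O}_X}\mathcal{D}_X\otimes_{\mathcal{O}_X}\mathcal{F}$; since $\mathcal{D}_X$ is locally free, hence flat, over $\mathcal{O}_X$ by the order filtration with $\mathrm{gr}\,\mathcal{D}_X\cong\mathrm{Sym}_{\mathcal{O}_X}\Theta_X$, and $\mathcal{F}$ is $\mathcal{O}_X$-flat, this base change is flat over $\mathcal{A}$. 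The underlying $\mathcal{A}$-module of an arbitrary cofibrant object is then a retract of a transfinite cell complex on such flats, and is again $\mathcal{A}$-flat; consequently $\otimes_{\mathcal{A}}$ preserves quasi-isomorphisms in each variable against a cofibrant object, and the pushout-product and unit axioms follow by the usual cellular induction. I would invoke the model and monoidal structures already established in \cite{KRY,KSYI} to keep this verification brief, since the only genuinely new bookkeeping is the interaction of the $\mathcal{D}_X$-action with $\otimes_{\mathcal{A}}$, which is governed by the Leibniz rule and does not affect the underlying $\mathcal{A}$-module computation.
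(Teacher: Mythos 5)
Your argument is correct. Note that the paper states Proposition \ref{prop: Sym preserves W} without any proof; the only hint it gives is the immediately preceding definition $\mathcal{S}ym_{\mathcal{A}}^n(-)\simeq (-)_{\Sigma_n}\circ\otimes_{\mathcal{A}}^{\mathbb{L},n}$, which is precisely the shape of argument you supply: in characteristic zero the averaging idempotent exhibits $Sym_{\mathcal{A}}^p(f)$ as a retract of $f^{\otimes_{\mathcal{A}}p}$, and the problem reduces to the tensor-power case, which follows from the monoidal model structure on $\DG(\mathcal{A})$ (equivalently, from the $\mathcal{A}$-flatness of the underlying modules of cofibrant objects, which you correctly trace back to the generating cofibrations built from induced modules $\mathcal{A}[\mathcal{D}_X]\otimes_{\mathcal{O}_X}\mathcal{F}$ and the local freeness of $\mathcal{D}_X$ over $\mathcal{O}_X$). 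The one place to be slightly careful is that the unit $\mathcal{A}$ is generally not cofibrant in the projective model structure on $\DG(\mathcal{A})$, but your argument never needs this: cofibrancy of $\mathcal{M}\otimes_{\mathcal{A}}\mathcal{N}$ for cofibrant $\mathcal{M},\mathcal{N}$ follows from the pushout-product axiom alone, so the factorization of $f^{\otimes_{\mathcal{A}}p}$ into single-slot maps goes through. In short, your proposal is a complete and correct proof of a statement the paper leaves unproved, and it is the argument the paper's surrounding definitions presuppose.
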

The $\infty$-functor arising from Proposition \ref{prop: Sym preserves W} is denoted as usual $Sym_{\mathcal{A}}^*.$  
We have that,
\begin{equation}
\label{eqn: Derived Variational Algebra}
\mathsf{DR}_{\mathcal{B}}(\mathcal{S}ym(\mathbb{L}\Omega_{\mathcal{B}}^1[1])\big)\simeq p_{\infty}^*\omega_X\otimes_{\mathcal{A}\otimes\mathcal{D}_X}^{\mathbb{L}}\mathcal{S}ym(\mathbb{L}\Omega_{\mathcal{B}}^1[1])\big)\simeq p_{\infty}^*\omega_X\otimes_{\mathcal{A}\otimes\mathcal{D}_X}^{\mathbb{L}}\Bigwedge^*\mathbb{L}_{\mathcal{B}},
\end{equation}
where the wedge is necessarily understood in the derived sense and $p_{\infty}:Spec_{\mathcal{D}}(\mathcal{A})\rightarrow X$ is the structure projection.
\vspace{1mm}

We now globally extend (\ref{Definition: (Formal) Algebraic D-Spaces}).
Namely, let $\EQ$ be an algebraic $\D_X$-space flat over $X.$ There is a complex of left $\mathcal{O}_{\EQ}[\mathcal{D}_X]$-modules,
$$\mathsf{DR}(\EQ/X)=\bigoplus_i \mathsf{DR}^i(\EQ/X), \hspace{2mm}\mathsf{DR}^i(\EQ/X):=\Omega_{\EQ/X}^i.$$
Passing to $\D^{op}$-modules produces a complex of sheaves $h\big(\mathsf{DR}(\EQ/X)\big)$ on the étale site of $\EQ.$ 

Viewed as a $\D_X$-module we can take the $\D$-module de Rham complex:
\begin{equation}
    \label{eqn: D-Module BiComplex}
    DR_X\big(\mathsf{DR}(\EQ/X)\big)\simeq\big[\mathcal{O}_{\EQ}\rightarrow \Omega_{\EQ}^1\rightarrow \ldots\rightarrow \Omega_{\EQ}^p\rightarrow \Omega_{\EQ}^{p+1}\rightarrow\cdots\big].
    \end{equation}
    The right-hand side of (\ref{eqn: D-Module BiComplex}) is $\Omega_{\EQ}^*$ and the components of $d_{\EQ}$ corresponding to $DR_X$ and $\mathsf{DR}(\EQ/X)$ are to be denoted $d_H:=d_X,$ and $d^v:=d_{\EQ/X},$ and are referred to as the `horizontal' and `vertical' components, respectively.
    
    The $\D$-connection induces a splitting $\Omega_{\EQ}^1\simeq\Omega_{\EQ/X}^1\oplus p^*\Omega_X^1$ with $\Omega_{\EQ}^n\simeq \bigoplus_{p+q=n}\Omega_{\EQ/X}^p\otimes p^*\Omega_X^q,$ so that
    \[
\begin{tikzcd}
    & \arrow[dl, "d_h",labels=above left] \Omega_{\EQ/X}^p\otimes p^*\Omega_X^q\arrow[dr,"d^v"] & 
    \\
    \Omega_{\EQ/X}^p\otimes p^*\Omega_X^{q+1} & & \Omega_{\EQ/X}^{p+1}\otimes p^*\Omega_X^{q},
\end{tikzcd}
    \]
giving a clear realization of the $\D$-module analog of the bi-complex.
 Fixing a classical solution $\varphi:X\rightarrow Spec_{\mathcal{D}}(\mathcal{A})$ we obtain a complex of $\mathcal{D}_X$-modules $\varphi^*\mathbb{L}_{\mathcal{A}/\mathcal{O}_X}^{\bullet}.$ The corresponding de Rham complex is
$$DR_X(\varphi^*\mathbb{L}_{\mathcal{A}/\mathcal{O}_X}^{\bullet})=\bigoplus_{i=p+q}\Omega_X^{d_X+p}\otimes \varphi^*\mathbb{L}_{\mathcal{A}/\mathcal{O}_X}^{q},\hspace{2mm} D_{dR}:=d_1\pm \delta.$$
It is the total complex of the double complex whose $(p,q)$-term is $\Omega_X^{d_X+p}\otimes\varphi^*\mathbb{L}_{\mathcal{A}/\mathcal{O}_X}^q$ with differentials $d_1^q:=(-1)^{d_X}\nabla^q,$ and $d_2^p:=d_{dR}^p\otimes id.$ 
If $X$ is a super-variety:
$$\mathsf{DR}^{var}(\varphi^*\mathbb{L}_{\mathcal{A}}^{\bullet}):=\big(\mathcal{I}_{n-m-*,X}\otimes_{\mathcal{O}_X}\mathcal{D}_X\big)\otimes_{\mathcal{D}_X}^{\mathbb{L}}\varphi^*\mathbb{L}_{\mathcal{A}}^{\bullet},$$

For example, consider the free derived $\D$-algebra (\ref{sssec: D de Rham diff}). Then $\mathcal{A}\simeq Free_{\mathcal{D}_X}(\mathcal{M}^{\bullet})$ for a coherent $\D$-module $\mathcal{M}$ and we have that 
$$DR_X\big(\varphi^*\mathbb{L}_{Free_{\mathcal{D}_X}(\mathcal{M}^{\bullet})/\mathcal{O}_X}^{\bullet})\simeq \bigoplus_{i=p+q}\Omega_X^{d_X+p}\otimes \varphi^*\big(Free_{\mathcal{D}}(\mathcal{M}^{\bullet})\otimes_{\mathcal{O}_X}\mathcal{M}^{\bullet})^q.$$

\begin{proposition}
Suppose $\EQ$ is an algebraic $\D_X$-space, flat over $X$ for which $\Omega_{\EQ/X}^1$ is a vector $\D$-bundle on $\EQ.$ Then 
$H^0\big(DR_X(\Omega_{\EQ/X}^i)\big)=0,$ for all $i>0$ and $H^0\big(DR_X(\mathcal{O}_{\EQ})\big)=H^0\big(\mathsf{DR}(\EQ)\big)$.
\end{proposition}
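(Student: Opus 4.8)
The plan is to reduce both statements to a single fact—that for $i>0$ the horizontal de Rham complex of the vertical form sheaf $\Omega^i_{\EQ/X}$ is a resolution—and then to propagate it through the bicomplex (\ref{eqn: D-Module BiComplex}) whose total complex is $\mathsf{DR}(\EQ)$. Here $DR_X(\Omega^i_{\EQ/X})$ is the $i$-th row, i.e. the complex $\Omega^i_{\EQ/X}\to p^*\Omega^1_X\otimes\Omega^i_{\EQ/X}\to\cdots\to p^*\omega_X\otimes\Omega^i_{\EQ/X}$ with the horizontal differential $d_h$, so that $H^0(DR_X(-))$ is the kernel of $d_h$ on the purely vertical forms.

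First I would prove the vanishing $H^0(DR_X(\Omega^i_{\EQ/X}))=0$ for $i>0$. Because $\Omega^1_{\EQ/X}$ is a vector $\D$-bundle it is an induced $\mathcal{A}[\mathcal{D}_X]$-module—exactly as $\Theta_{\mathcal{A}}$ and $\Omega^1_{\mathcal{A}}$ are in Subsection \ref{ssec: Local Calculus}—and since $\EQ$ is flat over $X$ the algebra $\mathcal{A}$ is $\mathcal{O}_X$-flat, so $\Omega^1_{\EQ/X}$ is flat as a $\mathcal{D}_X$-module. The wedge powers $\Omega^i_{\EQ/X}=\Bigwedge^i_{\mathcal{A}}\Omega^1_{\EQ/X}$ are, in characteristic zero, summands of the $\mathcal{A}$-tensor powers of $\Omega^1_{\EQ/X}$, and one checks they remain $\mathcal{D}_X$-flat. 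For any $\mathcal{D}_X$-flat $\mathcal{M}$ one has $\omega_X\otimes^{\mathbb{L}}_{\mathcal{D}_X}\mathcal{M}\simeq\omega_X\otimes_{\mathcal{D}_X}\mathcal{M}$ with no higher Tor, so $DR_X(\mathcal{M})$ has cohomology concentrated in the single top degree $d_X$; as $d_X>0$ this gives $H^0(DR_X(\Omega^i_{\EQ/X}))=0$. This is the $\D$-geometric form of the algebraic Poincaré lemma for $d_h$, and where a hands-on argument is wanted one may instead exhibit the contracting homotopy for $d_h$ directly in the Cartan coordinates $\omega^\alpha_\sigma$ of Subsection \ref{ssec: Local Calculus}.

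The second statement follows at once. A class in $H^0(\mathsf{DR}(\EQ))$ is an $f\in\mathcal{O}_{\EQ}$ with $d_hf=0$ and $d^vf=0$, while $H^0(DR_X(\mathcal{O}_{\EQ}))$ imposes only $d_hf=0$; so it suffices to see that $d_hf=0$ forces $d^vf=0$. But then $d_h(d^vf)=-d^v(d_hf)=0$, so $d^vf$ is a $d_h$-closed element of $\Omega^1_{\EQ/X}$ and hence vanishes by the first part. More systematically—and to recover the higher cohomologies—one filters $\mathsf{DR}(\EQ)=\mathrm{Tot}\big(DR_X(\Omega^\bullet_{\EQ/X})\big)$ by vertical degree: the spectral sequence has $E_1^{i,j}=H^j\big(DR_X(\Omega^i_{\EQ/X})\big)$ with $d_1=d^v$, converges because the horizontal degree is bounded by $d_X$, and in total degree $0$ collapses onto the corner $E_1^{0,0}=H^0(DR_X(\mathcal{O}_{\EQ}))$ since the neighbouring term $E_1^{1,0}$ is killed by the first part.

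The real obstacle is the first part for $i\geq2$. Although $\Omega^1_{\EQ/X}$ is induced over $\mathcal{A}[\mathcal{D}_X]$ by hypothesis, its $\mathcal{A}$-linear wedge powers are not freely generated over $\mathcal{A}[\mathcal{D}_X]$ by their order-zero elements—the Leibniz action of the total derivatives mixes the wedge factors—so their $\mathcal{D}_X$-flatness is not manifest and must be deduced from the stability of the induced (hence flat) modules under the tensor operations above. Pinning this down in the $\D$-geometric setting (and, in the super case, with $\omega_X$ replaced by the Berezinian $\mathcal{I}_{*,X}$), together with identifying the surviving top cohomology $\omega_X\otimes_{\mathcal{D}_X}\Omega^i_{\EQ/X}$ with the sheaf of functional forms, is where the substantive work lies; the rest is the formal homological algebra recorded above.
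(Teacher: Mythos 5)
Your argument is essentially the paper's own: the vanishing for $i>0$ comes from $\Omega^i_{\EQ/X}$ being a (summand of an) induced $\mathcal{D}_X$-module, whose horizontal de Rham complex is acyclic away from the top degree, and the identification $H^0(DR_X(\mathcal{O}_{\EQ}))=H^0(\mathsf{DR}(\EQ))$ follows by noting that $d_hf=0$ forces $d^vf$ to be a $d_h$-closed element of $\Omega^1_{\EQ/X}$, hence zero by the first part. The extra bookkeeping you supply (flatness of induced modules, the vertical-degree spectral sequence, and the caveat about wedge powers for $i\geq 2$ being summands of tensor powers of induced modules) only fleshes out the one-line assertion the paper makes and does not change the route.
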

\begin{proof}
Note that $\Omega_{\EQ/X}^i$ is a direct summand of induced $\mathcal{D}$-modules for $i>0.$ Then, since $f\in \mathcal{O}_{\EQ}$ such that $d_{\EQ}(f)\subset \Omega_{\EQ/X}^1\subset \Omega_{\EQ}^1,$ with $d_X(d_{\EQ}f)=0,$ we have $d_{\EQ}f\in H^0\big(DR_X(\Omega_{\EQ/X}^1)\big).$ Thus $d_{\EQ}f=0.$
\end{proof}
Truncating our complexes produces a sheaf parameterizing closed Cartan $2$-forms on $\EQ.$ Indeed, consider the complex of sheaves,
 $$E^{\bullet}:=\tau^{\leq 2}\sigma_{\geq 1}h\big(\mathsf{DR}(\EQ/X)\big).$$
 It is a $2$-term subcomplex with terms,
 $h(\Omega_{\EQ/X}^1),$ and $ ker\big(h(d^v):h(\Omega_{\EQ/X}^2)\rightarrow h(\Omega_{\EQ/X}^3)\big).$
 \begin{proposition}
Complex $E^{\bullet}$ is isomorphic to the single sheaf complex of sheaves 
$h(\Omega_{\EQ/X}^1)\rightarrow h(\Omega_{\EQ/X}^2)^{cl}.$ It determines a sheaf of Picard groupoids on the étale site of $\EQ.$ 
 \end{proposition}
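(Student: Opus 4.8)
First I would establish the displayed isomorphism by simply unwinding the two truncations. Applying the central de Rham cohomology functor $h$ termwise to the vertical complex $\mathsf{DR}(\EQ/X)=\bigoplus_i\Omega_{\EQ/X}^i$ yields the complex of étale sheaves $\cdots\to h(\Omega_{\EQ/X}^{i})\xrightarrow{h(d^v)}h(\Omega_{\EQ/X}^{i+1})\to\cdots$, with $h(\Omega_{\EQ/X}^i)$ in cohomological degree $i$ and differential induced by the vertical differential $d^v=d_{\EQ/X}$. The brutal truncation $\sigma_{\geq 1}$ discards the degree-zero term $h(\mathcal{O}_{\EQ})$ while leaving $h(\Omega_{\EQ/X}^1)$ untouched in degree $1$; the canonical truncation $\tau^{\leq 2}$ then preserves degree $1$, replaces the degree-$2$ term by $\ker\big(h(d^v)\colon h(\Omega_{\EQ/X}^2)\to h(\Omega_{\EQ/X}^3)\big)$, and kills all higher degrees. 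As this kernel is by definition the subsheaf $h(\Omega_{\EQ/X}^2)^{cl}$ of closed Cartan $2$-forms, one reads off a literal isomorphism of complexes of sheaves
$$E^{\bullet}\cong\big[\,h(\Omega_{\EQ/X}^1)\xrightarrow{\ h(d^v)\ }h(\Omega_{\EQ/X}^2)^{cl}\,\big],$$
concentrated in degrees $1$ and $2$ — the first claim. Here I use that $\Omega_{\EQ/X}^1$ is a vector $\D$-bundle, as in the preceding proposition, so that the $h(\Omega_{\EQ/X}^i)$ are honest étale sheaves and the truncation lands in exactly the stated terms.

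Next, to produce the Picard groupoid I would invoke Deligne's dictionary between two-term complexes of abelian sheaves and strictly commutative Picard groupoids. Shifting $E^{\bullet}$ into degrees $[-1,0]$, I associate to each étale map $U\to\EQ$ the groupoid $\mathcal{P}(U)$ whose objects are the sections of $h(\Omega_{\EQ/X}^2)^{cl}$ over $U$ — the closed Cartan $2$-forms — and whose morphisms $\omega\to\omega'$ are the sections $\theta$ of $h(\Omega_{\EQ/X}^1)$ with $h(d^v)\theta=\omega'-\omega$, composed by addition. Fibrewise addition of forms makes $\mathcal{P}(U)$ symmetric monoidal with unit the zero form, and each $\omega$ is invertible with inverse $-\omega$, so $\mathcal{P}(U)$ is a strictly commutative Picard groupoid; by design $\pi_0\mathcal{P}(U)=\mathrm{coker}(h(d^v))$ and $\pi_1\mathcal{P}(U)=\ker\big(h(d^v)\colon h(\Omega_{\EQ/X}^1)\to h(\Omega_{\EQ/X}^2)\big)$.

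Finally I would upgrade the presheaf $U\mapsto\mathcal{P}(U)$ — functorial by restriction of forms — to a sheaf of Picard groupoids. I expect the descent step to be the main obstacle: one must check that objects and morphisms glue along étale covers without higher coherence obstructions. This reduces to the sheaf property of the two terms $h(\Omega_{\EQ/X}^1)$ and $h(\Omega_{\EQ/X}^2)^{cl}$ (a kernel of a morphism of sheaves, hence a sheaf) together with the fact that $h(d^v)$ is a morphism of sheaves; gluing a locally defined morphism is then gluing a $1$-cocycle of sections of $h(\Omega_{\EQ/X}^1)$, which the sheaf condition supplies. The cleanest route is to cite the general equivalence (Deligne, SGA 4 XVIII) that any length-one complex of abelian sheaves on a site determines a Picard stack, and apply it to $E^{\bullet}$. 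Strict commutativity — the absence of nontrivial symmetry data — is automatic, since the monoidal structure is literally addition in sheaves of abelian groups; thus the only content-bearing point is the descent/stack condition.
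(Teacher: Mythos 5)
Your proposal is correct and is the argument the paper intends: the proposition is stated there without proof, the two-term description of $E^{\bullet}$ having already been spelled out verbatim in the preceding sentence, and the Picard-groupoid claim is exactly the standard Deligne correspondence (SGA 4 XVIII) between length-one complexes of abelian sheaves and strictly commutative Picard stacks that you invoke. The one refinement worth recording is that descent for \emph{morphisms} follows from the sheaf property of $h(\Omega_{\EQ/X}^1)$ as you say, whereas effectivity of descent data for \emph{objects} is supplied by stackifying the prestack $U\mapsto\mathcal{P}(U)$ rather than holding on the nose, which is precisely what the cited equivalence builds in.
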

 The sheaf $h(\Omega_{\EQ/X}^2)^{cl}$ is the sheaf of closed $2$-forms with respect to the vertical differential. 

\subsubsection{Characteristic Classes}
\label{sssec: CharClasses}
One of the many uses of understanding the global cohomology of $\D$-spaces via the bi-complex is that we may extract interesting cohomological invariants.
\begin{proposition}
Suppose that $\EQ$ is an étale local $\mathcal{D}$-space of finite presentation. Then for any vector $\mathcal{D}_X$-bundle $\mathcal{V}$ on $\EQ$, there is a well-defined sequence of characteristic classes,
$$ch_p^{\mathcal{D}}(\mathcal{V})\in \mathcal{H}^{2p+1}\big(\EQ_{\bullet},h(\mathsf{DR}^{\geq p}(\EQ/X)\big),p\geq 0.$$
\end{proposition}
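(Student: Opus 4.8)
The plan is to produce $ch_p^{\mathcal{D}}(\mathcal{V})$ by a $\mathcal{D}$-linear Chern--Weil construction, exploiting the fact that a vector $\mathcal{D}_X$-bundle already carries a canonical \emph{flat} connection along $X$ --- namely its $\mathcal{D}_X$-module structure --- so that the relevant invariants are \emph{secondary} in nature, which is what will account for the odd degree $2p+1$. First I would isolate the fibre-direction datum: an $\mathcal{O}_{\EQ}[\mathcal{D}_X]$-linear vertical connection $\nabla^v\colon \mathcal{V}\to \mathcal{V}\otimes_{\mathcal{O}_{\EQ}}\Omega_{\EQ/X}^1$, whose obstruction to global existence is the vertical Atiyah class $\mathrm{at}^v(\mathcal{V})\in \mathrm{Ext}_{\mathcal{O}_{\EQ}[\mathcal{D}_X]}^1\big(\mathcal{V},\mathcal{V}\otimes_{\mathcal{O}_{\EQ}}\Omega_{\EQ/X}^1\big)$. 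Intrinsically one then sets $ch_p^{\mathcal{D}}(\mathcal{V})=\tfrac{1}{p!}\,\mathrm{tr}\big((\mathrm{at}^v)^{p}\big)$, using cyclic invariance of the $\mathcal{D}$-linear trace $\mathrm{tr}\colon \mathcal{E}nd_{\mathcal{O}_{\EQ}[\mathcal{D}_X]}(\mathcal{V})\to \mathcal{O}_{\EQ}$ to land in a class valued in $\Omega_{\EQ/X}^p$, the bottom term of $\mathsf{DR}^{\geq p}(\EQ/X)$.

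Because $\EQ$ is étale local of finite presentation, I would realize this concretely over an étale hypercover $\EQ_\bullet\to \EQ$ by affine $\D_X$-schemes $U_\alpha=\mathrm{Spec}_{\mathcal{D}}(\mathcal{A}_\alpha)$ on which $\mathcal{V}$ is an induced (hence projective) $\mathcal{A}_\alpha[\mathcal{D}_X]$-module, so that local vertical connections $\nabla_\alpha^v$ exist. Forming the vertical curvature $F_\alpha:=(\nabla_\alpha^v)^2\in \mathcal{E}nd(\mathcal{V})\otimes \Omega_{\EQ/X}^2$, the Bianchi identity together with cyclicity of the trace shows that $\mathrm{tr}(F_\alpha^{\,p})\in \Omega_{\EQ/X}^{2p}$ is $d^v$-closed, and I would record that its purely horizontal part vanishes since the horizontal connection (the $\mathcal{D}_X$-structure) is flat, $F^h=0$. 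Applying the central cohomology functor $h(-)$ to the bicomplex $\big(\Omega_{\EQ/X}^{*},\,d^v,d_h\big)$ of Subsect.~\ref{ssec: De Rham Algebras} then places each local representative into $h\big(\mathsf{DR}^{\geq p}(\EQ/X)\big)$.

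The essential step is the gluing/transgression argument that upgrades these local $d^v$-closed forms to a single hypercohomology class. On overlaps the differences $\nabla_\alpha^v-\nabla_\beta^v$ are $\mathcal{E}nd(\mathcal{V})\otimes\Omega_{\EQ/X}^1$-valued, and the standard Chern--Simons transgression $TC_p(\nabla_\alpha,\nabla_\beta)$, satisfying $(d^v+d_h)\,TC_p=\mathrm{tr}(F_\alpha^{\,p})-\mathrm{tr}(F_\beta^{\,p})$, assembles the local data into a Čech--de Rham cocycle for the total differential $D^{Tot}$ of $h(\mathsf{DR}^{\geq p})$ over $\EQ_\bullet$; flatness of the horizontal connection forces this cocycle into cohomological degree $2p+1$ rather than $2p$, exactly as in the theory of secondary characteristic classes. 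Independence of the choice of connections and of the hypercover then follows from a second transgression showing that two such cocycles differ by a $D^{Tot}$-coboundary, so the class $ch_p^{\mathcal{D}}(\mathcal{V})\in \mathcal{H}^{2p+1}\big(\EQ_\bullet,h(\mathsf{DR}^{\geq p}(\EQ/X))\big)$ is well-defined.

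The hard part will be the degree bookkeeping that pins down the total degree as $2p+1$: one must track simultaneously the vertical weight, the Čech degree over $\EQ_\bullet$, and the internal shift built into $h(-)=\Omega_X^n\otimes_{\mathcal{D}_X}^{\mathbb{L}}(-)[n]$, and then verify that the transgression cocycle is genuinely $D^{Tot}$-closed after passing to central cohomology --- i.e. that the requisite $d_h$-primitives exist, which is where flatness of the $\mathcal{D}_X$-connection and the finite-presentation hypothesis (guaranteeing the needed acyclicity and projectivity on charts) are indispensable. A secondary technical point is checking that the trace is strictly $\mathcal{D}$-linear and compatible with the inner duality $\mathbb{D}_{\mathcal{A}}^{ver}$ of Definition~\ref{Defin: Inner Verdier Duality}, so that $\mathrm{tr}((\mathrm{at}^v)^p)$ is intrinsically defined independently of the local trivializations.
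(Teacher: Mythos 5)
Your overall strategy---a $\mathcal{D}$-linear Chern--Weil construction via vertical connections, curvatures and Chern--Simons transgressions over a \v{C}ech hypercover---is a legitimate alternative to what the paper does, and several of its ingredients appear implicitly in the paper's argument (the hypercover, the acyclicity of $h(\Omega^i_{\EQ/X})$ on affines, the reduction to the underlying bundle). But there is a genuine gap at exactly the point you flag as ``the hard part'': the degree count. A \v{C}ech--de Rham cocycle assembled from $\mathrm{tr}(F_\alpha^{\,p})$ in \v{C}ech degree $0$, the transgressions $TC_p(\nabla_\alpha,\nabla_\beta)$ in \v{C}ech degree $1$, and so on, has \emph{total} degree $2p$, and flatness of the horizontal connection does not change that: flatness kills components of the curvature, it does not raise the cohomological degree of the resulting cocycle. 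Your assertion that flatness ``forces this cocycle into cohomological degree $2p+1$'' is therefore unsubstantiated, and the construction as written lands in degree $2p$ of the total complex rather than in $\mathcal{H}^{2p+1}\big(\EQ_\bullet,h(\mathsf{DR}^{\geq p}(\EQ/X))\big)$.

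The mechanism the paper actually uses to produce the odd degree is homological rather than geometric: for $p>0$ the relative forms $\Omega^i_{\EQ/X}$ are direct summands of induced $\mathcal{O}_{\EQ}[\mathcal{D}_X]$-modules, so the projection $DR_X\big(\mathsf{DR}^{\geq p}(\EQ/X)\big)\rightarrow h\big(\mathsf{DR}^{\geq p}(\EQ/X)\big)[-1]$ is a quasi-isomorphism, whence $\mathcal{H}^{k}\big(\EQ_{\bullet},DR_X(\mathsf{DR}^{\geq p}(\EQ/X))\big)\simeq \mathcal{H}^{k-1}\big(\EQ,h(\mathsf{DR}^{\geq p}(\EQ/X))\big)$. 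One then feeds into this a class of total degree $2p+2$ coming from the usual Hodge-filtered Chern classes of the underlying bundle, via the inclusions $\mathsf{DR}_{\EQ}^{\geq p+1}\subset DR_X(\mathsf{DR}_{\EQ/X}^{\geq p})\subset \mathsf{DR}_{\EQ}^{\bullet}$, and the shift $2p+2\mapsto 2p+1$ does the rest. For your approach to close, you would need to work with the $(p+1)$-st, not the $p$-th, Chern--Weil form of the \emph{total} connection $\nabla^h+\nabla^v_\alpha$: since $F^h=0$, the components of $\mathrm{tr}(F^{p+1})$ sit in bidegrees $(r,s)$ with vertical weight $s\geq p+1$, which is what places the cocycle in $DR_X(\mathsf{DR}^{\geq p}(\EQ/X))$ --- so flatness controls the Hodge filtration level, not the total degree --- and then invoke the quasi-isomorphism above to drop from $2p+2$ to $2p+1$. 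Neither the quasi-isomorphism nor the attendant index shift $p\leftrightarrow p+1$ appears in your write-up, and without them the statement is not reached. The remaining items you list ($\mathcal{D}$-linearity of the trace, independence of choices) are genuine but secondary concerns by comparison.
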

\begin{proof}
Let $\EQ_{\bullet}\rightarrow\EQ$ be a $\mathcal{D}$-étale hypercover with $U_{\bullet}\rightarrow X$ an étale hypercover of $X$, such that $\EQ_{\beta}\simeq \bigcup \mathcal{U}_{\beta_i},$ with $\mathcal{U}_{\beta_i}=Spec_{\mathcal{D}_X}(\mathcal{A}_{\beta_i}).$

This data consists of morphisms of simplicial $\D_X$-spaces, $\varphi_{\bullet}:U_{\bullet}\rightarrow \EQ_{\bullet}\rightarrow \EQ,$ which, by choosing a chart $\{U_{\alpha}\}$ amounts to: a collection of morphisms of $\mathcal{D}_{U_{\alpha}}$-schemes specified by $(\varphi_{\alpha})$ together with morphisms of $\mathcal{D}_{U_{\alpha}\times_X U_{\beta}}$-schemes $(\psi_{\alpha\beta})$ for pairs of indices such that $\psi_{\alpha\beta}$ connects $\varphi_{\alpha}|_{U_{\alpha}\times_X U_{\beta}}$ with $\varphi_{\beta}|_{U_{\alpha}\times_X U_{\beta}}$ and satisfies cocycle conditions on higher overlaps.

By our assumptions we may represent $\EQ$ via its totalization $\EQ\simeq Tot(\EQ_{\bullet})$ of the simplicial affine $\mathcal{D}$-scheme determined by $\EQ_n=Spec_{\mathcal{D}}(\mathcal{A}_n^{\ell}),$ where $\mathcal{A}_n^{\ell}\simeq Sym_{\mathcal{O}_X}^*(\mathcal{M}_n[1])$ is the $n$-simplex of a simplicial object in smooth commutative $\mathcal{D}$-algebras \cite[Proposition 3.30--3.32]{KSYI}. In particular each $\mathcal{M}_n$ is a coherent $\mathcal{D}_X$-module.

Consider the sub-complex 
$$\mathsf{DR}^{\geq p}(\EQ/X):=\big[\Omega_{\EQ/X}^p\rightarrow \Omega_{\EQ/X}^{p+1}\rightarrow \cdots\big]\hookrightarrow \mathsf{DR}(\EQ/X),$$ and consider the complex $\mathrm{C}_{\EQ/X}^{\bullet}$ of Cech cochains with coefficients in $h(\mathsf{DR}^{\geq 1}(\EQ/X)\big)$. 

For all 
$i>0,$ one has that 
$$\mathcal{H}^p\big(\EQ_{\beta},h(\Omega_{\EQ/X}^i)\big)=0,$$
and that 
$H^{\bullet}(\mathrm{C}_{\EQ/X})=R\Gamma(\EQ_{\bullet},h(\mathsf{DR}^{\geq 1}(\EQ/X)\big).$
Our goal is to assign to the datum of a vector $\mathcal{D}_X$-bundle $\mathcal{V}$ on $\EQ$, a sequence of characteristic classes,
$$ch_p^{\mathcal{D}}(\mathcal{V})\in \mathcal{H}^{2p+1}\big(\EQ_{\bullet},h(\mathsf{DR}^{\geq p}(\EQ/X)\big),p\geq 0.$$
Considering the projection
$$DR_X\big(\mathsf{DR}^{\geq p}(\EQ/X)\big)\rightarrow h\big(\mathsf{DR}^{\geq p}(\EQ/X)\big)[-1],$$ one can see it is a quasi-isomorphism for all $p>0.$ Moreover unless $p=k=0,$ there are identifications
$$\mathcal{H}^k\big(\EQ_{\bullet},DR_X(\mathsf{DR}^{\geq p}(\EQ/X)\big)\simeq \mathcal{H}^{k-1}\big(\EQ,h(\mathsf{DR}^{\geq p}(\EQ/X)\big).$$
If $\mathcal{V}\simeq \mathcal{F}\otimes\mathcal{D}_X$ corresponds to a $\mathcal{O}_{\EQ}[\mathcal{D}_X]$-module with $\mathcal{F}$ a vector bundle on $\EQ,$ the classes $ch_p^{\mathcal{D}}(\mathcal{V})$ admit a simpler description in terms of the usual Chern classes in de Rham cohomology of $\EQ$ for $\mathcal{F}$ i.e. $ch_p(\mathcal{F})\in H^{2p}(\EQ,\mathsf{DR}_{\EQ}^{\geq p}).$ The first observation being that the product of forms endows $H^{2\bullet}(\EQ,\mathsf{DR}_{\EQ}^{\geq \bullet}),$ with a graded commutative algebra structure.\footnote{The so-called `Secondary wedge product' \cite{VinSecCalc}.}
Natural embeddings $\mathsf{DR}_{\EQ}^{\geq p+1}\subset DR_X(\mathsf{DR}_{\EQ/X}^{\geq p})\subset \mathsf{DR}_{\EQ}^{\bullet}$ furnish a map
$$H^{2p+2}\big(\EQ,\mathsf{DR}_{\EQ}^{\geq p+1}\big)\rightarrow H^{2p+2}\big(\EQ,DR_X(\mathsf{DR}_{\EQ/X}^{\geq p})\big)=H^{2p+1}\big(\EQ,h(\mathsf{DR}_{\EQ/X}^{\geq p})\big).$$
Then, the image of $ch_p(\mathcal{F})$ by $H^{2p}(\EQ,\mathsf{DR}_{\EQ}^{\geq p})\rightarrow R^{2p}a_*(\EQ),$ is the desired class, where $a_{\EQ}:\EQ\rightarrow pt.$
\end{proof}

\noindent\textit{\textbf{Aspects of Secondary Calculus in algebraic $\D$-geometry.}}
The $\D_X$-module structures on sheaves of local $1$-forms and vector fields induce, for a $\mathcal{D}_X$-smooth scheme $Spec_{\mathcal{D}}(\mathcal{A})$ a complex of $\mathcal{A}^{\ell}[\D_X]$-modules, for each $s,p\geq 0:$

\begin{equation}
    \label{eqn: Variational de Rham complex for s,p forms}
\mathsf{DR}_{\mathcal{A}}^{\mathcal{D}}\big(\Bigwedge^s\Omega_{\mathcal{A}}^1\otimes_{\mathcal{A}}\Bigwedge^p\Theta_{\mathcal{A}}^{\ell}\big):=(\mathcal{A}\otimes\omega_X)\otimes_{\mathcal{A}\otimes\mathcal{D}_X}^{\mathbb{L}}\big(\Bigwedge^s\Omega_{\mathcal{A}}^1\otimes_{\mathcal{A}^{\ell}}\Bigwedge^p\Theta_{\mathcal{A}}^{\ell}\big),
\end{equation}
where we used notation of Definition \ref{Variational de Rham complex definition}. Elements of the complex 
(\ref{eqn: Variational de Rham complex for s,p forms}) are called $\D_X$-\emph{geometric} (or \emph{local}) \emph{form-valued multivector fields} of \emph{type} $[p,s].$ 

Note that we must take the leftified module $\Theta_{\mathcal{A}}^{\ell}$.
In particular, we may consider the variational de Rham complex with coefficients in multivectors (omitting the explicit pull-back $p_{\infty}^*$).
\begin{definition}
\normalfont The complex of \emph{variational form-valued multivector fields} is 
$$\mathsf{DR}^{var}(\mathcal{A}^{\ell};\wedge^*\Theta_{\mathcal{A}}^{\ell}):=\big(\mathcal{I}_{*,X}\otimes\mathcal{D}_X[n]\big)\otimes_{\mathcal{D}_X}^{\mathbb{L}}\big(\Omega_{\mathcal{A}}^*\otimes_{\mathcal{A}}\wedge^*\Theta_{\mathcal{A}}\big).$$
\end{definition}
Multivector fields play an important role in the Hamiltonian formalism of PDEs and while a more detailed study will be discussed elsewhere we give just one example.
\begin{example}
\label{ex: Hamiltonian Operators}
\normalfont 
Consider the $\D$-algebra of differential polynomials in $m$ variables $u^1,\ldots,u^m.$ That is, polynomials in variables $u_I^{\alpha}$ e.g. of the form 
$$f(u^{\alpha},u_{\sigma}^{\alpha})=\sum_{k\geq 0}f_{\alpha_1\sigma_1,\ldots,\alpha_k\sigma_k}\prod_{j=1}^ku_{\sigma_j}^{\alpha_j}.$$
The space of $k=|I|$-fold $\mathcal{D}_X$-\emph{polyvector fields} is given by 
\begin{equation}
    \label{eqn:I-fold local polyvectors}
    \Theta_{\mathcal{A}}^k:=\mathcal{H}\mathrm{om}_{\mathcal{A}[\mathcal{D}_X]^{\boxtimes |I|}}\big((\Omega_{\mathcal{A}}^{\bullet})^{\otimes |I|},\Delta_*^{(I)}\mathcal{A}\big).
\end{equation}
When $\mathcal{A}=\mathbb{R}[u_{\sigma}^{\alpha}]$ is the free $\mathbb{R}[\partial]$-module of differential polynomials for any $\mathcal{A}[\mathcal{D}_X]$-module $\mathcal{M}$, one has that 
$\Delta_*^{(I)}\mathcal{M}$ is given by $\mathcal{M}\otimes_{\mathbb{R}[\partial]}\mathbb{R}[\partial_1,\ldots,\partial_{|I|}],$ and (\ref{eqn:I-fold local polyvectors}) is 
$$\mathcal{H}\mathrm{om}_{\mathcal{A}\otimes\mathbb{R}[\partial_1,\ldots,\partial_{|I|}]}\big((\Omega_{\mathcal{A}}^{\bullet})^{\otimes |I|},\mathcal{A}\otimes_{\mathbb{R}[\partial]}\mathbb{R}[\partial_1,\ldots,\partial_{|I|}]\big).$$

For instance $\pi\in\Theta_{\mathcal{A}}^{[2]}$ defines a skew-symmetric map $\pi:\Omega_{\mathcal{A}}^1\otimes \Omega_{\mathcal{A}}^1\rightarrow \mathcal{A}\otimes_{\mathbb{R}[\widetilde{\partial}]}\mathbb{R}[\widetilde{\partial_1},\widetilde{\partial_2}],$ which can be evaluated on basis $1$-forms, written as
$$\pi(\omega_{\sigma}^{\alpha},\theta_{\tau}^{\beta})=\pi_{\sigma\tau}^{\alpha\beta}\otimes_{\mathcal{D}_X}f_{I_1I_2}\widetilde{\partial}_1^{I_1}\widetilde{\partial}_2^{I_2},\hspace{5mm}\pi_{\sigma\tau}^{\alpha\beta},f_{I_1I_2}\in\mathcal{A},$$
with $\widetilde{\partial_i}^{I_i}:=\mathcal{C}\big(\partial_i^{I_i}\big)$ given by the $\mathcal{D}_X$-action. Local Lie bracket on vector fields extends to the (variational) Schouten $*$-bracket of (shifted) multi-vectors $$
    [\![-,-]\!]^{*}:\mathrm{Pol}^*\big(\mathcal{A}^{\ell},p\big)\boxtimes  \mathrm{Pol}^*\big(\mathcal{A}^{\ell},p\big)\rightarrow \Delta_*\mathrm{Pol}^*\big(\mathcal{A}^{\ell},p\big)[-1-p].$$
\end{example}
\vspace{1.5mm}

The space (\ref{eqn: Variational de Rham complex for s,p forms}) is naturally bi-graded:
$$\mathsf{DR}^{var}\big(\Bigwedge^*\Omega_{\mathcal{A}}^1\otimes_{\mathcal{A}}\Bigwedge^{\cdot}\Theta_{\mathcal{A}}^{\ell}\big):=\bigoplus_{s\geq 0}\bigoplus_{p\geq 0} \mathsf{DR}^{var}\big(\Bigwedge^s\Omega_{\mathcal{A}}^1\otimes_{\mathcal{A}}\Bigwedge^p\Theta_{\mathcal{A}}^{\ell}\big).$$

We may suitably interpret the variational $p$-multivector valued differential $s$-forms as the weight $s$-component of the variational de Rham complex (\ref{Variational de Rham complex definition}). In other words, put 
\begin{equation}
    \label{eqn: DRVAR weight s}
\mathsf{DR}^{var}(\mathcal{A}^{\ell})(s):=\mathsf{DR}_{\mathcal{O}}^{\mathcal{D}}(\Omega_{\mathcal{A}}^s),
\end{equation}
and with coefficients in the $\mathcal{A}^{\ell}[\mathcal{D}_X]$-module $\Bigwedge^p\Theta_{\mathcal{A}},$ a class in
$$h^{0,\mathrm{var}}(\mathcal{A}^{\ell})(s,p):=h^0\bigg(\mathsf{DR}_{\mathcal{A}}^{\mathcal{D}}\big(\Bigwedge^s\Omega_{\mathcal{A}}^1\otimes_{\mathcal{A}}\Bigwedge^p\Theta_{\mathcal{A}}^{\ell}\big)\bigg),$$
is a variational (also called functional/secondary) $p$-vector valued $s$-form on $Spec_{\mathcal{D}}(\mathcal{A}).$

We are interested further in the entire central cohomology space:
$$h^{\bullet,\mathrm{var}}(\mathcal{A}^{\ell})(*,*):=\bigoplus_{s,p}h^{\bullet,\mathrm{var}}(\mathcal{A}^{\ell})(s,p).$$
A variational vector field is thus an element of $h^{0,\mathrm{var}}(\mathcal{A}^{\ell})(0,1)$.
\begin{proposition}
Suppose that $X$ is an even variety, that is of dimension $n|0.$ 
Then there is an isomorphism of $k_X$-modules 
$h^{0,\mathrm{var}}(\mathcal{A}^{\ell})(0,1)\cong \omega_X\otimes_{\mathcal{D}_X}\Theta_{\mathcal{A}}^{\ell}.$
\end{proposition}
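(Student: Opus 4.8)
The plan is to unwind the definition of the central variational cohomology in the special bidegree $(0,1)$ and then reduce the statement to a projection formula that relates the de Rham complex relative to $\mathcal{A}[\mathcal{D}_X]$ to the absolute $\mathcal{D}_X$-module de Rham complex of the underlying $\mathcal{D}_X$-module. First I would observe that, since $\bigwedge^0\Omega_{\mathcal{A}}^1=\mathcal{A}$ is the monoidal unit, the coefficient module in bidegree $(0,1)$ is $\mathcal{A}\otimes_{\mathcal{A}}\Theta_{\mathcal{A}}^{\ell}\cong\Theta_{\mathcal{A}}^{\ell}$, so that by definition
$$h^{0,\mathrm{var}}(\mathcal{A}^{\ell})(0,1)=h_{\mathcal{A}}(\Theta_{\mathcal{A}}^{\ell})=H^0\big(\mathsf{DR}_{\mathcal{A}}^{\mathcal{D}}(\Theta_{\mathcal{A}}^{\ell})\big).$$
Because $X$ is even of dimension $n|0$, the integral-forms complex $\mathcal{I}_{*,X}$ appearing in Definition \ref{Variational de Rham complex definition} reduces to the ordinary de Rham algebra $\Omega_X^*$ with $\omega_X=\Omega_X^n$, so no parity corrections intervene. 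This places the present statement as the $\mathcal{M}=\Theta_{\mathcal{A}}^{\ell}$ instance of exactly the same mechanism that produces Proposition \ref{prop: 0,1 var h}(1), where the $\mathcal{M}=\mathcal{A}$ instance gives $h^{0,\mathrm{var}}(\mathcal{A}^{\ell})(0)\cong\omega_X\otimes_{\mathcal{D}_X}\mathcal{A}^{\ell}$.

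Next I would compute the top cohomology of the relative complex. Writing $\mathsf{DR}_{\mathcal{A}}^{\mathcal{D}}(\mathcal{M})=\mathcal{A}^r\otimes^{\mathbb{L}}_{\mathcal{A}[\mathcal{D}_X]}\mathcal{M}$, with $\mathcal{A}^r=\mathcal{A}\otimes_{\mathcal{O}_X}\omega_X$ the right monoidal unit, the central ($H^0$) term is the un-derived tensor product, i.e. the cokernel of the last de Rham differential,
$$H^0\big(\mathsf{DR}_{\mathcal{A}}^{\mathcal{D}}(\mathcal{M})\big)\cong\mathcal{A}^r\otimes_{\mathcal{A}[\mathcal{D}_X]}\mathcal{M},$$
exactly as in the stated formula $h(\mathcal{M})=\Omega_X^n\otimes_{\mathcal{D}_X}\mathcal{M}$ for the bare $\mathcal{D}_X$-module de Rham cohomology. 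The crux is then the projection-formula identification of right $\mathcal{A}[\mathcal{D}_X]$-modules
$$\mathcal{A}^r\;\cong\;\omega_X\otimes_{\mathcal{D}_X}\mathcal{A}[\mathcal{D}_X].$$
To prove this I would invoke the side-switching isomorphism: the tensor-product left $\mathcal{D}_X$-structure on $\mathcal{A}[\mathcal{D}_X]=\mathcal{A}\otimes_{\mathcal{O}_X}\mathcal{D}_X$, given on $\theta\in\Theta_X$ by $\theta\cdot(a\otimes P)=\nabla_{\theta}(a)\otimes P+a\otimes\theta P$, is intertwined by an $\mathcal{O}_X$-linear automorphism with the induced (free) left structure $\theta\cdot(a\otimes P)=a\otimes\theta P$, and this intertwiner respects right multiplication by $\mathcal{A}[\mathcal{D}_X]$. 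Substituting the free presentation into $\omega_X\otimes_{\mathcal{D}_X}(-)$ and using $\omega_X\otimes_{\mathcal{D}_X}\mathcal{D}_X\cong\omega_X$ collapses the $\mathcal{D}_X$-factor and returns $\omega_X\otimes_{\mathcal{O}_X}\mathcal{A}=\mathcal{A}^r$.

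Finally I would combine these by associativity of the tensor product,
$$\mathcal{A}^r\otimes_{\mathcal{A}[\mathcal{D}_X]}\Theta_{\mathcal{A}}^{\ell}\cong\big(\omega_X\otimes_{\mathcal{D}_X}\mathcal{A}[\mathcal{D}_X]\big)\otimes_{\mathcal{A}[\mathcal{D}_X]}\Theta_{\mathcal{A}}^{\ell}\cong\omega_X\otimes_{\mathcal{D}_X}\Theta_{\mathcal{A}}^{\ell},$$
which is manifestly a sheaf of $k_X$-modules, yielding the claimed isomorphism. The main obstacle I anticipate is the second step: verifying the projection-formula isomorphism together with its compatibility with the residual right $\mathcal{A}[\mathcal{D}_X]$-action (so that the cancellation $\mathcal{A}[\mathcal{D}_X]\otimes_{\mathcal{A}[\mathcal{D}_X]}(-)=\mathrm{id}$ is legitimate), and checking that taking $H^0$ commutes with forgetting the $\mathcal{A}$-module structure — equivalently, that $\mathsf{DR}_{\mathcal{A}}^{\mathcal{D}}(\Theta_{\mathcal{A}}^{\ell})$ agrees with the absolute $\mathcal{D}_X$-module de Rham complex of its underlying $\mathcal{D}_X$-module. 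The remaining identifications are routine given the étale-local nature of all the constructions and the fact that $\Theta_{\mathcal{A}}^{\ell}$ is the leftification of the $\mathcal{A}[\mathcal{D}_X^{\mathrm{op}}]$-module $\Theta_{\mathcal{A}}$.
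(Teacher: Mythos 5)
Your proposal is correct and follows essentially the same route as the paper: both reduce the $(0,1)$ central cohomology to the un-derived tensor product and collapse $(\omega_X\otimes_{\mathcal{O}_X}\mathcal{A})\otimes_{\mathcal{A}[\mathcal{D}_X]}\Theta_{\mathcal{A}}^{\ell}$ to $\omega_X\otimes_{\mathcal{D}_X}\Theta_{\mathcal{A}}^{\ell}$ by associativity. The only cosmetic difference is that the paper first factors out the coefficient and quotes the weight-zero computation of Proposition \ref{prop: 0,1 var h}(1), whereas you perform the same cancellation directly via the side-switching identification $\mathcal{A}^r\cong\omega_X\otimes_{\mathcal{D}_X}\mathcal{A}[\mathcal{D}_X]$.
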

\begin{proof}
It suffices to notice that $
h^{0,\mathrm{var}}(\mathcal{A}^{\ell})(0,1)=H^0\big(\mathsf{DR}^{var}(\mathcal{A}^{\ell})(0)\big)\otimes_{\mathcal{A}}\Theta_{\mathcal{A}}$ 
which by Proposition \ref{prop: 0,1 var h} is given by $\big(\omega_X\otimes_{\mathcal{D}_X}\mathcal{A}^{\ell}\big)[n]\otimes_{\mathcal{A}}\Theta_{\mathcal{A}}
\simeq \omega_X\otimes_{\mathcal{D}_X}\Theta_{\mathcal{A}}^{\ell}.$
\end{proof}

When $\pi:E\rightarrow X$ is a bundle, $H\subset Sect(X,E)$ is a sub-space of solution sections defining a $\mathcal{D}$-algebraic PDE i.e. $\mathcal{D}$-ideal $\mathcal{I}_H\subset \mathrm{Jet}^{\infty}(E),$ whose $\mathcal{D}$-space of solutions is $Spec_{\mathcal{D}}(\mathcal{B}:=\mathcal{O}(\mathrm{Jet}^{\infty}(E))/\mathcal{I}_H),$ it is possible to show that there exists natural pairings on $H_{*,c}(X)\times h^{*}(\mathcal{B}^{\ell})(s,p)$ with values in $\Omega_{H}^p\otimes \wedge^s T_H,$ for $s,p\geq 0.$ Going through their construction is outside the scope of the paper, but can be obtained by following the ideas we  present here for $s=0,p>0$.

\begin{proposition}
\label{prop: Integration pairing}
Let $(\pi,H,\mathcal{B})$ be as above. Then for all $p\geq 0$ there exists a natural pairing
$$H_{*,c}(X)\times h^{*}(\mathcal{B}^{\ell})(p,0)\rightarrow \Omega_H^p,(K,\omega)\mapsto \lambda_{K,\omega},$$
where $\lambda_{K,\omega}$ is defined for each open subset $U$ of $X$ by its values on $U$-parameterized sections $s_U\in Sect_K(X,E)(U)\subseteq Hom(X\times U,E),$ with support in $K$, by 
$\lambda_{K,\omega}(s_U):=\int_{\sigma}(j_{\infty}(\varphi(x,u))^*\omega\in \Omega_U^p.$
\end{proposition}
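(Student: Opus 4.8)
The plan is to produce $\lambda_{K,\omega}$ directly as a $p$-form on $H$ in the functor-of-points sense of Subsection~\ref{ssec: Local Calculus}: it suffices to specify, for every open $U\subseteq X$ and every $U$-parameterized family of solution sections $s_U\in Sect_K(X,E)(U)$, an honest form $\lambda_{K,\omega}(s_U)\in\Omega_U^p$, compatibly with restriction. A class in $h^q(\mathcal{B}^\ell)(p,0)=H^q\big(\mathsf{DR}^{var}(\mathcal{B}^\ell)(p)\big)$ is represented by a $d_h$-closed element $\omega$ of horizontal degree $q$ carrying $p$ Cartan legs, so that in local coordinates $\omega=\sum f\,\omega_{\sigma_1}^{\alpha_1}\wedge\cdots\wedge\omega_{\sigma_p}^{\alpha_p}\wedge dx^{i_1}\wedge\cdots\wedge dx^{i_q}$ with $f\in\mathcal{B}$; such a class is paired with a $q$-cycle $K\in H_{q,c}(X)$. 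First I would form the parameterized prolongation $j_\infty(s_U)\colon X\times U\to\mathrm{Jet}^{\infty}(E)$, $(x,u)\mapsto j_\infty\big(s_U(-,u)\big)(x)$, and set
$$\lambda_{K,\omega}(s_U):=\int_K (j_\infty s_U)^*\omega\in\Omega_U^p,$$
the integration being over (a smooth singular chain $\sigma$ representing) $K$; convergence is guaranteed by the compact-support hypothesis built into $H_{q,c}(X)$ and $Sect_K$.

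The conceptual heart is the behaviour of Cartan forms under a parameterized prolongation. For each fixed $u$ the section $s_U(-,u)$ is holonomic, so its prolongation annihilates every Cartan form; consequently the $dx^i$-contributions to $(j_\infty s_U)^*\omega_\sigma^\alpha$ cancel and one is left with the purely vertical (variational) expression $(j_\infty s_U)^*\omega_\sigma^\alpha=\sum_a\partial_{u^a}\big(D^{\sigma}s_U^\alpha\big)\,du^a$, where $u^a$ are coordinates on $U$. Hence $(j_\infty s_U)^*\omega$ decomposes on $X\times U$ into a sum of terms of bidegree $(q,p)$, each a function times a degree-$q$ monomial in the $dx^i$ and a degree-$p$ monomial in the $du^a$; integrating the horizontal degree-$q$ part over $K$ while keeping the $du^a$ as coefficients yields precisely a $p$-form on $U$, and exhibits $\lambda_{K,\omega}$ as alternating of degree $p$. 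This specializes at $p=0$ to the functional pairing \eqref{eqn: Integration pairing}.

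Next I would check the naturality that makes $\lambda_{K,\omega}$ a well-defined $p$-form on $H$. Given $f\colon U'\to U$ and the reparameterized family $s_{U'}:=s_U\circ(\mathrm{id}_X\times f)$, one has $j_\infty s_{U'}=(j_\infty s_U)\circ(\mathrm{id}_X\times f)$, and since integration over $K$ touches only the $X$-directions it commutes with $f^*$; thus $f^*\lambda_{K,\omega}(s_U)=\lambda_{K,\omega}(s_{U'})$, which is exactly the commuting-diagram condition of Subsection~\ref{ssec: Local Calculus}.

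Finally I would establish independence of the two representatives, in both cases by Stokes once the correct bidegree component of the intertwining $(j_\infty s_U)^*\circ d=d_{X\times U}\circ (j_\infty s_U)^*$ is isolated. Comparing $(q,p)$-components yields $(j_\infty s_U)^*(d_h\eta)=d_X\big((j_\infty s_U)^*\eta\big)$, with $d_X$ the $X$-directional de Rham differential on $X\times U$; hence replacing $\omega$ by $\omega+d_h\eta$ alters the integrand by an $X$-exact term whose integral over the cycle $K$ equals $\int_{\partial K}(\cdots)=0$, while replacing $K$ by a homologous cycle $K'$ with $K-K'=\partial L$ changes the value by $\int_L (j_\infty s_U)^*(d_h\omega)=0$ since $\omega$ is $d_h$-closed. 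The step I expect to be the main obstacle is precisely this bidegree bookkeeping for the parameterized pullback---confirming that the $p$ Cartan legs land in $\Omega_U^p$ while the horizontal degree $q$ matches $\dim K$, and that fibre integration over the singular chain $K$ commutes with the exterior algebra on $U$; the vanishing-of-$dx$ computation for Cartan forms above is what makes each of these points go through.
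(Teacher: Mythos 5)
Your construction is correct, and it is essentially the only reasonable reading of the statement: the paper itself gives no proof of Proposition \ref{prop: Integration pairing} (the surrounding text explicitly defers the construction as ``outside the scope of the paper''), so the defining formula $\lambda_{K,\omega}(s_U)=\int_K (j_\infty s_U)^*\omega$ in the statement is all there is to compare against, and your argument supplies exactly the missing justification. The three points you isolate --- the bidegree bookkeeping showing $(j_\infty s_U)^*\omega$ lands in bidegree $(q,p)$ on $X\times U$ because holonomic prolongation kills the $dx$-part of each Cartan leg, the restriction-compatibility making $\lambda_{K,\omega}$ a form on $H$ in the functor-of-points sense of Subsection \ref{ssec: Local Calculus}, and the two Stokes arguments for independence of the representatives of $[\omega]$ and $[K]$ --- are precisely what is needed, and each goes through as you describe. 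One cosmetic caveat: the cancellation of the $dx^i$-contributions requires the standard contact-form convention $\omega_\sigma^\alpha = du_\sigma^\alpha - u_{\sigma+1_i}^\alpha dx^i$; the coordinate expression printed in Subsection \ref{ssec: Local Calculus} carries a $+$ sign, which should be read as a typo rather than an obstruction to your computation.
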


If $Sol(\mathcal{I})\subset Sect(X,E)$ is the non-local solution space corresponding to a $\D$-smooth algebraic non-linear PDE (\ref{eqn: Local solution D space}) with $Sol(\mathcal{I})=\{s\in Sect(X,E)|j_{\infty}(s)^*u=0,u\in \mathcal{I}\},$ then  
$$H_{*,c}(X)\times h^{*-d_X}(\mathcal{B}^{\ell})\rightarrow \mathsf{Maps}(Sol(\mathcal{I}),\underline{k}),$$
is a well-defined integration pairing defined in the same way c.f.  (\ref{eqn: Integration pairing}).
\vspace{2mm}

\noindent\textit{\textbf{Variational structures on $Ran_X$.}}
For every $I\in \mathrm{fSet}$, with cardinality $|I|$ consider $X^I:=X\times\ldots\times X$ with projection onto the $i$'th factor as $p_i:X^I\rightarrow X.$
Suppose that $\mathrm{vol}_X(x_1)$ is a volume form around a point $x_1^{\mu}=(x_1^1,\ldots,x_1^n)$ in the first copy of $X$ in $X^I.$ Assume the same for $\mathrm{vol}_X(x_i^{\mu})$ for each $i=1,\ldots,|I|,$ and that
$x_i\neq x_j$.

The volume form on $X^I$ is
$$\mathrm{Vol}_{X^I}(x_1,x_2,\ldots,x_{|I|}):=p_1^*\mathrm{vol}_X(x_1)\wedge \ldots\wedge p_{|I|}^*\mathrm{vol}_X(x_{|I|}).$$
For instance, in the case of a complex curve $C$ we have a line bundle $\mathrm{det}(k^I)$, with an $\mathrm{Aut}(I)$-action via the sign character. There is a canonical $\mathrm{Aut}(I)$-invariant isomorphism 
$\omega_C^{\boxtimes I}\otimes\mathrm{det}(k^I)\rightarrow \omega_{C^I}$ given by $$\theta_1\boxtimes\ldots\boxtimes\theta_{|I|}\otimes(e_{i_1}\wedge\ldots\wedge e_{i_{|I|}})\mapsto p_{i_1}^*\theta_{i_1}\wedge \ldots \wedge p_{i_{|I|}}^*\theta_{i_{|I|}},$$
where $\theta_i\in\omega_C$ and $\{i_1,\ldots,i_{|I|}\}$ is a total ordering on $I$ with $\{e_i\}$ the standard basis of $k^I.$ This holds for more general varieties i.e. we may identify $\omega_{X^I}$ with
$\omega_X\boxtimes\ldots\boxtimes\omega_X.$

Suppose that $\mathcal{N}^{Ran}$ is a $\D_{Ran_X}$-module and $\mathcal{M}$ is a $\D_{Ran_X}$-module supported along the main diagonal. In particular, each $\mathcal{M}^{\boxtimes I}$ is a $\D_{X^I}$-module, and one considers its de Rham complex over $Ran_X$
\begin{equation}
    \label{eqn:de Rham for Ran D modules}
   DR_{\mathcal{M}}\big(\mathcal{N}^{Ran}\big):=\big\{\mathcal{N}^{(I)}\otimes_{\mathcal{D}_{X^I}}^{\mathbb{L}}\mathcal{M}^{\boxtimes I}\}_{I\in\mathrm{fSet}}.
\end{equation}
This works in the $(\infty,1)$-categorical setting. That is, (\ref{eqn:de Rham for Ran D modules}) defines an $\infty$-functor
$$\mathbf{DR}_{\mathcal{M}}(-):\mathsf{Mod}(\D_{Ran_X})\rightarrow \mathsf{Shv}(Ran_X),$$
to the $\infty$--category of (co)sheaves of vector spaces on $Ran_X.$

Using (\ref{eqn:de Rham for Ran D modules}) we may straightforwardly extend (\ref{Variational de Rham complex definition}) by setting
$$\mathsf{DR}_{Ran_X}^{\mathcal{D}}(\mathcal{N}^{Ran}):=\big\{(\mathcal{I}_{*,X^I}\otimes_{\mathcal{O}_{X^I}}\mathcal{D}_{X^I}[|I|d_X])\otimes_{\mathcal{D}_{X^I}}^{\mathbb{L}}\mathcal{N}^I\big\}_{I\in \mathrm{fSet}},$$
i.e. a collection of $\{\mathsf{DR}_{\mathcal{O}_{X^I}}^{\mathcal{D}}(\mathcal{N}^I)\}_{I\in fSet}.$
When $\mathcal{M}$ is a $\D_X$-module supported on the main diagonal then
$$\mathsf{DR}_{Ran_X}^{\mathcal{D}}(\mathcal{M})\simeq\bigg\{\big(\mathcal{I}_{*,X}^{\boxtimes |I|}\otimes_{\mathcal{O}_{X^I}}\mathcal{D}_{X^I}\big)[|I|\cdot\mathrm{dim}(X)]\otimes_{\mathcal{D}_{X^I}}^{\mathbb{L}}\mathcal{M}^{\boxtimes I}\bigg\}_{I\in fSet}.$$

\section{de Rham Algebras of Solution Spaces}
In the previous section we discussed de Rham algebras for objects of classical $\D_X$-prestacks $\mathrm{PStk}_X(\D_X)=\mathrm{Fun}(\mathrm{CAlg}_X(\D_X),\mathsf{Spc}).$ In this section we do it homotopy coherently (see \cite[Thm. 3.57]{KSYI}) based on the $(\infty,1)$-categorical equivalence
\begin{equation}
    \label{eqn: MainEquiv}
\mathsf{PreStk}_{X}(\D_X)\simeq \mathsf{PreStk}_{/X_{dR}}.
\end{equation}
It is related to the classical formulation by noticing the obvious fully-faithful embedding 
$$i:\mathrm{PStk}_{X}(\D_X)\rightarrow \mathbf{dPStk}_X(\D_X),$$
into a model category of derived prestacks has a right-adjoint Quillen functor $i^*$ so by considering its right-derived functor
$\mathbb{R}i^*:Ho(\mathbf{dPStk}_X(\D_X))\rightarrow Ho(\mathrm{PStk}_X(\D_X)),$ there is a fully-faithful left adjoint $\mathbb{L}i_!$ (via Kan extension) acting on classical $\D$-prestacks by
$$\mathbb{L}i_!Sol_{\mathcal{D}}(\mathcal{A})\simeq\mathbb{R}Sol_{\mathcal{D}}(i\mathcal{A}),$$
where $i\mathcal{A}$ the $\D$-algebra viewed as a derived object in cohomological degree zero.
\vspace{2mm}

\noindent\textit{\textbf{The `de Rham side' of }} (\ref{eqn: MainEquiv}). 
Let $p_{dR,X}:X\rightarrow X_{dR}$ and consider an $X_{dR}$-stack $\EQ$ such that its cotangent complex exists. Let $\mathbb{L}_{\EQ/X_{dR}}$ be the relative cotangent complex and consider the map $\eta:p_{dR,X}^*(\EQ)\rightarrow X$ and global functions $\mathcal{O}\big(p_{dR,X}^*\EQ\big).$ 
Consider the push-forward sheaf $\eta_*\mathcal{O}\big(p_{dR,X}^*\EQ\big)$. By pull-back, $p_{dR,X}^*\mathbb{L}_{\EQ/X_{dR}}$, is an object of the $(\infty,1)$-category
 $$\eta_*\mathcal{O}\big(p_{dR,X}^*\EQ\big)-\mathsf{Mod}(\D_X)\simeq \mathsf{Mod}(\mathcal{A}_X\otimes\D_X),$$
where we have put 
 $\mathcal{A}_X:=\eta_*\mathcal{O}(p_{dR}^*\EQ).$
 Making the necessary assumption that $\mathbb{L}_{\EQ/X_{dR}}$ is perfect and consequently dualizable, we may take its derived local Verdier dual (c.f. \ref{eqn: Derived Inner Verdier Dual}), given by
\begin{equation}
    \label{eqn: Derived D Tangent}
\mathbb{R}\mathsf{Maps}\big(p_{dR,X}^*\mathbb{L}_{\EQ/X_{dR}},\eta_*\mathcal{O}\big(p_{dR,X}^*\EQ\big)[\D_X]\big)\otimes \omega_X^{\otimes -1}.
\end{equation}
We take (\ref{eqn: Derived D Tangent}) as the definition of the derived
$\D$-geometric tangent complex $\mathbb{T}_{\EQ/X_{dR}}^{\ell}$ of a derived $X_{dR}$-stack $\EQ.$ 

The most important example arises via the derived $\infty$-jet prestack construction: given $\EuScript{E}\in \PS_{/X},$ there is a canonical prestack 
$\J
(\EuScript{E})\in \PS_{/X_{dR}},$ thus equipped with a canonical flat connection, and a pull-back diagram
\begin{equation}
\begin{tikzcd}
    \label{eqn: Jet-pb diagram}
    p_{dR}^*(\J(\EuScript{E})\big)\arrow[d]\arrow[r,"c_{dR}^{\EuScript{E}}"] & \J(\EuScript{E})\arrow[d]
    \\
    X\arrow[r,"p_{dR}"]& X_{dR}.
    \end{tikzcd}
\end{equation}
Such objects are interpreted as homotopy co-free derived ${\scalemath{.90}{\mathrm{N}}{\scalemath{.84}{\mathrm{L}}}{\scalemath{.88}{\mathrm{P}}{\scalemath{.84}{\mathrm{DE}}{\scalemath{.70}{\mathrm{S}}}}}}$. A generic derived non-linear PDE\footnote{Defined in \cite{KRY} and presented in further detail in \cite{KSYI}.}on sections of $E\rightarrow X$ is understood as a suitable (derived) closed sub-stack $\EQ$ of $\J(E)$.

From hereon-out we keep the following standing assumptions referred to by $(\star)$:
\begin{equation*}
(\star):\begin{cases}
\EuScript{E}\in \PS_{/X}^{laft} \textit{ admits deformation theory relative to } X, \textit{and } \RS_X(p_*\EuScript{E}) \textit{ is laft.}
\\
\textit{Every } 
i:\EQ\rightarrow \JetX(\EuScript{E}) \textit{ comes from such }\EuScript{E} \textit{ for which } \RS(\EQ) \textit{ is laft}.
\end{cases}
\end{equation*}

In \cite{KSYI} objects satisfying $(\star)$ together with the assumption that all cotangent complexes exist were said to be $\D$-finitary prestacks. Here existence of cotangents will be implicitly assumed alongside $(\star).$

Then, these conditions tell us that $\mathbb{L}_{\EuScript{E}/X}$ exists and  $p_{dR*}\EuScript{E}$ admits deformation theory relative to $X_{dR},$ (whose deformation theory is obviously trivial). Thus, a relative cotangent complex exists for $\JetX(\EuScript{E})$, as a pull-back of $\mathbb{L}_{p_*\EuScript{E}/X}.$ 
\vspace{1.5mm}

If $\RS_X(p_*\EuScript{E})$ admits deformation theory, and it does for $\D$-finitary ${\scalemath{.90}{\mathrm{N}}{\scalemath{.84}{\mathrm{L}}}{\scalemath{.88}{\mathrm{P}}{\scalemath{.84}{\mathrm{DE}}{\scalemath{.70}{\mathrm{S}}}}}}$, the induced morphism 
$$\RS_X(i):\RS_X(\EQ)\rightarrow \RS_X(\JetX(\EuScript{E})),$$
of laft prestacks possesses a relative cotangent complex,
$cofib\big(\RS_X(i)\big).$

It is also convenient to consider the pull-back diagram in $\PS_{/X}$ formed by the comonad $\JetX,$
\begin{equation}
    \label{eqn: Infinite prolongation}
    \begin{tikzcd}[row sep=large, column sep = large]
        & \EQ_X^{\infty}\arrow[d]\arrow[r,"i_{\infty}"] & \JetX\EuScript{E}\arrow[d,"\alpha"]
        \\
        \EQ_X\arrow[r,"\rho"] & \JetX\EQ\arrow[r,"\JetX(i_X)"] & \JetX\JetX(\EuScript{E})
    \end{tikzcd}
\end{equation}
where $\rho$ is the induced $\JetX$-coalgebra map. The object
$\EQ_X^{\infty}\in \PS_{/X},$ defined by (\ref{eqn: Infinite prolongation}) may be interpreted as a kind of infinite-prolongation.

\subsection{Finiteness Properties}
\label{ssec: Finiteness}
Homotopical finiteness via (eventual) co-connectivity is necessary for some constructions and required for most proofs.  
Denote by $\mathsf{CAlg}_X(\mathcal{D}_X)^{\leq -n},$ the $n$-coconnective derived $\mathcal{D}_X$-algebras. The various forgetful functors we use, summarizing some results discussed in \cite[Sect. 3.5]{KSYI}, is given by the diagram of functors of $(\infty,1)$-categories:
\[
\begin{tikzcd}
    \mathsf{CAlg}_X(\mathcal{D}_X)\arrow[d,"U"]\arrow[r,"\tau_{\mathcal{D}}^{\leq -n}"] & \mathsf{CAlg}_X(\mathcal{D}_X)^{\leq -n}\arrow[d,"U"]
    \\
    \mathsf{Mod}(\mathcal{D}_X)\arrow[d,"For_{\mathcal{D}}"] \arrow[r,"\tau_{\mathcal{D}}^{\leq -n}"] & \mathsf{Mod}(\mathcal{D}_X)^{\leq -n}\arrow[d,"For_{\mathcal{D}}"]
    \\
    \mathsf{Mod}(\mathcal{O}_X)\arrow[r,"\tau^{\leq -n}"]& \mathsf{Mod}(\mathcal{O}_X)^{\leq -n}
\end{tikzcd}
\]
via the truncations and the functors $U$ forgetting algebra structure and $For_{\mathcal{D}}$ forgetting the connection. Here we have put $\mathsf{Mod}(\mathcal{O}_X):=\mathsf{QCoh}(X)^{\leq 0}.$

Moreover the homotopy-coherent algebraic jet functor is the left-adjoint to the forgetful functor $U:\mathsf{CAlg}_X(\mathcal{D}_X)\rightarrow \mathsf{CAlg}_X(\mathcal{O}_X),$ with $\mathsf{CAlg}_X(\mathcal{O}_X):=Comm\big(\mathsf{QCoh}(X)^{\leq 0}\big).$
\subsubsection{Horizontal jet prestacks}
We now adapt the discussion of Subsect. \ref{sssec: Horizontal Jets} to the homotopical setting, making use of Proposition \ref{Multi-jet r-adjoint proposition}.

Namely, given $\EQ\in \PS_{X_{dR}},$ formation of multi-jets gives a factorization $\D$-space of horizontal jets $\overline{\mathsf{Jets}}^{\infty}(\EQ)^{Ran}$ such that for each $I\in fSet$ and $S$-point of $X_{dR}^I$, a map $x_I:S_{red}\rightarrow X^I$ lifting to a point of $\overline{\mathsf{Jets}}^{\infty}(\EQ)^I$ is equivalently the datum of map in
$$\mathsf{Maps}_{/X_{dR}}(\hat{D}_{x_I,dR}\times_{S_{dR}}S,\EQ).$$

We summarize some useful properties of horizontal jet prestacks.
\begin{proposition}
Consider a $\D$-prestack $\EQ\in \PS_{/X_{dR}}.$
Then, the following hold:
\begin{itemize}
\item There is a tautological equivalence $\overline{\mathsf{Jets}}^{\infty}(\EQ)^{[1]}\simeq \EQ$ in $\PS_{/X_{dR}};$
\item If $\EQ\in \PS_{/X_{dR}}$ is of the form $p_{dR*}E$ for some smooth prestack over $X$ then the prestack of horizontal jets is equivalent to $\mathsf{Jets}^{\infty}(E).$
\end{itemize}
By push-forward along $q^I:\overline{\mathsf{Jets}}^{\infty}(\EQ)^I\rightarrow X_{dR}^I$ there is a $\mathcal{D}_{X^I}$-algebra $\mathcal{F}(\EQ)^I:=q_*^I(\mathcal{O}_{\overline{\mathsf{Jets}}^{\infty}(\EQ)^I}).$
\end{proposition}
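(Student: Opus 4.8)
The plan is to derive all three assertions from the functor-of-points characterization of $\overline{\mathsf{Jets}}^{\infty}(\EQ)^I$ recorded just above, combined with the defining property of the de Rham prestack that $Y_{dR}$ depends only on the reduced prestack $Y_{red}$. First I would prove the tautological equivalence. Fix an affine test scheme $S$ and a point $x_1\colon S_{red}\to X$ (the case $I=[1]$, so $X^I=X$). The formal disk $\hat{D}_{x_1}$ is the formal completion of $S\times X$ along the graph $\Gamma_{x_1}\cong S_{red}$, so its reduction is $S_{red}$; since de Rham prestacks are insensitive to infinitesimal thickenings, $\hat{D}_{x_1,dR}\simeq (S_{red})_{dR}\simeq S_{dR}$ over $X_{dR}$, the structure map being $(x_1)_{dR}$. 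Base-changing along $S\to S_{dR}$ then gives $\hat{D}_{x_1,dR}\times_{S_{dR}}S\simeq S$. Substituting into the defining formula yields $\overline{\mathsf{Jets}}^{\infty}(\EQ)^{[1]}(S,x_1)\simeq \mathsf{Maps}_{/X_{dR}}(S,\EQ)$, i.e. the $S$-points of $\EQ$ lying over $(x_1)_{dR}$; letting $x_1$ vary and invoking Yoneda identifies the two prestacks over $X_{dR}$.

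For the second bullet, set $\EQ=p_{dR*}E$ with $E\in\PS_{/X}$ smooth. Applying the $(p_{dR}^*,p_{dR*})$-adjunction to the defining formula rewrites the $S$-points as $\mathsf{Maps}_{/X}\big(p_{dR}^*(\hat{D}_{x_I,dR}\times_{S_{dR}}S),E\big)$. The remaining step is to identify $p_{dR}^*(\hat{D}_{x_I,dR}\times_{S_{dR}}S)$ with the genuine (multi-)formal neighbourhood of $x_I$ inside $S\times X^I$; on the $[1]$-component this recovers precisely the functor-of-points of the infinite jet prestack $\mathsf{Jets}^{\infty}(E)=p_{dR*}E$, and smoothness of $E\to X$ guarantees that this jet prestack is the classical infinite jet bundle (representable, with the expected cotangent complex). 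I would then conclude $\overline{\mathsf{Jets}}^{\infty}(\EQ)^{[1]}\simeq\mathsf{Jets}^{\infty}(E)$, consistently with the first bullet.

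Finally, the $\mathcal{D}_{X^I}$-algebra structure is obtained formally. The structure sheaf $\mathcal{O}_{\overline{\mathsf{Jets}}^{\infty}(\EQ)^I}$ is a commutative algebra object of $\Q(\overline{\mathsf{Jets}}^{\infty}(\EQ)^I)$; since $q_*^I$ is lax symmetric monoidal, its pushforward $\mathcal{F}(\EQ)^I:=q_*^I\mathcal{O}_{\overline{\mathsf{Jets}}^{\infty}(\EQ)^I}$ is a commutative algebra in $\Q(X_{dR}^I)$. Transporting along the equivalence $\Q(X_{dR}^I)\simeq \C(X^I)\simeq \mathsf{Mod}(\mathcal{D}_{X^I})$ produces a commutative $\mathcal{D}_{X^I}$-algebra, and the factorization equivalences of $\overline{\mathsf{Jets}}^{\infty}(\EQ)^{Ran}$ together with compatibility of the $q^{(\bullet)}$ with diagonals and unions of finite sets upgrade $\{\mathcal{F}(\EQ)^I\}_{I\in fSet}$ to a commutative factorization algebra, recovering Proposition \ref{Multi-jet r-adjoint proposition}.

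The main obstacle is the collapse step and its derived refinement: one must verify that $\hat{D}_{x_I,dR}\times_{S_{dR}}S\simeq S$ (and, in the second bullet, the identification of its $p_{dR}^*$ with the honest formal disk) holds at the level of prestacks over $X_{dR}^I$, including the $S$-family and derived, non-reduced test objects, rather than merely on reduced points. This is exactly where de Rham-invariance under nilpotent extensions and the compatibility of $p_{dR}^*$ with formal completions must be invoked carefully; everything else reduces to formal manipulation of adjunctions and lax monoidal functors.
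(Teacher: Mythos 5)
Your proposal is correct, and in fact the paper offers no proof of this proposition at all — it is stated as a summary of ``useful properties'' immediately after the functor-of-points description of $\overline{\mathsf{Jets}}^{\infty}(\EQ)^I$, so your argument fills a gap rather than duplicating one. The route you take is the natural (and surely intended) one: for the first bullet, the observation that $(\hat{D}_{x_1})_{red}\simeq S_{red}$ forces $\hat{D}_{x_1,dR}\simeq S_{dR}$ and hence $\hat{D}_{x_1,dR}\times_{S_{dR}}S\simeq S$ is exactly the standard de Rham-invariance computation, and your identification of the resulting structure map to $X_{dR}$ with $(x_1)_{dR}$ is what makes the equivalence one \emph{over} $X_{dR}$ rather than merely of underlying prestacks. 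For the second bullet the statement is ambiguous about whether $\mathsf{Jets}^{\infty}(E)$ means $\J(E)=p_{dR*}E$ (in which case it is an instance of the first bullet) or the full multi-jet family; your adjunction argument, reducing to the identification of $p_{dR}^*(\hat{D}_{x_I,dR}\times_{S_{dR}}S)$ with the genuine formal neighbourhood via $Z\times_{X_{dR}}X\simeq$ formal completion, covers the stronger reading and is consistent with Proposition \ref{Multi-jet r-adjoint proposition}. The lax-monoidality argument for the $\mathcal{D}_{X^I}$-algebra structure on $q_*^I\mathcal{O}$ is likewise the standard one. The only caveat, which you correctly flag yourself, is that the collapse $\hat{D}_{x_I,dR}\times_{S_{dR}}S\simeq S$ and the formal-completion identification must be checked functorially in derived, non-reduced $S$; both follow from the universal property of $(-)_{dR}$ as inverting nil-isomorphisms, so there is no genuine gap.
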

Finally, note that for all $I$, there exists an `evaluation' map in $\PS_{/X_{dR}^I}:$
$$X_{dR}^I\times \RS(\EQ)\rightarrow \overline{\mathsf{Jets}}^{\infty}(p_{dR*}E)^I.$$ 

\subsection{Tangent and Cotangent Complexes}
\label{ssec: Sectional de Rham}
We will initiate the description of de Rham algebras associated to a derived non-linear PDE $\EQ.$ For this, we need a description of the various tangent and cotangent complexes.

\begin{proposition}
\label{prop: Tangent RSol1}
    Suppose that $\EQ$ satisfies $(\star).$ Then, for all $U$-parameterized solutions $\varphi_U:U\rightarrow \RS_X(\EQ),$ we have that $$\mathbb{T}[\RS_X(\EQ)]_{\varphi_U}\simeq q_{U*}^{\IC}(j_{\infty}(\varphi_U)^!\EuScript{T}_{\EQ}).$$
\end{proposition}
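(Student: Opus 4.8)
The plan is to realize $\RS_X(\EQ)$ as a relative mapping prestack over $X_{dR}$ and then to specialize the standard formula for the tangent complex of a mapping space to the $\IC$-formalism that governs $\D$-geometry. First I would pass, via the equivalence (\ref{eqn: MainEquiv}), to the de Rham side, where a $U$-parameterized solution $\varphi_U\colon U\rightarrow \RS_X(\EQ)$ is, by the jet/$p_{dR}$-adjunctions underlying (\ref{eqn: Infinite prolongation}), the same datum as its infinite prolongation: a map $j_\infty(\varphi_U)\colon X_{dR}\times U\rightarrow \EQ$ in $\PS_{/X_{dR}}$, obtained by restricting the evaluation map $X_{dR}\times\RS(\EQ)\rightarrow \overline{\mathsf{Jets}}^\infty(p_{dR*}\ScrE)$ along $\varphi_U$ and factoring through $\EQ$ precisely because $\varphi_U$ is a solution. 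Under $(\star)$ the prestack $\RS_X(\EQ)$ is laft and admits deformation theory relative to $X_{dR}$, so $\mathbb{T}[\RS_X(\EQ)]_{\varphi_U}$ exists and is pinned down by its corepresentation of the space of square-zero deformations of $\varphi_U$ along $U$.

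Next I would compute that deformation space directly. A first-order deformation of $\varphi_U$ is a lift of $j_\infty(\varphi_U)$ along a square-zero thickening of $U$ by an $\mathcal{O}_U$-module $\mathcal{M}$ in the mapping-prestack description, and by the relative cotangent complex formula for mapping stacks (Toën–Vezzosi, Lurie, Gaitsgory–Rozenblyum) such lifts are classified by sections over $X_{dR}\times U$ of the pullback of the tangent complex of the target. Writing $q_U\colon X_{dR}\times U\rightarrow U$ for the projection, this pins the deformation space down as $q_{U*}^{\IC}\big(j_\infty(\varphi_U)^!\EuScript{T}_\EQ\big)$. The reason that the $!$-pullback and the $\IC$-pushforward enter (rather than the naive $*$-operations) is that the source $X_{dR}\times U$ is built from the formal de Rham disks and is not schematic of finite type over $U$; the correct duality pairing on such objects is furnished by the globalization of the inner Verdier duality (\ref{eqn: Derived Inner Verdier Dual}), which is exactly the $!$-formalism of $\IC$, under which $\EuScript{T}_\EQ$ is computed as the dual of $\mathbb{L}_{\EQ/X_{dR}}$ in the sense of (\ref{eqn: Derived D Tangent}).

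I would then assemble the identification. The perfectness assumption accompanying $(\star)$ makes $\mathbb{L}_{\EQ/X_{dR}}$, hence $\EuScript{T}_\EQ$, dualizable, so $j_\infty(\varphi_U)^!\EuScript{T}_\EQ$ is a well-behaved object; $\IC$-base change along the prolongation pull-back square (the analogue of (\ref{eqn: Jet-pb diagram})) then guarantees that $q_{U*}^{\IC}$ is defined and commutes with the base changes needed to match the universal property. Comparing the corepresenting object of the deformation functor with the defining property of $\mathbb{T}[\RS_X(\EQ)]_{\varphi_U}$ yields the asserted equivalence $\mathbb{T}[\RS_X(\EQ)]_{\varphi_U}\simeq q_{U*}^{\IC}\big(j_\infty(\varphi_U)^!\EuScript{T}_\EQ\big)$.

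The step I expect to be the main obstacle is the $\IC$ bookkeeping of the second paragraph: justifying that the mapping-space tangent formula holds in the stated $!$/$\IC$-pushforward form when the source is a jet/de Rham prestack rather than an honest proper scheme. This amounts to checking that $j_\infty(\varphi_U)$ is inf-schematic enough for $\EuScript{T}_\EQ$ to be $\IC$-dualizable along it and that $\IC$-base change applies to $q_U$ — both of which follow from the laft and deformation-theoretic content of $(\star)$, but whose careful verification is the delicate point. The remaining ingredients, namely the adjunction identifying $\varphi_U$ with its prolongation and the corepresentability of the square-zero deformation functor, are formal once these finiteness inputs are secured.
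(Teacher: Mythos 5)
Your proposal is correct and follows essentially the same route as the paper: identify $\varphi_U$ with its prolongation $j_{\infty}(\varphi_U)$ via the evaluation diagram, invoke the mapping-prestack (co)tangent formula $\mathbb{L}[\RS_X(\EQ)/X_{dR}]\simeq D^{loc-ver}(\rho_*ev_{\varphi_U}^!\mathbb{T}_{\EQ/X_{dR}})$, use the dualizability of $\mathbb{L}_{\EQ/X_{dR}}$ guaranteed by $(\star)$, and base-change to obtain $q_{U*}^{\IC}(j_{\infty}(\varphi_U)^!\EuScript{T}_{\EQ})$. The only cosmetic difference is that you justify the mapping-space formula by corepresentability of square-zero deformations, whereas the paper asserts it and instead makes explicit the passage through $\mathsf{ProCoh}$ and its identification with $\IC^{\op}$ via local Verdier duality on the smooth $X$.
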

\begin{proof}
Consider a $U$-parameterized solution $\varphi_U:U\rightarrow \RS_X(\EQ),$ which is equivalently a point $\varphi_U':U\times X_{dR} \rightarrow \EQ,$ that we denote by $j_{\infty}(\varphi_U):U\times X_{dR}\rightarrow \EQ.$ Recall it appears in the following diagram 
\begin{equation}
    \label{eqn: Point of RSol}
\begin{tikzcd}
    U\times X_{dR}\arrow[d,"q_U"]\arrow[r,"\varphi_U\times id_{X_{dR}}"] \arrow[rr, bend left, "j_{\infty}(\varphi_U)"] & \RS_X(\EQ)\times X_{dR}\arrow[d,"\rho"] \arrow[r,"ev_{\varphi_U}"] & \EQ
    \\
    U\arrow[r,"\varphi_U"] & \RS_X(\EQ) & 
\end{tikzcd}
\end{equation}
Suppose that $\EQ\rightarrow X_{dR}$ is a $\D$-finitary Artin stack.
One may show
$$\mathbb{L}\big[\RS_X(\EQ)/X_{dR}\big]\simeq D^{loc-ver}\big(\rho_*ev_{\varphi_U}^!\mathbb{T}_{\EQ/X_{dR}}\big).$$
Since $\mathbb{L}_{\EQ/X_{dR}}$ is assumed to be dualizable, for all solutions, $j_{\infty}(\varphi_U)^!\mathbb{L}_{\EQ/X_{dR}}$ is dualizable as well. Base-changing via the diagram gives us
$$(q_U)_*\big(j_{\infty}(\varphi_U)\big)^!\mathbb{T}_{\EQ/X_{dR}}\simeq \varphi_U^!\rho_*ev_{\varphi_U}^!\mathbb{T}_{\EQ/X_{dR}}.$$

From this diagram it is clear that since $\EQ$ is $\D$-finitary, both $\mathbb{L}_{\EQ}$ and $\mathbb{L}\big(\RS_X(\EQ)\big)$ exists as objects of $\mathsf{ProCoh}(\EQ),$ and for every solution we have
$$\mathbb{L}_{\RS_X(\EQ),\varphi_U}:=\varphi_U^!\mathbb{L}_{\RS_X(\EQ)}\simeq \varphi_U^! \rho_! ev_{\varphi_U}^!\mathbb{L}_{\EQ}\in \mathsf{ProCoh}(U).$$

Since $X$ is smooth, these pro-coherent complexes identify with ind-coherent duals e.g. $\mathbb{T}_{\EQ,\varphi_U}$ is given by 
$\mathbb{D}^{loc-ver}(ev_{\varphi_U}^!\mathbb{L}_{\EQ})$ with $\mathbb{D}^{loc-ver}$ the local Verdier duality,
$$\mathsf{ProCoh}(\RS_X(\EQ)\times X_{dR})\simeq \IC(\RS_X(\EQ)\times X_{dR})^{\op}.$$
Consequently, our desired object exists and is given by 
$$\mathbb{T}[\RS_X(\EQ)]_{\varphi_U}\simeq q_{U*}^{\IC}(j_{\infty}(\varphi_U)^!\EuScript{T}_{\EQ}).$$
\end{proof}
By a similar argument, one can describe $\mathbb{L}[\J(E)/X_{dR}]$ by considering the natural diagram
\begin{equation}
\label{eqn: Jets diagram}
\begin{tikzcd}
    U\times X \arrow[d,"(id_U\times p_{dR})"]\arrow[r,"\varphi_U\times id_{X}"] \arrow[rr, bend left, "\beta_{\varphi_U}"] & \J(\EuScript{E})\times_{X_{dR}}X\arrow[d,"(id\times p_{dR})=:\pi"] \arrow[r,"ev_{\varphi_U}"] & \EuScript{E}
    \\
    U\times X_{dR}\arrow[r,"j_{\infty}(\varphi_U)"] & \J(\EuScript{E}) & 
\end{tikzcd}
\end{equation}
Similar reasoning gives that
$$\mathbb{L}\big[\J(\EuScript{E})/X_{dR}\big]\simeq \big(id_{\J(\EuScript{E})}\times p_{dR}\big)_!\circ ev_{\varphi_U}^!\mathbb{L}_{\EuScript{E}/X},$$
as pro-coherent sheaves on $\J(\EuScript{E}).$
We can describe the situation as pro-coherent sheaves on $U\times X_{dR},$ via pull-back against an infinite jet of a solution i.e. a point $j_{\infty}(\varphi_U)\in \RS_X\big(\J(\EuScript{E})\big)(U)$.
\begin{proposition}
    \label{proposition: Pro-cotangent of Jets}
    For all solutions $\varphi_U$ parameterized by a laft affine $U\in \mathsf{dAff}_{/X}^{laft},$ there is an equivalence
$j_{\infty}(\varphi_U)^!\mathbb{L}_{\J(\EuScript{E})}\simeq (id_U\times p_{dR})_!\circ \beta_{U}^!\mathbb{L}_{\EuScript{E}/X}.$
\end{proposition}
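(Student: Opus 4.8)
The plan is to obtain the claim by pulling back, along the solution point $j_{\infty}(\varphi_U)$, the intrinsic description of the cotangent complex of $\J(\EuScript{E})$ recorded immediately above, and then commuting the operators that appear by base change over the left-hand square of (\ref{eqn: Jets diagram}).

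First I would reduce the absolute cotangent complex to the relative one. Since the de Rham stack has trivial deformation theory, $\mathbb{L}_{X_{dR}}\simeq 0$, and the cofiber sequence attached to $\J(\EuScript{E})\to X_{dR}\to \mathrm{pt}$ gives an equivalence $\mathbb{L}_{\J(\EuScript{E})}\simeq \mathbb{L}[\J(\EuScript{E})/X_{dR}]$. I may therefore substitute the already-established formula $\mathbb{L}[\J(\EuScript{E})/X_{dR}]\simeq (id_{\J(\EuScript{E})}\times p_{dR})_!\, ev^{!}\mathbb{L}_{\EuScript{E}/X}$, where $ev$ is the universal co-unit $\J(\EuScript{E})\times_{X_{dR}}X\to \EuScript{E}$ (the subscript $\varphi_U$ decorating this map in (\ref{eqn: Jets diagram}) being a bookkeeping artifact, as the co-unit is independent of the chosen solution).

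Next I would apply $j_{\infty}(\varphi_U)^{!}$. The left square of (\ref{eqn: Jets diagram}) is Cartesian: pulling $\pi=(id_{\J(\EuScript{E})}\times p_{dR})$ back along the $X_{dR}$-map $j_{\infty}(\varphi_U)\colon U\times X_{dR}\to \J(\EuScript{E})$ yields $(U\times X_{dR})\times_{X_{dR}}X\simeq U\times X$, with induced top arrow exactly $\varphi_U\times id_X$. Base change then provides $j_{\infty}(\varphi_U)^{!}\,\pi_{!}\simeq (id_U\times p_{dR})_{!}\,(\varphi_U\times id_X)^{!}$, and combining this with $(\varphi_U\times id_X)^{!}\,ev^{!}\simeq \big(ev\circ(\varphi_U\times id_X)\big)^{!}=\beta_{\varphi_U}^{!}$ — the latter equality holding because $\beta_{\varphi_U}$ is by construction the top composite of (\ref{eqn: Jets diagram}) — delivers $j_{\infty}(\varphi_U)^{!}\mathbb{L}_{\J(\EuScript{E})}\simeq (id_U\times p_{dR})_{!}\,\beta_{\varphi_U}^{!}\mathbb{L}_{\EuScript{E}/X}$, which is the asserted equivalence (with $\beta_{\varphi_U}=\beta_U$).

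The hard part will be justifying the base-change isomorphism in the correct variance. Because the cotangent complex lives in $\mathsf{ProCoh}$ rather than $\IC$, the relevant pushforward is the pro-coherent $(-)_{!}$ dual to IndCoh $*$-pushforward, and the required statement $j_{\infty}(\varphi_U)^{!}\pi_{!}\simeq (id_U\times p_{dR})_{!}(\varphi_U\times id_X)^{!}$ is the $\mathsf{ProCoh}$-dual of the IndCoh base change already exploited in the proof of Proposition \ref{prop: Tangent RSol1}. One must check this dualization is licensed: this needs the laft hypotheses on $U$ and $\EuScript{E}$ carried by $(\star)$, so that the Gaitsgory--Rozenblyum functorial formalism and its base change apply, together with dualizability of $\mathbb{L}_{\EuScript{E}/X}$, ensuring the $!$-pullbacks compute the cotangent complexes and not merely their underlying pro-coherent sheaves. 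Under the standing assumptions all of these hold, leaving only the routine compatibility of base change with composition of $!$-pullbacks.
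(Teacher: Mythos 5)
Your proposal is correct and follows essentially the same route the paper takes: it substitutes the formula $\mathbb{L}[\J(\EuScript{E})/X_{dR}]\simeq (id_{\J(\EuScript{E})}\times p_{dR})_!\, ev_{\varphi_U}^!\mathbb{L}_{\EuScript{E}/X}$ established just above the proposition, applies $j_{\infty}(\varphi_U)^!$, and base-changes along the Cartesian left square of (\ref{eqn: Jets diagram}) exactly as the paper does (implicitly, by the "similar reasoning" to Proposition \ref{prop: Tangent RSol1}), with $\beta_{\varphi_U}=ev_{\varphi_U}\circ(\varphi_U\times id_X)$ absorbing the composite pullback. Your added care about the $\mathsf{ProCoh}$-versus-$\IC$ variance and the laft/dualizability hypotheses matches the paper's own justification surrounding Proposition \ref{prop: Tangent RSol1}.
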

By $(\star)$, we have $\RS_X(p_*(\EuScript{E})\big)$ is laft, with $\EuScript{E}$ admitting deformation theory relative to $X_{dR}$ so that $\RS_X(p_*\EuScript{E})$ admits deformation theory.

By our finitary assumptions, $\mathbb{L}_{\J(\EuScript{E})}$ is dualizable, so $j_{\infty}(\varphi_U)^!\mathbb{L}_{\J(\EuScript{E})}$ is dualizable as well.
\begin{proposition}
  The relative cotangent complex $\mathbb{L}_{\JetX(\EuScript{E})/X}$ is equivalent to the dual of $\pi^*\big(\pi_*ev_{\varphi_U}^!\mathbb{T}_{\EuScript{E}}).$
\end{proposition}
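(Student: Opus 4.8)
The plan is to deduce the claim from the description of $\mathbb{L}_{\J(\EuScript{E})/X_{dR}}$ obtained just above, by transporting it across the Cartesian square (\ref{eqn: Jet-pb diagram}) and then dualizing. First I would record that, by construction, $\JetX(\EuScript{E})$ is the fibre product $p_{dR}^*\J(\EuScript{E})\simeq \J(\EuScript{E})\times_{X_{dR}}X$ appearing in (\ref{eqn: Jet-pb diagram}), whose horizontal arrow $c_{dR}^{\EuScript{E}}$ is precisely the projection $\pi$ of (\ref{eqn: Jets diagram}). Because the square is Cartesian and all the relevant cotangent complexes exist (by $(\star)$ and the finitary hypotheses), base change for the relative cotangent complex gives
$$\mathbb{L}_{\JetX(\EuScript{E})/X}\simeq \big(c_{dR}^{\EuScript{E}}\big)^*\mathbb{L}_{\J(\EuScript{E})/X_{dR}}\simeq \pi^*\mathbb{L}_{\J(\EuScript{E})/X_{dR}}.$$

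Next I would substitute the formula $\mathbb{L}_{\J(\EuScript{E})/X_{dR}}\simeq \pi_!\,ev_{\varphi_U}^!\,\mathbb{L}_{\EuScript{E}/X}$ recorded above (equivalently Proposition \ref{proposition: Pro-cotangent of Jets}, after $!$-pullback along an infinite jet of a solution), to obtain
$$\mathbb{L}_{\JetX(\EuScript{E})/X}\simeq \pi^*\pi_!\,ev_{\varphi_U}^!\,\mathbb{L}_{\EuScript{E}/X}$$
as pro-coherent sheaves. It then remains to recognize the right-hand side as the dual of $\pi^*\big(\pi_*\,ev_{\varphi_U}^!\,\mathbb{T}_{\EuScript{E}}\big)$. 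For this I would invoke dualizability of $\mathbb{L}_{\EuScript{E}/X}$, so that $\mathbb{L}_{\EuScript{E}/X}\simeq \mathbb{D}^{loc-ver}(\mathbb{T}_{\EuScript{E}})$, and push the local Verdier duality functor through the remaining operations using the standard exchange relations $\mathbb{D}^{loc-ver}\circ f^*\simeq f^!\circ \mathbb{D}^{loc-ver}$ and $\mathbb{D}^{loc-ver}\circ f_!\simeq f_*\circ\mathbb{D}^{loc-ver}$, valid whenever the objects involved are dualizable. Applied to $\pi$ and $ev_{\varphi_U}$, these turn the displayed expression into $\mathbb{D}^{loc-ver}\big(\pi^!\pi_*\,ev_{\varphi_U}^*\,\mathbb{T}_{\EuScript{E}}\big)$.

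The main obstacle is the final reconciliation of this last expression with the asserted $\mathbb{D}^{loc-ver}\big(\pi^*\pi_*\,ev_{\varphi_U}^!\,\mathbb{T}_{\EuScript{E}}\big)$, i.e. the bookkeeping of the $!$ versus $*$ functors and their dualizing twists along the de Rham direction $p_{dR}:X\rightarrow X_{dR}$. Concretely, one must exchange $\pi^!$ for $\pi^*$ and $ev_{\varphi_U}^*$ for $ev_{\varphi_U}^!$, which introduces the relative dualizing complexes of $\pi$ and of $ev_{\varphi_U}$. Since $\pi$ is a base change of the smooth map $p_{dR}$, its twist is a shift by $\omega_X[d_X]$, and because $ev_{\varphi_U}$ is compatible with the structure maps to $X$ its relative dualizing data along the $X$-direction is the same $\omega_X[d_X]$; these two twists therefore cancel, and the residual $\omega_X^{\otimes -1}[\dim_X]$ built into $\mathbb{D}^{loc-ver}$ (cf. (\ref{eqn: Derived Inner Verdier Dual})) accounts for the rest. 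Verifying that these twists match on the nose, together with the validity of base change and of the duality-exchange relations in the pro-coherent/ind-coherent formalism over the smooth $X$, is where the real work lies; the hypotheses $(\star)$ guarantee that all functors and duals in question are defined and that dualizability propagates through each of them.
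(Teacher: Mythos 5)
Your proposal is correct and follows essentially the same route as the paper: identify $\JetX(\EuScript{E})$ as the pullback $\J(\EuScript{E})\times_{X_{dR}}X$, deduce $\mathbb{L}_{\JetX(\EuScript{E})/X}\simeq \pi^*\mathbb{L}_{\J(\EuScript{E})/X_{dR}}$ by base change along the Cartesian square, substitute the formula $\pi_!\, ev_{\varphi_U}^!\mathbb{L}_{\EuScript{E}/X}$ from Proposition \ref{proposition: Pro-cotangent of Jets}, and dualize. The paper's own argument is in fact terser than yours, simply asserting the chain of equivalences, so your additional care with the $!$-versus-$*$ bookkeeping and the cancellation of dualizing twists only makes explicit what the paper leaves implicit.
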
 
For this, notice that
$$\mathbb{L}_{\JetX(\EuScript{E})}\simeq \mathbb{L}_{p^*\J(\EuScript{E})}\simeq \mathbb{L}_{\J(\EuScript{E})\times_{X_{dR}}X}\simeq \pi^*\big(\pi_*ev_{\varphi_U}^!\mathbb{T}_{\EuScript{E}}\big)^{\vee}.$$
Indeed, since $\JetX(\EuScript{E})$ is obtained via pull-back and diagram (\ref{eqn: Jets diagram}) induces a morphism
$$\alpha:c_{dR}^*\mathbb{L}_{p_*\EuScript{E}}\rightarrow \mathbb{L}_{\JetX(\EuScript{E})},$$
which exhibits $c_{dR}^*\mathbb{L}_{p_*\EuScript{E}}$ as the relative cotangent complex for $\mathbb{L}_{\JetX(\EuScript{E})},$ where $c_{dR}$ arises as in (\ref{eqn: Jet-pb diagram}). Note $c_{dR}:\JetX(\EuScript{E})\rightarrow p_*\EuScript{E}.$ Put $$\mathbb{L}_{\JetX(\EuScript{E})/p_*\EuScript{E}}:=Cone(\alpha).$$

\subsubsection{Functoriality statements.}
Given a morphism $g:U_1\rightarrow U_2$, first of all, set 
$$\mathbb{L}_{\J(\EuScript{E}),\varphi_i}:=j_{\infty}(\varphi_{U_i})^!\mathbb{L}_{\J(\EuScript{E})}\in\mathsf{ProCoh}(U_i\times X_{dR}) \hspace{1mm} i=1,2.$$
From (\ref{eqn: Jets diagram}) we have the induced diagram
\begin{equation}
\label{eqn: Jet-pb functorial cube}
\adjustbox{scale=.85}{
\begin{tikzcd}[row sep=scriptsize, column sep=scriptsize]
& U_1\times X\arrow[dl, "g"] \arrow[rr, "\varphi_{1}"] \arrow[rrr, bend left, "\beta_{\varphi_{U_1}}"] \arrow[dd, "q_1"] & &\J(\EuScript{E})\times_{X_{dR}}X\arrow[dl, ""] \arrow[dd, ""] \arrow[r,"ev_{\varphi_{U_1}}"] & \EuScript{E}\\
U_2\times X\arrow[rr, "\varphi_2"] \arrow[rrr, bend left, "\beta_{\varphi_{U_2}}"] \arrow[dd, "q_2"] & & \J(\EuScript{E})\times_{X_{dR}}X \arrow[r,"ev_{\varphi_{U_2}}"] & \EuScript{E}\\
& U_1\times X_{dR}\arrow[dl, "g "] \arrow[rr, "\hspace{7mm}j_{\infty}(\varphi_{U_1})"] & &  \J(\EuScript{E}) \arrow[dl, "id"] \\
U_2\times X_{dR} \arrow[rr, "j_{\infty}(\varphi_{U_2})"] & & \J(\EuScript{E})\arrow[from=uu, crossing over]\\
\end{tikzcd}}
\end{equation}
where we have set $q_i:=id_{U_i}\times p_{dR}^X:U_i\times X\rightarrow U_i\times X_{dR}$ for $i=1,2$ and put $\varphi_i:=\varphi_{U_i}\times id_X$ etc. for ease of notation.
It is obvious that $g:U_1\rightarrow U_2,$ induces functors 
$$(g\times id_{X_{dR}})^!:\IC(U_2\times X_{dR})\rightarrow \IC(U_1\times X_{dR}),$$ which moreover generate commutative diagrams via duality operations,
\[
\begin{tikzcd}
    \IC(U_2\times X_{dR})^{\op}\arrow[d]\arrow[r,"(g\times id)^{!\op}"] & \IC(U_1\times X_{dR})\arrow[d]
    \\
    \mathsf{ProCoh}(U_2\times X_{dR})\arrow[r,"(g\times id)_{\mathsf{Pro}}^!"]& \mathsf{ProCoh}(U_1\times X_{dR}).
\end{tikzcd}
\]
Together with (\ref{eqn: Jet-pb functorial cube}), we obtain a canonical map 
$$(g\times id)_{\mathsf{Pro}}^!\mathbb{L}_{\J(\EuScript{E}),\varphi_1}\rightarrow \mathbb{L}_{\J(\EuScript{E}),\varphi_2}.$$ 
Taking the limit over all such diagrams, we obtain a global/solution independent object, that we abusively denoted by
$$\mathbb{L}_{\J(\EuScript{E})}^{glob}:=\underset{g\in \mathsf{dAff}_{/\J(\EuScript{E})}^{\op}}{holim}\hspace{1mm} \mathbb{L}_{\J(\EuScript{E})} .$$

Now, since $\underline{\IC}^!$ is lax symmetric monoidal for the tensor product of sheaves $\mathcal{F}_1,\mathcal{F}_2\rightarrow \mathcal{F}_1\boxtimes\mathcal{F}_2$, denote by $\delta_U:U\times X_{dR}\rightarrow (U\times X_{dR})\times (U\times X_{dR}),$ the diagonal embedding.

\begin{proposition}
\label{proposition: 2-forms Jets}
We have an object $\mathbb{L}_
{\J(\EuScript{E}),\varphi_U}\otimes^!\mathbb{L}_{\J(\EuScript{E}),\varphi_U},$ in $\IC(U\times X_{dR})\otimes \IC(U\times X_{dR}),$
given by
$\delta_U^*\big(\mathbb{L}_
{\J(\EuScript{E}),\varphi_U}\boxtimes\mathbb{L}_
{\J(\EuScript{E}),\varphi_U}\big).$
\end{proposition}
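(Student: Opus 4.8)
The plan is to read $\otimes^{!}$ as the tensor operation of the $!$-symmetric monoidal structure on $\IC(U\times X_{dR})$ induced by the lax symmetric monoidal functor $\underline{\IC}^{!}$, namely external product followed by restriction along the diagonal. With this reading the asserted formula $\mathbb{L}_{\J(\EuScript{E}),\varphi_U}\otimes^{!}\mathbb{L}_{\J(\EuScript{E}),\varphi_U}:=\delta_U^{*}\big(\mathbb{L}_{\J(\EuScript{E}),\varphi_U}\boxtimes\mathbb{L}_{\J(\EuScript{E}),\varphi_U}\big)$ is, by definition, the value of that monoidal product on the two factors, so the substance of the statement is that the construction genuinely applies to $\mathbb{L}_{\J(\EuScript{E}),\varphi_U}$ and produces an honest object rather than a merely formal one.

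First I would invoke the dualizability already in hand. Under the standing assumptions $(\star)$ the complex $\mathbb{L}_{\J(\EuScript{E})}$ is dualizable, hence so is $\mathbb{L}_{\J(\EuScript{E}),\varphi_U}=j_{\infty}(\varphi_U)^{!}\mathbb{L}_{\J(\EuScript{E})}$ by Proposition \ref{proposition: Pro-cotangent of Jets}, equivalently a dualizable pro-coherent complex under the local Verdier identification $\mathsf{ProCoh}(U\times X_{dR})\simeq\IC(U\times X_{dR})^{\op}$ used in Proposition \ref{prop: Tangent RSol1}. Since a symmetric monoidal functor preserves dualizable objects, and the $!$-tensor product is built from $\underline{\IC}^{!}$, the external product $\mathbb{L}_{\J(\EuScript{E}),\varphi_U}\boxtimes\mathbb{L}_{\J(\EuScript{E}),\varphi_U}$ is again dualizable; the Künneth comparison $\IC(U\times X_{dR})\otimes\IC(U\times X_{dR})\to\IC\big((U\times X_{dR})^{\times 2}\big)$ then identifies it with a genuine ind-coherent sheaf. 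Applying $\delta_U^{*}$ produces the claimed object, and because $\delta_U$ is a (pro-)closed embedding the pro-coherent $\delta_U^{*}$ and the ind-coherent $\delta_U^{!}$ differ only by the twist by the relative dualizing complex of the diagonal, so the operation preserves dualizability and the defining identity $\otimes^{!}=\delta_U^{*}\circ\boxtimes$ holds by construction.

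The main obstacle is geometric rather than formal: neither $U\times X_{dR}$ nor its square is of finite type, so the Künneth equivalence and the compatibility of $\boxtimes$ with $!$-pullback cannot be quoted from the finite-type theory and must instead be extracted from the laft-ness of $\RS_X(\EQ)$ supplied by $(\star)$ together with the dualizability above. To make this precise I would reduce to affine $\D_X$-charts, on which — via the equivalence $(\ref{eqn: MainEquiv})$ — the relevant cotangent complexes are summands of induced $\mathcal{A}[\D_X]$-modules as in Subsect. \ref{sssec: Horizontal Jets}, and where the pattern ``external product, then restrict along the diagonal'' is computed precisely by the local-operations formalism of $(\ref{eqn: LocalOps})$--$(\ref{eqn: Holocals})$. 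The desired global object is then obtained by passing to the homotopy limit over the chart, the limit being well behaved exactly because each term is dualizable.
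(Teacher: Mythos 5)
Your reading is the paper's own: the proposition is essentially definitional, the formula $\otimes^{!}=\delta_U^{*}\circ\boxtimes$ being exactly the $!$-tensor operation supplied by the lax symmetric monoidal structure of $\underline{\IC}^{!}$ noted immediately before the statement, and the paper offers no further proof beyond that observation together with the dualizability of $\mathbb{L}_{\J(\EuScript{E}),\varphi_U}$ already established from $(\star)$. Your additional care about Künneth in the non-finite-type setting and reduction to affine $\D_X$-charts goes beyond what the paper records but is consistent with it; the only small imprecision is invoking preservation of dualizables by a \emph{lax} monoidal functor, where what is actually used is that an external product of dualizable objects is dualizable in the tensor product of categories.
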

By Proposition \ref{proposition: 2-forms Jets}, there exists an object, 
$$\mathbb{L}_{\J(\EuScript{E}),\varphi_U}^{\otimes^! p}\in \IC(U\times X_{dR})^{\otimes p},$$
which has very desirable properties in the following sense. On one hand it is generated by push-forwards of coherent objects in the parameter directions $U.$ Namely, if $i_Z:Z\hookrightarrow U$ is a closed embedding, denote by $j:U\backslash Z\hookrightarrow U$ the affine open compliment. Then there is a canonical pull-back
$$j_{\IC}^!:\IC(U\times X_{dR})\rightarrow \IC(U\backslash Z\times X_{dR}),$$
which is moreover $t$-exact and fully-faithful. It admits a continuous right-adjoint $j_*^{\IC}$ which preserves the sub-categories of compact objects.

On the other hand, it is stable under embedding in the de Rham directions in to smooth ambient spaces. Namely, if $\iota:X\hookrightarrow \overline{X}$ is a closed embedding into a smooth $\overline{X},$
there is a fully-faithful, $t$-exact functor 
$$\iota_*^{\IC}:\IC(U\times X_{dR})\rightarrow \IC(U\times\overline{X}_{dR}),$$
which admits a continuous right-adjoint and an object $\mathcal{M}\in \IC(U\times X_{dR})$ is compact if and only if $i_*^{\IC}(\mathcal{M})$ is compact.\footnote{Making use of Kashiwara's theorem.}

\subsubsection{Comonadic Extensions}
Consider (\ref{eqn: Infinite prolongation}) with all objects satisfying $(\star)$. Set $\rho^{\EQ}:=p^*p_*(i),$ as the comonadic extension of $i:\EQ\rightarrow \JetX(\EuScript{E}).$

Consider the jet-coalgebra comultiplication $\Delta_{\EuScript{E}}$ (\ref{eqn: Infinite prolongation}) and the associated cofiber sequence
$$\Delta_{\EuScript{E}}^!\mathbb{L}_{\JetX(\JetX\EuScript{E})}\rightarrow \mathbb{L}_{\JetX(\EuScript{E})}\rightarrow \mathbb{L}_{\Delta_{\EuScript{E}}}.$$
\begin{proposition}
Considering $\EQ_X^{\infty}$ and the space of maps in ind-coherent sheaves on jets gives an equivalence
$$\M\big(\Bigwedge^p\mathbb{T}_{\JetX(\EuScript{E})/\JetX\JetX(\EuScript{E})},\alpha^{!}\rho_!^{\EQ}\mathcal{O}_{p^*p_*\EQ_X}[n]\big)\simeq\M_{\EQ_X^{\infty}}(i_{\infty}^!\Bigwedge^p\mathbb{T}_{\Delta_{\EuScript{E}}},\mathcal{O}_{\EQ_X^{\infty}}[n]\big),$$
\end{proposition}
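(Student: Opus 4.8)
The plan is to deduce the equivalence from $\mathsf{IndCoh}$ base change along the Cartesian square (\ref{eqn: Infinite prolongation}) together with the projection formula across the closed immersion $i_{\infty}$. First I would use the cofiber sequence $\Delta_{\EuScript{E}}^{!}\mathbb{L}_{\JetX(\JetX\EuScript{E})}\to \mathbb{L}_{\JetX(\EuScript{E})}\to \mathbb{L}_{\Delta_{\EuScript{E}}}$ to identify $\mathbb{L}_{\Delta_{\EuScript{E}}}$ with $\mathbb{L}_{\JetX(\EuScript{E})/\JetX\JetX(\EuScript{E})}$, so that dually $\mathbb{T}_{\JetX(\EuScript{E})/\JetX\JetX(\EuScript{E})}\simeq \mathbb{T}_{\Delta_{\EuScript{E}}}$; this is exactly what lets the two sides speak about the same sheaf. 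By $(\star)$ and the dualizability statements of Subsect. \ref{ssec: Sectional de Rham} the complex $\mathbb{T}_{\Delta_{\EuScript{E}}}$, and hence $\Bigwedge^{p}\mathbb{T}_{\Delta_{\EuScript{E}}}$, is dualizable on $\JetX(\EuScript{E})$ with dual $\Bigwedge^{p}\mathbb{L}_{\Delta_{\EuScript{E}}}$.

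Next I would rewrite the coefficient sheaf on the left. Writing $q\colon \EQ_X^{\infty}\to \JetX\EQ$ for the left vertical map of (\ref{eqn: Infinite prolongation}) and recalling $\rho^{\EQ}=\JetX(i)=p^{*}p_{*}(i)$ with $p^{*}p_{*}\EQ_X\simeq \JetX\EQ$, the square is Cartesian, so the base-change equivalence $\alpha^{!}\circ (\JetX(i))_{!}\simeq (i_{\infty})_{*}^{\IC}\circ q^{!}$ gives $\alpha^{!}\rho_{!}^{\EQ}\mathcal{O}_{p^{*}p_{*}\EQ_X}\simeq (i_{\infty})_{*}^{\IC}\,q^{!}\mathcal{O}_{\JetX\EQ}$. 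The point on which everything turns is the identification $q^{!}\mathcal{O}_{\JetX\EQ}\simeq \mathcal{O}_{\EQ_X^{\infty}}$; I would obtain it by showing that $q$, being the base change of the jet comultiplication $\alpha=\Delta_{\EuScript{E}}$ along the closed immersion $\JetX(i)$, is itself an equivalence — i.e. the infinite prolongation $\EQ_X^{\infty}$ is recovered as $\JetX\EQ$ — which is the $\D$-geometric expression of formal integrability. Granting this, the target of the left-hand mapping space becomes $(i_{\infty})_{*}^{\IC}\mathcal{O}_{\EQ_X^{\infty}}[n]$.

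Finally I would move the mapping space across $i_{\infty}$. Using dualizability, $\M(\Bigwedge^{p}\mathbb{T}_{\Delta_{\EuScript{E}}},N)\simeq \Gamma\big(\JetX(\EuScript{E}),\Bigwedge^{p}\mathbb{L}_{\Delta_{\EuScript{E}}}\otimes^{!}N\big)$; taking $N=(i_{\infty})_{*}^{\IC}\mathcal{O}_{\EQ_X^{\infty}}[n]$ and applying the projection formula $\Bigwedge^{p}\mathbb{L}_{\Delta_{\EuScript{E}}}\otimes^{!}(i_{\infty})_{*}^{\IC}(-)\simeq (i_{\infty})_{*}^{\IC}\big(i_{\infty}^{!}\Bigwedge^{p}\mathbb{L}_{\Delta_{\EuScript{E}}}\otimes^{!}(-)\big)$ together with $\Gamma\circ (i_{\infty})_{*}^{\IC}\simeq \Gamma$ reduces the left-hand side to $\Gamma\big(\EQ_X^{\infty},\,i_{\infty}^{!}\Bigwedge^{p}\mathbb{L}_{\Delta_{\EuScript{E}}}\otimes^{!}\mathcal{O}_{\EQ_X^{\infty}}[n]\big)$. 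Since $!$-pullback is symmetric monoidal for $\otimes^{!}$, the functor $i_{\infty}^{!}$ preserves duals, whence $i_{\infty}^{!}\Bigwedge^{p}\mathbb{L}_{\Delta_{\EuScript{E}}}\simeq \big(i_{\infty}^{!}\Bigwedge^{p}\mathbb{T}_{\Delta_{\EuScript{E}}}\big)^{\vee}$, and this last expression is precisely $\M_{\EQ_X^{\infty}}\big(i_{\infty}^{!}\Bigwedge^{p}\mathbb{T}_{\Delta_{\EuScript{E}}},\mathcal{O}_{\EQ_X^{\infty}}[n]\big)$, the right-hand side.

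The main obstacle is twofold. The harder part is the geometric identification $q^{!}\mathcal{O}_{\JetX\EQ}\simeq \mathcal{O}_{\EQ_X^{\infty}}$: since $q$ inherits the nonzero relative cotangent $i_{\infty}^{*}\mathbb{L}_{\Delta_{\EuScript{E}}}$, this is not a formal étaleness statement but genuinely uses that the prolongation diagram realizes $\EQ_X^{\infty}$ as the jet coalgebra $\JetX\EQ$, and the shift $[n]=[d_X]$ must be tracked against the de Rham directions of $X$. The remaining technical burden is to legitimize $\mathsf{IndCoh}$ base change, the projection formula and $\Gamma\circ(i_{\infty})_{*}^{\IC}\simeq\Gamma$ in the non-schematic, pro-coherent $\D$-prestack setting — which rests on the finiteness hypotheses $(\star)$, the properness of the closed immersion $i_{\infty}$, and the dualizability of the cotangent complexes established in the preceding propositions.
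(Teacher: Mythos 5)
Your overall route is in substance the one the paper takes: everything turns on the Cartesian square (\ref{eqn: Infinite prolongation}), base change for $\IC$ along it, dualizability of $\Bigwedge^p\mathbb{T}_{\Delta_{\EuScript{E}}}$ supplied by $(\star)$, and an adjunction step that relocates the mapping space onto $\EQ_X^{\infty}$. Where the paper dualizes into $\mathsf{ProCoh}$ and pushes all the way down to $\JetX\JetX(\EuScript{E})$ before invoking base change, you stay on $\JetX(\EuScript{E})$ and use the projection formula across the closed immersion $i_{\infty}$; that is a legitimate, and arguably cleaner, repackaging of the same content, and your identification $\mathbb{L}_{\Delta_{\EuScript{E}}}\simeq\mathbb{L}_{\JetX(\EuScript{E})/\JetX\JetX(\EuScript{E})}$ from the cofiber sequence is essentially definitional.

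There is, however, one genuine gap: your justification of $q^{!}\mathcal{O}_{\JetX\EQ}\simeq\mathcal{O}_{\EQ_X^{\infty}}$ by the assertion that $q\colon\EQ_X^{\infty}\to\JetX\EQ$ is an equivalence, ``the $\D$-geometric expression of formal integrability.'' That assertion says the infinite prolongation coincides with the jet space of the equation; it is a nontrivial regularity hypothesis on the PDE, it fails for a general $\D$-finitary $\EQ$, and it appears nowhere among the hypotheses of the proposition. It is also internally inconsistent with your own remark that $q$ carries the nonzero relative cotangent complex $i_{\infty}^{*}\mathbb{L}_{\Delta_{\EuScript{E}}}$: an equivalence has vanishing relative cotangent complex, so you cannot have both. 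The repair is that no geometric input on $q$ is needed at this step. On the $\IC$ side the symbol $\mathcal{O}$ denotes the unit for $\otimes^{!}$ (the dualizing object, via the functor $\Upsilon$ relating $\Q$ and $\IC$ that the paper recalls in its final Observation of Section 3), and $q^{!}$ is symmetric monoidal for $\otimes^{!}$, hence preserves the unit; equivalently, this identification is absorbed into the $\IC$/$\mathsf{ProCoh}$ duality that the paper invokes when it rewrites the coefficient object as $i_{*}^{\infty}\pi_{\infty}^{!}\mathcal{O}_{\EQ_X^{\infty}}[n]\simeq\Delta_{\EuScript{E}}^{!}\rho_{!}\mathcal{O}_{\EQ_X^{\infty}}[n]$. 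With that substitution the remainder of your argument (projection formula, $\Gamma\circ(i_{\infty})_{*}^{\IC}\simeq\Gamma$, and $i_{\infty}^{!}$ preserving duals because it is $\otimes^{!}$-monoidal) goes through as you describe.
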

\begin{proof}
    By definition, we have by putting $\M_{\EQ_X^{\infty}}$ to denote the mapping space in \emph{pro-coherent} sheaves,
    \begin{eqnarray*}
        \mathcal{A}_{/p^*p_*(\EQ_X)}^p(\EQ_X^{\infty};n)&=&\M_{\EQ_X^{\infty}}\big(\mathcal{O}_{\EQ_X^{\infty}}[-n],\Bigwedge^p\mathbb{L}_{\EQ_X^{\infty}/p^*p_*\EQ_X}\big)
        \\
        &\simeq& \M_{\EQ_X^{\infty}}\big(\mathcal{O}_{\EQ_X^{\infty}}[-n],i_{\infty}^!\Bigwedge^p\mathbb{L}_{\JetX(\EuScript{E})/\JetX(\JetX(\EuScript{E})}\big)
    \end{eqnarray*}
    which then agrees, by using duality, with 
  $$\M_{\JetX(\JetX(\EuScript{E})}\big(\mathcal{O}_{\JetX(\JetX\EuScript{E}}[-n],\rho_!^{\EQ}\pi_!i_{\infty}^!\Bigwedge^p\mathbb{L}_{\JetX(\EuScript{E})/\JetX(\JetX\EuScript{E})}\big).$$
More exactly, since $\EQ_X$ is $\D$-finitary, so is $\EQ_X^{\infty}.$ By dualizing, which acts, for instance by 
\[
\begin{tikzcd}
    \IC(\J(\EuScript{E}))^{\op}\arrow[d]\arrow[r] & \IC(\EQ)^{\op}
    \arrow[d]
    \\
    \mathsf{ProCoh}(\J(\EuScript{E}))\arrow[r] & \mathsf{ProCoh}(\EQ).
\end{tikzcd}
\]
we obtain the result, noting that we then pass from pro-coherent to ind-coherent sheaves and the above is equivalently given via base-change by a morphism
$$\Bigwedge^p\mathbb{T}_{\Delta_{\EuScript{E}}}\rightarrow i_*^{\infty}\pi_{\infty}^!\mathcal{O}_{\EQ_X^{\infty}}[n]\simeq \Delta_{\EuScript{E}}^!\rho_!\mathcal{O}_{\EQ_X^{\infty}}[n].$$
\end{proof}

\subsubsection{}
A more delicate question concerns when $\EQ\in \PS_{/X_{dR}}$ is obtained from two (derived) differential operators acting on prestacks i.e. morphisms 
\begin{equation}
    \label{eqn: DiffMorphisms}
\mathsf{F}_1:p_{dR}^*p_{dR*}E_1\rightarrow Q,\hspace{2mm} \mathsf{F}_2:p_{dR}^*p_{dR*}E_2\rightarrow Q,
\end{equation}
with $E_1,E_2,Q\in \PS_{/X}.$ 
\begin{proposition}
\label{prop: Constrained tangents}
    Consider \emph{(\ref{eqn: DiffMorphisms})} and suppose that all objects satisfy $(\star).$ Set 
    $$\EQ:=\J E_1\times_{\J Q}\J E_2\rightarrow X_{dR}.$$ Then for all $U$-parameterised solutions $\varphi_U=(\varphi_1,\varphi_2)_U$ with $\varphi_i$ the component of the section into $\J(E_i),i=1,2,$ we have that the cotangent complex of $\RS(\EQ)$ is equivalent to  
    $$ j_{\infty}(\varphi_1,\varphi_2)_U^!\big(pr_1^!\big(\big(id_{\J(\EuScript{E})}\times p_{dR}\big)_!\circ ev_{\varphi_1}^!\mathbb{L}_{E_1/X}\big)\oplus pr_2^!\big(\big(id_{\J(E_2)}\times p_{dR}\big)_!\circ ev_{\varphi_2}^!\mathbb{L}_{E_2/X}\big)\big).$$
\end{proposition}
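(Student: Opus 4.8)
The plan is to exploit the fact that $\EQ$ is by construction a fibre product and to feed its two factors into the jet cotangent-complex computation already established in Proposition \ref{proposition: Pro-cotangent of Jets}. First I would note that $\RS_X(-)$ is corepresented by sections over $X_{dR}$ and therefore commutes with the finite limit defining $\EQ$ out of $(\ref{eqn: DiffMorphisms})$; under $(\star)$ all four prestacks are laft with dualizable cotangent complexes, so
\[
\RS_X(\EQ)\simeq \RS_X(\J E_1)\times_{\RS_X(\J Q)}\RS_X(\J E_2)
\]
is again laft and the square is one of prestacks admitting deformation theory. This is the only place where the specific structure of $\EQ$ as a fibre product enters.

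Next I would invoke the behaviour of the cotangent complex under such a Cartesian square. Writing $pr_1,pr_2,\pi$ for the two projections and the structural map to $\RS_X(\J Q)$, base change together with the transitivity triangle yields a cofibre sequence
\[
\pi^{*}\mathbb{L}_{\RS_X(\J Q)}\longrightarrow pr_1^{*}\mathbb{L}_{\RS_X(\J E_1)}\oplus pr_2^{*}\mathbb{L}_{\RS_X(\J E_2)}\longrightarrow \mathbb{L}_{\RS_X(\EQ)}.
\]
Pulling back along a solution $j_{\infty}(\varphi_1,\varphi_2)_U$ and passing to $\PC(U\times X_{dR})$ as in diagram $(\ref{eqn: Point of RSol})$, I would identify each middle summand: by Proposition \ref{proposition: Pro-cotangent of Jets} applied to $E_i$, together with the functoriality cube $(\ref{eqn: Jet-pb functorial cube})$ and the base-change comparisons recorded there, one has
\[
j_{\infty}(\varphi_i)^{!}\,pr_i^{*}\mathbb{L}_{\RS_X(\J E_i)}\simeq pr_i^{!}\big((id_{\J E_i}\times p_{dR})_{!}\circ ev_{\varphi_i}^{!}\mathbb{L}_{E_i/X}\big),
\]
which is exactly the $i$-th term of the asserted formula, so the middle of the cofibre sequence assembles the direct sum appearing in the statement.

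The hard part, and the step requiring genuine care, is the treatment of the middle ($\J Q$) term of the cofibre sequence and the commutation of $(-)_{!}$ with $(-)^{!}$ across $(\ref{eqn: Jet-pb functorial cube})$. Here I would use that $\D$-finitariness makes every cotangent complex dualizable and pro-coherent, so that one may pass freely between $\IC$ and $\PC$ through the local Verdier duality used before Proposition \ref{prop: Tangent RSol1}; the $t$-exact, fully faithful pushforwards $\iota_*^{\IC}$ and $j_*^{\IC}$ of the preceding subsection then let me transport the relevant base-change identities through the de Rham and parameter directions of the cube. The remaining point is that, at an honest solution, the transversality of the two differential operators $\mathsf{F}_1,\mathsf{F}_2$ of $(\ref{eqn: DiffMorphisms})$ into the common target $Q$ forces the map out of $\pi^{*}\mathbb{L}_{\RS_X(\J Q)}$ to split off after pullback along $j_{\infty}(\varphi_1,\varphi_2)_U$, so that the cofibre sequence degenerates to the claimed direct sum $pr_1^{!}(\cdots)\oplus pr_2^{!}(\cdots)$. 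Establishing this splitting, rather than the formal fibre-product manipulation, is where I expect the real difficulty to lie, since it is precisely the linearized constraint imposed by $Q$ that must be shown to contribute trivially to the cotangent complex evaluated at a genuine solution.
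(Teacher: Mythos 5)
Your proposal follows essentially the same route as the paper: exploit the fibre-product presentation of $\EQ$, write the associated (co)fibre sequence of cotangent complexes over $X_{dR}$, identify the two middle summands via Proposition \ref{proposition: Pro-cotangent of Jets} and diagram (\ref{eqn: Point of RSol}), and pull back along $j_{\infty}(\varphi_1,\varphi_2)_U$. The only substantive difference is that you commute $\RS_X(-)$ with the limit first and work downstairs, whereas the paper works with $\mathbb{L}_{\EQ}$ upstairs and then applies Proposition \ref{prop: Tangent RSol1}; these are interchangeable under $(\star)$. The step you honestly flag as unresolved --- why the $\eta^{!}\mathbb{L}_{\J(Q)}$ term contributes nothing, so that the fibre sequence degenerates to the direct sum --- is precisely the step the paper also does not argue: its proof writes the three-row diagram of fibre sequences $\mathbb{T}_{\EQ/X_{dR}}\to pr_1^{!}\mathbb{T}_{\J(E_1)/X_{dR}}\oplus pr_2^{!}\mathbb{T}_{\J(E_2)}\to \eta^{!}\mathbb{T}_{\J(Q)/X_{dR}}$ and then simply asserts the direct-sum formula. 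So you have not missed an idea that the paper supplies; you have correctly located the point at which both arguments would need a genuine transversality or splitting input to be complete.
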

\begin{proof}
 First note that $\EQ$ is obtained by the $(p_{dR}^*,p_{dR*})$-adjunction on morphisms (\ref{eqn: DiffMorphisms}). That is,  we obtain maps 
$p_*\mathsf{F}_i:p_*E_i\rightarrow p_* Q,i=1,2$ and may consider the homotopy fiber product in $\PS_{/X_{dR}}:$
\[
\begin{tikzcd}
    \EQ\simeq \J E_1\times_{\J Q}\J E_2\arrow[d,"pr_1"] \arrow[r,"pr_2"]\arrow[dr,"\eta"] & \J(E_2)\arrow[d,"p_*\mathsf{F}_2"]
    \\
    \J(E_1)\arrow[r,"p_*\mathsf{F}_1"] & \J(Q),
\end{tikzcd}
\]
and with $\pi:=q\circ \eta:\EQ\rightarrow \J(Q)\xrightarrow{q}X_{dR}.$ Then, there is a diagram,
\[
\begin{tikzcd}
    \mathbb{T}_{\EQ/X_{dR}}\arrow[d]\arrow[r] & pr_1^!\mathbb{T}_{\J(E_1)/X_{dR}}\oplus pr_2^!\mathbb{T}_{\J(E_2)}\arrow[d]\arrow[r] & \eta^!\mathbb{T}_{\J(Q)/X_{dR}}\arrow[d]
    \\
    \mathbb{T}_{\EQ}\arrow[d]\arrow[r] & pr_1^*\mathbb{L}_{\J(E_1)}\oplus pr_2^*\mathbb{T}_{\J(E_2)} \arrow[d]\arrow[r] & \eta^*\mathbb{T}_{\J(Q)}\arrow[d]
    \\
    \pi^*\mathbb{T}_{X}\arrow[r] & \pi^*\mathbb{T}_X\oplus \pi^*\mathbb{T}_X\arrow[r] & \pi^*\mathbb{T}_X
\end{tikzcd}
\]
where we may view relative to $X_{dR}$ over $X$ instead, as it follows from the fact that $0\simeq p_{X,dR}^*\mathbb{L}_{X_{dR}}\rightarrow \mathbb{L}_{X}\rightarrow \mathbb{L}_{X/X_{dR}},$ so $\mathbb{L}_{X}\simeq \mathbb{L}_{X/X_{dR}}.$ Then, one may observe the result follows from analog diagram (\ref{eqn: Point of RSol}) via an application of Proposition \ref{prop: Tangent RSol1}.  
Namely, consider a $U$-point of $$\RS_X(\EQ)\simeq \mathsf{Maps}_{/X_{dR}}(X_{dR},\J(E_1)\times_{\J(Q)}\J(E_2)),$$ with $\rho:\RS_X(\EQ)\times X_{dR}\rightarrow \RS_X(\EQ),$ one has $\varphi_U^! \rho_! ev_{\varphi_U}^!\mathbb{L}_{\EQ},$ from which we may see
\begin{eqnarray*}
\mathbb{L}(\RS_X(\EQ))&\simeq & j_{\infty}^!(\varphi_U)\mathbb{L}(\J E_1\times_{\J Q}\J E_2)
\\
&\simeq& j_{\infty}(\varphi_U)^!\big(pr_1^!\mathbb{L}_{\J E_1}\oplus pr_2^!\mathbb{L}_{\J E_2}\big).
\end{eqnarray*}
\end{proof}
The question of whether $\EQ$ inherits a local shifted structure will therefore depend on $E_i,i=1,2$ and $Q$ as well as properties of the morphisms $\mathsf{F}_i,$ for instance, if they were Lagrangian in a suitable differential sense. 
We illustrate this situation with an example. 
\subsubsection{Framework for Hamilton-Jacobi systems.}
Fix a derived Artin stack $X$ and put $\gamma:E:=X\times \mathbb{A}^1\rightarrow \mathbb{A}^1,$ where $\mathbb{A}^1$ is viewed as a derived stack with $\mathcal{O}(\mathbb{A}^1)=k[t].$
Consider the $n$-shifted cotangent stack $\mathsf{T}^*[n]X$, together with its usual $n$-shifted (Liouville) $1$-form $\lambda$ and non-degenerate shifted symplectic form $\omega=d_{dR}\lambda.$
Consider two pull-back diagrams in derived stacks:
\begin{equation}
\label{eqn: 1-jets}
\begin{tikzcd}
    \mathsf{Jets}_{\mathbb{A}^1}^{1}[n]X\arrow[d,"p_1"] \arrow[r,"p_2"] & \mathbb{A}^1[n]\arrow[d]
    \\
    \mathsf{T}^*[n]X\arrow[r] & *
\end{tikzcd}\hspace{2mm},\begin{tikzcd}
     \mathsf{Jets}_{\mathbb{A}^1}^{\dagger}[n]X\arrow[d,"q_1"] \arrow[r,"q_2"] &\mathbb{A}^1[n]\arrow[d]
    \\
    \mathsf{T}[n]X\arrow[r] & *
\end{tikzcd}
\end{equation}
From (\ref{eqn: 1-jets}) we have natural cofiber sequences 
$$p_1^*\mathbb{T}_{\mathsf{T}^*[n]X}\rightarrow \mathbb{T}_{\JN}\rightarrow p_2^*\mathbb{T}_{\mathbb{A}^1[n]},\hspace{1mm}\text{ and }\hspace{.2mm} q_1^*\mathbb{T}_{\mathsf{T}[n]X}\rightarrow \mathbb{T}_{\Jn}\rightarrow q_2^*\mathbb{T}_{\mathbb{A}^1[n]}.$$
Thus it follows that
$$\mathbb{L}_{\JN}\simeq p_1^*\mathbb{L}_{\mathsf{T}^*[n]X}\oplus p_2^*\mathbb{L}_{\mathbb{A}^1[n]},\hspace{1mm}\text{ and }\mathbb{T}_{\JN}\simeq p_1^*\mathbb{T}_{\mathsf{T}^*[n]X}\oplus p_2^*\mathbb{T}_{\mathbb{A}^1[n]}.$$
Similarly,
$$\mathbb{L}_{\Jn}\simeq q_1^*\mathbb{L}_{\mathsf{T}[n]X}\oplus q_2^*\mathbb{L}_{\mathbb{A}^1[n]},\hspace{1mm}\text{ and }\hspace{.2mm}\mathbb{T}_{\Jn}\simeq q_1^*\mathbb{T}_{\mathsf{T}[n]X}\oplus q_2^*\mathbb{T}_{\mathbb{A}^1[n]}.$$
From these considerations it is clear that the $n$-shifted $1$-form $\lambda$ induces a canonical $n$-shifted $1$-form i.e. a section $\vartheta\in\Gamma\big(\JN,\mathbb{L}_{\JN}\big),$ given locally by 
$\vartheta=p_1^*\lambda -d_{dR}(p_2^*t),$ where $t$ is of degree $n$ as the coordinate on the shifted affine line.

Consider the pull-back diagram in derived stacks:
\begin{equation}
\label{eqn: Legendre Diagram}
\begin{tikzcd}
    \JN\times_{E}^h\Jn\arrow[d,"pr_1"] \arrow[r,"pr_2"] & \Jn\arrow[d,"\eta"]
    \\
    \JN\arrow[r,"\rho"] & E[n]  
\end{tikzcd}
\end{equation}
Composition with standard maps $\pi:\mathsf{T}^*[n]X\rightarrow X$ and $\tau:\mathsf{T}[n]X\rightarrow X,$ gives
$\JN\rightarrow \mathsf{T}^*[n]X\rightarrow X$ and $\Jn\rightarrow \mathsf{T}[n]X\rightarrow X.$

Take mapping spaces of sections in derived stacks of diagram (\ref{eqn: Legendre Diagram}), a section $$s=(w,v)\in \mathsf{Maps}(E,\JN\times_E\Jn),$$ corresponds to $w\in \mathsf{Maps}(E,\JN)$ and $v\in \mathsf{Maps}(E,\Jn)$.
For every $L\in \mathcal{O}(\Jn),$ a correspondence in derived stacks:
\[
\begin{tikzcd}
    & \arrow[dl] \Gamma_L\arrow[dr] & 
    \\
    \Jn & & \JN
\end{tikzcd}
\]
together with a map $\iota_L:\Gamma_L\rightarrow \JN\times\Jn,$ subject to additional properties (e.g. isotropic with additional non-degeneracy conditions corresponding to the shifted symplectic structure on $1$-jet stacks) gives a general framework to pose generalized Hamilton-Jacobi problems. Namely, such a correspondence acts like a kind of Legendre transform. Indeed, classically one might take for example $\Gamma_L=\{(t,x,x_1,\partial_{x_1}L)\}$ with $x_1=\partial_tx.$ 
A section $s=(v,w)$ as above solves the HJ-problem if there is a homotopy $i_v s^*\lambda \simeq s^*d_{dR}H_L,$ for some $H_L\in \mathcal{O}(\JN),$ for which $s$ factors through $\iota_L$ i.e. $s$ determines a point in $\mathsf{Maps}(E,\Gamma_L).$

The question of non-degeneracy then relies on the existence of underlying shifted symplectic or Poissons structures. 
In the current setting, this may be seen by applying the functor $p_{dR*}$ to (\ref{eqn: Legendre Diagram}) puts us in the setting of Proposition \ref{prop: Constrained tangents} and if $X$ has a relative $d$-orientation (see \cite{PTVV}), then the relative (to $\Jn$) mapping space of solutions inherits a $(d-1)$-shifted symplectic structure, by pull-push $\vartheta$ along the span:
    \[
    \begin{tikzcd}
& \arrow[dl,"\pi"] \RS_X(\EQ)\times X_{dR} \arrow[dr,"ev"] & 
\\
\RS_X(\EQ)& & \EQ.
\end{tikzcd}
\]

This is a particular instance of a more general $\D$-geometric transgression procedure, that we do not go into detail with in this work. We conclude with one observation in this direction.
\begin{observation}
\normalfont
Consider $\EQ\rightarrow X_{dR}\in \mathsf{PreStk}_X(\mathcal{D}_X),$ satisfying $(\star)$ with moduli of solutions $\RS_X(\EQ).$ If $\EQ$ is endowed with a shifted form $\omega$ then there is an induced form $\omega_{\RS}$ via transgression. 
For instance, representing
$$\mathbb{T}_{\varphi_U}\RS_X(\EQ)\simeq Ra_*\big(U\times X_{dR},j_{\infty}(\varphi_U)^!\mathbb{T}_{\EQ}\big),$$
if $\EQ$ has an $n$-shifted $2$-form $\theta_{\EQ}$, it induces a pairing
$$\Theta_{\EQ}:\mathbb{T}_{\EQ}\wedge \mathbb{T}_{\EQ}\rightarrow \mathcal{O}_{\EQ}[n],$$
which in turn defines a morphism 
$$Ra_*(U\times X_{dR},j_{\infty}(\varphi_U)^*\mathbb{T}_{\EQ})\wedge Ra_*(U\times X_{dR},j_{\infty}(\varphi_U)^*\mathbb{T}_{\EQ})\rightarrow Ra_*(U\times X_{dR},\mathcal{O}_{U\times X_{dR}}[n]).$$
Roughly speaking, due to the equivalence of functors,
\begin{eqnarray*}
Ra_*(U\times X_{dR},-)&\simeq& \mathsf{Maps}_{\IC(U\times X_{dR})}(\mathcal{O}_{U\times X_{dR}},-)
\\
&\simeq& \M_{\IC(U)\times \IC(X_{dR})}(\mathcal{O}_U\boxtimes \mathcal{O}_{X_{dR}}^r,-)
\\
&\simeq& \M_{\Q(U)\otimes \IC(X_{dR})}(\mathcal{O}_U\boxtimes \omega_X,-)
\\
&\simeq& R\Gamma^{\IC}(U,-)\boxtimes R\Gamma_{dR}(X,-),
\end{eqnarray*}
where the second factor is the $\mathcal{D}$-module de Rham cohomlogy i.e .
$$\Gamma_{dR}(X,\mathcal{M}^{\ell})=\Gamma(X_{dR},\mathcal{M}^{\ell})\simeq \M_{\Q(X_{dR})}(\mathcal{O}_X,\mathcal{M}^{\ell}),$$ which by passing to $\D^{op}$-modules, is of the form $\mathsf{Maps}_{\IC(X_{dR})}(\omega_X,\mathcal{N}),$ where $\mathcal{M}^{\ell}$ is obtained from $\mathcal{N}$ by (the inverse of) a natural functor\footnote{In \cite{GaiRozCrys}, it is denoted $\Upsilon_{X_{dR}}.$} $\IC(X_{dR})\rightarrow \Q(X_{dR})$.
\end{observation}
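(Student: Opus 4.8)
The plan is to mimic the \cite{PTVV} transgression for mapping stacks, with the de Rham direction $X_{dR}$ playing the role of the source and the $\D$-module de Rham cohomology of $X$ furnishing the integration. First I would invoke Proposition \ref{prop: Tangent RSol1}, so that at a $U$-parameterised solution $\varphi_U$ the tangent complex reads $\mathbb{T}_{\varphi_U}\RS_X(\EQ)\simeq Ra_*\big(U\times X_{dR},j_{\infty}(\varphi_U)^!\mathbb{T}_{\EQ}\big)$, and record that an $n$-shifted $2$-form $\theta_{\EQ}$ is by definition a (closed) pairing $\Theta_{\EQ}:\mathbb{T}_{\EQ}\wedge\mathbb{T}_{\EQ}\rightarrow\mathcal{O}_{\EQ}[n]$. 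Since every operation below is natural in $U$, it suffices to build the pairing $!$-locally over each affine $U$ and then pass to the limit over $\mathsf{dAff}_{/\RS_X(\EQ)}^{\op}$, exactly as in the assembly of $\mathbb{L}_{\J(\EuScript{E})}^{glob}$; this produces a solution-independent form.

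Next I would pull $\Theta_{\EQ}$ back along $j_{\infty}(\varphi_U)$, equivalently along $ev_{\varphi_U}$ via diagram (\ref{eqn: Point of RSol}), to obtain a self-pairing of $j_{\infty}(\varphi_U)^!\mathbb{T}_{\EQ}$ valued in $j_{\infty}(\varphi_U)^!\mathcal{O}_{\EQ}[n]\simeq\mathcal{O}_{U\times X_{dR}}[n]$, and then push forward. Because $\underline{\IC}^!$ is lax symmetric monoidal (the same structure used to produce Proposition \ref{proposition: 2-forms Jets}), the functor $Ra_*$ carries the $!$-tensor square of $j_{\infty}(\varphi_U)^!\mathbb{T}_{\EQ}$ to the tensor square of $\mathbb{T}_{\varphi_U}\RS_X(\EQ)$, yielding a pairing $\mathbb{T}_{\varphi_U}\RS_X(\EQ)\wedge\mathbb{T}_{\varphi_U}\RS_X(\EQ)\rightarrow Ra_*\big(\mathcal{O}_{U\times X_{dR}}\big)[n]$.

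To extract the correct shift I would apply the factorisation $Ra_*(U\times X_{dR},-)\simeq R\Gamma^{\IC}(U,-)\boxtimes R\Gamma_{dR}(X,-)$ displayed in the Observation to the unit, giving $R\Gamma^{\IC}(U,\mathcal{O}_U)\boxtimes R\Gamma_{dR}(X,\mathcal{O}_X)[n]$. The first factor is the structure sheaf of $\RS_X(\EQ)$ in the $U$-direction, while the second is the de Rham cohomology of $X$. When $X$ carries a $d$-orientation in the sense of \cite{PTVV}---realised here through Verdier duality, the identification $\omega_{X^I}\simeq\omega_X^{\boxtimes I}$, and the functor $\Upsilon_{X_{dR}}$ of \cite{GaiRozCrys}, which together supply a trace $R\Gamma_{dR}(X,\mathcal{O}_X)\rightarrow k[-d]$---composing with this trace lands the pairing in $\mathcal{O}_{\RS_X(\EQ)}[n-d]$. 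This gives the underlying shifted $2$-form $\omega_{\RS}$ and recovers, as a special case, the shifted symplectic form attached to the Hamilton--Jacobi span.

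The main obstacle is upgrading this underlying pairing to a genuine \emph{closed} shifted form, i.e. transgressing the full coherency datum $\omega=\omega_0+\omega_1+\cdots$ of the Notations section rather than only its leading term $\omega_0$. For this I would verify that the factorisation of $Ra_*$ intertwines the total differential $D=\delta+d$ on both sides: the $U$-factor $R\Gamma^{\IC}(U,-)$ is compatible with the internal differential, whereas the $X$-factor is governed by the de Rham differential, which is precisely the part the trace must respect. Concretely, the delicate point is that $Ra_*$ must commute with the weight-graded mixed structure, so that a $D$-cocycle on $\EQ$ transgresses to a $D$-cocycle on $\RS_X(\EQ)$; this is where $t$-exactness of $\Upsilon_{X_{dR}}$ and of the relevant $!$-pushforwards is used. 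Nondegeneracy of $\omega_{\RS}$ would then follow from nondegeneracy of $\theta_{\EQ}$ together with the perfectness of the de Rham duality pairing on $X$ furnished by the orientation.
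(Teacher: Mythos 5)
Your proposal follows essentially the same route as the paper's own justification of this Observation: identify $\mathbb{T}_{\varphi_U}\RS_X(\EQ)$ with $Ra_*\big(U\times X_{dR}, j_{\infty}(\varphi_U)^!\mathbb{T}_{\EQ}\big)$ via Proposition \ref{prop: Tangent RSol1}, push the pairing $\Theta_{\EQ}$ forward using the (lax monoidal) functoriality of $Ra_*$, and then invoke the factorization $Ra_*(U\times X_{dR},-)\simeq R\Gamma^{\IC}(U,-)\boxtimes R\Gamma_{dR}(X,-)$ together with the orientation/trace on the de Rham factor. The additional material on transgressing the full closedness datum and on nondegeneracy goes beyond what the paper records here (it only sketches the underlying pairing, deferring the symplectic refinement to the Hamilton--Jacobi discussion and Proposition \ref{prop: Transgression}), but it is consistent with that later treatment.
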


 \begin{example}
 \normalfont 
Let $C$ be an elliptic curve and $X$ a $d$-oriented derived Artin stack with $\pi:E=\mathsf{T}^*X\times C\rightarrow C$ the natural projection and consider 
    $\EQ:=\J(\mathsf{T}^*X).$ Then $\EQ$ is equipped with a Poisson structure, determined by a bi-vector field
$\Pi\in \mathbb{T}_{\EQ}\wedge^{\star}\mathbb{T}_{\EQ},$ on $RanX.$
It follows that the corresponding de Rham cohomology $\Gamma_{dR}(X,ev_X^*\mathbb{T}_{\EQ}),$ identifies this Poisson structure with the symplectic form on $\M(C,\mathsf{T}^*X),$ by the description of
$\mathbb{T}[\underline{\M}(C,\mathsf{T}^*X)]$ (see \cite{PTVV}).
\end{example}

\section{$\mathcal{D}$-Geometric Derived Variational de Rham Algebra}
\label{sec: D-Geometric DerVar}
In this section we exploit (\ref{eqn: MainEquiv}) to give an more explicit presentation of the de Rham algebras over derived affine $\mathcal{D}$-spaces $Spec_{\mathcal{D}_X}(\mathcal{A}).$

To this end, note the fact that if $\EQ\rightarrow X$ is affine, then so is $\mathrm{Jets}_{dR}^{\infty}(\EQ)\rightarrow X_{dR}$ and there is an identification $\EQ\simeq Spec_{\mathcal{D}}(\mathcal{A}^{\ell}),$ with $\mathcal{A}^{\ell}\in \mathsf{CAlg}_X(\D_X)$ some eventually co-connective \cite{KSYI} derived $\D$-algebra.

\subsection{Shifted Symplectic $\mathcal{D}$-Geometry.}
\label{ssec: Shifted Symplectic Derived D Geometry}
The construction of $\mathbb{L}_{\mathcal{A}^{\ell}}$ is compatible with localization of $X$ and for $j\in \mathbb{Z}$, one has $\mathcal{H}_{\mathcal{D}}^j(\mathbb{L}_{\mathcal{A}^{\ell}})\in \mathrm{Mod}(\mathcal{H}_{\mathcal{D}}^0(\mathcal{A}^{\ell})[\mathcal{D}_X]).$ They are also compatible with étale localization of $Spec_{\mathcal{D}}(\mathcal{A}^{\ell}).$  
\begin{remark}
    If $X$ is quasi-projective one may use a global semi-free resolution of $\mathcal{A}^{\ell}$ to define $\mathbb{L}_{\mathcal{A}^{\ell}}\in \mathrm{D}(\mathcal{A}^{\ell}[\mathcal{D}_X]),$ i.e. as an object of the derived category and exploit naturally defined local operations (c.f. \ref{eqn: LocalOps}).
\end{remark}
We may consider the usual derived de Rham algebra construction adapted to our $\mathcal{D}$-module setting to take values in the complex of integral forms.
\begin{theorem}[\textcolor{blue}{Theorem A (i)}]
\label{prop: DRVar is a prestack}
Let $X$ be smooth of dimension $d_X.$ Assigning a graded-mixed algebra object in bi-complexes of $\D_X$-modules to a derived $\D_X$-algebra $\mathcal{A}=(\mathcal{A}^{\bullet},\delta)$ via
$$\mathcal{A}^{\bullet}\mapsto \mathbf{DR}_{var}^*(\mathcal{A}^{\bullet})[n-p]\simeq \underbrace{\bigoplus_{q\geq 0}\mathcal{I}_{X,*}\otimes\mathcal{D}_X[d_X]\otimes_{\mathcal{D}}^{\mathbb{L}}\big(\Bigwedge_{\mathcal{A}}^q\mathbb{L}_{\mathcal{A}}\big)[q-p+n]}_{\mathcal{V}ar^q(\mathcal{A}^{\bullet},n)},$$
is local on $X,$ satisfies $\mathcal{D}$-étale descent, and is compatible with truncation i.e. the natural morphism $$\mathcal{V}ar^q(\mathcal{A}^{\bullet},n)\rightarrow holim_n\hspace{.3mm}\mathcal{V}ar^q(\tau_{\mathcal{D}}^{\leq -n}\mathcal{A}^{\bullet}),$$ with $\tau_{\mathcal{D}}^{\leq -n}$ as in \emph{(\ref{ssec: Finiteness})} is an equivalence for every $q\geq 0.$
\end{theorem}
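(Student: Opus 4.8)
The plan is to establish the three asserted properties---locality on $X$, $\mathcal{D}$-étale descent, and compatibility with truncation---in turn, deducing each from the corresponding behaviour of the cotangent complex $\mathbb{L}_{\mathcal{A}}$ and its exterior powers, together with the exactness and limit-preservation properties of the de Rham functor $\mathsf{DR}^{\mathcal{D}}_{\mathcal{O}_X}(-)=(\mathcal{I}_{X,*}\otimes_{\mathcal{O}_X}\mathcal{D}_X[d_X])\otimes^{\mathbb{L}}_{\mathcal{D}_X}(-)$ of Definition \ref{Variational de Rham complex definition}. The essential point is that each of the three operations entering $\mathcal{V}ar^q(\mathcal{A}^\bullet,n)$---formation of $\mathbb{L}_{\mathcal{A}}$, passage to $\bigwedge^q_{\mathcal{A}}$, and the derived tensor against the fixed complex $\mathcal{I}_{X,*}\otimes\mathcal{D}_X[d_X]$---is compatible with restriction along opens, with the $\infty$-categorical totalizations computing descent, and with the Postnikov tower.

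First I would treat locality. As recorded at the start of Subsect.~\ref{ssec: Shifted Symplectic Derived D Geometry}, the construction of $\mathbb{L}_{\mathcal{A}^{\ell}}$ commutes with localization of $X$ and with étale localization of $\mathrm{Spec}_{\mathcal{D}_X}(\mathcal{A}^{\ell})$. The wedge powers $\bigwedge^q_{\mathcal{A}}\mathbb{L}_{\mathcal{A}}$ are formed $\mathcal{A}[\mathcal{D}_X]$-linearly and hence inherit this compatibility; moreover, by Proposition~\ref{prop: Sym preserves W} (via décalage, which presents $\bigwedge^q$ as a $Sym^q$ of a shift), this exterior power is a well-defined $\infty$-functor on $\DG(\mathcal{A})$, independent of the chosen cofibrant resolution. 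Since $\mathcal{I}_{X,*}$, $\mathcal{D}_X$, and the bifunctor $(-)\otimes^{\mathbb{L}}_{\mathcal{D}_X}(-)$ are all defined sheaf-theoretically on the small site of $X$, the composite $\mathcal{A}^\bullet\mapsto \mathcal{V}ar^q(\mathcal{A}^\bullet,n)$ is a local construction.

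For $\mathcal{D}$-étale descent I would argue that each of the three functors preserves the relevant limits. Cotangent complexes satisfy étale descent, and since Kähler differentials and their exterior powers are étale local---as already observed when defining the functor $\mathsf{DR}^{var}(-)\colon Comm(\mathcal{D}_X\text{-mod})_{et}\to cdga_{\mathcal{D}_X}$---the assignment $\mathcal{A}\mapsto \bigwedge^q_{\mathcal{A}}\mathbb{L}_{\mathcal{A}}$ carries a $\mathcal{D}$-étale hypercover to a cosimplicial object whose totalization recovers the value on $\mathcal{A}$. The de Rham functor $\mathsf{DR}^{\mathcal{D}}_{\mathcal{O}_X}$ is exact and, being a derived tensor against a fixed complex, commutes with totalizations, so applying it preserves this descent; hence $\mathcal{V}ar^q(-,n)$ is an étale sheaf of bi-complexes of $\mathcal{D}_X$-modules. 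One also checks that the graded-mixed structure---the weight grading by $q$ together with the two differentials $d_h$ (from $DR_X$) and $d^v$ (the Kähler differential), and the internal $\delta$---is respected by descent, which is automatic as it is induced at the level of $\mathbb{L}_{\mathcal{A}}$.

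The compatibility with truncation is the main obstacle and the only point demanding genuine convergence bookkeeping. The comparison map assembles the functorial maps induced by the canonical truncation maps relating $\mathcal{A}^\bullet$ to $\tau^{\leq -n}_{\mathcal{D}}\mathcal{A}^\bullet$. The key input is that the truncation functors of Subsect.~\ref{ssec: Finiteness} affect the cotangent complex only in a controlled range: from the base-change cofiber sequence, the comparison $\mathbb{L}_{\mathcal{A}}\otimes_{\mathcal{A}}\tau^{\leq -n}_{\mathcal{D}}\mathcal{A}\to \mathbb{L}_{\tau^{\leq -n}_{\mathcal{D}}\mathcal{A}}$ is an isomorphism in a range of cohomological degrees that grows linearly with $n$. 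The obstacle is that $\bigwedge^q$ is neither exact nor limit-preserving; this is resolved by the connectivity estimate that, for fixed $q$, forming $\bigwedge^q$ shifts such a range only by a bounded amount (each of the $q$ tensor factors contributing a fixed shift), so $\bigwedge^q_{\mathcal{A}}\mathbb{L}_{\mathcal{A}}$ and $\bigwedge^q_{\tau^{\leq -n}\mathcal{A}}\mathbb{L}_{\tau^{\leq -n}\mathcal{A}}$ still agree through a range growing with $n$. Tensoring by $\mathcal{I}_{X,*}\otimes\mathcal{D}_X[d_X]$ merely shifts this range by $d_X$. Consequently, for each fixed total degree $j$ and fixed $q$, the structure maps $H^j\mathcal{V}ar^q(\mathcal{A}^\bullet,n)\to H^j\mathcal{V}ar^q(\tau^{\leq -n}_{\mathcal{D}}\mathcal{A}^\bullet)$ are isomorphisms for all sufficiently large $n$; the associated Milnor $\varprojlim^1$-term vanishes, and the map into the homotopy limit is an equivalence for every $q\geq 0$.
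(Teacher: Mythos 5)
Your proposal is correct and follows essentially the same route as the paper: descent is reduced to $\mathcal{D}$-étale descent for $\Bigwedge^q\mathbb{L}_{(-)}$ together with Proposition \ref{prop: Sym preserves W}, and truncation-compatibility is reduced, via the cofiber sequence $\mathbb{L}_{\tau_{\mathcal{D}}^{\leq -n}\mathcal{A}/\mathcal{A}}[-1]\rightarrow f_n^*\mathbb{L}_{\mathcal{A}}\rightarrow \mathbb{L}_{\tau_{\mathcal{D}}^{\leq -n}\mathcal{A}}$, to a connectivity estimate showing the relevant homotopy limit of fibers vanishes. Your degree-wise isomorphism plus Milnor $\varprojlim^1$ bookkeeping is just a repackaging of the paper's filtration on $Sym^q(f_n^*\mathbb{L}_{\mathcal{A}}[-1])$ followed by its ``standard connectivity argument.''
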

\begin{proof}
    To see that this assignment defines a sheaf of spaces on the $\infty$-site of affine derived $\D$-schemes with respect to the $\D$-étale topology
    it is enough to see remark that $\mathcal{U}\rightarrow \Bigwedge^p\mathbb{L}_{\mathcal{U}}$ satisfies $\D$-étale descent.
It indeed defines an $\infty$-functor by Proposition \ref{prop: Sym preserves W} since the underlying bi-graded algebra of $\mathbf{DR}_{var}(\mathcal{A}^{\bullet})$ is $DR_X\big(Sym_{\mathcal{A}}(\mathbb{L}_{\mathcal{A}}[-1])(-1)\big),$ 
and put $\mathcal{V}ar^q(\mathcal{A})\simeq DR_X\big(Sym_{\mathcal{A}}^q(\mathbb{L}_{\mathcal{A}}[-1])\big).$

For each $n$ we set $f_n:\mathcal{A}\rightarrow \tau_{\mathcal{D}}^{\leq -n}\mathcal{A},$ and must prove that the canonical map
$$\nu_q:\mathcal{V}ar^q(\mathcal{A})\rightarrow \underset{n}{holim}\hspace{.5mm} DR_X\big(Sym_{\tau_{\mathcal{D}}^{\leq -n}\mathcal{A}}^q\big(\mathbb{L}_{\tau_{\mathcal{D}}^{\leq -n}\mathcal{A}}[-1]\big)\big),$$
is an equivalence for all $q\geq 0.$
For this, use the natural sequence associated to $f_n,$ 
$$\mathbb{L}_{\tau_{\mathcal{D}}^{\leq -n}\mathcal{A}/\mathcal{A}}[-1]\rightarrow f_n^*\mathbb{L}_{\mathcal{A}}\rightarrow \mathbb{L}_{\tau_{\mathcal{D}}^{\leq -n}\mathcal{A}},$$
obtained from the usual cofiber sequence in $\mathcal{A}^{\bullet}\otimes \mathcal{D}_X$-modules. 
There is an induced sequence
$$\mathcal{V}ar^q(\mathcal{A})\rightarrow \underset{n}{holim}\hspace{.5mm}DR_X\big(Sym_{\tau_{\mathcal{D}}^{\leq -n}\mathcal{A}}^q\big(f_n^*\mathbb{L}_{\mathcal{A}}[-1]\big)\big)\rightarrow\underset{n}{holim}\hspace{.5mm} \mathcal{V}ar^q(\tau_{\mathcal{D}}^{\leq -n}\mathcal{A}),$$
such that $DR_X\big(Sym_{\tau_{\mathcal{D}}^{\leq -n}\mathcal{A}}^p\big(f_n^*\mathbb{L}_{\mathcal{A}}[-1]\big)\big)$ inherits a natural filtration whose associated graded pieces are isomorphic to 
$$DR_X\big(Sym_{\tau_{\mathcal{D}}^{\leq -n}\mathcal{A}}^q\big(\mathbb{L}_{\tau_{\mathcal{D}}^{\leq -n}\mathcal{A}/\mathcal{A}}[-2]\big)\otimes Sym_{\tau_{\mathcal{D}}^{\leq -n}\mathcal{A}}^q(\mathbb{L}_{\tau_{\mathcal{D}}^{\leq -n}\mathcal{A}}[-1])\big).$$
We will have succeeded in proving the claim if we can show that 
$$\underset{n}{holim}\hspace{.5mm}\bigg(DR_X\big(Sym_{\tau_{\mathcal{D}}^{\leq -n}\mathcal{A}}^q\big(f_n^*\mathbb{L}_{\mathcal{A}}[-1]\big)\big)\rightarrow \mathcal{V}ar^q(\tau_{\mathcal{D}}^{\leq -n}\mathcal{A})\bigg)\simeq 0.$$
But since we have equivalences 
$$\underset{n}{holim}\hspace{.5mm} \mathcal{V}ar^q(\tau_{\mathcal{D}}^{\leq -n}\mathcal{A})\simeq DR_X\big(\underset{n}{holim}\hspace{.5mm} Sym_{\tau_{\mathcal{D}}^{\leq -n}\mathcal{A}}^q(\mathbb{L}_{\tau_{\mathcal{D}}^{\leq -n}\mathcal{A}}[-1])\big),$$
for each $q\geq 0,$ it is enough to prove 
$$\underset{n}{holim}\bigg(Sym_{\tau_{\mathcal{D}}^{\leq -n}\mathcal{A}}^q\big(f_n^*\mathbb{L}_{\mathcal{A}}[-1]\big)\rightarrow Sym_{\tau_{\mathcal{D}}^{\leq -n}\mathcal{A}}^q\big(\mathbb{L}_{\tau_{\mathcal{D}}^{\leq -n}\mathcal{A}}[-1]\big)\bigg)\simeq 0.$$
This follows from a standard connectivity argument.

    \end{proof}
Following as in \cite{PTVV}, taking realizations of the objects in Proposition \ref{prop: DRVar is a prestack}, we obtain a simplicial set $\mathcal{V}ar_{\Delta}^p(\mathcal{A}).$
\begin{proposition}
    \label{prop: SimplicialVar}
   There is a well-defined sheaf of spaces given by the functor
\begin{eqnarray*}
    \mathsf{Var}^p(n)(-):(\mathsf{dAff}_X(\mathcal{D}_X))^{op}&\rightarrow& \mathsf{Spc},
    \\
   Spec_{\mathcal{D}}(\mathcal{A}^{\bullet})&\mapsto& \mathcal{V}ar_{\Delta}^p(\mathcal{A}^{\bullet},n):=\mathsf{Var}^p(n)(Spec_{\mathcal{D}}(\mathcal{A}^{\bullet})).
   \end{eqnarray*} 
\end{proposition}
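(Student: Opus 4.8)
The plan is to deduce the proposition formally from Theorem~\ref{prop: DRVar is a prestack} (Theorem A (i)) by post-composing the complex-valued assignment $\mathcal{A}^{\bullet}\mapsto\mathcal{V}ar^p(\mathcal{A}^{\bullet},n)$ with a Dold--Kan realization functor. Following \cite{PTVV}, I would first fix notation for the realization $|-|$, viewed as the Eilenberg--MacLane functor from the central-cohomology complexes of $k$-modules underlying $\mathbf{DR}_{var}^*$ to $\mathsf{Spc}$, characterized by $\pi_i|E|\cong H^{-i}(E)$ for $i\geq 0$; equivalently, on the relevant objects, $|-|\simeq\mathsf{Maps}_{\mathsf{Vect}_k}(k,-)$. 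Then one sets $\mathsf{Var}^p(n)(Spec_{\mathcal{D}}(\mathcal{A}^{\bullet})):=|\mathcal{V}ar^p(\mathcal{A}^{\bullet},n)|$, and the entire content of the claim reduces to two points: that this assignment is an $\infty$-functor, and that it carries $\mathcal{D}$-étale hypercovers to homotopy limits of spaces.

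Functoriality is essentially immediate. By Proposition~\ref{prop: Sym preserves W} the assignment $\mathcal{A}\mapsto Sym_{\mathcal{A}}^p(\mathbb{L}_{\mathcal{A}}[-1])$ descends to an $\infty$-functor on $\cdga_{\mathcal{A}/}$, hence so does $\mathcal{A}\mapsto DR_X(Sym_{\mathcal{A}}^p(\mathbb{L}_{\mathcal{A}}[-1]))=\mathcal{V}ar^p(\mathcal{A},n)$ as recorded in Theorem~\ref{prop: DRVar is a prestack}; composing with $|-|$ produces the desired $\infty$-functor $\mathsf{Var}^p(n)(-)$ on $(\mathsf{dAff}_X(\mathcal{D}_X))^{op}$, landing in $\mathsf{Spc}$ by construction. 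For the sheaf property I would invoke the descent already proven at the level of complexes: Theorem~\ref{prop: DRVar is a prestack} asserts that $\mathcal{V}ar^p(-,n)$ is local on $X$ and satisfies $\mathcal{D}$-étale descent, i.e. sends a $\mathcal{D}$-étale hypercover $\mathcal{U}_{\bullet}\to Spec_{\mathcal{D}}(\mathcal{A})$ to a totalization $\mathcal{V}ar^p(\mathcal{A},n)\xrightarrow{\simeq}\mathrm{Tot}\,\mathcal{V}ar^p(\mathcal{U}_{\bullet},n)$. Since the realization functor $|-|\simeq\mathsf{Maps}_{\mathsf{Vect}_k}(k,-)$ is corepresentable and hence preserves arbitrary homotopy limits, in particular cosimplicial totalizations, one obtains $\mathsf{Var}^p(n)(\mathcal{A})\simeq\mathrm{Tot}\,\mathsf{Var}^p(n)(\mathcal{U}_{\bullet})$, which is exactly descent for the space-valued functor.

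The one point requiring genuine care, and the step I expect to be the main obstacle, is that the space-valued functor only remembers the coconnective part of the complexes, so I must ensure both that the sheaf condition as spaces is correctly detected after realization and that the cosimplicial homotopy limit converges rather than being truncated away. Here I would use the truncation-compatibility clause of Theorem~\ref{prop: DRVar is a prestack}, which gives $\mathcal{V}ar^p(\mathcal{A},n)\simeq\mathrm{holim}_n\,\mathcal{V}ar^p(\tau_{\mathcal{D}}^{\leq -n}\mathcal{A},n)$, to reduce the descent check to eventually coconnective $\mathcal{D}_X$-algebras. On such algebras the weight-$p$ complexes $\mathcal{V}ar^p$ are uniformly bounded below, so the associated cosimplicial space is the totalization of a tower whose connectivity increases; a standard Milnor-sequence / Postnikov convergence argument then confirms that $|-|$ commutes with the totalization and that no residual $\lim^1$ obstruction survives. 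Combining this with the weight-wise decomposition and the already-established étale descent on $X$ yields that $\mathsf{Var}^p(n)$ is a sheaf of spaces on $(\mathsf{dAff}_X(\mathcal{D}_X))^{op}$, completing the proof.
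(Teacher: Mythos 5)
Your proposal is correct and follows essentially the same route as the paper, which disposes of this proposition in a single sentence ("taking realizations of the objects in Proposition \ref{prop: DRVar is a prestack}, we obtain a simplicial set $\mathcal{V}ar_{\Delta}^p(\mathcal{A})$"); you simply make explicit the two points the paper leaves implicit, namely that the Dold--Kan realization is corepresentable and so preserves the totalizations witnessing $\mathcal{D}$-étale descent, and that the truncation-compatibility clause of Theorem A(i) controls convergence.
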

In what follows we will choose not distinguish with the notations $\mathcal{V}ar^p,\mathcal{V}ar_{\Delta}^p,$ hoping the context will be clear.
This $\D$-stack modulates $n$-shifted $p$-forms, via Proposition \ref{prop: DRVar is a prestack} and Proposition \ref{prop: SimplicialVar} we may put
$$\mathcal{V}ar^p(\EQ,n)=\mathsf{Maps}_{\mathsf{DStk}_X(\mathcal{D}_X)}\big(\EQ,\mathsf{Var}^p(n)\big),$$
for a general $\EQ$ in $\mathsf{PreStk}_X(\mathcal{D}_X).$ Analogous to \cite{PTVV}, one has an equivalence
\begin{equation}
    \label{eqn: Variational n-shifted p-forms}
\mathcal{V}\mathrm{ar}^p\big(\EQ,n\big)\simeq \mathsf{Maps}\big(\mathcal{O}_{\EQ},\mathbf{DR}^{\mathrm{var}}(\mathcal{O}_{\EQ})[n](p)\big).
\end{equation}
The space (\ref{eqn: Variational n-shifted p-forms}) is called the classifying space of variational $n$-shifted $p$-forms.
Note that if $\{f_{\alpha}:\bigcup_{\alpha}Spec_{\mathcal{D}}(\mathcal{B}_{\alpha})\rightarrow \EQ\}$ is an atlas for $\EQ$, we have
$$\mathbb{L}_{\EQ}\otimes_{\mathcal{O}_{\EQ}}^{\mathbb{L}}\mathcal{B}_{\alpha}\rightarrow \mathbb{L}_{\mathcal{B}_{\alpha}}\rightarrow \mathbb{L}_{\mathcal{B}_{\alpha}/\mathcal{O}_{\EQ}},$$
and since $\mathsf{Maps}$ in $\mathsf{DStk}_X(\D_X)$ is defined by right Kan extension from affine $\D$-spaces, one has
\begin{eqnarray*}
\mathsf{Maps}(\EQ,\mathsf{Var}^p(n)\big)&\simeq& \M(\underset{Spec_{\mathcal{D}}(\mathcal{B}_{\alpha})\rightarrow \EQ}{hocolim}Spec_{\mathcal{D}}(\mathcal{B}_{\alpha}),\mathsf{Var}^p(n)\big)
\\
&\simeq& \underset{Spec_{\mathcal{D}}(\mathcal{B}_{\alpha})\rightarrow \EQ}{holim}\M(Spec_{\mathcal{D}}(\mathcal{B}_{\alpha}),\mathsf{Var}^p(n)\big)
\\
&\simeq& \underset{Spec_{\mathcal{D}}(\mathcal{B}_{\alpha})\rightarrow \EQ}{holim} \mathcal{V}ar^p(Spec_{\mathcal{D}}(\mathcal{B}_{\alpha}),n).
\end{eqnarray*}
\begin{proposition}
\label{Theorem A}
Suppose that $\EQ=Spec_{\mathcal{D}_X}(\mathcal{A}^{\bullet})$ is an affine derived $\D_X$-prestack which is homotopically finitely $\D$-presented. Then the space \emph{(\ref{eqn: Variational n-shifted p-forms})} is equivalent to the mapping space in $\mathcal{A}^{\ell}[\mathcal{D}]$-modules:
$$\mathsf{Maps}\big(\mathcal{A}^{\ell},\big(\mathcal{I}_{X,*}\otimes\mathcal{D}_X[d_X]\big)\otimes_{\mathcal{D}_X}\Bigwedge_{\mathcal{A}^{\ell}}^p\mathbb{L}_{\mathcal{A}^{\ell}}[n]\big).$$
\end{proposition}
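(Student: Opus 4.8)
The plan is to collapse the general definition to the affine case and then recognise the defining mapping space of (\ref{eqn: Variational n-shifted p-forms}) as a map out of the monoidal unit of $\mathcal{A}^{\ell}[\mathcal{D}_X]$-modules. Since $\EQ=Spec_{\mathcal{D}_X}(\mathcal{A}^{\bullet})$ is affine, the atlas $\{Spec_{\mathcal{D}}(\mathcal{B}_{\alpha})\rightarrow \EQ\}$ may be taken to be $\EQ$ itself, so the $holim$ computing $\mathsf{Maps}(\EQ,\mathsf{Var}^p(n))$ displayed before the statement degenerates and by (\ref{eqn: Variational n-shifted p-forms}) we have $\mathcal{V}ar^p(\EQ,n)\simeq \mathsf{Maps}\big(\mathcal{A}^{\ell},\mathbf{DR}^{var}(\mathcal{A}^{\ell})[n](p)\big)$. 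First I would extract the weight-$p$ summand of $\mathbf{DR}^{var}(\mathcal{A}^{\ell})[n]$: by Theorem \ref{prop: DRVar is a prestack}, taking the weight $q=p$ term (for which the internal shift $[q-p+n]$ specialises to $[n]$), this summand is exactly $\big(\mathcal{I}_{X,*}\otimes\mathcal{D}_X[d_X]\big)\otimes_{\mathcal{D}_X}^{\mathbb{L}}\Bigwedge_{\mathcal{A}^{\ell}}^p\mathbb{L}_{\mathcal{A}^{\ell}}[n]$, which is precisely the target module appearing in the claimed equivalence.

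Next I would identify the two mapping spaces. Under the equivalence $\mathrm{Mod}_{\mathcal{D}_X}(\mathcal{A})\simeq \mathrm{Mod}(\mathcal{A}\otimes_{\mathcal{O}_X}\mathcal{D}_X)$ of Proposition \ref{Tautological AD equivalence}, the algebra $\mathcal{A}^{\ell}$ is the monoidal unit of $\mathcal{A}^{\ell}[\mathcal{D}_X]$-modules, as recorded after Proposition \ref{prop: CDiffs}. Consequently the realization functor underlying $\mathsf{Maps}(\mathcal{A}^{\ell},-)$ in (\ref{eqn: Variational n-shifted p-forms})---the passage from the weight-$p$ complex to its associated space via the Dold--Kan construction of Proposition \ref{prop: SimplicialVar}---coincides with $\mathsf{Maps}_{\mathcal{A}^{\ell}[\mathcal{D}_X]}(\mathcal{A}^{\ell},-)$, since mapping out of the unit is the global-sections (forgetful) functor. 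This is the $\D$-geometric analogue of the affine identification $\mathcal{A}_k^p(Spec(A);n)\simeq \mathsf{Maps}\big(\mathcal{O},\Bigwedge^p\mathbb{L}[n]\big)$ of \cite{PTVV} recalled in Subsection \ref{Notations and Conventions}; the only new feature is that the wedge powers and the de Rham tensor $\big(\mathcal{I}_{X,*}\otimes\mathcal{D}_X[d_X]\big)\otimes_{\mathcal{D}_X}^{\mathbb{L}}(-)$ are formed in $\mathcal{A}^{\ell}[\mathcal{D}_X]$-modules rather than in $\mathcal{O}$-modules.

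The role of the hypothesis is to make these manipulations legitimate. Homotopical finite $\D$-presentation forces $\mathbb{L}_{\mathcal{A}^{\ell}}$ to be a perfect, hence dualizable, $\mathcal{A}^{\ell}[\mathcal{D}_X]$-module, so that each $\Bigwedge_{\mathcal{A}^{\ell}}^p\mathbb{L}_{\mathcal{A}^{\ell}}$ is again perfect (via Proposition \ref{prop: Sym preserves W} together with décalage) and the de Rham functor commutes with the realization and with the weight decomposition; it also upgrades the truncation comparison of Theorem \ref{prop: DRVar is a prestack} to an equivalence, so that no coconnectivity obstruction survives. The main obstacle I expect is precisely this commutation: one must verify that the prestack-level $\mathsf{Maps}$ of (\ref{eqn: Variational n-shifted p-forms}), defined through a realization and (in general) a $holim$ over the atlas, is exchanged with the $\D$-module de Rham functor and the weight-$p$ projection without loss, and that mapping out of $\mathcal{A}^{\ell}$ genuinely computes the underlying space of the weight-$p$ complex. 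Finite presentation, combined with the connectivity estimate already invoked in the proof of Theorem \ref{prop: DRVar is a prestack}, is what controls convergence of the relevant limits and secures this exchange; once it is in place the stated equivalence follows.
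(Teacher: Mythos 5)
Your proposal is correct, and it reaches the same conclusion by a mildly different route than the paper. The paper's own proof identifies $\mathcal{V}ar^p(\EQ,n)$ as the space of sections of the vector $\D$-stack $\mathbb{V}_{\EQ}(\mathcal{M})$ for $\mathcal{M}=\mathbf{DR}^{var}(\mathcal{O}_{\EQ})[n](p)$, and then invokes the representability statement of Proposition \ref{prop: Vector D Stack} together with the free--forgetful adjunction (\ref{eqn: Free-forget AD-Mod/Alg Adjunction}): a section of $\mathbb{V}_{\EQ}(\mathcal{M})$ is a map of $\D$-algebras out of $Sym_{\mathcal{O}_{\EQ}}(\mathcal{M}^{\vee})$, hence by adjunction and dualizability a map $\mathcal{O}_{\EQ}\rightarrow\mathcal{M}$ of $\mathcal{A}^{\ell}[\mathcal{D}_X]$-modules. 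You instead extract the weight-$p$ summand directly from Theorem \ref{prop: DRVar is a prestack} and argue that the realization of $\mathsf{Maps}(\mathcal{O}_{\EQ},-)$ in (\ref{eqn: Variational n-shifted p-forms}) is mapping out of the monoidal unit, i.e.\ the global-sections functor on $\mathcal{A}^{\ell}[\mathcal{D}_X]$-modules. The two arguments are equivalent in substance --- the paper's displayed equivalence $\mathsf{Maps}_{\mathsf{DStk}_X(\mathcal{D}_X)_{/\EQ}}(\EuScript{X},\mathbb{V}_{\EQ}(\mathcal{M}))\simeq\mathsf{Maps}_{\mathcal{O}_{\EuScript{X}}}(\mathcal{O}_{\EuScript{X}},\gamma^*\mathcal{M})$ is precisely your ``sections are maps out of the unit'' --- but the paper makes the passage from prestack-level maps to module-level maps explicit through the $Sym\dashv\mathsf{For}$ adjunction, which is the one point your write-up leaves at the level of an assertion. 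Your version is more explicit about where the target complex $\big(\mathcal{I}_{X,*}\otimes\mathcal{D}_X[d_X]\big)\otimes_{\mathcal{D}_X}\Bigwedge_{\mathcal{A}^{\ell}}^p\mathbb{L}_{\mathcal{A}^{\ell}}[n]$ comes from (the weight decomposition and the specialization of the shift $[q-p+n]$ at $q=p$), which the paper leaves implicit; both versions use homotopical finite $\D$-presentation in the same way, namely to guarantee perfectness and hence dualizability of $\mathbb{L}_{\mathcal{A}^{\ell}}$ and its wedge powers. If you want to tighten your argument, replace the sentence beginning ``Consequently the realization functor\ldots'' with the explicit adjunction chain the paper uses; as written it is the only step that is asserted rather than derived.
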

\begin{proof}
Follows from a standard description of the mapping space of sections $\mathsf{Maps}(\EQ,\mathbb{V}_{\EQ}(\mathcal{M})),$ of any vector $\mathcal{D}_X$-space over $\EQ$ as in \ref{sssec: Horizontal Jets} and Proposition \ref{prop: Vector D Stack}.
Indeed, observe that for any $\pi:\mathbb{V}_{\EQ}(\mathcal{M})\rightarrow \EQ$ with $\mathcal{M}$ perfect and therefore dualizable as an $\mathcal{O}_{\EQ}[\mathcal{D}_X]$-module, the mapping space in derived $\mathcal{D}_X$-stacks over $\EQ$ is described for any $f:\EuScript{X}\rightarrow \mathbb{V}_{\EQ}(\mathcal{M}),$ 
by putting $\gamma=\pi\circ f,$ as
$$\mathsf{Maps}_{\mathsf{DStk}_X(\mathcal{D}_X)_{/\EQ}}(\EuScript{X},\mathbb{V}_{\EQ}(\mathcal{M}))\simeq \mathsf{Maps}_{\mathcal{O}_{\EuScript{X}}}(\mathcal{O}_{\EuScript{X}},\gamma^*\mathcal{M}).$$
Indeed, this can be seen from the equivalence
$$\mathsf{Maps}_{\mathsf{DStk}_X(\mathcal{D}_X)_{/\EQ}}(\EuScript{X},\mathbb{V}_{\EQ}(\mathcal{M}))\simeq \mathsf{Maps}_{\mathsf{CAlg}(\mathcal{D}_X)_{\mathcal{O}_{\EQ}/}}(Sym_{\mathcal{O}_{\EQ}}(\mathcal{M}^{\vee}),\gamma_*\mathcal{O}_{\EuScript{X}}),$$
and using standard adjunctions (\ref{eqn: Free-forget AD-Mod/Alg Adjunction}).
\end{proof}

We may define closed forms via weighted negative cyclic homology of graded mixed complexes $E^{\bullet}(\star)$ with their natural map $NC^w(E^{\bullet}(\star))\rightarrow E^{\bullet}(\star),$ via projection onto the first component \cite{PTVV}. We define them as recalled in Subsect.\ref{Notations and Conventions} as cocycles with respect to the corresponding total differential.

The difference for us is rather than considering $\prod_{i\geq 0}\Bigwedge^{p+i}\mathbb{L}_A[n-i]$ as the complex whose underlying graded vector space is the product over all $i\geq 0$ of the graded vector spaces $\Bigwedge^{p+i}\mathbb{L}_A[n-i]$ with differential $d_{dR}+d_A,$ we have a \emph{bi-complex} structure. On $k$-forms of cohomological degree $n$, it reads as
\begin{equation}
    \label{eqn: Tri-complex decomposition}
    \mathcal{V}ar^k(\EQ,n)\simeq\bigoplus_{r,s}\mathcal{V}ar^{(r,s)}(\EQ,n)\simeq \prod_{r\geq 0,s\geq 0}\Gamma\big(\EQ,p_{\infty}^*\Omega_{X}^{p+r}\otimes\Bigwedge^{s+q}\mathbb{L}_{\EQ/X}^{\bullet}[n-(s+r)]\big).
\end{equation}
For simplicity, we record (\ref{eqn: Tri-complex decomposition}) together with its total differential $D=d_h+d_v+\delta,$ with $\delta$ the cohomological differential as a definition. 
Recall that the weight $s$-piece of the variational de Rham algebra (c.f. \ref{eqn: DRVAR weight s}) applied to Proposition \ref{prop: DRVar is a prestack} is $\mathbf{DR}_{var}^{*}(\mathcal{O}_{\EQ})(s).$

\begin{definition}
    \label{definition: n-shifted (p,q)-forms}
    \normalfont The complex of \emph{variational $(p,q)$-forms on $\EQ$} is 
    $$\mathcal{V}ar^{(p,q)}(\EQ)\simeq \big(\prod_{r\geq p,s\geq q}\mathbf{DR}_{var}^{r}(\mathcal{O}_{\EQ})(s)[-r-s]\big)[p+q].$$
   A \emph{closed variational $(p,q)$-form of degree $n$} is an $n$-coycle in $\mathcal{V}ar^{(p,q)}$, with respect to the total differential,
   $$\mathcal{V}ar^{(p,q),cl}(\EQ,n):=Z^n\big(\mathcal{V}ar^{(p,q)}(\EQ),D=d_h+d_v+\delta\big).$$
\end{definition}
Before we make the content of Definition \ref{definition: n-shifted (p,q)-forms} explicit, note there are canonical projections 
 \begin{equation}
 \label{eqn: p,q-projection}
\eta_{p,q}:\mathcal{V}ar^{k}(\EQ)\rightarrow \mathcal{V}ar^{(p,q)}(\EQ),
\end{equation}
to the complex of differential $(p,q)$-forms as well as tautological 
 projections onto purely horizontal and vertical complexes
\[
\begin{tikzcd}
    & \arrow[dl] \mathcal{V}ar^{k}(\EQ)\arrow[dr]
    \\
    \mathcal{V}ar^{(k,0)}(\EQ) && \mathcal{V}ar^{(0,k)}(\EQ).
\end{tikzcd}
\]

Supposing that $\mathcal{A}^{\bullet}$ is a cofibrant $\D$-algebra, arising for example from a generically irregular $\D$-algebra $\mathcal{B}\simeq \mathcal{A}/\mathcal{I}$ of functions (c.f. \ref{eqn: SES}), as $\mathcal{A}^{\bullet}=Q\mathcal{B},$ its cofibrant replacement. Then the action of the three differentials on the term consisting of $(p,q)$-forms of degree $k$ (omitting all other arrows) is:
\begin{equation}
\label{eqn: PreliminaryTriComplex}
\adjustbox{scale=.83}{
\begin{tikzcd}[row sep=scriptsize, column sep=scriptsize]
& \overbrace{p_{\infty}^*\Omega_X^{p}\otimes(\Bigwedge^{q}\Omega_{Q\mathcal{B}}^1)^{k}[q]}^{\textcolor{blue}{(p,q)-\text{forms of degree }k}}\arrow[dl, "\delta"] \arrow[rr, "d_h"] \arrow[dd, "d_v"] & &p_{\infty}^*\Omega_X^{p+1}\otimes(\Bigwedge^{q}\Omega_{Q\mathcal{B}}^1)^{k}[q] \\
p_{\infty}^*\Omega_X^{p}\otimes(\Bigwedge^{q+1}\Omega_{Q\mathcal{B}}^1)^{k+1}[q+1]& &\\
& p_{\infty}^*\Omega_X^{p}\otimes(\Bigwedge^{q+1}\Omega_{Q\mathcal{B}}^1)^{k}[q+1] & &    \\
 & & \\
\end{tikzcd}}
\end{equation}
From (\ref{eqn: PreliminaryTriComplex}) it is clear the natural bi-graded algebra multiplication on the cohomologically graded object of variational $(p,q)$-forms:
$$\mathcal{V}ar^{(p_1,q_1)}(\EQ)\times \mathcal{V}ar^{(p_2,q_2)}(\EQ)\rightarrow \mathcal{V}ar^{(p_1+p_2,q_1+q_2)}(\EQ),$$
acts on homogeneous graded components of cohomological degreees, say $k_1,k_2$ for a cofibrant $\D$-algebra $\mathcal{A}$ by
$$DR_X^{p_1}(\wedge^{q_1}\Omega_{\mathcal{A}}^1)^{k_1}[q_1])\times DR_X^{p_2}(\wedge^{q_2}\Omega_{\mathcal{A}}^1)^{k_2}[q_2])\rightarrow DR_{X}^{p_1+p_2}(\wedge^{q_1+q_2}\Omega_{\mathcal{A}}^1)^{k_1+k_2}[q_1+q_2].$$

 We can now state the second half of Theorem A, pertaining to the description of the `key' of forms and the homotopy-coherent data necessary for their closure in the totalized complex.

 In the following, we also will use the notation $\mathbb{L}\Omega^{(p,q)}(\EQ)_{\mathbf{k}}$ to mean $DR_X^{p}(\wedge^q\mathbb{L}\Omega_{\mathcal{A}}^1)^{k}[q]),$ for a not necessarilly cofibrant $\D$-algebra, to emphasize this is a derived analog of the $(p,q)$-component of the usual variational bi-complex.
 
\begin{proposition}[\textcolor{blue}{Theorem A (ii)}]
\label{prop: Cohom degree N (p,q) form key}
A cohomological degree $N$ differential $(p,q)$-form $$\vartheta=(\theta_N^{(p,q)}|\cdots) \in \mathcal{V}ar^{(p,q)}(\EQ),$$ is a formal sequence of elements $\theta_{M}^{(r,s)}\in \mathbb{L}\Omega^{(r,s)}(\EQ)_{\mathbf{M}},$ with leading term $\theta_N^{(p,q)}\in \mathbb{L}\Omega^{(p,q)}(\EQ)_{\mathbf{N}}$ with all others coming from:
$$\scalemath{.95}{\mathbb{L}\Omega^{(p+1,q)}(\EQ)_{\mathbf{N-1}}\times \mathbb{L}\Omega^{(p,q+1)}(\EQ)_{\mathbf{N-1}},\mathbb{L}\Omega^{(p+2,q)}(\EQ)_{\mathbf{N-2}}\times \mathbb{L}\Omega^{(p+1,q+1)}(\EQ)_{\mathbf{N-2}}\times \mathbb{L}\Omega^{(p,q+2)}(\EQ)_{\mathbf{N-2}}},$$
such that $\theta^{(p,q)}\neq 0$ for possible infinitely many $\ell=p+q.$
A generic generic degree $\mathbf{k}$-component comes from
$$\scalemath{.95}{\mathbb{L}\Omega^{(p+k,q)}(\EQ)_{\mathbf{N-k}}\times\mathbb{L}\Omega^{(p+k-1,q+1)}(\EQ)_{\mathbf{N-k}}\times\cdots\times \mathbb{L}\Omega^{(p+1,q+k-1)}(\EQ)_{\mathbf{N-k}}\times\mathbb{L}\Omega^{(p,q+k)}(\EQ)_{\mathbf{N-k}}}.
$$
\end{proposition}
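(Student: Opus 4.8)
The plan is to prove (ii) by directly unwinding the product-totalization that defines $\mathcal{V}ar^{(p,q)}(\EQ)$ in Definition \ref{definition: n-shifted (p,q)-forms} and reading off its cohomological degree $N$ graded piece. Since the statement concerns only the underlying graded object (the total differential $D=d_h+d_v+\delta$ enters only in part (iii)), nothing beyond the bigraded identification of Theorem \ref{prop: DRVar is a prestack} and the decomposition (\ref{eqn: Tri-complex decomposition}) is required. First I would recall that
$$\mathcal{V}ar^{(p,q)}(\EQ)\simeq \Big(\prod_{r\geq p,\,s\geq q}\mathbf{DR}_{var}^{r}(\mathcal{O}_{\EQ})(s)[-r-s]\Big)[p+q],$$
and that, after the décalage coming from $\Omega_{\mathcal{A}}^{*}=Sym(\Omega_{\mathcal{A}}^1[1])$, each factor $\mathbf{DR}_{var}^{r}(\mathcal{O}_{\EQ})(s)$ is the complex $\mathbb{L}\Omega^{(r,s)}(\EQ)=DR_X^{r}\big(\Bigwedge^{s}\mathbb{L}\Omega_{\mathcal{A}}^1\big)[s]$ equipped with the internal cohomological grading inherited from $\mathbb{L}_{\mathcal{A}}$.

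The key step is the degree bookkeeping across the three gradings: the horizontal de Rham degree $r\geq p$, the vertical weight $s\geq q$, and the internal cohomological degree $M$. With the convention $X[m]^{j}=X^{j+m}$, the total shift applied to the $(r,s)$-summand is $[\,p+q-r-s\,]$, so its cohomological degree $N$ part is exactly the internal degree $M=N+(p+q)-(r+s)$ component, that is $\mathbb{L}\Omega^{(r,s)}(\EQ)_{\mathbf{M}}$. Introducing the antidiagonal index $k:=(r-p)+(s-q)=(r+s)-(p+q)\geq 0$, this reads $M=N-k$, which pins down the internal degree of every component lying on the $k$-th layer.

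It then remains only to organize the factors by $k$. For each fixed $k\geq 0$ the admissible bidegrees $(r,s)$ with $r\geq p$, $s\geq q$ and $(r-p)+(s-q)=k$ are precisely $(p+k,q),(p+k-1,q+1),\dots,(p,q+k)$, all carrying internal degree $N-k$; this reproduces verbatim the products displayed in the statement, with the leading term $\theta_N^{(p,q)}$ being the unique $k=0$ contribution. Because the totalization is a product and not a coproduct, a degree $N$ element is a formal, possibly semi-infinite sequence $(\theta_N^{(p,q)}|\cdots)$ whose components may be nonzero for arbitrarily large total form degree $r+s$, which is exactly the phenomenon recorded at the end of the statement.

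The main obstacle is not conceptual but the careful reconciliation of the shift conventions: one must check that the $[s]$-décalage built into $\mathbb{L}\Omega^{(r,s)}$, together with the $[-r-s]$ and $[p+q]$ shifts of the totalization, combine to leave exactly $M=N-k$, with no residual dependence on $s$ surviving. It is here that a sign or normalization slip would corrupt the internal degrees. I would also verify that this graded decomposition is stable under the $\mathcal{D}$-étale descent and truncation compatibilities established in Theorem \ref{prop: DRVar is a prestack}, so that the assignment $\vartheta\mapsto(\theta_M^{(r,s)})$ is well defined on $\EQ$ itself rather than only on an affine chart.
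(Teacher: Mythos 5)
Your proposal is correct and follows essentially the same route the paper intends: the paper gives no separate proof of this proposition, treating it as a direct unwinding of the product totalization in Definition \ref{definition: n-shifted (p,q)-forms}, with the degree bookkeeping $M=N-((r+s)-(p+q))$ organizing the factors along antidiagonals exactly as you do (compare the worked example of $k=1$, degree $N=0$ forms in Subsect. \ref{ssec: Derived Variational de Rham Algebra}). Your explicit attention to the shift conventions and to the product-versus-coproduct point underlying the semi-infinite sequences is exactly the content the paper leaves implicit.
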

In other words, a cohomological degree $\mathbf{N}$ differential $n$-form on $\EQ$, understood as an element of 
$H^N\big(\mathcal{V}ar^n(\EQ)\big)$ amounts to a semi-infinite sequence given by a formal sum of cohomological $(p,q)$-forms for which $p+q=n.$
\vspace{1mm}

Two $(p,q)$-forms of cohomological degree $N\leq 0$, say $\vartheta_{N}^{(p,q)},\widetilde{\vartheta}_{N}^{(p,q)}\in \Omega_{var,N}^{(p,q)}$ with $\delta \vartheta_N^{(p,q)}=\delta\widetilde{\vartheta}_N^{(p,q)}=0$ in $\Omega_{var,N+1}^{(p,q)}$ are naive-equivalent if there exists some $\alpha_{N-1}^{(p,q)}\in \Omega_{var,N-1}^{(p,q)}$ such that 
$$\vartheta_{N}^{(p,q)}-\widetilde{\vartheta_{N}}^{(p,q)}=\delta \alpha_{N-1}^{(p,q)}.$$
\begin{proposition}[\textcolor{blue}{Theorem A (iii)}]
    \label{prop: Closed degree N (p,q) form}
   Consider Proposition \emph{(\ref{prop: Cohom degree N (p,q) form key})}. Then a cohomological degree $N$ differential $(p,q)$-form $\vartheta$ is closed in the sense of Definition \emph{\ref{definition: n-shifted (p,q)-forms}} i.e. $$\vartheta=(\theta_N^{(p,q)}|\cdots)\in Z^N\big(\mathcal{V}ar^{(p,q)}(\EQ),D=d_h+d_v+\delta\big), p=1,\ldots,d_X,q\geq 0,$$ if it satisfies $$\delta(\theta_N^{(p,q)})=0\in \mathbb{L}\Omega^{(p,q)}(\EQ)_{\mathbf{N+1}},$$ and the further homotopy-coherent system of relations:
\begin{itemize}
\item $d^v\theta_{N}^{(p,q)}+\delta\theta_{N-1}^{(p,q+1)}=0$ in $\mathbb{L}\Omega^{(p,q+1)}(\EQ)_{\mathbf{N}},$
\item $d_h\theta_N^{(p,q)}+\delta\theta_{N-1}^{(p+1,q)}=0$ in $\mathbb{L}\Omega^{(p+1,q)}(\EQ)_{\mathbf{N}};$

\item $d^v\theta_{N-1}^{(p,q+1)}+\delta\theta_{N-2}^{(p,q+2)}=0$ in $\mathbb{L}\Omega^{(p,q+2)}(\EQ)_{\mathbf{N-1}},$

\item $d_h\theta_{N-1}^{(p,q+1)}+d^v\theta_{N-1}^{(p+1,q)}+\delta\theta_{N-2}^{(p+1,q+1)}=0$ in $\mathbb{L}\Omega^{(p+1,q+1)}(\EQ)_{\mathbf{N-1}},$
\item $d_h\theta_{N-1}^{(p+1,q)}+\delta\theta_{N-2}^{(p+2,q)}=0,$ in $\mathbb{L}\Omega^{(p+2,q)}(\EQ)_{\mathbf{N-1}};$

\item etc.

\end{itemize}

and so on.
\end{proposition}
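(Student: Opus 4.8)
The plan is to read off the cocycle condition $D^{Tot}(\vartheta)=0$ as the collection of its homogeneous components in the totalized complex, each component yielding one relation. I would first invoke Proposition \ref{prop: Cohom degree N (p,q) form key}, which presents $\vartheta$ as a formal sum $\sum_{r\geq 0}\sum_{i+j=r}\theta_{N-r}^{(p+i,q+j)}$, the summand $\theta_{N-r}^{(p+i,q+j)}$ lying in $\mathbb{L}\Omega^{(p+i,q+j)}(\EQ)_{\mathbf{N-r}}$. The essential preliminary point is that every summand sits in one and the same total degree: assigning to a class in $\mathbb{L}\Omega^{(a,b)}(\EQ)_{\mathbf{M}}$ the total degree $M+a+b$ (the décalage convention implicit in (\ref{eqn: Tri-complex decomposition})), one has $(N-r)+(p+i)+(q+j)=N+p+q$ whenever $i+j=r$. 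Hence $\vartheta$ is a genuine homogeneous element of $\mathcal{V}ar^{(p,q)}(\EQ)$ of total degree $N+p+q$, and being a cocycle means exactly that $D^{Tot}\vartheta$ vanishes in total degree $N+p+q+1$.

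Next I would record the tridegrees of the three pieces of $D^{Tot}=d_h+d^v+\delta$ directly from their definitions. The internal differential $\delta$, induced by the dg-structure of the cofibrant resolution $\mathcal{A}=Q\mathcal{B}$, acts inside $\Bigwedge^q\Omega_{\mathcal{A}}^1$ and so fixes the form-bidegree $(p,q)$ while raising the cohomological degree (the subscript) by one; the horizontal differential $d_h$ is the de Rham differential of $\Omega_X^*$ on the base, a chain map raising $p$ by one and fixing $q$ and the cohomological degree; and the vertical differential $d^v$ is the derived (mixed) de Rham differential on $Sym_{\mathcal{A}}(\mathbb{L}_{\mathcal{A}}[-1])$, a chain map raising $q$ by one and fixing $p$ and the cohomological degree. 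Each therefore raises total degree by one. Consequently the component of $D^{Tot}\vartheta$ in a fixed form-bidegree $(a,b)$ — equivalently, in a fixed cohomological degree, since its total degree is pinned to $N+p+q+1$ — can receive contributions only from the three neighbours of $(a,b)$ present in $\vartheta$: from $\theta^{(a,b)}$ through $\delta$, from $\theta^{(a-1,b)}$ through $d_h$, and from $\theta^{(a,b-1)}$ through $d^v$, subject to the admissibility constraints $a\geq p$, $b\geq q$ and the matching of cohomological degrees. Setting each such component to zero and letting $(a,b)$ range over the quadrant $\{(p+i,q+j):i,j\geq 0\}$ produces the asserted system.

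Carrying this out along successive anti-diagonals reproduces the hierarchy verbatim. The apex $(p,q)$ in cohomological degree $N+1$ admits only the $\delta$-contribution, giving $\delta\theta_N^{(p,q)}=0$; the two lattice points $(p,q+1)$ and $(p+1,q)$ in degree $N$ each couple one de Rham term to one $\delta$-term, giving the second and third relations; and the three points $(p,q+2),(p+1,q+1),(p+2,q)$ in degree $N-1$ give the fourth through sixth, the central point $(p+1,q+1)$ acquiring \emph{both} a $d_h$- and a $d^v$-contribution precisely because it possesses two admissible lower neighbours. In general the $s$-th anti-diagonal contributes $s+1$ relations; for $s\geq 1$ its two endpoints $(p,q+s)$ and $(p+s,q)$ carry a single de Rham term (a $d^v$- resp.\ a $d_h$-contribution) while each interior point carries both. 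This is the homotopy-coherent system claimed. Note that the hierarchy is bounded in the $d_h$-direction by $p\leq d_X$ (as $\Omega_X^{>d_X}=0$) but unbounded in the $d^v$-direction, whence the semi-infinite shape $q\geq 0$.

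I expect the only genuine difficulty to lie upstream of this bookkeeping, namely in verifying that $D^{Tot}=d_h+d^v+\delta$ is a differential, so that \emph{closed} is well posed and the sorting above unambiguous. This amounts to $d_h^2=(d^v)^2=\delta^2=0$ together with the three anticommutation identities $d_hd^v+d^vd_h=0$, $d_h\delta+\delta d_h=0$, and $d^v\delta+\delta d^v=0$, with consistent Koszul signs. The first two identities are the flatness of the $\mathcal{D}$-connection splitting $\Omega_{\EQ}^1\simeq\Omega_{\EQ/X}^1\oplus p^*\Omega_X^1$ underlying the bicomplex (\ref{eqn: D-Module BiComplex}); the compatibility with $\delta$ follows because $d^v$ is a chain map for the internal differential by functoriality of $\mathbb{L}\Omega^1(-)$, while $d_h$ is induced $\mathcal{O}_X$-linearly from $\Omega_X^*$ and hence commutes with the $Q\mathcal{B}$-internal differential. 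Granting these structural facts, the proof reduces to the finite inspection of tridegrees on each anti-diagonal described above.
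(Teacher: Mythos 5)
Your proposal is correct and follows essentially the same route as the paper: the paper gives no separate proof environment for this proposition, but its worked example (the $k=1$, degree $N=0$ case immediately following, where $D=\delta+d_h+d_v$ is applied and the resulting relation is ``separated by equating the components which have identical bi-complex weights'') is exactly the component-sorting argument you describe, carried out anti-diagonal by anti-diagonal. Your additional remarks on the total-degree convention and on verifying that $D^{Tot}$ squares to zero are sound but are taken for granted in the paper.
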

Suppose that $\mathcal{A}$ is a cofibrant differential graded $\D_X$-algebra.
Taking connected components of the spaces of shifted forms gives
$$H^0\big(\prod_{i\geq 0}\Omega_{\mathcal{A}}^{p+i}[n-i],D_{dR}\big)\cong H^n\big(\prod_{i\geq 0}\Omega_{\mathcal{A}}^{p+i}[-i],D_{dR}\big).$$
One may say that a \emph{closed Cartan form} of \emph{cohomological degree} $n$, with $p\geq 0,n\leq 0$ is thus a sequence $\omega=\big(\omega^0,\omega^1,\omega^2,...\big)$ of forms, where $\omega^i\in \Omega^{p+i}(\mathcal{A})^{n-i},$ for each $i=0,1,2,...$ such that:
$
d\omega^0=0\in \Omega^p(\mathcal{A})^{n+1},
d_{dR}\omega^0+d\omega^1=0$ in $\Omega^{p+1}(\mathcal{A})^{n}$ and similarly that 
$d_{dR}\omega^1+d\omega^2=0$ in $\Omega^{p+1+1}(\mathcal{A})^{n-1}$ and so on.
The homotopy coherent data for $\omega$ to be closed as an expression for $D_{dR}$-closedeness is written as $D_{dR}(\omega)=0$, i.e. $d_{dR}\omega^i+d\omega^{i+1}=0,\hspace{1mm} \text{in } \Omega^{p+i+1}(\mathcal{A})^{n-i},\hspace{1mm} i\geq 0.$
Furthermore, there exists the usual Hodge filtration, this time on bi-complexes:
$$\mathcal{F}_{\mathrm{Hodge}}^p\mathbf{DR}^{\mathcal{D}}\big(\mathcal{A}\big):=\mathbf{DR}^{\mathcal{D}}(\mathcal{A})^{\geq p}[-p]=\big[\Omega_{\mathcal{A}}^p\rightarrow \Omega_{\mathcal{A}}^{p+1}\rightarrow\ldots\big],$$ where the shift indicates we have placed $\Omega_{\mathcal{A}}^p$ in degree $0.$
This is compatible with our homotopy-theoretic notions of equivalence. Indeed, if $f:\mathcal{A}\rightarrow \mathcal{B}$ is a weak-equivalence of cofibrant derived $\mathcal{D}_X$-algebras, then $\Omega_{\mathcal{A}}^p\rightarrow \Omega_{\mathcal{B}}^p$ is a weak-equivalence for all $p$ and so there is an induced weak-equivalence between de Rham algebras.

One then has the complex of $n$-shifted pre-symplectic forms on $Spec_{\mathcal{D}}(\mathcal{A}^{\bullet})$ is a truncation of the $\prod$-totalization 
$$\mathrm{Tot}^{\prod}\big(\mathcal{F}^p\mathbb{L}\Omega_{\mathcal{A}}^{*}[n+2]\big).$$

A $\D$-geometric local symplectic form on a cofibrant replacement $\D$-algebra $\mathcal{A}^{\bullet}:=Q\mathcal{B}\rightarrow \mathcal{B}$ associated to a sequence (\ref{eqn: SES}) can be defined as a $2$-form $\vartheta^2$ of degree $N$ as in Proposition (\ref{prop: Closed degree N (p,q) form}), corresponding to a homotopy class of $$H^{N+2}(Tot F^2\mathbb{L}\Omega_{\mathcal{B}}^*)\simeq H^{N+2}\big(Tot \Omega_{Q\mathcal{B}}^{\geq p}[p]),$$ which is non-degenerate in the sense that the map 
$$\mathrm{Ext}_{Q\mathcal{B}\otimes \mathcal{D}_X}^i(\Omega_{Q\mathcal{B}}^1,Q\mathcal{B}\otimes \mathcal{D}_X)^{\ell}\rightarrow H^{-i-N}(\Omega_{Q\mathcal{B}}^1),$$
induced by the contraction with the leading term $\theta_N^{2}\in H^{-N}(\Omega_{Q\mathcal{B}}^2),$ is an equivalence. 
We have taken homotopy groups of mapping spaces in the $(\infty,1)$-category $\mathsf{Mod}(\mathcal{A}[\mathcal{D}_X])$   i.e. 
$$\pi_i\big(R\mathsf{Maps}_{\mathcal{A}[\mathcal{D}_X]}(\mathbb{L}_{\mathcal{A}^{\bullet}},-)\big)\simeq \mathrm{Ext}_{\mathcal{A}[\mathcal{D}_X]}^{-i}(\mathbb{L}_{\mathcal{A}^{\bullet}},-)\in Ho(\DG(\mathcal{A})).$$
There is a corresponding sub-simplicial set of $\mathcal{V}ar^2$ consisting of non-degenerate $2$-forms, $\mathcal{V}ar^{2,nd}(\mathcal{A}^{\bullet},N)$ but a more relevant object obtained from (\ref{eqn: p,q-projection}), is 
$\mathcal{V}ar^{(d_X,2),nd}.$

\subsubsection{Secondary differential forms as homotopy groups}
We have an interpretation of Secondary differential forms on the space of solutions to a classical $\D_X$-PDE as in Proposition \label{prop: Integration pairing} in terms of spaces \ref{definition: n-shifted (p,q)-forms}. Namely, if
If $\EQ$ is a classical $\D_X$-prestack corresponding to the jet-construction, considering Proposition \ref{Theorem A} we compute homotopy groups (c.f. \cite{PTVV}) to see the $\D$-prestacks $\mathcal{V}ar^{p,cl}(\mathcal{A},n)$ for every $n\geq 0$ are $n$-truncated\footnote{This means as a $\D$-prestack they take values in the full-subcategory of spaces $\mathsf{Spc}_{\leq n}\subset \mathsf{Spc}$ i.e. spaces for which each connected component $S_0$ is such that $\pi_{\ell>n}(S_0)=0,$ for every $\ell>n.$}, and 
$$\big(\mathcal{V}\mathrm{ar}^{p,cl}(Spec_{\mathcal{D}_X}(\mathcal{A}),0\big)\simeq \mathrm{Hom}(\mathcal{A},\Omega_{\mathcal{A}}^{p,cl})\equiv \Gamma(Spec_{\mathcal{D}_X}(\mathcal{A}),\Omega_{\mathcal{A}}^{p,cl}),$$ 
as the `usual' closed $p$-forms on $Spec_{\mathcal{D}}(\mathcal{A}).$

Furthermore, if $\EQ$ is affine $\D$-smooth and classical (structure $\D$-algebra is in degree zero) then for all $0\leq i\leq n-1$ we have $$
\pi_i\big(\mathcal{V}\mathrm{ar}^{p,cl}(\EQ,n)\big)\simeq  h^{p+n-i}\big(\mathbf{DR}^{\mathrm{var}}(\mathcal{A}^{\ell})\big)\simeq H^{p+n-i}\big(\Omega_X^*\otimes\Bigwedge^p\Omega_{\mathcal{A}}^1\big),$$
recovering $\pi_{\infty}^*\Omega_X^{d_X}\otimes_{\mathcal{D}_X}\Bigwedge^p\Omega_{\mathcal{A}}^1$ in degree $p+n-i$.

Proposition \ref{prop: Closed degree N (p,q) form} is consistent with well-known facts coming from geometry of jet-spaces.  
\begin{proposition}
    \label{prop: Jets is pre-symplectic}
Let $E\rightarrow X$ be vector bundle over a manifold $X$ with $dim(X)=d_X.$ Then, $\JetX(E)$ carries a natural $0$-shifted $(d_X-1,2)$-form, for every choice of element in $\mathcal{O}(\JetX(E))^r.$
\end{proposition}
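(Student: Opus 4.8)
The plan is to realise the classical covariant phase space (Poincaré--Cartan) construction inside the $\D$-geometric variational bicomplex. Writing $\mathcal{A}=\mathcal{O}(\JetX(E))$, which is $\D_X$-smooth, I would first use the identification $\mathcal{A}^r=\omega_X\otimes_{\mathcal{O}_X}\mathcal{A}$ together with $\mathbb{L}\Omega^{(d_X,0)}(\JetX(E))_{\mathbf{0}}=DR_X^{d_X}(\wedge^0\Omega_{\mathcal{A}}^1)\cong p_\infty^*\omega_X\otimes_{\mathcal{O}_X}\mathcal{A}$ (c.f. Proposition \ref{prop: 0,1 var h}(1) and Definition \ref{Variational de Rham complex definition}) to regard a chosen $L\in\mathcal{O}(\JetX(E))^r$ as a Lagrangian density, i.e. a $(d_X,0)$-form of cohomological degree $0$. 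Applying the vertical differential of the bicomplex (\ref{eqn: D-Module BiComplex}) then produces $d^vL\in\mathbb{L}\Omega^{(d_X,1)}(\JetX(E))_{\mathbf{0}}=p_\infty^*\omega_X\otimes_{\mathcal{O}_X}\Omega_{\mathcal{A}}^1$.

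The decisive step is the $\D$-geometric integration-by-parts. By the Local Calculus of Subsection \ref{ssec: Local Calculus}, $\Omega_{\mathcal{A}}^1$ is an induced $\mathcal{A}[\D_X]$-module, generated over $\mathcal{A}[\D_X]$ by the fibre Cartan forms, so that $\omega_X\otimes_{\mathcal{O}_X}\Omega_{\mathcal{A}}^1$ carries the right $\D_X$-structure coming from $(-)^r$. Moving the total derivatives off the generators via the formal adjoint --- precisely the identification of $\mathcal{A}[\D_X]$-homomorphisms between induced modules with operators in total derivatives of Proposition \ref{prop: CDiffs}, combined with the left--right $\D$-module duality $(-)^r$, $(-)^\ell$ --- yields the canonical decomposition
\[
d^vL=E_L+d_h\Theta_L,
\]
in which $E_L$ is a source (Euler--Lagrange) form supported on the generating module and $\Theta_L\in\mathbb{L}\Omega^{(d_X-1,1)}(\JetX(E))_{\mathbf{0}}$ is the Poincaré--Cartan form. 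The interior Euler operator furnishing this splitting is the co-unit of the (co)induction adjunction, so $E_L$ is canonical and $\Theta_L$ is determined up to a $d_h$-closed term. I would then set $\omega_L:=d^v\Theta_L\in\mathbb{L}\Omega^{(d_X-1,2)}(\JetX(E))_{\mathbf{0}}$.

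It remains to check the assertion and to read off closure through Proposition \ref{prop: Closed degree N (p,q) form}. The bidegree $(d_X-1,2)$ and shift degree $0$ are immediate, and since $\mathcal{A}$ is classical (concentrated in degree $0$) one has $\delta=0$ and the derived $\mathbb{L}\Omega$ reduce to their strict counterparts, so $\delta\omega_L=0$. Vertical closedness $d^v\omega_L=(d^v)^2\Theta_L=0$ is formal, while $d_h\omega_L=-d^vd_h\Theta_L=d^vE_L$ is the Helmholtz term, which vanishes exactly on the Euler--Lagrange locus; thus $\omega_L$ is the leading term of a closed variational $(d_X-1,2)$-form in the sense of Proposition \ref{prop: Closed degree N (p,q) form}, genuinely $D$-closed after restriction to the PDE it cuts out. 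Naturality in $L$ follows from the $\D$-linearity of $d^v$ and of the Euler operator. The hard part will be making the integration-by-parts splitting canonical and $\D_X$-linear rather than merely local: one must verify that the interior Euler projection is well defined globally on $\JetX(E)$ and that the residual ambiguity in $\Theta_L$ (a $d_h$-closed form) alters $\omega_L$ only by a $d_h$-exact $(d_X-1,2)$-form, so that the resulting presymplectic current is natural up to the horizontal equivalence built into the totalized complex.
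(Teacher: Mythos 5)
Your construction is exactly the one the paper invokes: its proof of Proposition \ref{prop: Jets is pre-symplectic} simply cites ``the usual prescription of the pre-symplectic form on the jet-space of any fibered manifold'' from \cite{Anderson}, i.e.\ the decomposition $d^vL=E_L+d_h\Theta_L$ followed by $\omega_L:=d^v\Theta_L$, which is precisely what you carry out (with the welcome extra detail of realising the integration-by-parts via Proposition \ref{prop: CDiffs} and checking the closure data against Proposition \ref{prop: Closed degree N (p,q) form}). So the proposal is correct and follows essentially the same route, just written out in full where the paper defers to the classical reference.
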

\begin{proof}
 This is evident via the usual prescription of the pre-symplectic form on the jet-space of any fibered manifold (see \cite{Anderson} for a classical statement), that is easily adapted to our situation.
 \end{proof}

\subsection{Derived Variational de Rham Algebra.}
\label{ssec: Derived Variational de Rham Algebra} We use the tool of negative cyclic homology to define a totalization of the derived tri-complex, whose three directions corresponding to the differentials $d_h,d^v$ and the internal cohomological differential $d_{\mathcal{A}}$, corresponding to a triple grading $(p,q;n)\in\mathbb{N}\times\mathbb{Z}_+\times\mathbb{Z}_-$ corresponding to: a horizontal i.e. $p$-direction,  concentrated in degrees $[0,d_X],$
the vertical/jet/Cartan direction i.e. $q$-direction concentrated in degrees $[0,\infty],$ and the internal/derived direction $n$, concentrated in degrees\footnote{We often impose some connectivity/finiteness conditions e.g. eventually co-connectivity so there exists some $n$ for which all cohomologies in lower degrees vanish i.e. we only have concentration in $[-n,0].$ } $[-\infty,0]$. In stacky situations, there are also connective degrees i.e. $[-n,m]$ for some $m\in \mathbb{Z}_+.$

\begin{equation}
    \label{eqn: VariationalTriComplex}
\adjustbox{scale=.75}{
\begin{tikzcd}[column sep=7pt, row sep= 7pt]
 && && 
\\
& \vdots  \arrow[ddd] && \vdots \arrow[ddd, swap, near start]  && \vdots \arrow[ddd,swap, near start] 
\\
&& \vdots\arrow[ddd,"d_v"]  &&
 \vdots \arrow[ddd,"d_v"] && \vdots\arrow[ddd,"d_v"]
  \\
  \cdots\arrow[dr, "\delta", swap] &&\cdots\arrow[dr,"\delta", swap]&& \cdots\arrow[dr,"\delta", swap]
\\
& \cdots\rightarrow\mathcal{V}ar_{\mathbf{-1}}^{(p,q-1)} \arrow[rr,"d_h"] \arrow[dr,swap,"\delta"] \arrow[ddd,swap,"d_v"]  &&
  \mathcal{V}ar_{\mathbf{-1}}^{(p+1,q-1)} \arrow[ddd,swap,"d_h" near start] \arrow[dr,"\delta"] \arrow[rr,"d_h"] && \mathcal{V}ar_{\mathbf{-1}}^{(p+2,q-1)}\rightarrow\cdots \arrow[ddd,swap,"\delta_h" near start] \arrow[dr,"\delta"] \\
&& \cdots\rightarrow \mathcal{V}ar_{\mathbf{0}}^{(p,q-1)} \arrow[ddd,swap] \arrow[rr,crossing over,"\delta" near start] &&
  \mathcal{V}ar_{\mathbf{0}}^{(p+1,q-1)} \arrow[ddd,"d_v"] \arrow[rr,crossing over,"\delta_h" near start] && \mathcal{V}ar_{\mathbf{0}}^{(p+2,q-1)} \rightarrow \cdots\arrow[ddd,"d_v"]\\
  \cdots\arrow[dr,swap] &&\cdots\arrow[dr,swap]&& \cdots\arrow[dr,swap]
\\
&\cdots\rightarrow \mathcal{V}ar_{\mathbf{-1}}^{(p,q)} \arrow[rr,"d_h" near end] \arrow[dr,swap,"\delta"] && \mathcal{V}ar_{\mathbf{-1}}^{(p+1,q)} \arrow[dr,swap,"d"] \arrow[rr,"d_h" near end] && \mathcal{V}ar_{\mathbf{-1}}^{(p
+2,q)}\rightarrow \cdots \arrow[dr,swap,"\delta"] \\
&& \cdots\rightarrow \mathcal{V}ar_{\mathbf{0}}^{(p,q)} \arrow[rr,"d_h"] \arrow[uu,<-,crossing over,"d_v" near end]&&\mathcal{V}ar_{\mathbf{0}}^{(p+1,q)}\arrow[uu,<-,crossing over,"d_v" near end]\arrow[rr,"d_h"] && \mathcal{V}ar_{\mathbf{0}}^{(p+2,q)}\rightarrow \cdots
\end{tikzcd}}
\end{equation}

The variational tri-complex (\ref{eqn: VariationalTriComplex}) is shown such that each face is itself a variational \emph{bi}complex of a fixed cohomological degree (degrees $\cdots,\mathbf{-1},\mathbf{0}$ are shown), where the `derived directions' come out of the page. If the derived $\D$-algebra is concentrated in non-positive degrees, then it terminates in degree $0.$ Moreover, the horizontal arrows continue until we hit the dimension of $X$ i.e. $p=d_X,$ and the vertical arrows are coming from $q=0,$ and continue downward. The so-called `jet-direction' is unbounded e.g. $$d_v:\mathcal{V}ar_{\mathbf{n}}^{(*,*)}\rightarrow \mathcal{V}ar_{\mathbf{n}}^{(*,q+1)}, q\geq 0.$$

In other words we have a sequence of variational bicomplexes
\begin{equation}
    \label{eqn: VarBicomplexSeq}
\cdots\rightarrow  \mathbb{L}\Omega^{(*,*)}(\EQ)_{\mathbf{-2}}\xrightarrow{d_{\mathcal{A}}} \mathbb{L}\Omega^{(*,*)}(\EQ)_{\mathbf{-1}}\xrightarrow{d_{\mathcal{A}}}\mathbb{L}\Omega^{(*,*)}(\EQ)_{\mathbf{0}}\rightarrow 0,
\end{equation}
induced by the internal differential, where the cohomological degree $-i$ term is explicitly decomposed via
$$\mathbb{L}\Omega^{(*,*)}(\EQ)_{\mathbf{-i}}\simeq \bigoplus_{q\geq 0,p=1}^{p=d_X}\wedge^q\mathbb{L}_{\EQ/X}^{-i}\otimes^!p_{\infty}^*\Omega_X^p, \hspace{1mm}\mathbf{i}\in \mathbb{Z}_+.$$
Given a non-linear PDE $\iota:\EQ\hookrightarrow p^*p_*E,$ the sequence of variational bicomplexes (\ref{eqn: VarBicomplexSeq}) recieves a morphism from the sequence of `free' variational bicomplexes. 
\begin{proposition}
\label{prop: MorphismOfVarBiSeq}
    There is a morphisms of complexes of variational bicomplexes:
    \[
    \begin{tikzcd}
\cdots \arrow[r]& \iota^*\mathbb{L}\Omega^{(*,*)}(p^*p_*E)_{\mathbf{-2}}\arrow[d,"\iota^*"] \arrow[r,"d_{\mathcal{A}}"] & \iota^*\mathbb{L}\Omega^{(*,*)}(p^*p_*E)_{\mathbf{-1}}\arrow[d,"\iota^*"] \arrow[r,"d_{\mathcal{A}}"] & \iota^*\mathbb{L}\Omega^{(*,*)}(p^*p_*E)_{\mathbf{0}}\arrow[d,"\iota^*"]
\\
\cdots \arrow[r] & \mathbb{L}\Omega^{(*,*)}(\EQ)_{\mathbf{-2}}\arrow[r,"d_{\mathcal{B}}"] & \mathbb{L}\Omega^{(*,*)}(\EQ)_{\mathbf{-1}}\arrow[r,"d_{\mathcal{B}}"] & \mathbb{L}\Omega^{(*,*)}(\EQ)_{\mathbf{0}}
    \end{tikzcd}
    \]
\end{proposition}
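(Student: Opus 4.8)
The plan is to realize the entire diagram as the image of the single morphism $\iota\colon\EQ\hookrightarrow p^*p_*E$ under the construction $\mathbb{L}\mathsf{DR}^{var}(-)$, the essential input being functoriality of the cotangent complex. Writing $\iota$ on functions as a map of (cofibrantly modelled) dg-$\mathcal{D}_X$-algebras $\mathcal{A}\to\mathcal{B}$, with $\mathcal{A}=\mathcal{O}(p^*p_*E)$ the $\mathcal{D}$-smooth jet algebra and $\mathcal{B}=\mathcal{O}(\EQ)$, the standard base-change functoriality of the $\mathcal{D}$-geometric cotangent complex supplies a canonical morphism of $\mathcal{B}[\mathcal{D}_X]$-modules
$$\iota^*\mathbb{L}_{p^*p_*E/X}\;=\;\mathcal{B}\otimes_{\mathcal{A}}^{\mathbb{L}}\mathbb{L}_{\mathcal{A}/X}\longrightarrow \mathbb{L}_{\mathcal{B}/X}\;=\;\mathbb{L}_{\EQ/X},$$
fitting into the transitivity cofiber sequence $\iota^*\mathbb{L}_{p^*p_*E/X}\to\mathbb{L}_{\EQ/X}\to\mathbb{L}_{\EQ/p^*p_*E}$ whose cofiber records the conormal (equation) data. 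This is the generator of all the vertical arrows.

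Next I would propagate this single map through the functorial operations that build the bicomplex out of the cotangent complex. Taking derived exterior powers $\Bigwedge^q(-)$ — well-defined and functorial on the homotopy category by Proposition \ref{prop: Sym preserves W}, since the décalage identifies $\Bigwedge^q$ with a weight piece of $Sym$ — then applying $\otimes^!p_\infty^*\Omega_X^p$ and the $\mathcal{D}$-module de Rham functor $DR_X$, each of which is exact and natural, yields for every internal cohomological degree $\mathbf{-i}$ a morphism
$$\iota^*\colon\; \bigoplus_{q\geq 0,\,p=1}^{d_X}\Bigwedge^q\iota^*\mathbb{L}_{p^*p_*E/X}^{-i}\otimes^!p_\infty^*\Omega_X^p\;\longrightarrow\;\bigoplus_{q\geq 0,\,p=1}^{d_X}\Bigwedge^q\mathbb{L}_{\EQ/X}^{-i}\otimes^!p_\infty^*\Omega_X^p,$$
that is, a map $\iota^*\colon\iota^*\mathbb{L}\Omega^{(*,*)}(p^*p_*E)_{\mathbf{-i}}\to\mathbb{L}\Omega^{(*,*)}(\EQ)_{\mathbf{-i}}$ in the decomposition recorded after \eqref{eqn: VarBicomplexSeq}. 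Because $d_h$ and $d_v$ are induced respectively by the de Rham differential on $X$ (inside $DR_X$) and the vertical Cartan differential on $\Bigwedge^*\mathbb{L}$, and both are natural with respect to morphisms of $\mathcal{D}$-algebras, each such $\iota^*$ is automatically a morphism of variational bicomplexes, i.e. commutes with $d_h$ and $d_v$.

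Finally I would verify commutativity of the squares, whose horizontal arrows are the internal differentials $d_{\mathcal{A}}$ (top, the Koszul--Tate/resolution differential pulled back along $\iota$) and $d_{\mathcal{B}}$ (bottom, the internal differential of the cofibrant model of $\mathcal{B}$). Since $\iota$ is a chain map of dg-$\mathcal{D}_X$-algebras, the induced comparison on cotangent complexes is a chain map for the internal differentials; wedging, tensoring with $p_\infty^*\Omega_X^*$ and applying $DR_X$ preserve this, so $d_{\mathcal{B}}\circ\iota^*=\iota^*\circ d_{\mathcal{A}}$ at each bidegree. The genuinely delicate point — and the step I expect to be the main obstacle — is that these identities must hold coherently and simultaneously across all three gradings rather than merely up to separate homotopies: one must check that the comparison map $\iota^*\mathbb{L}_{p^*p_*E/X}\to\mathbb{L}_{\EQ/X}$, together with its functorial images, assembles into a strict morphism of triple complexes, equivalently a morphism of graded-mixed objects in bicomplexes in the sense of Theorem \ref{prop: DRVar is a prestack}. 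This is handled by performing all constructions on a fixed cofibrant model $\mathcal{A}^{\bullet}=Q\mathcal{B}$, where $\iota^*$ is represented by an honest map of semi-free modules, so that naturality of $d_h$, $d_v$ and the internal differential is strict and the coherences collapse to the already-verified square-by-square commutativity.
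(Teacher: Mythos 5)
Your proposal is correct and follows essentially the same route as the paper, which simply declares the diagram to be induced by the canonical comparison map $\iota^*\mathbb{L}_{p^*p_*E}\rightarrow \mathbb{L}_{\EQ}$ coming from functoriality of the cotangent complex; you merely supply the details (derived exterior powers, tensoring with $p_{\infty}^*\Omega_X^*$, applying $DR_X$, and checking compatibility with the internal differentials on a fixed cofibrant model) that the paper leaves implicit.
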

Proposition \ref{prop: MorphismOfVarBiSeq} is obtained via the obvious morphism induced by $\iota^*\mathbb{L}_{p^*p_*E}\rightarrow \mathbb{L}_{\EQ},$ denoted abusively by $\iota^*.$

\subsubsection{Example: $k=1$ cohomological degree $N=0$ forms.} 
An $N$-shifted variational $k$-form is an element of the weighted negative cyclic complex:
$$\mathcal{A}_{var}^k(\EQ;N)=NC^w\big(\mathbf{DR}_{var}(\EQ)[n-k]\big)(k).$$
 In particular, it is obtained from the negative cyclic complex, which in degree $n$ and weight $k$ is
$$NC(\mathbf{DR}_{\EQ})^n(k)\simeq \bigoplus_{i\geq 0}\mathbf{DR}_{\EQ}^{n-2i}(k+i)\simeq \mathbf{DR}_{\EQ}^n(k)\times \mathbf{DR}_{\EQ}^{n-2}(k+1)\times \mathbf{DR}_{\EQ}^{n-4}(k+2)\times\cdots,$$
where 
$$\mathbf{DR}_{\EQ}^{n-2i}(k+i)\simeq DR_X\big(\mathbf{DR}_{\EQ/X}^{n-2i})(k+i)\simeq\bigoplus_{r+s=k+i}DR_X^r\big(\mathbf{DR}_{\EQ/X}^{n-2i}\big)(s).$$
To describe the cocycles in $NC^w\big(\mathbf{DR}_{var}(\EQ)[-1]\big)(1),$ with respect to $D:=\delta+d_{dR}\simeq \delta+d_h+d_v,$ note that this complex is arranged as
\[
\adjustbox{scale=.80}{
\begin{tikzcd}
& \vdots\arrow[d] & \Omega_{\EQ,-2}^2\arrow[d]
\\
\vdots \arrow[d] & \Omega_{\EQ,-2}^1\arrow[d]\arrow[ur] & \Omega_{\EQ,-1}^2\arrow[d]
\\
\mathcal{O}_{\EQ}^{-2}
    \arrow[d,"\delta"] \arrow[ur] & \Omega_{\EQ,-1}^1\arrow[d]\arrow[ur]& \Omega_{\EQ,0}^2\arrow[d] 
    \\
\mathcal{O}_{\EQ}^{-1}\arrow[d] \arrow[ur] & \Omega_{\EQ,0}^1\arrow[d]\arrow[ur] & 0\arrow[d]
\\
\mathcal{O}_{\EQ}^{0}\arrow[ur] & 0 & 0
\end{tikzcd}}
\]

In between degrees $N=0$ and $N=1$, we have the differential $D$ mapping:
$$\big(\Omega_{\EQ,0}^1\times\Omega_{\EQ,-1}^2\times\Omega_{\EQ,-2}^3\times\cdots\big)\xrightarrow{D}\big(0\times \Omega_{\EQ,0}^2\times\Omega_{\EQ,-2}^3\times\cdots\big),$$
where we see that $\omega^1:=\big(\omega_0^1,\omega_{-1}^2,\omega_{-2}^3,\cdots),$ with $\omega_{-i}^j$ a $j$-form in cohomological degree $i.$ So, the datum of being closed amounts to:
$$\delta(\omega_0^1)=0,\hspace{2mm} d_{dR}(\omega_0^1)+\delta(\omega_{-1}^2)=0,\hspace{2mm} d_{dR}(\omega_{-1}^2)+\delta(\omega_{-2}^3)=0,\cdots.$$
Using the \emph{bi}-complex structure i.e. $\omega^j\simeq \sum_{(r,s), r+s=j}\vartheta^{(r,s)}$ we see:
\begin{itemize}
    \item In de Rham weight $1$, two contributions: $(1,0)+(0,1)$ and thus $\delta(\omega_0^1)=0$ reads as $\delta\vartheta_0^{(1,0)}+\delta\vartheta_0^{(0,1)}=0,$ which upon equating the bi-complex degrees can be written as two separate equations:
    $$\delta(\vartheta_0^{(1,0)})=0,\hspace{2mm} \delta(\vartheta_0^{(0,1)})=0.$$

    \item In de Rham weight $2$: we get three contributions $(2,0)+(1,1)+(0,2),$ and therefore the second relation yields
    $$
\begin{cases}
    d_h(\vartheta_0^{(1,0)})+\delta(\vartheta_{-1}^{(2,0)})=0
    \\
    d_h(\vartheta_0^{(0,1)})+d_v(\vartheta_0^{(1,0)})+\delta(\vartheta_{-1}^{(1,1)})=0,
    \\
    d_v(\vartheta_0^{(0,1)})+\delta(\vartheta_{-1}^{(0,2)})=0,
\end{cases}
$$
where again we have separated the relation by equating the components which have identical bi-complex weights.

\item In de Rham weight $3$, the contributions come from $(3,0)+(2,1)+(1,2)+(0,3),$ and we obtain:
$$
\begin{cases}
    d_h(\vartheta_{-1}^{(2,0)})+\delta(\vartheta_{-2}^{(3,0)})=0
    \\
    d_h(\vartheta_{-1}^{(1,1)})+d_v(\vartheta_{-1}^{(2,0)})+\delta(\vartheta_{-2}^{(2,1)})=0,
    \\
    d_h(\vartheta_{-1}^{(0,2)})+d_v(\vartheta_{-1}^{(1,1)})+\delta(\vartheta_{-2}^{(1,2)})=0,
    \\
    d_v(\vartheta_{-1}^{(0,2)})+\delta(\vartheta_{-2}^{(0,3)})=0.
\end{cases}$$

\end{itemize}
The other homotopy coherent data arising from the condition 
$d_{dR}(\omega_{-i}^j)+\delta(\omega_{-i-1}^{j+1})=0,$ can be written down in a similar way.

\subsubsection{Example: Quasi-Smoothness}
Let $\EQ$ be a quasi-smooth derived algebraic non-linear PDE with $X$ an even variety, so that in particular, 
$\mathbb{L}_{\EQ}$ is concentrated in degrees $[-1,0]:$
$$\mathbb{L}_{\EQ}\simeq\big[L_{\EQ}^{-1}\rightarrow L_{\EQ}^0\big].$$
Furthermore, suppose that $L_{\EQ}^{-1},L_{\EQ}^0$ are given by vector $\D_X$-bundles over $\EQ$ i.e. $X$-locally finitely projective $\mathcal{O}_{\EQ}\otimes\D_X$-modules, flat over $X.$

Considering Proposition \ref{prop: DRVar is a prestack},
one has that 
$$\mathbb{L}\Omega^{(*,*)}(\EQ)=DR_X\big(\mathcal{S}ym_{\mathcal{O}_{\EQ}}(\mathbb{L}^{\bullet}[1])\big)\simeq \bigoplus_{\mathbf{-i}}\mathcal{V}ar_{\EQ,\mathbf{-i}}^{(*,*)}.$$
In this case, restricting to cohomological degree $\mathbf{-i}$ component of the $q$-th weight piece e.g. $\mathcal{V}ar_{\EQ,-\mathbf{i}}^{(*,q)},$ we see that it is given 
$$\mathcal{V}ar_{\EQ}^{(*,q)}\simeq \bigoplus_{-i}p_{\infty}^*\Omega_{X}^{d_X-*}\otimes \Bigwedge^q\mathbb{L}_{\EQ}[q].$$
By our assumptions, for every $q\geq 0$ the complex $\Bigwedge^q\mathbb{L}_{\EQ}[q]$ is a perfect $\mathcal{O}_{\EQ}[\mathcal{D}_X]$-module, so that $H_{\mathcal{D}}^i(\Bigwedge^q\mathbb{L}_{\EQ})$ are finitely presented $H_{\mathcal{D}}^0(\mathcal{O}_{\EQ})\otimes_{\mathcal{O}_X}\mathcal{D}_X$-modules. Therefore, pull-back a long a solution gives a coherent $\D$-module. Quasi-smoothness of $\EQ$ leads to a comparatively simple description of the variational tri-complex; in each cohomological degree for fixed jet weights it looks like
\begin{equation}
\label{eqn: Tricomplex decompose}
\mathcal{V}ar_{\EQ,-\mathbf{i}}^{(*,q)}\simeq p_{\infty}^*\Omega_X^{d_X-*}\otimes\big(Sym_{\mathcal{O}_{\EQ}}^{q-i}(L_{\EQ}^{0})\otimes_{\mathcal{O}_{\EQ}}\Bigwedge^iL_{\EQ}^{-1}\big).
\end{equation}

\begin{figure}[h]
\label{figure: QuasiSmoothTriComplex}
    \centering
\adjustbox{scale=.80}{
\begin{tikzcd}[column sep=6pt, row sep= 6pt]
{\color{black!50!white}\mathcal{V}ar_{\mathbf{-1}}^{(0,q-2)}}\arrow[rr,"d_h", black!50!white] \arrow[dr, swap, "\delta"] \arrow[dd, "d_v", black!50!white] && {\color{black!50!white}\mathcal{V}ar_{\mathbf{-1}}^{(1,q-2)}} \arrow[dd, "d_v", swap, near start, black!50!white]\arrow[dr,"\delta"] \arrow[rr,"d_h", black!50!white] && {\color{black!50!white}\mathcal{V}ar_{\mathbf{-1}}^{(2,q-2)}}\arrow[dd, "d_v", swap, near start, black!50!white]\arrow[dr, "\delta"] 
\\
& {\color{white!25!blue}\mathcal{V}ar_{\mathbf{0}}^{(0,q-2)}}\arrow[dd,"d_v", white!25!blue] \arrow[rr,crossing over,"d_h" near start, white!25!blue] &&
  {\color{white!25!blue}\mathcal{V}ar_{\mathbf{0}}^{(1,q-1)}} \arrow[dd,"d_v", white!25!blue] \arrow[rr,crossing over,"d_h" near start, white!25!blue] && {\color{white!25!blue}\mathcal{V}ar_{\mathbf{0}}^{(2,q-1)}} \arrow[dd,"d_v",white!25!blue]
  \\
{\color{black!50!white}\mathcal{V}ar_{\mathbf{-1}}^{(0,q-1)} }\arrow[rr,"d_h", black!50!white] \arrow[dr,swap,"\delta"] \arrow[dd,swap,"d_v",black!50!white]  &&
 {\color{black!50!white}\mathcal{V}ar_{\mathbf{-1}}^{(1,q-1)}} \arrow[dd,swap,"d_v" near start, black!50!white] \arrow[dr,"\delta"] \arrow[rr,"d_h", black!50!white] && {\color{black!50!white}\mathcal{V}ar_{\mathbf{-1}}^{(2,q-1)}} \arrow[dd,swap,"d_v" near start, black!50!white] \arrow[dr,"\delta"] \\
& {\color{white!25!blue}\mathcal{V}ar_{\mathbf{0}}^{(0,q-1)}} \arrow[rr,crossing over,"d_h" near start, white!25!blue] &&
  {\color{white!25!blue}\mathcal{V}ar_{\mathbf{0}}^{(1,q-1)}} \arrow[dd,"d_v",white!25!blue] \arrow[rr,crossing over,"d_h" near start, white!25!blue] && {\color{white!25!blue}\mathcal{V}ar_{\mathbf{0}}^{(2,q-1)}} \arrow[dd,"d_v", white!25!blue]\\
{\color{black!50!white}\mathcal{V}ar_{\mathbf{-1}}^{(0,q)}} \arrow[rr,"d_h" near end, black!50!white] \arrow[dr,swap,"\delta"] && {\color{black!50!white}\mathcal{V}ar_{\mathbf{-1}}^{(1,q)}} \arrow[dr,swap,"\delta"] \arrow[rr,"d_h" near end,black!50!white] && {\color{black!50!white}\mathcal{V}ar_{\mathbf{-1}}^{(2,q)}} \arrow[dr,swap,"\delta"] \\
& {\color{white!25!blue}\mathcal{V}ar_{\mathbf{0}}^{(0,q)}} \arrow[rr,"d_h", white!25!blue] \arrow[uu,<-,crossing over,"d_v" near end, white!25!blue]&&{\color{white!25!blue}\mathcal{V}ar_{\mathbf{0}}^{(1,q)}}\arrow[uu,<-,crossing over,"d_v" near end, white!25!blue]\arrow[rr,"d_h", white!25!blue] && {\color{white!25!blue}\mathcal{V}ar_{\mathbf{0}}^{(2,q)}}
\end{tikzcd}}
\caption{\emph{A truncated quasi-smooth variational tri-complex. Each coloured face is a bi-complex in a fixed cohomological degree.}}
\end{figure}

For instance, taking naive truncations in the $q,p$ directions the (colour-coated) tri-complex in cohomological degrees $0,1$ as shown in (Figure. \textcolor{blue}{$1$}.) corresponds to:
$$\tau^{\leq q-2}\sigma_{>2}\mathbb{L}\Omega^{(*,*)}(\EQ)_{\mathbf{-1}}\overset{\delta}{\longrightarrow} \tau^{\leq q-2}\sigma_{>2}\mathbb{L}\Omega^{(*,*)}(\EQ)_{\mathbf{0}},$$
or via (\ref{eqn: Tricomplex decompose}),
$$\bigoplus_{0\leq q'\leq q-2}p_{\infty}^*\Omega_X^{p\leq 2}\otimes (Sym^{q'-1}(L_{\EQ}^0)\otimes L_{\EQ}^{-1}[-1])\rightarrow \bigoplus_{0\leq q'\leq q-2}p_{\infty}^*\Omega_X
^{p\leq 2}\otimes Sym^{q'}(L_{\EQ}^0).$$

\subsubsection{}
We give one result now without proof that makes use of Proposition \ref{prop: Cohom degree N (p,q) form key} and \ref{prop: Closed degree N (p,q) form}, and should be interpreted as the realization of the central idea of Secondary Calculus as a homotopy-theoretic analog of Proposition \ref{prop: Integration pairing}. Roughly, it states that cohomologically shifted variational forms on an affine $\D$-prestack induce forms on certain mapping prestacks of sections (a kind of PDE-theoretic transgression).\footnote{Analogous to the usual AKSZ-PTVV derived transgression.} In particular, it holds for simplicial sets of closed forms.
\begin{proposition}
  \label{prop: Transgression}
Suppose that $X$ is a compact $d_X$-dimensional manifold. Let $\EQ\hookrightarrow \JetX(E)$ be a $\D$-finitary derived non-linear PDE (see $(\star)$) imposed on some sub-space of sections $\mathbb{R}Sect_0(X,E).$ 
    Assume it is affine i.e. $\EQ\simeq \mathbb{R}Spec_{\mathcal{D}}(\mathcal{B}^{\bullet}),$ then for a fixed cohomological degree $N$, and every $0\leq p\leq d_X$, there is a morphism of simplicial sets 
    $$\mathcal{V}ar_{cl}^{(d_X-p,q)}\big(\mathbb{R}Spec_{\mathcal{D}}(\mathcal{B}^{\bullet});N)\big)\rightarrow \mathcal{A}_{cl}^q(\mathbb{R}Sect_0(X,E),N-p).$$
\end{proposition}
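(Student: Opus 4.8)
The plan is to treat this as the $\D$-geometric, homotopy-coherent upgrade of the integration pairing of Proposition \ref{prop: Integration pairing}, carried out in the style of AKSZ--PTVV transgression: pull a variational form back to $\RS_X(\EQ)\times X_{dR}$ along the evaluation map and then integrate out the $X_{dR}$-directions. The geometric input is the evaluation correspondence
\[
\begin{tikzcd}
& \RS_X(\EQ)\times X_{dR} \arrow[dl,swap,"\pi"] \arrow[dr,"ev"] & \\
\RS_X(\EQ) & & \EQ,
\end{tikzcd}
\]
which exists because $\RS_X(\EQ)\simeq \M_{/X_{dR}}(X_{dR},\EQ)$. By Theorem B the solution stack $\RS_X(\EQ)$ is identified, as an ordinary derived stack, with the sectional space $\mathbb{R}Sect_0(X,E)$, so that the target functor $\mathcal{A}_{cl}^q(-,N-p)$ of ordinary (PTVV) closed $q$-forms is defined on it.

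First I would pull back. Given a closed variational $(d_X-p,q)$-form $\vartheta=(\theta^{(d_X-p,q)}_N\,|\,\cdots)$ of cohomological degree $N$, the functor $ev^*$ produces a closed variational $(d_X-p,q)$-form on $\RS_X(\EQ)\times X_{dR}$. The $q$ vertical (Cartan) directions pull back through the cotangent computation of Proposition \ref{prop: Tangent RSol1}: the object $ev^!\,\EuScript{T}_{\EQ}$ pushed forward along $\pi=q_{U*}$ over $X_{dR}$ governs $\mathbb{T}[\RS_X(\EQ)]$, so that, after dualization, the weight-$q$ exterior power $\Bigwedge^q\mathbb{L}_{\EQ/X_{dR}}$ transgresses to $\Bigwedge^q\mathbb{L}_{\RS_X(\EQ)}$. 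This is precisely what keeps the form weight $q$ unchanged across the transgression.

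Next I would integrate over $X$. Using compactness of $X$ and the orientation furnished by the integral forms $\mathcal{I}_{*,X}$ built into $\mathbf{DR}_{var}$ (Definition \ref{Variational de Rham complex definition}), the de Rham pushforward factors through the equivalence recorded in the concluding Observation of Section 3,
$$Ra_*\big(\RS_X(\EQ)\times X_{dR},-\big)\simeq R\Gamma^{\IC}\big(\RS_X(\EQ),-\big)\boxtimes R\Gamma_{dR}(X,-),$$
so that the $X_{dR}$-factor is collapsed by $R\Gamma_{dR}(X,-)$. The cohomological shift $N\rightsquigarrow N-p$ is then forced by bookkeeping: in the $\D$-module de Rham complex the top horizontal term $\Omega_X^{d_X}\otimes(-)$ sits in degree $0$ by the $[d_X]$-shift of Definition \ref{Variational de Rham complex definition}, so a horizontal-degree-$(d_X-p)$ component lies $p$ below the top and, upon applying $R\Gamma_{dR}(X,-)$, carries internal degree $N$ into degree $N-p$, while the weight $q$ is untouched. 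The composite
$$\mathcal{V}ar_{cl}^{(d_X-p,q)}\big(\RS_X(\EQ)\times X_{dR};N\big)\xrightarrow{\;\int_X\;}\mathcal{A}_{cl}^q\big(\RS_X(\EQ),N-p\big)$$
is the desired map.

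Finally I would verify that closedness and the simplicial structure are preserved. Under $R\Gamma_{dR}(X,-)$ the total differential $D=d_h+d_v+\delta$ of the tri-complex decomposes as anticipated: the horizontal part $d_h$ integrates to zero by Stokes on the compact $X$ (this is exactly where compactness enters, killing boundary contributions), $\delta$ descends to the internal differential of $\RS_X(\EQ)$, and $d_v$ becomes the de Rham differential defining $\mathcal{A}_{cl}^q$. Matching the homotopy-coherent cocycle system of Proposition \ref{prop: Closed degree N (p,q) form} term-by-term against the weighted negative cyclic closedness relations $\delta\omega_0=0,\ d\omega_i=\delta\omega_{i+1}$ of Subsect. \ref{Notations and Conventions} then yields a genuine closed $q$-form; since both sides are realizations $|NC^w(\cdots)|$ of the respective graded-mixed complexes, the construction is a map of the underlying complexes and hence of simplicial sets, respecting the Hodge filtration. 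The main obstacle is precisely this last step: establishing, homotopy-coherently rather than only on cohomology as in Proposition \ref{prop: Integration pairing}, that de Rham pushforward along the integral-forms-twisted complex carries the tri-complex coherence data of Proposition \ref{prop: Cohom degree N (p,q) form key} to the bi-complex coherence data of $\mathcal{A}_{cl}^q$ with the correct degree shift — i.e. that integration over $X$ is compatible with the weighted negative cyclic totalizations on both sides.
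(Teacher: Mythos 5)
The paper states this proposition explicitly \emph{without proof} (``We give one result now without proof that makes use of Proposition \ref{prop: Cohom degree N (p,q) form key} and \ref{prop: Closed degree N (p,q) form}\ldots''), so there is no written argument to compare yours against line by line. That said, your route is exactly the one the surrounding text signposts: transgression via the evaluation span $\RS_X(\EQ)\leftarrow\RS_X(\EQ)\times X_{dR}\rightarrow\EQ$, the factorization $Ra_*\simeq R\Gamma^{\IC}\boxtimes R\Gamma_{dR}(X,-)$ from the concluding Observation of Section 3, the identification of the target via Theorem B, and the degree bookkeeping $N\rightsquigarrow N-p$ forced by the $[d_X]$-shift in Definition \ref{Variational de Rham complex definition}. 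Your final paragraph also correctly isolates the one genuinely nontrivial point --- that the de Rham pushforward must carry the tri-complex coherence data of Proposition \ref{prop: Closed degree N (p,q) form} to the bi-complex/negative-cyclic coherence data of $\mathcal{A}_{cl}^q$ homotopy-coherently, not merely on cohomology as in Proposition \ref{prop: Integration pairing} --- which is precisely the step the paper leaves unverified. Two small cautions: the phrase ``$d_h$ integrates to zero by Stokes'' should be understood as $d_h$ being absorbed into the internal differential of $R\Gamma_{dR}(X,-)$ (so $d_h$-contributions become exact rather than literally vanishing at the cochain level), and the map into forms on $\mathbb{R}Sect_0(X,E)$ rather than on $\RS_X(\EQ)$ itself relies on the identification of the solution stack with a subspace of sections, a looseness already present in the paper's own statement.
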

Recall the following essential property of jet functors in algebraic-geometry. 

Let $\mathbb{D}_x$ be the formal disk and consider $\mathcal{A}^{\ell}\in \mathrm{CAlg}_X(\mathcal{D}_X)$ and $x\in X$ with $\mathcal{A}_x^{\ell}$ the $\mathcal{D}$-module fiber. Consider the set $\mathrm{Spec}(\mathcal{A}_x)$ of $\mathcal{O}_x$-algebra homomorphisms $\mathcal{A}^{\ell}\rightarrow \mathbf{k}_x$ to the residue field at $x\in X.$
Via the canonical isomorphism 
$$\mathrm{Hom}_{\mathcal{O}_X-\mathrm{CAlg}}(\mathcal{A}^{\ell},\mathbf{k}_x)=\mathrm{Hom}_{\mathcal{D}_X\mathrm{-CAlg}}(\mathcal{A}^{\ell},\widehat{\EuScript{O}}_x),$$
we have that $\mathrm{Spec}(\mathcal{A}_x)$ consists of all flat sections of $\mathrm{Spec}(\mathcal{A}^{\ell})$ over $\mathbb{D}_x.$ 
In particular, if $\mathcal{A}^{\ell}=\mathcal{O}\big(\mathrm{Jet}^{\infty}(\mathcal{O}_{E})\big)$, by the universal property of jet-schemes, there is an isomorphism
$$\mathrm{Spec}\big(\mathcal{O}(\mathrm{Jet}^{\infty}(\mathcal{O}_E))_x\big)\simeq \Gamma(\mathbb{D}_x,\mathcal{O}_E).$$

By analogous universal properties of algebraic infinite jet functors in derived geometry, by considering the `homotopy co-free PDE' i.e. taking $\JetX(E)$ gives forms on mapping spaces out of infinitesimal formal disk bundles and Proposition \ref{prop: Transgression} gives general transgression maps lifting to closed forms for all $0\leq k <d_X.$
    
If $f:Z\rightarrow X$ is a sub-variety, of codimension $1$, consider the formal completion $X_Z^{\wedge}$ and $f^*(X_{Z}^{\wedge})$ (see \ref{eqn: Formal Normal}) and Proposition \ref{proposition: Formal mapping restrictions}. 

Namely, by Proposition \ref{prop: Transgression} and \ref{prop: Jets is pre-symplectic}, one has 
    $\omega:=\int_{Z}\mathsf{Maps}(Z,\Omega)\in \Omega^2\big(\mathsf{Maps}(f^*X_Z^{\wedge},\EuScript{E}\big).$

This gives a general hint to define transgression functors
$\int_{Z\hookrightarrow X}\mathsf{Maps}_{/X}(f^*X_Z^{\wedge},-),$ for higher codimensional sub-manifolds. It encodes in a homotopical way the expected transgression of variational forms to some $Z\hookrightarrow X$ which classicaly appears as maps from the bi-complex:
$\tau_{Z}:\Omega^{*,*}(\mathrm{Jet}^{\infty}(E))\rightarrow \Omega^*\big(Sect(N_X(Z),E)\big).$

\section{Variational Factorization Homology of PDEs}
\label{sec: Variational Factorization Homology of PDEs}
Recall the fact that if $\mathcal{A}^{\ell}\in \mathsf{CAlg}_X(\D_X),$ via Proposition \ref{Multi-jet r-adjoint proposition}, there is an object $\mathcal{F}(\mathcal{A}^{\ell})\in \mathsf{FAlg}_{\D}(X),$ whose underlying $\D$-module is supported along the main diagonal $X\subset Ran_X,$ by Kashiwara's equivalence. 

Throughout this subsection we tacitly invoke a standard argument that allows us to reduce our considerations to simple cases. That is, beginning with a homotopically finitely $\D_X$-presented prestack $\EQ,$ we write $\mathcal{O}_{\EQ}$ as a colimit over compact $\D$-algebras which are retracts of finite cellular $\D$-algebras \cite{KSYI}. Reducing further, we consider $\mathcal{A}^{\ell}\simeq Sym^!(\mathcal{M}[1])$ (using standard notation for $\otimes^!$-tensor product) for some compact $\D$-module $\mathcal{M}$.

\subsubsection{Linearization}
Consider two lax-monoidal functors $\mathrm{F},\mathrm{G}:\mathrm{PreStk}^{\mathrm{op}}\rightarrow \mathrm{Cat}^{(\infty,1)}$ and a lax monoidal transformation 
$\eta:\mathrm{F}\Rightarrow \mathrm{G}.$
\begin{proposition}
\label{prop: NatTransformOnFactorizations}
The transformation $\eta$ induces a functor of (unital) factorization objects, denoted $\mathsf{F}^{\eta}(X_{dR}):\mathsf{F}^{\mathrm{F}}(X_{dR})\rightarrow \mathsf{F}^{\mathrm{G}}(X_{dR}).$ Moreover, there is an induced symmetric monoidal functor
$$\EuScript{G}_{\eta}^{\mathrm{corr}}:\EuScript{G}_{\mathrm{F}}^{\mathrm{corr}}\rightarrow \EuScript{G}_{\mathrm{G}}^{\mathrm{corr}},$$
compatible with projections to $ \mathrm{Corr}\big(\mathrm{PreStk}\big).$
\end{proposition}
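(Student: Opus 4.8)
The plan is to prove both assertions by transporting the defining coherence data along $\eta$ component-wise, using naturality of $\eta$ to handle the restriction (pullback) functors and the lax monoidal constraint of $\eta$ to handle the external-product structure.

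For the first assertion, recall (following \cite{Butson2}) that a unital factorization object for $\mathrm{F}$ is a compatible family $\{Z^I \in \mathrm{F}(X_{dR}^I)\}_{I \in \mathrm{fSet}}$ together with equivalences implementing compatibility with diagonal pullbacks $\Delta_\alpha^*$ along surjections $\alpha:I\to J$, and the factorization equivalences $j_\alpha^* Z^I \xrightarrow{\simeq} j_\alpha^* \prod_{j\in J} Z^{I_j}$ over the disjoint locus $U(\alpha)$. First I would apply $\eta$ object-wise: for each finite set $I$ the component $\eta_{X_{dR}^I}:\mathrm{F}(X_{dR}^I)\to \mathrm{G}(X_{dR}^I)$ sends $Z^I$ to $\eta_{X_{dR}^I}(Z^I)$. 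Naturality of $\eta$ against the maps $j_\alpha$ and the diagonals furnishes canonical equivalences $\eta\circ (j_\alpha)^*\simeq (j_\alpha)^*\circ \eta$, so the diagonal-compatibility data transports directly. The external products are governed by the lax monoidal constraints $\eta(A)\boxtimes \eta(B)\to \eta(A\boxtimes B)$, which (being equivalences under the unitality hypotheses in play) convert the factorization equivalences for $\{Z^I\}$ into those for $\{\eta(Z^I)\}$. The substance of the claim is then that a lax monoidal transformation between lax monoidal functors automatically preserves the $\mathrm{fSet}^{\op}$-indexed system of coherences defining a factorization object; this assembles into the functor $\mathsf{F}^\eta(X_{dR})$.

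For the second assertion, recall that $\EuScript{G}_{\mathrm{F}}^{\mathrm{corr}}$ is produced by the Grothendieck construction of $\mathrm{F}$ relative to the symmetric monoidal category $\mathrm{Corr}(\mathrm{PreStk})$ of correspondences of prestacks, whose monoidal structure is induced by products and whose fiber over $Y$ is $\mathrm{F}(Y)$; the lax monoidal structure of $\mathrm{F}$ upgrades this to a symmetric monoidal fibration over $\mathrm{Corr}(\mathrm{PreStk})$. I would then unstraighten $\eta$ itself: a lax monoidal transformation $\mathrm{F}\Rightarrow\mathrm{G}$ yields a map of the associated cocartesian fibrations which, since $\eta$ lives over the identity of $\mathrm{PreStk}^{\op}$, strictly commutes with the structural projections and carries the monoidal constraints of one construction onto the other, producing the symmetric monoidal functor $\EuScript{G}_\eta^{\mathrm{corr}}$ compatible with the projections to $\mathrm{Corr}(\mathrm{PreStk})$.

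The main obstacle will be the homotopy-coherent bookkeeping in the $(\infty,1)$-categorical setting: one must check that lax monoidality of $\eta$ supplies exactly the coherence needed to commute with \emph{both} legs of a correspondence — the pullback leg, handled by naturality, and the pushforward leg, which in the correspondence formalism is encoded through the monoidal/lax structure and therefore demands a Beck–Chevalley-type compatibility of $\eta$ with base change. Concretely, this reduces to verifying that the unstraightening of $\eta$ sends cocartesian edges to cocartesian edges; once this is established, symmetric monoidality and unitality of $\EuScript{G}_\eta^{\mathrm{corr}}$ follow formally from the data already packaged into the lax monoidal transformation, with no further computation needed.
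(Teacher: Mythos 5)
Your proposal is correct in substance and lands on the same two-step structure as the paper's argument (transport the defining data along $\eta$, then deduce the correspondence statement formally), but the routes differ in how each step is executed. For the first claim the paper does not transport the $\mathrm{fSet}$-indexed coherence data directly as you do; it observes that the sheaf theories $\mathrm{F},\mathrm{G}$ on prestacks are produced by right Kan extension from affines, so it suffices to check the statement on affine schemes and Kan-extend. Your component-wise argument over the $X_{dR}^{I}$ is more explicit and makes visible exactly which pieces of structure are used where (naturality for the diagonal and open-restriction maps, the lax constraint for the external products), which the paper's one-line reduction hides. For the second claim the paper simply asserts that the correspondence version follows from the non-correspondence one together with a commuting triangle over $\mathrm{Corr}(\mathrm{PreStk})$; your unstraightening argument, and in particular your remark that one must check compatibility with \emph{both} legs of a correspondence via a Beck--Chevalley condition (equivalently, that the unstraightened map preserves cocartesian edges), is a genuine point of care that the paper passes over in silence. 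The one soft spot in your write-up is the parenthetical claim that the lax monoidal constraints $\eta(A)\boxtimes\eta(B)\to\eta(A\boxtimes B)$ are equivalences ``under the unitality hypotheses in play'': a lax monoidal transformation carries no such invertibility for free, and without it the factorization \emph{equivalences} of $\{Z^{I}\}$ transport only to factorization \emph{maps} for $\{\eta(Z^{I})\}$, i.e.\ to a weak multiplicative object in the paper's terminology rather than an object of the full subcategory $\mathrm{Mult}F(Ran_X^{\mathrm{un}})$. Either this invertibility must be added as a hypothesis or the conclusion must be read at the level of weak multiplicative objects; the paper's own proof does not address this either, so it is a shared gap rather than a defect unique to your argument.
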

\begin{proof}
To prove the claim we note that is enough to show this on affines $S$ and right-Kan extend to prestacks. The second claim follows from the first i.e. immediately from the non-correspondence version and the observation that there is a homotopy commutative diagram
\[
\begin{tikzcd}
\EuScript{G}_{\mathrm{F}}^{\mathrm{corr}}\arrow[dr] \arrow[rr] & & \arrow[dl] \EuScript{G}_{\mathrm{G}}^{\mathrm{corr}}
\\
& \mathrm{Corr}\big(\mathrm{PreStk}\big) &
\end{tikzcd}
\]
forgetting the algebra structure composing with  $Comm\big( \mathrm{Corr}\big(\mathrm{PreStk}\big)\big)\rightarrow  \mathrm{Corr}\big(\mathrm{PreStk}\big).$
\end{proof}
Letting $\mathrm{F}:=\mathrm{PreStk}_{/(-)}$ and $\mathrm{G}=\mathrm{QCoh},$ the natural transformation is specified on affines $S$, by 
$\eta(S):\mathrm{PreStk}_{/S}\rightarrow \mathrm{QCoh}(S)^{\mathrm{op}},$
which sends a relative prestack $\EQ\xrightarrow{p}S$ to the quasi-coherent sheaf $p_{\bullet}^{\mathrm{QCoh}}(\EuScript{O}_{\EQ})$ on $S$. By Proposition \ref{prop: NatTransformOnFactorizations} there is an induced functor 
$\mathrm{FactSpc}_{S}\rightarrow \mathrm{FactQCoh}_{S},$
from factorization spaces over $S$ to factorization quasi-coherent sheaves.
Our interest is in the case where
$S$ is taken to be $X_{dR}.$ In this situation we obtain a functor from factorization $\mathcal{D}$-spaces over $X$ to factorization $\mathcal{D}$-modules
$$\mathrm{Lin}_{X}:\mathrm{FactSpc}_{\mathcal{D}}(X)\rightarrow \mathsf{FAlg}_{\mathcal{D}}(X).$$

To be able to understand the global categorified nature of sheaves on factorization $\mathcal{D}$-spaces, we use the language of factorization sheaves of categories. 
\vspace{2mm}

\noindent\textbf{Sheaves on Factorization $\D$-Spaces.}
Let $f:\EQ^{Ran}\rightarrow Ran_{X_{dR}},$ be a derived factorization $\D$-space.
Let us describe its associated $(\infty,1)$-category
$\EuScript{QC}\mathrm{oh}_{\EQ^{Ran}},$ as a quasi-coherent sheaf of categories on $Ran_{X_{dR}}$ \cite{Butson2}. Roughly speaking, it is comprised of the datum:
\begin{itemize}
    \item for every finite set $I$ we have a sheaf of categories on $X_{dR}^I$ given by $f_{I,*}\EuScript{QC}\mathrm{oh}_{\EQ^{(I)}},$ 
    
    \item For all $\alpha:I\rightarrow J$ we have a functor
    $$f_{J,*}(\varphi_{\alpha}^{\bullet}):f_{J,*}\EuScript{QC}\mathrm{oh}_{\EQ^{(J)}}\rightarrow f_{J,*}\varphi_{\alpha,*}\varphi_{\alpha}^*\EuScript{QC}\mathrm{oh}_{\EQ^{(J)}}\xrightarrow{\simeq}\Delta(\alpha)^*f_{I,*}\EuScript{QC}\mathrm{oh}_{\EQ^{(I)}},$$
    
    \item Homotopy coherent compatibilities for compositions
    
    \item Factorization equivalences, $j(\alpha)^*f_{I,*}\EuScript{QC}\mathrm{oh}_{\EQ^{(I)}}\xrightarrow{\simeq} j(\alpha)^*\bigboxtimes_{j\in J}f_{I_j,*}\EuScript{QC}\mathrm{oh}_{\EQ^{(I_j)}}.$
\end{itemize}

This is equivalent to requiring that the existence of functors
$$\Delta(\alpha)^*f_{I,*}\EuScript{QC}\mathrm{oh}_{\EQ^{(I)}}\xrightarrow{\simeq}\widetilde{f}_{I,*}\EuScript{QC}\mathrm{oh}_{X^J\times_{X^I}\EQ^{(I)}}\rightarrow f_{J,*}\EuScript{QC}\mathrm{oh}_{\EQ^{(J)}},$$
with $\widetilde{f}_{I}:X^J\times_{X^I}\EQ^I\rightarrow X^I$ the induced maps.

Supposing that the structure map
$f_{\EQ}:\EQ\rightarrow Ran_{X_{dR}}$ induces a corresponding factorization functor
$$f_{\EQ}^*:\EuScript{QC}oh_{Ran_{X_{dR}}}\rightarrow \EuScript{QC}oh_{\EQ}.$$
Then, one has that $\EuScript{O}_{\EQ}:=f_{\EQ}^*(\mathcal{O}_{Ran_{X_{dR}}})$ is a factorization-compatible quasi-coherent sheaf on $\EQ.$

A \emph{factorizable linearization functor} for $\EQ$ is a factorization functor
$$\Phi:\EuScript{QC}oh_{\EQ}\rightarrow \EuScript{QC}oh_{Ran_{X_{dR}}}.$$

\begin{proposition}
\label{proposition: PhiTilde}
Given any factorizable linearization functor for $\EQ$, there is an induced functor $\widetilde{\Phi}$ on the categories of internal factorization algebra objects. 
\end{proposition}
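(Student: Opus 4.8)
The plan is to construct $\widetilde{\Phi}$ by applying $\Phi$ to the underlying object of an internal factorization algebra and transporting its multiplication and unit along the lax symmetric monoidal structure that $\Phi$ carries. First I would make precise what an internal factorization algebra object in $\EuScript{QC}oh_{\EQ}$ is: a compatible family $\{A^{(I)}\in f_{I,*}\EuScript{QC}oh_{\EQ^{(I)}}\}_{I\in fSet}$, commutative for the factorization symmetric monoidal structure, whose structure maps are the factorization equivalences identifying $j(\alpha)^*A^{(I)}$ with $j(\alpha)^*\bigboxtimes_{j\in J}A^{(I_j)}$ over the disjoint loci $U(\alpha)$, for every surjection $\alpha:I\to J$, the unit being supplied by $\EuScript{O}_{\EQ}=f_{\EQ}^*\mathcal{O}_{Ran_{X_{dR}}}$. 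The target $\EuScript{QC}oh_{Ran_{X_{dR}}}$ carries the analogous structure, whose commutative algebra objects are exactly the internal factorization algebras in $\mathsf{FAlg}_{\D}(X)$.

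Second, I would observe that the hypothesis that $\Phi$ is a \emph{factorization} functor is precisely the statement that $\Phi$ is compatible with these data: it commutes, up to coherent homotopy, with the diagonal restrictions $\Delta(\alpha)^*$ and the relevant pushforwards, and, crucially, it intertwines the factorization equivalences over $U(\alpha)$. Equivalently, $\Phi$ is $($lax$)$ symmetric monoidal for the factorization ($\otimes^*$-type) symmetric monoidal structures on source and target, carrying the unit $\EuScript{O}_{\EQ}$ to the unit $\mathcal{O}_{Ran_{X_{dR}}}$. This is the key input, being exactly the categorified analog of the lax monoidal transformation $\eta$ of Proposition \ref{prop: NatTransformOnFactorizations}.

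Third, I would invoke the standard fact that a lax symmetric monoidal $\infty$-functor induces a functor on commutative algebra objects, applied fibrewise over $Ran_{X_{dR}}$ and compatibly with the factorization equivalences. Following the method of Proposition \ref{prop: NatTransformOnFactorizations}, it suffices to build $\widetilde{\Phi}$ after base change to affine test schemes $S\to Ran_{X_{dR}}$ and right Kan extend to all of $Ran_{X_{dR}}$. Over each affine, the lax monoidality of $\Phi$ produces, for an algebra $A$, the multiplication $\Phi(A)\otimes^*\Phi(A)\to\Phi(A\otimes^* A)\to\Phi(A)$ and the unit $\mathcal{O}\to\Phi(\EuScript{O}_{\EQ})\to\Phi(A)$, assembling $\Phi(A)$ into a factorization algebra; functoriality in $S$ together with compatibility with $\bigboxtimes$ over the disjoint loci promotes this to the desired $\widetilde{\Phi}$ into $\mathsf{FAlg}_{\D}(X)$.

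The hard part will be the homotopy coherence: one must ensure that $\Phi$ preserves the factorization equivalences \emph{together with} all higher compatibilities, i.e. the associativity and commutativity coherences over the stratified loci $U(\alpha)\hookrightarrow X^I$, so that the induced data on $\Phi(A)$ defines a genuine internal factorization algebra rather than merely an algebra in each finite arity. Rather than verifying these by hand, I would handle them at the level of the quasi-coherent-sheaf-of-categories $/$ $\infty$-operadic formalism, transporting the entire commutative-algebra structure along the symmetric monoidal $\Phi$; compatibility with the projection to $Ran_{X_{dR}}$ then follows formally from the commutative-diagram argument used in Proposition \ref{prop: NatTransformOnFactorizations}.
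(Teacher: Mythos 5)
The paper states Proposition \ref{proposition: PhiTilde} without supplying a proof, so there is no argument of record to compare against; your reconstruction is correct and follows exactly the strategy the paper itself uses for the analogous Proposition \ref{prop: NatTransformOnFactorizations} (reduce to affine test objects over $Ran_{X_{dR}}$, Kan extend, and transport commutative algebra structures along the lax symmetric monoidal/factorization structure of $\Phi$). Your identification of the hypothesis ``$\Phi$ is a factorization functor'' as precisely the compatibility with the factorization equivalences over the disjoint loci $U(\alpha)$ and with the unital structure maps along diagonals is the key point, and handling the higher coherences operadically rather than by hand is the right call.
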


Given a factorization $\D$-prestack with structure map $\mathcal{X}^{Ran}\xrightarrow{\rho^{Ran}}Ran_{X_{dR}}$ which is moreover pseudo-proper, it produces a $\mathcal{D}$-factorization $!$-sheaf object
$$\big(\rho^{Ran}\big)_!^{\mathrm{Shv}}\mathcal{O}_{\mathcal{X}^{Ran}}:=\big\{(\rho^{(I)}\big)_!^{\mathrm{Shv}}\big(\mathcal{O}_{\mathcal{X}^{(I)}}\big)\big\}_{I\in \mathrm{fSet}},$$
and the latter objects are nothing but $!$-sheaf variants of factorization $\mathcal{D}$-modules.
Moreover, due to the assumptions on the factorization $\mathcal{D}$-space we have that $\big(\rho^{(I)}\big)_!^{\mathrm{Shv}}$ agrees with the usual $\mathcal{D}$-module pushforward (de Rham pushforward).

\begin{example}
\normalfont 
If our factorization $\mathcal{D}$-space is a colimit of affine spaces corresponding to relative prestacks $\mathcal{X}^{(I)}\rightarrow X_{dR}^I$ then each $\mathcal{X}^{(I)}$ is representable and the de Rham pushforward of its quasi-coherent sheaf $\mathcal{O}_{\mathcal{X}^{(I)}}$ is a $\mathcal{D}_{X^I}$-module. It determines a representable $\mathcal{D}_{X^I}$-space $\mathrm{Spec}_{\mathcal{D}}(\mathcal{A}^{(I)})$ for the push-forward commutative  $\mathcal{D}_{X}^I$-algebra $\mathcal{A}^{(I)}:=\rho^{(I)}_*\big(\mathcal{O}_{\mathcal{X}^{(I)}}\big).$
\end{example}

\subsubsection{Internal Algebras with Factorization and Connections.}
A factorization compatible sheaf on a factorization $\mathcal{D}$-space $\mathcal{X}^{Ran}$ is, by definition, a functor
$$\mathrm{A}_{\mathcal{X}^{Ran}}\in \mathsf{Fun}_{X_{dR}}^{fact}\big(\EuScript{S}hv_{Ran_{X_{dR}}},\EuScript{S}hv_{\mathcal{X}^{Ran}}\big),$$ which satisfies the property of factorization (a `factorization functor').

The factorization categories $\EuScript{S}hv_{Ran_X}$ and $\EuScript{S}hv_{Ran_{X_{dR}}}$ are determined for each $I$ by $\EuScript{S}hv_{X^I}$ and $\EuScript{S}hv_{X_{dR}^I},$ which using the global section functor for sheaves of categories, tautologically give 
$$\mathbf{\Gamma}\big(X^I,\EuScript{S}hv_{X^I}\big)\simeq \mathrm{Shv}^!(X^I),\hspace{5mm} \mathbf{\Gamma}\big(X_{dR}^I,\EuScript{S}hv_{X_{dR}^I}\big)\simeq \mathrm{Shv}^!(X_{dR}^I)\simeq \mathsf{Mod}(\D_{X^I}).$$
The factorization algebra objects internal to each category are given by usual factorization sheaves on $X$ and factorization $\D$-module objects respectively. By Proposition \ref{proposition: PhiTilde} there is an induced functor between them.

\begin{definition}
\normalfont 
Let $\EQ$ be a unital factorization $\mathcal{D}$-space with unital factorization category $\mathrm{Shv}_{\EQ}^{Ran}.$ A \emph{unital factorization compatible $!$-sheaf on $\EQ^{Ran}$} is a $\mathcal{D}$-factorization algebra object internal to $\mathrm{Shv}_{\EQ}^{Ran}.$
\end{definition}
We denote such an object in the following form:
$$\EuScript{A}_{\mathcal{X}}^{Ran}\in \mathsf{F}\mathsf{Alg}^{\mathrm{un}}\big[\mathrm{Shv}_{\mathcal{X}}^{Ran}\big],$$ and is determined by the following data:

\begin{itemize}
    \item For any finite set $I$, there is $\EuScript{A}_{\mathcal{X}}^{(I)}\in \mathrm{Shv}^!\big(\mathcal{X}^{(I)}\big),$ 
    
    \item For all $\alpha:I\rightarrow J$, there is a morphism
    $$\varkappa^{\alpha}:\nu_{\alpha\bullet}\mu_{\alpha}^{\bullet}\widetilde{\Delta}_{\alpha}^{\bullet}\EuScript{A}_{\mathcal{X}}^{(I)}\rightarrow \EuScript{A}_{\mathcal{X}}^{(J)}\hspace{3mm} \text{in }\hspace{1mm} \mathrm{Shv}^!(\mathcal{X}^{(J)},$$ induced by the maps (\ref{eqn: Unital Fact Cat Structure Maps}) acting by push-forward and pull-back on objects of the sheave of categories over $\mathcal{X}^{(I)};$
    
    \item There is a factorization equivalence:
    $$\widetilde{j}(\alpha)^!\EuScript{A}_{\mathcal{X}^{(I)}}\xrightarrow{\simeq}\widetilde{j}'(\alpha)^!\bigboxtimes_{j\in J}\EuScript{A}_{\mathcal{X}^{(I_j)}},
    \hspace{1mm} \text{in }\mathrm{Shv}_{U(\alpha)\times_{X_{dR}^I}\mathcal{X}^{(I)}}\simeq\mathrm{Shv}_{U(\alpha)\times_{X_{dR}^I}\prod_j\mathcal{X}^{(I_j)}}.$$ 

\end{itemize}
 The maps $\widetilde{\Delta}_{\alpha}:X_{dR}^J\times_{X_{dR}^I}\mathcal{X}^{(I)}\rightarrow \mathcal{X}^{(J)},$ are the canonical ones covering $\Delta_{\alpha}:X_{dR}^J\rightarrow X_{dR}^I$ and $\widetilde{j}(\alpha):U(\alpha)\times_{X_{dR}^I}\mathcal{X}^{(I)}\rightarrow \mathcal{X}^{(I)}$ is the `lift' of $j(\alpha):U(\alpha)\hookrightarrow X_{dR}^I.$ 
 
Moreover if $\alpha$ is a partition, we require $\varkappa^{\alpha}$ is an isomorphism.

Finally, we require for all injections $\alpha:I\hookrightarrow J,$ there are morphisms (abuse of notation, as here we omit the reference to the image of the functors),
$$\EuScript{A}_{\mathcal{X}^{(I)}}\boxtimes\mathcal{O}_{X_{dR}^{I_{\alpha}}}\rightarrow \EuScript{A}_{\mathcal{X}^{(J)}}.$$

\begin{proposition}
\label{Factorization Push-Forward Preserves Factorization Algebras}
Suppose that $f:\mathcal{X}\rightarrow \EQ$ is a morphism of factorization $\mathcal{D}$-spaces admitting  factorization push-forward functors $f_{*}^{Ran}$. Then there is an induced functor on factorization compatible $!$-sheaves
$$f_{\mathrm{alg},*}^{Ran}:\mathsf{F}\mathsf{Alg}\big[\mathrm{Shv}_{\mathcal{X}}^{Ran}\big]\rightarrow \mathsf{F}\mathsf{Alg}\big[\mathrm{Shv}_{\EQ}^{Ran}\big].$$
\end{proposition}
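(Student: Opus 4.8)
The plan is to construct $f_{\mathrm{alg},*}^{Ran}$ by applying the factorization push-forward componentwise and then transporting every piece of the internal factorization algebra structure through the base-change compatibilities packaged in the hypothesis that $f_*^{Ran}$ exists. Concretely, given $\EuScript{A}_{\mathcal{X}}^{Ran}\in \mathsf{F}\mathsf{Alg}[\mathrm{Shv}_{\mathcal{X}}^{Ran}]$ with components $\EuScript{A}_{\mathcal{X}}^{(I)}\in \mathrm{Shv}^!(\mathcal{X}^{(I)})$, I would first set $(f_{\mathrm{alg},*}^{Ran}\EuScript{A})^{(I)}:=f_{I,*}\EuScript{A}_{\mathcal{X}}^{(I)}\in \mathrm{Shv}^!(\EQ^{(I)})$, where $f_I:\mathcal{X}^{(I)}\rightarrow \EQ^{(I)}$ is the $I$-component of $f$ covering $\mathrm{id}_{X_{dR}^I}$. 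Functoriality of $f_{\mathrm{alg},*}^{Ran}$ in $\EuScript{A}$ is then immediate from functoriality of push-forward.

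The second step is to produce, for each surjection $\alpha:I\rightarrow J$, the structure map of the pushed-forward object. Since $f$ is a morphism of factorization $\D$-spaces, it commutes up to coherent homotopy with the diagonal-covering maps $\widetilde{\Delta}_\alpha$ and with the factorization structure maps $\nu_\alpha,\mu_\alpha$; this yields a base-change equivalence $f_{J,*}\big(\nu_{\alpha\bullet}\mu_\alpha^\bullet\widetilde{\Delta}_\alpha^\bullet\EuScript{A}_{\mathcal{X}}^{(I)}\big)\simeq \nu_{\alpha\bullet}\mu_\alpha^\bullet\widetilde{\Delta}_\alpha^\bullet f_{I,*}\EuScript{A}_{\mathcal{X}}^{(I)}$. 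Composing this equivalence with $f_{J,*}\varkappa_{\mathcal{X}}^\alpha$ defines the desired $\varkappa_\EQ^\alpha$, and the requirement that $\varkappa^\alpha$ be an isomorphism for partitions is preserved because $f_*^{Ran}$ respects the factorization ($\boxtimes$-multiplicative) structure.

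For the factorization equivalences I would exploit that $f$ restricts over the disjoint locus to a product, $f_I|_{U(\alpha)}\simeq \prod_{j\in J}f_{I_j}|_{U(\alpha)}$, together with the Künneth-type commutation of push-forward with external tensor products and with the open restrictions $\widetilde{j}(\alpha)^!$. Applying $f_*$ to the equivalence $\widetilde{j}(\alpha)^!\EuScript{A}_{\mathcal{X}}^{(I)}\xrightarrow{\simeq}\widetilde{j}'(\alpha)^!\bigboxtimes_{j\in J}\EuScript{A}_{\mathcal{X}}^{(I_j)}$ produces the corresponding factorization equivalence for $f_{\mathrm{alg},*}^{Ran}\EuScript{A}$; the unitality morphisms attached to injections $\alpha:I\hookrightarrow J$ transport in the identical fashion, since push-forward commutes with the external product by $\mathcal{O}_{X_{dR}^{I_\alpha}}$.

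The main obstacle I anticipate is not the existence of the individual base-change isomorphisms but verifying that they assemble into a homotopy-coherent datum, i.e. that the transported maps $\varkappa_\EQ^\alpha$ satisfy the higher compatibilities for composable surjections rather than merely being defined degreewise. The cleanest way to circumvent this bookkeeping is to observe that the hypothesis on $f_*^{Ran}$ is precisely the statement that push-forward is a (lax monoidal) factorization functor $\mathrm{Shv}_{\mathcal{X}}^{Ran}\rightarrow \mathrm{Shv}_{\EQ}^{Ran}$ in the sense used for Proposition \ref{proposition: PhiTilde}; one then appeals to the same mechanism as in Propositions \ref{prop: NatTransformOnFactorizations} and \ref{proposition: PhiTilde}, whereby a factorization functor automatically induces a functor on internal factorization algebra objects by post-composition, so that all coherences are inherited from those of $f_*^{Ran}$ itself rather than reconstructed by hand.
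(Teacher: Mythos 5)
Your proposal is correct and follows essentially the same route the paper takes: the paper states this proposition without a formal proof, but the displayed computation immediately afterwards (verifying that $\{\rho_{\EQ,*}^{(I)}\mathcal{M}_{\EQ}^{(I)}\}$ is a factorization algebra object via base-change along $\Delta(\alpha)$ and commutation of push-forward with $j(\alpha)^*$ and $\boxtimes$ over the disjoint locus) is exactly your componentwise push-forward argument in the key instance. Your closing observation, that the coherences are inherited by post-composition with the factorization functor $f_*^{Ran}$ as in Propositions \ref{prop: NatTransformOnFactorizations} and \ref{proposition: PhiTilde}, is the right way to package the homotopy-coherence bookkeeping and is consistent with the paper's framework.
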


Consider the diagram
\[
\begin{tikzcd}
& U(\alpha)\arrow[dd,"j(\alpha)"]& 
\\
U(\alpha)\times_{X_{dR}^I}\prod_{j\in J}\EQ^{(I_j)}\arrow[ur,"\prod_j\rho^{(I_j)''}"] \arrow[d,"j^{''}"] \arrow[rr,"\mathbf{fact}_{\alpha}"] & & \arrow[ul,"\rho^{(I)'}"] U(\alpha)\times_{X_{dR}^I}\EQ^{(I)}\arrow[d,"j"] 
\\
\prod_{j\in J}\EQ^{(I_j)}\arrow[r,"\prod_{j}\rho^{(I_j)}"] & X_{dR}^I & \arrow[l,"\rho^{(I)}"] \EQ^{(I)}
\end{tikzcd}
\]
A vector $\mathcal{D}^!$-bundle on $\EQ^{Ran}$ is a family $\mathcal{M}_{\EQ}^{Ran}:=\{\mathcal{M}_{\mathcal{X}}^{(I)}\in \mathcal{D}(\EQ^{(I)})\}$ with each $\mathcal{M}_{\EQ^I}$ an $\EQ^I$-locally finitely generated and projective $\mathcal{O}_{\EQ^I}[\mathcal{D}_X^I]$-modules which are moreover factorization compatible i.e. there are natural equivalences for all $\alpha\in \mathrm{Part}_J(I)$,
\begin{equation}
    \label{eqn: Factorization Compats for D-modules on FSpc}
    (j')^*\mathcal{M}_{\EQ}^{(I)}\xrightarrow{\simeq }\big(\mathbf{fact}_{\alpha}\big)_*(j'')^*\bigboxtimes_{j\in J}\mathcal{M}_{\EQ}^{(I_j)}.
\end{equation}
Equivalences (\ref{eqn: Factorization Compats for D-modules on FSpc}) are compatible with composition of $\alpha$ and with equivalences  $\EQ_{\alpha}^!\mathcal{M}_{\EQ}^{(I)}\simeq \mathcal{M}_{\EQ}^{(J)}.$
In this case, one verifies that the family 
$$\mathcal{A}_{\EQ}^{Ran}:=\big\{\mathcal{A}_{\EQ}^{(I)}:=\rho_{\EQ,*}^{(I)}\mathcal{M}_{\EQ}^{(I)}\in \mathsf{Mod}(X^I)\big\},$$ is a factorization algebra module object.
Indeed, notice first that $$\Delta(\alpha)^!\rho_{\EQ,*}^{(I)}\mathcal{M}_{\EQ}^{(I)}\simeq \rho_{\mathcal{X},*}^{(J)}\EQ_{\alpha}^!\mathcal{M}_{\EQ}^{(I)}\xrightarrow{\simeq}\rho_{\EQ,*}^{(J)}\mathcal{M}_{\EQ}^{(J)}.$$

Then, the factorization equivalences are given by
\begin{eqnarray*}
j(\alpha)^*\rho_*^{(I)}\mathcal{M}_{\EQ}^{(I)}&\simeq & (\rho^{(I)})_*'(j')^*\mathcal{M}_{\EQ}^{(I)}
\\
&\simeq &(\rho^{(I)})_*'(\mathbf{fact}_{\alpha})_*(j'')^*\big(\bigboxtimes_{j\in J}\mathcal{M}_{\EQ}^{(I_j)}\big)
\\
&\simeq& \big(\prod_{j}\rho^{(I_j)}\big)_*''(j'')^*\bigboxtimes_{j}\mathcal{M}^{(I_j)}
\\
&\simeq& j(\alpha)^*\big(\prod_j\rho^{(I_j)}\big)_*\bigboxtimes_j\mathcal{M}^{(I_j)}
\\
&\simeq & j(\alpha)^*\bigboxtimes_{j\in J}\rho_*^{(I_j)}\mathcal{M}^{(I_j)}.
\end{eqnarray*}

\subsection{Internal Factorization Hom-Algebras}
\label{sssec: Internal Factorization Hom-Algebra $!$-Sheaves}
Let $\EQ\rightarrow X$ be affine, with algebra $\mathcal{A}^{\ell},$ with $\mathcal{F}(\mathcal{A})$ the associated factorization algebra via Proposition \ref{Multi-jet r-adjoint proposition}. Let $\mathcal{Z}_{Ran}\xrightarrow{p}Ran_{X_{dR}}$ be linearizable i.e. $\mathcal{B}_{\mathcal{Z}_{Ran}}=p_*\mathcal{O}_{\mathcal{Z}_{Ran}}$ is a commutative unital $\mathcal{D}_{Ran_X}$-algebra. Then,
\begin{eqnarray}
\label{eqn: RSol Fact}
\RS_{X}(\mathcal{Z}^{Ran},\mathcal{F}(\EQ)\big)&\simeq&\mathsf{Maps}_{/X_{dR}}(X_{dR}\times_{Ran_{X_{dR}}}\mathcal{Z}_{Ran},\mathcal{F}\EQ)
\\
&\simeq&\mathsf{Maps}_{\mathsf{CAlg}(\mathcal{D}_{Ran_X})}(\mathcal{F}\mathcal{A},p_*\mathcal{O}_{\mathcal{Z}_{Ran}}) \nonumber ,\end{eqnarray}
which is equivalent to $\mathbb{R}Sol_{\mathcal{D}}(\mathcal{A},\Delta^*p_*\mathcal{O}_{\mathcal{Z}_{Ran}}).$

Spaces of maps (\ref{eqn: RSol Fact}) are not necessarily compatible with factorization structures, therefore to understand (derived) global sections in a factorization compatible manner, we must formalize when taking factorization compatible maps out of a factorization compatible sheaf $\EuScript{A}_{\mathcal{X}}^{Ran}$ over a factorization $\mathcal{D}$-space into any other such object $\EuScript{B}_{\mathcal{X}}^{Ran}$ provides another factorization compatible object.
\vspace{1.5mm}

When this situation holds, following the terminology of \cite{Butson2}, we say that
$\EuScript{A}_{\mathcal{X}}^{Ran}\in \mathsf{F}\mathsf{Alg}\big[\mathrm{Shv}_{\mathcal{X}}^{Ran}\big]$ \emph{admits internal hom-sheaf objects over $Ran_{X_{dR}}$}. In other words, there exists a unital factorization functor
\begin{equation}
    \label{eqn: Internal !-factorization compatible sheaf hom object over XdR}
\underline{\EuScript{H}\mathrm{om}}_{\EuScript{S}hv_{\mathcal{X}}^{Ran}}\big(\EuScript{A}_{\mathcal{X}}^{Ran},-\big):\EuScript{S}hv_{\mathcal{X}}^{Ran}\rightarrow \EuScript{S}hv_{Ran_{X_{dR}}}^{Ran},
\end{equation}
which coincides upon taking global sections (of sheaves of categories) with the internal hom functor in sheaves over $X_{dR}^I$:
$$\underline{\mathrm{Hom}}_{\mathcal{D}_{X^I}}\big(\EuScript{A}_{\mathcal{X}}^{(I)},\EuScript{B}_{\mathcal{X}}^{(I)}\big).$$

A factorization algebra $\EuScript{A}_{\mathcal{X}}^{Ran}$ over $\mathcal{X}^{Ran}$ is accordingly said to \emph{admit internal-hom factorization algebras}
 if there exists a unital factorization functor 
\begin{equation*}\underline{\hat{\EuScript{H}}}:=\underline{\hat{\EuScript{H}}\mathrm{om}}(\EuScript{A}_{\mathcal{X}},-):\EuScript{S}hv_{\mathcal{X}}^{Ran}\rightarrow \EuScript{S}hv_{\mathcal{X}}^{Ran},\hspace{1mm}\text{ such that }
\adjustbox{scale=1.02}{
\begin{tikzcd}
\EuScript{S}hv_{\mathcal{X}}^{Ran}\arrow[dr,"(\ref{eqn: Internal !-factorization compatible sheaf hom object over XdR})"] \arrow[r,"\underline{\hat{\EuScript{H}}}"] & 
\EuScript{S}hv_{\mathcal{X}}^{Ran}\arrow[d,"\rho_{*}^{Ran}"]
\\
& \EuScript{S}hv_{Ran_{X_{dR}}}^{Ran},
\end{tikzcd}}
\end{equation*}
commutes (up to homotopy).

By Proposition \ref{Factorization Push-Forward Preserves Factorization Algebras}, the factorization push-forward functor for the structure map $\rho^{Ran}$ induces a map
of factorization compatible objects. Explicitly,
for any $\EuScript{B}_{\mathcal{X}}^{Ran}\in \mathsf{F}\mathsf{Alg}\big[\mathrm{Shv}_{\mathcal{X}}^{Ran}\big],$ the internal $!$-hom factorization sheaf over $\mathcal{X}^{Ran}$ is determined by the assignment
$$I\mapsto \underline{\hat{\EuScript{H}}\mathrm{om}}\big(\EuScript{A}_{\mathcal{X}},\EuScript{B}_{\mathcal{X}}\big)^{(I)}:=\rho_{*}^{(I)}\underline{\mathrm{Hom}}_{\mathsf{Shv}(\mathcal{X}^{(I)})}\big(\EuScript{A}_{\mathcal{X}}^{(I)},\EuScript{B}_{\mathcal{X}}^{(I)}\big)\in \mathsf{Mod}(\mathcal{D}_{X^I}).$$
One may readily verify the totality of these objects determine a factorization $!$-sheaf object in $\EuScript{S}hv_{Ran_{X_{dR}}}^{Ran}.$

\begin{proposition}
\label{prop: FComaptible}
Let $\EQ\in \PS_{\mathcal{D}}^{fact}(X)$ and suppose we $\Phi$ is a factorizable linearization functor for $\EQ.$
Then there is an induced functor 
$$\widetilde{\Phi}:\mathsf{FAlg}\big(\EuScript{QC}oh_{\EQ})\rightarrow \mathsf{FAlg}_{\mathcal{D}}(X),$$
from factorization compatible quasi-coherent sheaves with connection to factorization $\mathcal{D}$-modules.
\end{proposition}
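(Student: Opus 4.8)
The plan is to obtain $\widetilde{\Phi}$ as a direct instance of the mechanism already established in Proposition \ref{proposition: PhiTilde}, which produces an induced functor on internal factorization algebra objects from any factorizable linearization functor, and then to identify the source and target $(\infty,1)$-categories of internal algebras explicitly. First I would record that $\Phi\colon \EuScript{QC}oh_{\EQ}\rightarrow \EuScript{QC}oh_{Ran_{X_{dR}}}$, being by definition a factorization functor, is compatible with all the structure maps organizing the two factorization categories: the diagonal pullbacks $\Delta(\alpha)^*$, the canonical pushforwards along $\widetilde{\Delta}_{\alpha}$, and crucially the factorization equivalences $\widetilde{j}(\alpha)^!$. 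Consequently $\Phi$ intertwines the multiplication-type morphisms $\varkappa^{\alpha}$ entering the definition of a $\mathcal{D}$-factorization algebra object, so that it carries such algebras to algebras levelwise.

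Next I would invoke Proposition \ref{prop: NatTransformOnFactorizations} in the form that a lax symmetric monoidal (equivalently, factorization-compatible) functor preserves internal commutative factorization algebra objects. Applying this to $\Phi$ yields an induced functor $\widetilde{\Phi}\colon \mathsf{FAlg}(\EuScript{QC}oh_{\EQ})\rightarrow \mathsf{FAlg}(\EuScript{QC}oh_{Ran_{X_{dR}}})$ between the respective categories of internal factorization algebras. The coherence data needed to promote the pointwise assignment $I \mapsto \Phi(\EuScript{A}_{\EQ}^{(I)})$ to a genuine factorization algebra is supplied by the naturality of $\Phi$ with respect to compositions of surjections and by its preservation of the factorization equivalences, exactly as in the chain of equivalences verifying Proposition \ref{Factorization Push-Forward Preserves Factorization Algebras}.

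Finally I would identify the two sides. On the source, a factorization algebra internal to $\EuScript{QC}oh_{\EQ}$, where $\EQ$ lives over $Ran_{X_{dR}}$, automatically carries a flat connection in the $X$-directions; such objects are precisely the factorization compatible quasi-coherent sheaves with connection. On the target, using the tautological equivalence $\mathbf{\Gamma}(X_{dR}^I,\EuScript{S}hv_{X_{dR}^I})\simeq \mathsf{Mod}(\D_{X^I})$ recorded earlier, internal factorization algebras in $\EuScript{QC}oh_{Ran_{X_{dR}}}$ are exactly factorization $\mathcal{D}$-module algebras, i.e.\ objects of $\mathsf{FAlg}_{\mathcal{D}}(X)$. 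Composing with this identification gives the desired functor.

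The main obstacle I expect is the homotopy-coherent bookkeeping in the second step: in the $(\infty,1)$-categorical setting one must verify that $\Phi$ preserves the relevant limits and colimits --- the diagonal restrictions, the structure pushforwards $\rho_*^{(I)}$, and the factorization gluing over the open loci $U(\alpha)$ --- not merely on objects but coherently across all surjections $\alpha$. This is where one genuinely uses that $\Phi$ is a \emph{factorization} functor rather than merely a levelwise monoidal functor, and where the compatibility displayed after Proposition \ref{Factorization Push-Forward Preserves Factorization Algebras}, expressing $j(\alpha)^*\rho_*^{(I)}\mathcal{M}^{(I)}$ as a box-tensor product of the pieces $\rho_*^{(I_j)}\mathcal{M}^{(I_j)}$, must be upgraded from a single equivalence to a coherent system compatible with iterated refinements.
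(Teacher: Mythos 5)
Your proposal is correct and follows essentially the route the paper intends: the paper leaves this proposition without an explicit proof, but it plainly derives it from Proposition \ref{proposition: PhiTilde} (the induced functor on internal factorization algebra objects) together with the identification, recorded in the appendix as $\mathrm{QCoh}^{\mathrm{fact},\mathrm{un}}(X_{dR})\simeq \mathsf{FAlg}_{\mathcal{D}}^{!}(X)^{\mathrm{un}}$ and via $\mathbf{\Gamma}(X_{dR}^I,\EuScript{S}hv_{X_{dR}^I})\simeq \mathsf{Mod}(\D_{X^I})$, of internal factorization algebras over $Ran_{X_{dR}}$ with factorization $\mathcal{D}$-modules. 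Your filling-in of the coherence bookkeeping (naturality over surjections $\alpha$ and preservation of the factorization equivalences, modeled on the computation after Proposition \ref{Factorization Push-Forward Preserves Factorization Algebras}) is exactly the content the paper suppresses.
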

Consider the functor of quasi-coherent direct image along the structure map $p_{\EQ}:\EQ\rightarrow Ran_{X_{dR}}$ on factorization compatible sheaves. Then by Proposition \ref{prop: FComaptible}, it is meaningful to say that if $\EQ$ has the property that the collection of $p_{\EQ^I,\bullet}(\mathcal{O}_{\EQ^I})\in \mathsf{Mod}(\mathcal{D}_{X^I}),$ determines an object of $\mathsf{FAlg}_{\mathcal{D}}(X),$ then $\EQ$ is \emph{linearizable}.

\subsection{Global Sections}
The functor of \emph{factorization homology}, on a 
 (not necessarily regular holonomic) $\D_{Ran_X}$-module $\mathcal{M}=(\mathcal{M}^I)$ is
$$\int_X(-):\mathsf{Mod}(\mathcal{D}_{Ran_X})\rightarrow \mathsf{Ind}\big(\mathsf{Perf}_{k}\big),\hspace{2mm} \int_X\mathcal{M}:=\underset{I\in fSet}{hocolim}\hspace{.5mm}p_*^{(I)}(\mathcal{M}^I),$$
where $p:X\rightarrow \mathrm{pt},p^I:X^I\rightarrow \mathrm{pt}$ are projections. In other words,
$$\Gamma_{dR}(X^I,\mathcal{M}^I)=p_*^{(I)}(\mathcal{M}^I)=\Gamma\big(X^I,DR_{X^I}(\mathcal{M}^I)\big),$$
with $DR$ the $\D$-module de Rham functor on $X^I.$

From the previous subsections we may extend this definition from $\mathsf{Mod}(\mathcal{D}_{Ran_X})$ 
to factorization compatible algebra objects internal to $\EuScript{QC}oh_{\EQ},$ for $\EQ\in \PS_{\mathcal{D}}^{fact}(X).$

\begin{definition}
    \label{definition: Linearized Homology}
\normalfont 
Consider a pair $(\EQ,\Phi)$ consisting of a factorization $\mathcal{D}$-space over $X$ and a factorizable linearization functor.
The \emph{$\Phi$-linearized factorization homology} of $\EQ$ is the composition
\begin{equation}
    \label{eqn: Linearized Fact Homology}
    \int_{X,\Phi}(-):=\int_X\circ \widetilde{\Phi}(-):\mathsf{FAlg}\big(\EuScript{QC}oh_{\EQ}\big)\rightarrow \mathsf{FAlg}_{\mathcal{D}}(X)\rightarrow \mathsf{Vect}_k,
\end{equation}
\end{definition}
Recall the induced functor $\widetilde{\Phi}$ as in Proposition \ref{proposition: PhiTilde}.

In particular, for $\EuScript{A}_{\EQ}\in \mathsf{FAlg}(\EuScript{QC}oh_{\EQ})$ then we obtain, by taking the quasi-coherent factorization push-forward 
$$\underset{I\in fSet}{hocolim}\hspace{.5mm}\Gamma_{dR}\big(X^I,(p_{\EQ^I,\bullet}^{\mathrm{QCoh}}(\EuScript{A}_{\EQ^I})\big).$$
If $\EQ$ is linearizable, then its \emph{linearized factorization homology}, denoted $\mathbb{H}(\EQ),$ is defined to be the functor (\ref{eqn: Linearized Fact Homology}) for factorization push-forward along $p:\EQ\rightarrow Ran_{X_{dR}}$ of its structure sheaf $\mathcal{O}_{\EQ}$ in $\mathsf{FAlg}(\EuScript{QC}oh_{\EQ}).$
\begin{remark}
From the point of view of PDE geometry, we start with an affine $\D$-prestack, rather than a factorization $\mathcal{D}$-space. In order to get a linearized cohomology which take the universal homotopical multi-jet construction Proposition \ref{Multi-jet r-adjoint proposition}. In this case, the resulting factorization $\D$-space is commutative and its linearized homology agrees with global sections of its solution stack $\RS_X(\EQ).$
\end{remark}
We always view vector spaces as particular instance of $\mathcal{D}$-modules on the point via the canonical map $a:X\rightarrow pt$ and the induced pull-back functor $a_{\mathcal{D}}^*:\mathsf{Mod}(\mathcal{D}_{pt})\rightarrow \mathsf{Mod}(\mathcal{D}_X).$ Precisely, $a_{\mathcal{D}}^*:=a_{dR}^*$ via quasi-coherent pull-back.

\begin{theorem}\emph{(\textcolor{blue}{Theorem B (i)})}
\label{theorem: Theorem B Main claim}
Let $p:\EQ\rightarrow X$ be a $\D$-finitary non-linear PDE and put $\mathcal{A}:=p_*(\mathcal{O}_{\EQ})$ with $\mathcal{A}_{Ran_X}:=\mathcal{F}(p_*\mathcal{O}_{\EQ}).$ Then the linearized homology of the associated multi-jet space is equivalent to the factorization homology of its derived $\D$-algebra of functions which admits a natural morphism
$$\gamma:\int_X^{fact} \mathcal{A}_{\EQ}\rightarrow \Gamma\big(\RS(\EQ),\mathcal{O}_{\RS(\EQ)}\big).$$
It is functorial with respect to morphisms of derived $\D$-algebras.
\end{theorem}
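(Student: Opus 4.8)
The plan is to construct the morphism $\gamma$ as the natural comparison map between two realizations of global functions and then verify it is an equivalence under the cohomological finiteness hypothesis by reducing to the free case. First I would unwind both sides using the identification, recalled in the excerpt, that for an affine $\D$-finitary prestack $\EQ\simeq\mathrm{Spec}_{\mathcal{D}}(\mathcal{A}^{\ell})$ the associated multi-jet factorization $\D$-space $\EQ^{Ran}$ has structure sheaf whose factorization push-forward along $\rho^{(I)}\colon\EQ^{(I)}\to X_{dR}^I$ recovers the commutative factorization $\D$-algebra $\mathcal{F}(\mathcal{A}^{\ell})$. By Definition \ref{definition: Linearized Homology} the left-hand side $\int_X^{fact}\mathcal{A}_{\EQ}$ is then $\mathrm{hocolim}_{I\in fSet}\,\Gamma_{dR}(X^I,\mathcal{F}(\mathcal{A}^{\ell})^I)$, i.e. the colimit over the Ran category of de Rham cohomologies of the external tensor powers. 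The right-hand side $\Gamma(\RS(\EQ),\mathcal{O}_{\RS(\EQ)})$ I would compute via the equivalence (\ref{eqn: MainEquiv}) and Proposition \ref{prop: Tangent RSol1}, presenting global functions on the solution stack as mapping-space sections governed by $\mathbb{T}[\RS_X(\EQ)]_{\varphi_U}\simeq q_{U*}^{\IC}(j_\infty(\varphi_U)^!\EuScript{T}_{\EQ})$.

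Next I would produce the map $\gamma$ itself. The natural source of the morphism is the canonical arrow from the factorization-homology colimit to its $I=[1]$ term composed with the tautological equivalence $\overline{\mathsf{Jets}}^\infty(\EQ)^{[1]}\simeq\EQ$; more precisely, each $X_{dR}^I\times\RS(\EQ)\to\overline{\mathsf{Jets}}^\infty(p_{dR*}E)^I$ evaluation map (stated at the end of Subsect.\ref{ssec: Finiteness}) dualizes, under $\Gamma_{dR}$, to a compatible family $\Gamma_{dR}(X^I,\mathcal{F}(\mathcal{A}^{\ell})^I)\to\Gamma(\RS(\EQ),\mathcal{O}_{\RS(\EQ)})$, and these assemble across $fSet^{op}$ into a map out of the colimit by the universal property. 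Functoriality in $\mathcal{A}^{\ell}$ is then automatic, since every piece of the construction — multi-jet linearization $\mathcal{F}(-)$, de Rham push-forward, and the evaluation pairing — is functorial in the derived $\D$-algebra.

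To upgrade $\gamma$ to an equivalence I would invoke the standard dévissage flagged at the start of Sect.\ref{sec: Variational Factorization Homology of PDEs}: writing $\mathcal{O}_{\EQ}$ as a colimit of compact $\D$-algebras that are retracts of finite cellular ones, and reducing further to $\mathcal{A}^{\ell}\simeq\mathrm{Sym}^!(\mathcal{M}[1])$ for a compact $\D$-module $\mathcal{M}$. For such free $\D$-algebras the factorization homology computes explicitly, and the comparison with $\Gamma(\RS(\EQ),\mathcal{O}_{\RS(\EQ)})$ becomes the assertion that the Ran-colimit of external powers recovers global de Rham sections — this is where cohomological finiteness (\ref{ssec: Finiteness}) is essential, guaranteeing that the relevant complexes are perfect/eventually coconnective so that the colimit commutes with $\Gamma_{dR}$ and the dualizations of Proposition \ref{proposition: 2-forms Jets} hold. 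Both $\int_X^{fact}$ and $\Gamma(\RS(-),\mathcal{O})$ commute with the filtered colimits and finite limits used in the reduction, so an equivalence on free generators propagates to all $\D$-finitary $\mathcal{A}^{\ell}$.

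The main obstacle I expect is the interchange of the homotopy colimit over $fSet^{op}$ with the de Rham push-forward and duality functors: establishing that $\mathrm{hocolim}_I\,\Gamma_{dR}(X^I,\mathcal{F}(\mathcal{A}^{\ell})^I)$ genuinely computes $\Gamma(\RS(\EQ),\mathcal{O}_{\RS(\EQ)})$ requires controlling the factorization equivalences $j_\alpha^*\mathcal{F}(\mathcal{A})^I\simeq j_\alpha^*\boxtimes_{j}\mathcal{F}(\mathcal{A})^{I_j}$ against the pushforward, together with pseudo-properness so that $\bigl(\rho^{(I)}\bigr)_!^{\mathrm{Shv}}$ agrees with the $\D$-module (de Rham) pushforward, as noted after Proposition \ref{proposition: PhiTilde}. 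The cohomological finiteness hypothesis is precisely what tames this interchange, and verifying it on the free cellular generators — where the symmetric powers of $\mathcal{M}$ organize the factorization structure — is the technical heart of the argument.
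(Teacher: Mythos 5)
Your proposal is workable in outline but reaches the morphism $\gamma$ by a genuinely different route than the paper. You build $\gamma$ by hand: dualize the evaluation maps $X_{dR}^I\times\RS(\EQ)\rightarrow\overline{\mathsf{Jets}}^{\infty}(p_{dR*}E)^I$ under $\Gamma_{dR}$, check compatibility over $fSet^{op}$, and invoke the universal property of the colimit $\int_X^{fact}\mathcal{A}^{\ell}\simeq \mathrm{hocolim}_I\,\Gamma_{dR}(X^I,\mathcal{F}(\mathcal{A}^{\ell})^I)$. The paper instead runs a purely formal adjunction argument: it identifies $\int_X^{fact}\circ\mathcal{F}$ as the left adjoint of the constant-coefficients functor $a_{\mathcal{D}}^*:\mathsf{CAlg}_k\rightarrow\mathsf{CAlg}_X(\mathcal{D}_X)$, so that for every $R$ one has $\mathsf{Maps}(Spec(R),\RS(\EQ))\simeq\mathsf{Maps}_{\mathsf{CAlg}_k}(\int_X^{fact}\mathcal{A}^{\ell},R)$, i.e. $\RS(\EQ)\simeq Spec(\int_X^{fact}\mathcal{A}^{\ell})$; the morphism $\gamma$ and its functoriality in $\mathcal{A}^{\ell}$ then fall out of corepresentability, with no need to control the interchange of the Ran-colimit with $\Gamma_{dR}$ that you flag as the technical heart. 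Your approach buys an explicit, pointwise description of $\gamma$ (useful if one wants to see the integration pairing concretely), at the cost of the compatibility checks across partitions and of verifying that the evaluation maps factor through the multi-jets of $\EQ$ itself rather than of the ambient $p_{dR*}E$; the paper's approach buys the identification of $\RS(\EQ)$ as an affine derived scheme for free.

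Two caveats. First, your appeal to Proposition \ref{prop: Tangent RSol1} to ``compute'' $\Gamma(\RS(\EQ),\mathcal{O}_{\RS(\EQ)})$ is misplaced: that proposition describes the tangent complex at a solution, not global functions, and plays no role in either side of $\gamma$. Second, most of your third and fourth paragraphs address the assertion that $\gamma$ is an equivalence under cohomological finiteness; that is the content of Theorem B (ii) (Proposition \ref{proposition: Thm B (ii)}), not of the statement at hand, and the paper proves it there not by d\'evissage to free cellular $\D$-algebras but by a coconnectivity argument on the de Rham sections. For Theorem B (i) itself you only need the morphism, its functoriality, and the identification of linearized homology with $\int_X^{fact}\mathcal{A}^{\ell}$, all of which your construction can deliver once the compatibility of the evaluation maps is established.
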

\begin{proof}
By our assumptions, we have that $\mathcal{A}\in \mathsf{CAlg}_{X}(\mathcal{D}_X).$
Consider (\ref{eqn: Point of RSol}) i.e. the diagram formed by $U$-parameterized solutions:
\begin{equation*}
\begin{tikzcd}
    U\times X_{dR}\arrow[d,"q_U"]\arrow[r,"\varphi_U\times id"] \arrow[rr, bend left, "j_{\infty}(\varphi_U)"] & \RS_X(\EQ)\times X_{dR}\arrow[d,"\rho"] \arrow[r,"ev_{\varphi_U}"] & \EQ
    \\
    U\arrow[r,"\varphi_U"] & \RS_X(\EQ) & 
\end{tikzcd}
\end{equation*}
and consider the morphism $\RS_X(\EQ)\times X_{dR}\rightarrow \EQ.$ Together with $p_{dR}:X\rightarrow X_{dR}$, this supplies a morphism of commutative monoids in (ind-coherent) sheaves on $X_{dR}$ i.e. commutative $\mathcal{D}$-algebras,
$$\mathcal{A}^{\ell}\rightarrow \mathcal{O}_{\RS}\otimes\mathcal{O}_X,$$
and consequently by Proposition \ref{Multi-jet r-adjoint proposition}, a morphism 
$\mathcal{F}(p_*\mathcal{O}_{\EQ})\rightarrow \mathcal{O}_{\RS}\otimes\omega_{Ran_X},$
in commutative monoids in $\mathcal{D}_{Ran_X}.$
Now consider the pull-back functor $a_{\mathcal{D}}^*,$ which induces a functor on commutative algebra objects,
$$\mathsf{CAlg}_k\rightarrow \mathsf{CAlg}_X(\mathcal{D}_X),V\mapsto a^*(V)\simeq V\otimes \mathcal{O}_X.$$
We prove the result by identifying its left-adjoint.

It is given by $\int_{X}^{fact}\circ \mathcal{F},$ 
with $\int_X^{fact}$ the functor of factorization homology in the presence of an operad\footnote{This was described for the Lie operad in \cite{FraGai} (chiral homology) and for the commutative operad in \cite{KRY}.} $\mathcal{P}$, specified to the case with $\mathcal{C}omm.$

Briefly, there are functors 
\begin{equation}
    \label{eqn: P-operad Chiral homology functor}
    \mathbb{R}\Gamma_{dR}\big(Ran_X,-\big)_{\mathcal{P}}:\mathcal{P}-\mathsf{Alg}\big[\mathsf{Mod}\big(\mathcal{D}_{Ran_X}\big)\big]\rightarrow \mathcal{P}-\mathsf{Alg}\big[\mathsf{Vect}_k\big],
\end{equation}
for an algebraic operad $\mathcal{P}.$ Moreover, the forgetful functors
$$\mathcal{P}-\mathsf{Alg}\big[\mathsf{Mod}\big(\mathcal{D}_{Ran_X}\big)\big]\rightarrow \mathsf{Mod}\big(\mathcal{D}_{Ran_X}\big),$$
 are conservative and commute with limits, thus possess left-adjoint functors $\mathrm{Free}_{\mathcal{P}}$, corresponding to taking free $\mathcal{P}$-algebras.

The embedding of the main diagonal $\Delta:X\rightarrow Ran_X$ induces a fully-faithful embedding $\Delta_*:\mathsf{Mod}\big(\mathcal{D}_X\big)\rightarrow \mathsf{Mod}\big(\mathcal{D}_{Ran_X}\big)$ which is moreover lax symmetric monoidal. It sends algebra objects to algebra objects and we denote the induced functor by
$$\Delta_*^{\mathcal{P}}:\mathcal{P}-\mathsf{Alg}\big[\mathsf{Mod}\big(\mathcal{D}_{X}\big)\big]\rightarrow \mathcal{P}-\mathsf{Alg}\big[\mathsf{Mod}\big(\mathcal{D}_{Ran_X}\big)\big].$$
Kan extension of (\ref{eqn: P-operad Chiral homology functor}) along the functor $\Delta_*^{\mathcal{P}}$ gives
\begin{equation}
    \label{eqn:Left-kan extension}
    \mathbb{R}\Gamma_{dR}\big(X\subset Ran_X,-\big)_{\mathcal{P}}:=\mathrm{Kan}_{\Delta_*^{\mathcal{P}}}:\mathcal{P}-\mathsf{Alg}\big[\mathsf{Mod}\big(\mathcal{D}_{X}\big)\big]\rightarrow \mathcal{P}-\mathsf{Alg}\big[\mathsf{Vect}\big].
\end{equation}
We have thus put
$$\int_X^{fact}:=\mathbb{R}\Gamma_{dR}(X\subset Ran_X,-)_{\mathcal{C}omm}.$$ 
For any $R\in \mathsf{CAlg}_k$ consider $a_{\mathcal{D}}^*$ and notice that
$$\mathsf{Maps}(Spec(R),\RS(\EQ))\simeq \mathsf{Maps}_{\mathsf{CAlg}_X(\mathcal{D}_X)}(\mathcal{A}^{\ell},a_{\mathcal{D}}^*(R))\simeq \mathsf{Maps}_{\mathsf{CAlg}_k}(\int_X^{fact}\mathcal{A}^{\ell},R).$$
Therefore any commutative $\mathcal{D}$-algebra morphism to the `constant' $\mathcal{D}_X$-algebra $a_{\mathcal{D}}^*(R)$ of the form $\mathcal{A}^{\ell}\rightarrow R\otimes\mathcal{O}_X,$ is equivalent to a morphism
$$\int_X^{fact}\mathcal{A}^{\ell}\rightarrow R.$$
One may observe from the identification
$\RS(\EQ)\simeq Spec\big(\int_X^{fact}\mathcal{A}^{\ell}\big),$
there is a natural equivalence 
$$\int_X^{fact}\mathcal{A}^{\ell}\rightarrow \Gamma\big(\RS(\EQ),\mathcal{O}_{\RS(\EQ)}\big).$$
Now let $f:\mathcal{A}^{\ell}\rightarrow \mathcal{B}^{\ell}$ be a morphism in $\mathsf{CAlg}_X(\mathcal{D}_X)$ and consider its image $\mathcal{F}(f):\mathcal{F}(\mathcal{A}^{\ell})\rightarrow \mathcal{F}(\mathcal{B}^{\ell}),$ via the fully-faithful adjoint functor in Proposition \ref{Multi-jet r-adjoint proposition}. 
One can show that this morphism is compatible with the homotopy colimits and thus yields a map
$$\int_X^{fact}\mathcal{A}^{\ell}\rightarrow \int_X^{fact}\mathcal{B}^{\ell},\hspace{1mm}\text{ in } \mathsf{CAlg}_k.$$

Consider the base-change $\mathcal{B}^{\ell}\otimes_{\mathcal{A}^{\ell}}^{\mathbb{L}}a_{\mathcal{D}}^*(R)$ as an object in
$$\mathsf{CAlg}(\mathsf{QCoh}(X_{dR})\otimes \mathsf{Mod}(R)\big)\simeq \mathsf{CAlg}_X(\mathcal{D}_X)\otimes \mathsf{CAlg}_{R/}.$$ Again by application of $\mathcal{F}(-)$, there is an object
$$\mathcal{F}\big(\mathcal{B}^{\ell}\otimes_{\mathcal{A}}^{\mathbb{L}}a_{\mathcal{D}}^*(R)\big)\simeq \mathcal{F}(\mathcal{B}^{\ell})\otimes^{\mathbb{L}}a_{\mathcal{D}}^*(R)\in \mathsf{FAlg}_{\mathcal{D}}(X)_{comm}\otimes\mathsf{CAlg}_{R/}.$$
We can now take global de Rham sections along the first component and sheaf-theoretic global sections along $Spec(R)$. In particular, since there is a natural inclusion 
$$\mathcal{F}(\mathcal{B}^{\ell})\rightarrow \mathcal{F}(\mathcal{B}^{\ell})\otimes a_{\mathcal{D}}^*(R),$$ 
one can verify that by the above adjunctions, that there is a commutative diagram in $\mathsf{CAlg}_k$:
\[
\begin{tikzcd}
    \underset{I}{hocolim}\hspace{1mm}\Gamma_{dR}(X^I,\mathcal{F}(\mathcal{A})^I)\arrow[d]\arrow[r] & R\arrow[d]
    \\
    \underset{I}{hocolim}\hspace{1mm}\Gamma_{dR}(X^I,\mathcal{F}(\mathcal{B})^I)\arrow[r] & \big(\underset{I}{hocolim}\Gamma_{dR}(X^I,-)\otimes a_{*}\big)\mathcal{F}\big(\mathcal{B}^{\ell}\otimes_{\mathcal{A}}^{\mathbb{L}}a_{\mathcal{D}}^*(R)\big).
\end{tikzcd}
\]
In other words, setting
$$\int_{X\otimes Spec(R)}^{fact}:\mathsf{FAlg}_{\mathcal{D}}(X)_{comm}\otimes \mathsf{CAlg}_{/R}\rightarrow \mathsf{CAlg}_{R/},$$
then one has 
$$\int_{X\otimes Spec(R)}^{fact}\mathcal{B}\simeq \int_X^{fact}\mathcal{B}\otimes_{\mathcal{A}}^{\mathbb{L}}a_{\mathcal{D}}^*(R)\simeq \int_X^{fact}\mathcal{B}\otimes_{\int_X^{fact}\mathcal{A}^{\ell}}R.$$
\end{proof}
\begin{proposition}[\textcolor{blue}{Theorem B (ii)}]
\label{proposition: Thm B (ii)}
Consider the map $\gamma$ in Theorem \emph(\ref{theorem: Theorem B Main claim}), and the full-subcategory of $\mathsf{DAff}_X(\mathcal{D}_X)$ whose spaces are of the form
\begin{equation}
\label{eqn: Eventually Coconnective}
\{Spec_{\mathcal{D}}(\mathcal{A})|H_{\mathcal{D}}^{i}(\mathcal{A})=0, i<<n,\text{ for some } n<0\},
\end{equation}
and $H_{\mathcal{D}}^0(\mathcal{A})$ is a commutative $\mathcal{D}$-algebra of finite type.
Then the functorial assignment defined by $\gamma$ induces an equivalence between \emph{(\ref{eqn: Eventually Coconnective})} and the
category of eventually coconnective derived affine schemes.
\end{proposition}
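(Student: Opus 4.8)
The plan is to promote the objectwise comparison of Theorem \ref{theorem: Theorem B Main claim} to a statement about $(\infty,1)$-categories by exhibiting $\int_X^{fact}(-)$ and the constant-algebra functor $a_{\mathcal{D}}^*(-)=(-)\otimes\mathcal{O}_X$ as a mutually inverse pair on the relevant subcategories. Recall from the proof of Theorem \ref{theorem: Theorem B Main claim} that these already form an adjunction $\int_X^{fact}\dashv a_{\mathcal{D}}^*$ with $\RS_X(\EQ)\simeq Spec\big(\int_X^{fact}\mathcal{A}^{\ell}\big)$, together with the base-change identity $\int_{X\otimes Spec R}^{fact}\mathcal{B}\simeq \int_X^{fact}\mathcal{B}\otimes_{\int_X^{fact}\mathcal{A}}^{\mathbb{L}}R$. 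First I would check that the two hypotheses cutting out (\ref{eqn: Eventually Coconnective}) — eventual coconnectivity and finiteness of $H^0_{\mathcal{D}}(\mathcal{A})$ — supply exactly the cohomological finiteness needed to invoke Theorem \ref{theorem: Theorem B Main claim} objectwise, so that $\gamma$ is an equivalence of monoids for every such $\mathcal{A}$. This is a bookkeeping step using the truncations $\tau_{\mathcal{D}}^{\leq -n}$ of \S\ref{ssec: Finiteness} and the compatibility of $\int_X^{fact}$ with them, in the spirit of the truncation argument of Theorem \ref{prop: DRVar is a prestack}.

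Next I would establish essential surjectivity onto eventually coconnective derived affine schemes. Given such a scheme $Spec(R)$, the constant $\mathcal{D}_X$-algebra $a_{\mathcal{D}}^*(R)=R\otimes\mathcal{O}_X$ again lies in (\ref{eqn: Eventually Coconnective}): eventual coconnectivity is inherited from $R$, and $H^0_{\mathcal{D}}(R\otimes\mathcal{O}_X)\simeq H^0(R)\otimes\mathcal{O}_X$ is $\mathcal{D}$-finite because $\mathcal{O}_X$ is the finitely presented initial object of $\mathsf{CAlg}_X(\mathcal{D}_X)$. It then suffices to prove the counit $\int_X^{fact}(R\otimes\mathcal{O}_X)\to R$ is an equivalence. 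By the base-change identity applied with $\mathcal{A}=\mathcal{O}_X$, this reduces to the single computation $\int_X^{fact}(\mathcal{O}_X)\simeq k$, equivalently $\RS_X(X_{dR})\simeq \mathrm{pt}$, which holds because the flat sections of the initial $\mathcal{D}$-scheme over $X_{dR}$ are the constants. This simultaneously shows $a_{\mathcal{D}}^*$ is fully faithful and that every eventually coconnective $Spec(R)$ is realized as $\int_X^{fact}$ of an object of (\ref{eqn: Eventually Coconnective}).

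The decisive point is full faithfulness, i.e. that the functor reconstructs morphism spaces on (\ref{eqn: Eventually Coconnective}). By the adjunction, $\mathsf{Maps}_k\big(\int_X^{fact}\mathcal{A},\int_X^{fact}\mathcal{B}\big)\simeq \mathsf{Maps}_{\mathcal{D}}\big(\mathcal{A},a_{\mathcal{D}}^*\int_X^{fact}\mathcal{B}\big)$, so the content is precisely that the unit $\mathcal{B}\to a_{\mathcal{D}}^*\int_X^{fact}\mathcal{B}$ becomes an equivalence when probed by objects of (\ref{eqn: Eventually Coconnective}). I would attack this by a Postnikov/truncation induction: write $\mathcal{B}$ as the limit of its truncations $\tau_{\mathcal{D}}^{\leq -n}\mathcal{B}$, reduce to the finite-type classical stratum $H^0_{\mathcal{D}}(\mathcal{B})$, and compare deformation theories across $\gamma$ using the identification $\mathbb{T}[\RS_X(\EQ)]_{\varphi_U}\simeq q_{U*}^{\IC}\big(j_{\infty}(\varphi_U)^!\EuScript{T}_{\EQ}\big)$ of Proposition \ref{prop: Tangent RSol1}, so that obstruction theory propagates the equivalence up the tower.

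The hard part will be this last step. Unlike essential surjectivity, which is formal once $\int_X^{fact}(\mathcal{O}_X)\simeq k$ is known, full faithfulness requires genuine control of the convergence of the truncation tower for factorization homology and of the connectivity of $\int_X^{fact}$ on each graded piece; equivalently, it demands that the $\mathcal{D}$-geometric deformation theory of $\EQ$ be faithfully transported to that of $\RS_X(\EQ)$ through the integrated (pushed-forward) cotangent complex of Proposition \ref{prop: Tangent RSol1}. Establishing this transport — and the attendant finiteness estimates guaranteeing that $\int_X^{fact}$ preserves eventual coconnectivity and finite presentation — is where essentially all the work lies; the categorical statement then follows formally from the unit and counit being equivalences on the two subcategories.
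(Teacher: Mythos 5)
Your architecture is genuinely different from the paper's, and the difference is where the trouble lies. The paper's entire proof of this proposition is, in effect, your first paragraph: it invokes the objectwise statement of Theorem \ref{theorem: Theorem B Main claim} (that $\gamma$ is an equivalence of monoids under the finiteness hypotheses of \ref{ssec: Finiteness}) and then checks that each $\Gamma_{dR}(X^I,\mathcal{F}(\mathcal{A})^I)$, hence the colimit $\int_X^{fact}\mathcal{A}^{\ell}$, lies in $\mathsf{Vect}_k^{\leq 0}$, so that $R^{\bullet}:=\Gamma(\RS(\EQ),\mathcal{O}_{\RS(\EQ)})$ gives an eventually coconnective $\mathbf{Spec}_k(R^{\bullet})$. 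It never constructs an inverse functor and never discusses essential surjectivity or full faithfulness; the ``equivalence'' is read at the level of the comparison map $\gamma$ for each object. Your essential-surjectivity argument (counit an equivalence, reduced via the base-change identity to $\int_X^{fact}\mathcal{O}_X\simeq k$, i.e.\ homological contractibility of $Ran_X$ for connected $X$) is correct and is genuinely additional content not present in the paper.

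The gap is the full-faithfulness step, and it is not merely that the work is deferred: the mechanism you propose cannot succeed. You reduce full faithfulness to the unit $\eta_{\mathcal{B}}:\mathcal{B}\to a_{\mathcal{D}}^*\int_X^{fact}\mathcal{B}$ inducing equivalences on mapping spaces out of objects of (\ref{eqn: Eventually Coconnective}), and your closing paragraph asserts the categorical statement ``follows formally from the unit and counit being equivalences on the two subcategories.'' But (\ref{eqn: Eventually Coconnective}) contains every classical jet algebra $\mathcal{B}=\mathcal{O}(\mathrm{Jet}^{\infty}(E))$ of a finite-rank bundle (these are finitely generated over $\mathcal{D}_X$ and concentrated in degree $0$), and for such $\mathcal{B}$ the target $a_{\mathcal{D}}^*\int_X^{fact}\mathcal{B}$ is a \emph{constant} $\mathcal{D}_X$-algebra while $\mathcal{B}$ is not. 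Already for the free $\D$-algebra on one generator, $\mathsf{Maps}_{\mathcal{D}}(\mathcal{B},\mathcal{B})$ is the space of differential polynomials $f(x,u_{\sigma})$, whereas $\mathsf{Maps}_{\mathcal{D}}(\mathcal{B},a_{\mathcal{D}}^*\int_X^{fact}\mathcal{B})$ is $\mathcal{O}(\mathbb{R}\mathrm{Maps}(X,\mathbb{A}^1)\times X)$, which contains non-local functionals such as $(\varphi,x)\mapsto\varphi(x_0)$ that are not in the image; so the comparison map on hom-spaces is not an equivalence and no Postnikov induction or deformation-theoretic transport through Proposition \ref{prop: Tangent RSol1} can repair this. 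Either the proposition must be read as the paper's own proof reads it --- $\gamma$ is an objectwise equivalence of monoids whose target is eventually coconnective --- in which case your third and fourth paragraphs are solving a different (and, as a literal categorical equivalence on these subcategories, false) problem, or the full-faithfulness claim requires a justification of an entirely different kind that your proposal does not supply.
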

\begin{proof}
    We need to show that restricting $\gamma$ gives an equivalence of monoids. This follows by \ref{ssec: Finiteness}, and the fact that since 
$$\int_X^{fact}\mathcal{A}^{\ell}= \underset{I}{hocolim}\Gamma_{dR}(X^I,\mathcal{F}(\mathcal{A})^I),$$ via the composition of forgetful functors 
$$\mathsf{CAlg}_X(\mathcal{D}_{X^I})^{\leq 0}\hookrightarrow \mathsf{CAlg}_X(\mathcal{D}_{Ran_X})\rightarrow \mathsf{Mod}(\mathcal{O}_{X^I})^{\leq 0}\subset \mathsf{Mod}(\mathcal{O}_{Ran_X}),$$
we get $U(\mathcal{F}\mathcal{A}^I)\in \mathsf{Mod}(\mathcal{O}_{X^I})^{\leq 0}.$ It follows that each component of $hocolim\Gamma_{dR}(X^I,\mathcal{F}\mathcal{A}^I)$ is an object of $\mathsf{Vect}_k^{\leq 0}.$
In a slight abuse of notation, the equivalence is obtained then by putting $R^{\bullet}:=\Gamma(\RS(\EQ),\mathcal{O}_{\RS(\EQ)})\in \mathsf{CAlg}_k,$ with corresponding affine derived stack 
$\mathbf{Spec}_k(R^{\bullet}).$
It is thus eventually coconnective, since $\mathcal{A}^{\ell}$ is an object in (\ref{eqn: Eventually Coconnective}) by assumption. 
\end{proof}

\begin{corollary}
\label{Thm B corollary}
   Suppose that $\mathcal{I}\rightarrow \mathcal{A}\rightarrow \mathcal{B}$ is a $\mathcal{D}$-algebraic non-linear PDE with $\EQ$ the corresponding derived $\mathcal{D}$-space $\mathbb{R}Spec_{\mathcal{D}}(\mathcal{B})$, that is moreover finitely $\mathcal{D}$-presented with multi-jet space $\EQ_{Ran_X}.$ The space of global sections in Theorem \ref{theorem: Theorem B Main claim} are equivalent to the derived stack
$\Gamma(Ran_{X_{dR}},\EQ_{Ran_X})$ with the property that its classical truncation (i.e. $\mathbf{Spec}_k(R^{\bullet})^{cl}$) coincides with
a non-local solution space on sections $Sect(X,E).$
\end{corollary}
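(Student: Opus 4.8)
The plan is to deduce the statement directly from Theorem \ref{theorem: Theorem B Main claim} and Proposition \ref{proposition: Thm B (ii)}, upgrading the scalar-valued comparison $\gamma$ to an equivalence of derived stacks and then passing to classical truncations. First I would record that, since the PDE $\mathcal{I}\rightarrow\mathcal{A}\rightarrow\mathcal{B}$ is finitely $\mathcal{D}$-presented, the hypotheses of Theorem \ref{theorem: Theorem B Main claim} are met for the $\mathcal{D}_X$-algebra of functions $p_*\mathcal{O}_{\EQ}=\mathcal{B}$. Hence $\gamma$ is an equivalence of commutative monoids and we may set $R^{\bullet}:=\int_X^{fact}\mathcal{A}_{\EQ}\simeq \Gamma\big(\RS(\EQ),\mathcal{O}_{\RS(\EQ)}\big)$, together with the identification $\RS(\EQ)\simeq \mathbf{Spec}_k(R^{\bullet})$ produced in that proof. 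This reduces the first assertion to identifying $\mathbf{Spec}_k(R^{\bullet})$ with the section stack $\Gamma(Ran_{X_{dR}},\EQ_{Ran_X})$.

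For this identification I would invoke the multi-jet construction of Proposition \ref{Multi-jet r-adjoint proposition}, which produces $\EQ_{Ran_X}$ as the factorization $\mathcal{D}$-space associated to $\EQ=\mathbb{R}Spec_{\mathcal{D}}(\mathcal{B})$, with $\mathcal{A}_{Ran_X}=\mathcal{F}(p_*\mathcal{O}_{\EQ})$ its algebra of functions. By definition the section stack $\Gamma(Ran_{X_{dR}},\EQ_{Ran_X})=\mathsf{Maps}_{/Ran_{X_{dR}}}(Ran_{X_{dR}},\EQ_{Ran_X})$ is computed as a limit over $I\in fSet$ of sections of $\EQ^{(I)}\rightarrow X_{dR}^I$; dualizing via the functor-of-points description and invoking Definition \ref{definition: Linearized Homology}, these assemble — through the factorization equivalences — into the homotopy colimit $\underset{I}{hocolim}\,\Gamma_{dR}(X^I,\mathcal{F}(\mathcal{A})^I)$ that defines $\int_X^{fact}\mathcal{A}^{\ell}$. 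The Remark following Definition \ref{definition: Linearized Homology} — that for a commutative factorization $\mathcal{D}$-space the linearized homology agrees with global sections of its solution stack — furnishes the needed chain of equivalences $\Gamma(Ran_{X_{dR}},\EQ_{Ran_X})\simeq \mathbf{Spec}_k\big(\int_X^{fact}\mathcal{A}^{\ell}\big)\simeq \RS(\EQ)$.

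For the second assertion I would apply the classical-truncation functor $(-)^{cl}$. Proposition \ref{proposition: Thm B (ii)} guarantees that $R^{\bullet}$ is eventually coconnective, so $\mathbf{Spec}_k(R^{\bullet})^{cl}\simeq \mathbf{Spec}_k(H^0 R^{\bullet})$ and truncation commutes with the comparison $\gamma$; hence $\mathbf{Spec}_k(R^{\bullet})^{cl}$ is the classical truncation of $\RS(\EQ)=\mathbb{R}\underline{\mathsf{Sol}}_{\mathcal{D}}(\mathcal{B})$ of (\ref{eqn: Derived Local solution D space}). By construction this classical truncation is the underived solution functor $\underline{\mathsf{Sol}}_{\mathcal{D}}(\mathcal{B})$ of (\ref{eqn: Local solution D space}); evaluating on $\mathcal{O}_X$ and using the jet-adjunction (\ref{eqn: Classical D-Solutions}) identifies its $k$-points with $\mathrm{Hom}_{\mathcal{D}\mathrm{-Alg}}(\mathcal{B},\mathcal{O}_X)$, i.e. with sections $s\in Sect(X,E)$ whose infinite prolongation annihilates $\mathcal{I}$. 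This is precisely the non-local solution space $Sol(\mathcal{I})\subset Sect(X,E)$ appearing around Proposition \ref{prop: Integration pairing}, completing the proof.

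The main obstacle I anticipate lies in the second step: rigorously commuting the section/mapping-space functor over $Ran_{X_{dR}}$ with the homotopy colimit over $fSet$ that defines factorization homology. This requires that $!$-pushforward along the (pseudo-proper) structure map $\EQ^{Ran}\rightarrow Ran_{X_{dR}}$ coincide with the de Rham pushforward and be compatible with the factorization equivalences, so that sections of the multi-jet space are genuinely computed by $\underset{I}{hocolim}\,\Gamma_{dR}(X^I,-)$ rather than merely mapping to it. A secondary subtlety is that classical truncation must be checked to commute with these colimits and pushforwards; this follows from the connectivity bookkeeping in Proposition \ref{proposition: Thm B (ii)}, since the relevant forgetful functors land in $\mathsf{Mod}(\mathcal{O}_{X^I})^{\leq 0}$ and preserve coconnectivity, but one must still verify that no higher homotopy is introduced by the colimit.
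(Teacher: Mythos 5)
Your proposal is correct and follows essentially the same route as the paper: both deduce the statement from Theorem \ref{theorem: Theorem B Main claim} together with the multi-jet adjunction of Proposition \ref{Multi-jet r-adjoint proposition}, identify $\Gamma(Ran_{X_{dR}},\EQ_{Ran_X})$ with $\RS(\EQ)\simeq \mathbf{Spec}_k\big(\int_X^{fact}\mathcal{A}^{\ell}\big)$, and then read off the classical truncation as the sections $s$ with $j_{\infty}(s)^*(\mathcal{I})\equiv 0$. The only difference is presentational: you make the colimit-versus-sections interchange over $fSet$ explicit, whereas the paper phrases that step geometrically via $\mathbb{R}Sect(X,E)\simeq \RS(\J E)$ and defers the general facts about flat sections over $X_{dR}$ to the cited reference.
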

This follows from Theorem \ref{theorem: Theorem B Main claim} and some general facts about derived spaces of flat sections of prestacks over $X_{dR},$ in \cite{KSYI}. For the sake of being self-contained we present a brief discussion.

Prestacks over $X_{dR}$ for instance, those of the form (\ref{eqn: Derived Local solution D space}) associated to sequences (\ref{eqn: SES}), can be extended to $Ran_{X_{dR}},$ so we must show via $\gamma,$ their space of global sections coincide as a usual derived stack with the corresponding derived space of solution-sections. To this end, consider the natural inclusion 
$$\mathbb{R}\mathsf{Maps}(X_{dR},\J(E))\hookrightarrow \mathbb{R}Sect(X,\JetX E),$$
identifying horizontal sections. Then, there is an equivalence
$$\mathbb{R}Sect(X,E)\simeq \RS(\J E),$$ and since the 
 multi-jet space of a map $\pi:E\rightarrow X$ is the by definition the multi-jet space of the stack $\J(E)\rightarrow X_{dR},$ our claim follows by noting that for $\EQ$ of the form $\mathbb{R}Sol_{\mathcal{D}},$ the alluded to object $\Gamma(Ran_{X_{dR}},\EQ_{Ran_X}),$ is the global flat sections. The result can be seen by noticing the natural equivalence 
$$\mathbb{R}Sect(X,E)\simeq \RS(\mathsf{Jets}_{dR}^{\infty}(E)^{Ran}).$$ 
At the level of classical truncations $\EQ=Sol_{\mathcal{D}}(\mathcal{B}),$ with $\mathcal{B}=\mathcal{O}(\mathrm{Jets}(E))/\mathcal{I},$ as in (\ref{eqn: SES}), so $^{cl}(\mathbb{R}Sol)$ as a subspace of $Sect(X,E),$ consists of those sections $s$ for which $j_{\infty}(s)^*:\JetX(E)\rightarrow X$ factors through $\EQ$. That is, $j_{\infty}(s)^*(\mathcal{I})\equiv 0,$ for the equation $\D_X$-ideal $\mathcal{I}.$

\begin{observation}
\normalfont
Suppose all objects admit internal homs in the sense of Subsect. \ref{sssec: Internal Factorization Hom-Algebra $!$-Sheaves}. Then one may express factorization homology in terms of a suitable functor of factorizable solutions. Write
$$\mathbb{R}Sol_{Ran_X}(-):=\mathsf{Maps}_{\mathcal{D}(Ran_X)}(-,\mathcal{O}_{Ran_X}),$$
for the functor sending $\mathcal{D}_{Ran_X}$-modules to $k_{Ran_X}$-modules. It is compatible under diagonal embeddings $\Delta:X^J\hookrightarrow X^I$ since
$$\mathbb{L}\Delta^*\mathbb{R}Sol_{X^J}(\mathcal{M}^J,\mathcal{O}_{X^J})\simeq \mathbb{R}Sol_{X^I}(\Delta^*\mathcal{M}^J,\mathcal{O}_{X^I})\simeq \mathbb{R}Sol_{X^I}(\mathcal{M}^I,\mathcal{O}_{X^I}).$$
Then usual
Verdier-duality (over each $X^I$) defined by 
$$\mathbb{D}_{Ran_X}(\mathcal{M}):=\{\mathbb{D}_{X^I}(\mathcal{M}^I)\}=\{\omega_{X^I}[d_{X^I}]\otimes\mathbb{R}\mathcal{H}om_{\mathcal{D}_{X^I}}(\mathcal{M}^I,\mathcal{D}_{X^I})\},$$
allows one to view $\mathbb{R}Sol_{Ran_X}$ as a suitable homotopy colimit of objects of the form 
$\mathbb{R}\Gamma_{dR}(X^I,\mathbb{D}_{X^I}(\mathcal{M}^I)[-d_{X^I}].$
\end{observation}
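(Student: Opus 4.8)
The plan is to unpack the Observation into three verifiable assertions and prove them in sequence: first, that $\mathbb{R}Sol_{Ran_X}(-)=\mathsf{Maps}_{\mathcal{D}(Ran_X)}(-,\mathcal{O}_{Ran_X})$ is well defined because it is compatible with the diagonal structure maps organizing $\mathsf{Mod}(\mathcal{D}_{Ran_X})$; second, the pointwise identification of the solution functor on each $X^I$ with de Rham cohomology of the Verdier dual; and third, the assembly of these into the stated homotopy-colimit presentation, whence the expression of factorization homology through factorizable solutions follows via Theorem \ref{theorem: Theorem B Main claim}.

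For the first assertion I would argue that $\mathbb{R}\mathcal{H}om_{\mathcal{D}}(-,\mathcal{O})$ commutes with the diagonal pullbacks $\mathbb{L}\Delta^*$ defining the factorization structure. The key inputs are that the structure sheaf is preserved, $\mathbb{L}\Delta^*\mathcal{O}_{X^I}\simeq \mathcal{O}_{X^J}$, and that under the standing hypothesis that all objects admit internal homs (Subsect. \ref{sssec: Internal Factorization Hom-Algebra $!$-Sheaves}) the base-change equivalence $\mathbb{L}\Delta^*\mathbb{R}\mathcal{H}om_{\mathcal{D}}(\mathcal{M},\mathcal{O})\simeq \mathbb{R}\mathcal{H}om_{\mathcal{D}}(\mathbb{L}\Delta^*\mathcal{M},\mathcal{O})$ holds for coherent, hence dualizable, $\mathcal{M}$. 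Composed with the factorization equivalence relating $\Delta^*\mathcal{M}^J$ to $\mathcal{M}^I$, this reproduces the displayed chain of equivalences and exhibits $\{\mathbb{R}Sol_{X^I}\}$ as a compatible family, i.e. a genuine functor on $\mathsf{Mod}(\mathcal{D}_{Ran_X})$.

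For the second assertion I would invoke the standard relation $Sol\simeq DR\circ\mathbb{D}$ up to shift. With $\mathbb{D}_{X^I}(\mathcal{M}^I)=\omega_{X^I}[d_{X^I}]\otimes\mathbb{R}\mathcal{H}om_{\mathcal{D}_{X^I}}(\mathcal{M}^I,\mathcal{D}_{X^I})$ and $d_{X^I}=|I|\cdot d_X$, the defining adjunction for $\mathbb{R}\mathcal{H}om_{\mathcal{D}}$ together with the self-duality of $\mathcal{D}_{X^I}$ used in (\ref{eqn: Derived Inner Verdier Dual}) and (\ref{eqn: Derived Local Solutions}) yields $\mathsf{Maps}_{\mathcal{D}_{X^I}}(\mathcal{M}^I,\mathcal{O}_{X^I})\simeq \mathbb{R}\Gamma_{dR}\big(X^I,\mathbb{D}_{X^I}(\mathcal{M}^I)[-d_{X^I}]\big)$. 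Taking the homotopy colimit over $I\in fSet^{op}$ and recalling $\int_X(-)\simeq \underset{I}{hocolim}\,\mathbb{R}\Gamma_{dR}(X^I,-)$, I obtain the desired presentation of $\mathbb{R}Sol_{Ran_X}$ as a homotopy colimit of the objects $\mathbb{R}\Gamma_{dR}(X^I,\mathbb{D}_{X^I}(\mathcal{M}^I)[-d_{X^I}])$, so that $\int_X\circ\,\mathbb{D}_{Ran_X}$ computes factorizable solutions; combined with $\RS(\EQ)\simeq Spec\big(\int_X^{fact}\mathcal{A}^{\ell}\big)$ this is the promised expression of factorization homology in terms of factorizable solutions.

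The main obstacle is the homotopy-coherent version of the first step. Commuting $\mathbb{R}\mathcal{H}om_{\mathcal{D}}(-,\mathcal{O})$ past $\mathbb{L}\Delta^*$ is not formal for arbitrary $\mathcal{D}$-modules, and must be shown natural in the surjection $\alpha$, compatible with compositions, and with the factorization equivalences $j(\alpha)^*$, so that the family genuinely glues over $Ran_X$ rather than merely objectwise. I would handle this by restricting to coherent factorization $\mathcal{D}$-modules supported on the main diagonal (Kashiwara's theorem), where Verdier duality is an involution and the base-change equivalences hold, and then right-Kan extend to all objects exactly as in the proof of Proposition \ref{prop: NatTransformOnFactorizations}.
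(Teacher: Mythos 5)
Your proposal is correct and follows essentially the same reasoning the paper intends: the paper states this Observation without a separate proof, its justification being exactly the two displayed facts you verify, namely base-change of $\mathbb{R}\mathcal{H}om_{\mathcal{D}}(-,\mathcal{O})$ along diagonals for dualizable objects, and the standard identification $Sol(\mathcal{M})\simeq DR(\mathbb{D}\mathcal{M})[-d]$ on coherent $\mathcal{D}$-modules, assembled via $\int_X(-)\simeq \underset{I}{hocolim}\,\Gamma_{dR}(X^I,-)$. Your added care about reducing to coherent objects supported on the main diagonal and Kan-extending is consistent with the paper's own standing reductions at the start of Section 5 and with the strategy of Proposition \ref{prop: NatTransformOnFactorizations}.
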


\section{Application: Variational Calculus}
\label{sec: Applications 1}
We recall how variational calculus arises in the language of $\mathcal{D}$-geometry \cite{Pau02,Pau01} and elaborate several important new homological aspects related to the tricomplex.
\subsection{Variational Calculus and the BV Formalism in $\mathcal{D}$-Geometry}
\label{ssec: Variational Calculus and BV Formalism in D Geometry}
Suppose that $\mathcal{A}$ is the $\mathcal{D}_X$-algebra corresponding to the infinite jet-bundle of a given vector bundle $\pi:E\rightarrow X,$ and recall $h(\mathcal{A})$ identifies with top forms modulo the $\D$-action. 

For example, if $dim(X)=2,$ sections of $\Omega_{\mathcal{A}}^{2,s}\otimes_{\mathcal{D}}\omega_X$ may be viewed as global sections $\Gamma(Spec_{\mathcal{D}}(\mathcal{A}),\Omega^{2,s}/d_h\Omega^{1,s}).$ Then, variational $2$-forms $\omega_{\mathcal{S}}$ associated to the action are global sections 
$\omega_{\mathcal{S}}\in \Gamma\big(Spec_{\mathcal{D}}(\mathcal{A}),\Omega^{1,2}/d^h(\Omega^{0,2})\big)$ for which $d^v\omega_{\mathcal{S}}=0.$

\begin{remark}
One makes sense of homotopy pre-symplectic forms as $(d_X-1,2)$-forms $\omega\in H^0(\EQ,\Omega_{\EQ}^{d-1,2})$ on $\EQ$, such that 
$d^v\in H^0(\EQ,d_h(\Omega^{d-2,1})).$
For lack of space, we mention only in brief the possibility of encoding also homotopy Hamiltonian vector fields (c.f. Example \ref{ex: Hamiltonian Operators}) i.e. elements $\theta\in H^0(\EQ,\Theta_{\EQ}^{\ell}),$ such that $\mathcal{L}_{\theta}(\omega)\in d_h\Omega_{\EQ}^{d-2,1},$ by naturally extending the local Lie derivative (Subsect. \ref{ssec: Local Calculus}) to the entire de Rham algebra as a derivation.
\end{remark}

Apply $h$ to the $\D$-linear universal derivation (\ref{sssec: D de Rham diff}) to obtain a map
$h\big(d\big):h(\mathcal{A})\rightarrow h(\Omega_{\mathcal{A}}^1)$ which we may then evaluate on the action by Proposition \ref{prop: 0,1 var h} (2) as $h(d)(\mathcal{S})\in h\big(\Omega_{\mathcal{A}}^1\big)$ inducing an \emph{insertion map}, 
\begin{equation}
\label{eqn:Insertion map}
i_{d\mathcal{S}}:\Theta_{\mathcal{A}}^{\ell}\rightarrow \mathcal{A}.
\end{equation}
Operation (\ref{eqn:Insertion map}) realize variational calculus in $\D$-geometry via the Euler-Lagrange ideal $\mathcal{I}_{EL}:=\mathrm{Im}\big(i_{d\mathcal{S}}\big)\subset \mathcal{A},$ and space of 
Noether identities $\mathcal{N}_{\mathcal{S}}:=\ker\big(i_{d\mathcal{S}}\big)\subset \Theta_{\mathcal{A}}.$ A Noether symmetry is a class $\xi\in h(\Theta_{\mathcal{A}})$ such that $Lie_{\xi}\mathcal{S}=0,$ via the $\mathrm{Lie}^*$ $\mathcal{A}$-algebroid action, descended on $H^0.$ 

That is, the local Lie derivative $\Theta_{\mathcal{A}}\boxtimes \mathcal{A}\rightarrow \Delta_*\mathcal{A}$ induces a map
$$h(\Theta_{\mathcal{A}}\boxtimes\mathcal{A})\simeq h(\Theta_{\mathcal{A}})\boxtimes h(\mathcal{A})\rightarrow h(\Delta_*\mathcal{A})\simeq \Delta_*h(\mathcal{A}),$$
as $h$ commutes with $\D$-module push-forwards.

In the dg-setting, one must consider the appropriate homotopical operations (\ref{eqn: Holocals}). In particular, (\ref{eqn: Derived Inner Verdier Dual}) and (\ref{eqn: Derived Local solution D space}) and taking the cone and cocone of the morphism in $Ho(\DG(\mathcal{A}))$ generalizing the map (\ref{eqn:Insertion map}).
This is based on the weight $1$ component (\ref{eqn: Weight p}) of $\mathsf{DR}^{var}(\mathcal{A}^{\ell}),$ yielding an equivalence,
\begin{equation}
    \label{eqn: Homotopy Insertion}
    \mathsf{DR}^{var}(\mathcal{A}^{\ell})(1)=p_{\infty}^*\omega_X\otimes^{\mathbb{L}}\mathbb{L}_{\mathcal{A}}\simeq \mathbb{R}\mathcal{H}om_{\mathcal{A}^{\ell}[\mathcal{D}]}(\omega_X^{\vee}\otimes\mathbb{D}_{\mathcal{A}}(\mathbb{L}_{\mathcal{A}}),\mathcal{A}^{\ell}).
    \end{equation}
Roughly speaking, (\ref{eqn: Homotopy Insertion}) is obtained via a derived double-duality statement (c.f. \ref{sssec: D de Rham diff}); the natural morphism,
$$\mathbb{L}\Omega_{\mathcal{A}^{\ell}}^1\rightarrow \mathbb{R}\mathcal{H}om_{\mathcal{A}^r\otimes\mathcal{D}_X^{op}}(\mathbb{D}_{\mathcal{A}}^{ver}(\mathbb{L}\Omega_{\mathcal{A}}^1),\mathcal{A}^r[\mathcal{D}_X^{op}]),$$
is an isomorphism in $Ho(\DG(\mathcal{A}^{\ell}))$.
Therefore, the object in homotopy category of dg-$\mathcal{A}[\D]$-modules we want to consider is
$$Cone(\omega_X^{\vee}\otimes \mathbb{D}_{\mathcal{A}}^{ver}(\mathbb{L}_{\mathcal{A}})\rightarrow \mathcal{A}),$$
obtained from the equivalence (\ref{eqn: Homotopy Insertion}).

\subsubsection{The $\EL$ $\D_X$-Space}
The quotient $\mathcal{D}_X$-algebra of on-shell functions is
\begin{equation}
    \label{eqn:On shell function algebra}
    \mathcal{A}_{EL}:=\mathcal{A}/\mathcal{I}_{EL},
\end{equation}
where for $F\in\mathcal{A}$, its image under the quotient $\mathcal{D}$-algebra map $q:\mathcal{A}\rightarrow \mathcal{A}_{EL}$ is denoted by $[F].$ Thus $P\bullet [F]:=\big[P\bullet F\big],$ where $\mathcal{A}_{\mathrm{EL}}\otimes \mathcal{A}_{\mathrm{EL}}\rightarrow \mathcal{A}_{\mathrm{EL}}$ is the obvious one $[F][G]:=[FG].$
Therefore, $\mathcal{A}_{EL}$ is a commutative $\mathcal{D}_X$-$\mathcal{A}$-algebra, via $q.$

The Euler-Lagrange critical $\D_X$-space is the (étale local) algebraic $\D_X$-space 
\begin{equation}
    \label{eqn: Euler Lagrange D Space}
\underline{Sol}_{EL}:=\underline{\mathrm{Spec}}_{\mathcal{D}_X}(\mathcal{A}_{\mathrm{EL}}):\mathrm{CAlg}_{\mathcal{A}/}(\mathcal{D}_X)\rightarrow \mathrm{Sets}.
\end{equation}
The $\D$-space (\ref{eqn: Euler Lagrange D Space}) is the one naturally arising (see Sect. \ref{sec: Geometry of D-Prestacks}) from the sequence
\begin{equation}
\label{eqn: Short exact Euler-Lagrange sequence}
0\rightarrow \mathcal{I}_{EL}\rightarrow \mathcal{A}\rightarrow \mathcal{A}_{EL}\rightarrow 0,
\end{equation}

\begin{proposition}
\label{Euler-Lagrange SES is D submanifold proposition}
The short exact sequence \emph{(\ref{eqn: Short exact Euler-Lagrange sequence})} defines a $\D$-algebraic PDE in the sense of \emph{(\ref{eqn: SES})}.
\end{proposition}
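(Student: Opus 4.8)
The plan is to exhibit the Euler--Lagrange sequence (\ref{eqn: Short exact Euler-Lagrange sequence}) as an instance of (\ref{eqn: SES}) by taking $\mathcal{I}_X := \mathcal{I}_{EL}$ and $\mathcal{B} := \mathcal{A}_{EL}$. This reduces the statement to three verifications: that $\mathcal{I}_{EL}$ is a $\mathcal{D}_X$-ideal sheaf (i.e. a sub-$\mathcal{A}[\mathcal{D}_X]$-module of $\mathcal{A}$), that the quotient $\mathcal{A}_{EL} = \mathcal{A}/\mathcal{I}_{EL}$ inherits a commutative $\mathcal{D}_X$-algebra structure for which $q \colon \mathcal{A} \to \mathcal{A}_{EL}$ is a morphism of $\mathcal{D}_X$-algebras, and that the resulting three-term sequence is short exact. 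Only the first point carries genuine content; the remaining two are formal once it is in place.

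For the key step I would argue that $\mathcal{I}_{EL}$ is automatically $\mathcal{A}[\mathcal{D}_X]$-stable because it arises as the \emph{image of a module homomorphism}. Indeed, since $\mathcal{A} = \mathrm{Jet}^{\infty}(\mathcal{O}_E)$ is $\mathcal{D}_X$-smooth, Proposition \ref{prop: 0,1 var h}(2) identifies $h(\Omega^1_{\mathcal{A}})$ with $\mathrm{Hom}_{\mathcal{A}^{\ell}[\mathcal{D}_X]}(\omega_X^{-1}\otimes\Theta_{\mathcal{A}},\mathcal{A})$; under this identification the class $h(d)(\mathcal{S})$ obtained by applying the universal $\mathcal{D}_X$-linear derivation of \S\ref{sssec: D de Rham diff} to the action $\mathcal{S}\in h(\mathcal{A})$ \emph{is} an $\mathcal{A}^{\ell}[\mathcal{D}_X]$-linear map, and the insertion map (\ref{eqn:Insertion map}) is precisely this homomorphism. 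Its image $\mathcal{I}_{EL}=\mathrm{Im}(i_{d\mathcal{S}})$ is therefore an $\mathcal{A}^{\ell}[\mathcal{D}_X]$-submodule of $\mathcal{A}$: concretely, $\mathcal{A}$-linearity of $i_{d\mathcal{S}}$ gives $a\cdot i_{d\mathcal{S}}(\theta)=i_{d\mathcal{S}}(a\cdot\theta)\in\mathcal{I}_{EL}$ for all $a\in\mathcal{A}$, so $\mathcal{I}_{EL}$ is an ideal, while $\mathcal{D}_X$-linearity gives $P\bullet i_{d\mathcal{S}}(\theta)=i_{d\mathcal{S}}(P\bullet\theta)\in\mathcal{I}_{EL}$ for $P\in\mathcal{D}_X$, so it is stable under total derivatives.

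The remaining verifications are routine. Quotienting a commutative $\mathcal{D}_X$-algebra by a $\mathcal{D}_X$-ideal produces a commutative $\mathcal{D}_X$-algebra: the multiplication descends to $[F][G]:=[FG]$ and the $\mathcal{D}$-action to $P\bullet[F]:=[P\bullet F]$, both well-defined precisely because $\mathcal{I}_{EL}$ is an ideal and $\mathcal{D}$-stable, exactly as recorded in the text preceding the statement. The map $q$ is then tautologically a $\mathcal{D}_X$-algebra morphism, and exactness of $0\to\mathcal{I}_{EL}\to\mathcal{A}\xrightarrow{q}\mathcal{A}_{EL}\to 0$ holds by construction of the quotient. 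Since all of these data are étale-local on $X$ and compatible with the localizations underlying the calculus of Subsection \ref{ssec: Local Calculus}, they sheafify, so that $\mathcal{I}_{EL}$ is a $\mathcal{D}_X$-ideal \emph{sheaf} and the sequence is one of sheaves of $\mathcal{D}_X$-algebras, matching (\ref{eqn: SES}).

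The main obstacle is the $\mathcal{D}_X$-linearity of the insertion map, and I would isolate it as the one point requiring care. The naive first variation of $\mathcal{S}$ is well-defined only modulo horizontal (total) differentials, so it is only after passing to central cohomology $h$ — which kills the image of the $\mathcal{D}$-action — that $h(d)(\mathcal{S})$ descends to an honest $\mathcal{A}^{\ell}[\mathcal{D}_X]$-module map; in coordinates this is exactly the integration-by-parts that converts the variation $\sum_{\sigma}\frac{\partial L}{\partial u_\sigma^\alpha}\,\delta u_\sigma^\alpha$ into the Euler--Lagrange expressions $E_\alpha(L)=\sum_\sigma(-1)^{|\sigma|}D_\sigma\frac{\partial L}{\partial u_\sigma^\alpha}$, whose components generate $\mathcal{I}_{EL}$ as a $\mathcal{D}_X$-ideal. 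Verifying that the duality identification of Proposition \ref{prop: 0,1 var h}(2) indeed intertwines the contraction pairing with an $\mathcal{A}[\mathcal{D}_X]$-linear structure — keeping track of the $\omega_X^{-1}$ twist relating $\Theta_{\mathcal{A}}$ to its leftification $\Theta_{\mathcal{A}}^{\ell}$ — is where the bookkeeping lies, but it requires no new ideas beyond those already assembled in Subsection \ref{ssec: Local Calculus}.
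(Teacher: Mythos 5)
Your proof is correct and follows the route the paper leaves implicit: the paper states this proposition without a written proof, treating it as immediate from the preceding construction of $\mathcal{A}_{EL}=\mathcal{A}/\mathcal{I}_{EL}$ with $P\bullet[F]=[P\bullet F]$. Your one substantive verification — that $\mathcal{I}_{EL}=\mathrm{Im}(i_{d\mathcal{S}})$ is a $\mathcal{D}_X$-stable ideal because $i_{d\mathcal{S}}$ is the $\mathcal{A}^{\ell}[\mathcal{D}_X]$-linear map furnished by the identification $h(\Omega^1_{\mathcal{A}})\cong\mathrm{Hom}_{\mathcal{A}^{\ell}[\mathcal{D}_X]}(\omega_X^{-1}\otimes\Theta_{\mathcal{A}},\mathcal{A})$ — is exactly the missing justification, and the remaining steps are routine as you say.
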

\begin{example}
\label{example: EL}
\normalfont Consider the $\D_X$ algebra associated to  $\pi:\mathbb{R}^{n+m}\rightarrow \mathbb{R}^n,$ with $\mathcal{S}=[L\omega]$ for $L=L(x,u^{\alpha},u_{\sigma}^{\alpha})\in \mathcal{A}=\mathrm{Jet}^{\infty}(\mathcal{O}_E)$ with volume form $\omega\in\omega_X.$ Then, the vector field in $\Theta_{\mathcal{A}}$ associated to the vertical derivation $\frac{\partial}{\partial u^{\alpha}}$ is sent by (\ref{eqn:Insertion map}) to the generator $i_{d\mathcal{S}}(\partial_{u^{\alpha}})=\mathcal{E}_{\alpha}(L).$ This is called the \emph{Euler-Lagrange function} on $\mathcal{A}$, and is represented by the expression
\begin{equation}
\label{EulerLagrangeFunction}
\mathcal{E}_{\alpha}(L):=\sum_{\sigma}(-1)^{|\sigma|}D_{\sigma}\big(\frac{\partial L}{\partial u_{\sigma}^{\alpha}}\big), \hspace{3mm}\alpha =1,\ldots,m.
\end{equation}
The image of the $\mathcal{D}_X$-algebra surjection in (\ref{eqn: Short exact Euler-Lagrange sequence}) is the quotient $\mathcal{A}_{EL}:=\mathcal{A}/\big<\mathcal{E}_{\alpha}(L):\alpha=1,\ldots,m\big>,$ defined by the relations $\mathcal{E}_{\alpha}(L)=0.$
\end{example}
Via an appropriate cofibrant resolution (c.f. Subsect.\ref{sssec: Variational tricomplexes}), one has the derived \EL-$\D$-stack as in (\ref{eqn: Local solution D space}), denoted by 
$\mathbb{R}Sol_{EL}.$

\begin{proposition}
\label{prop: EL shifted structure}
Suppose that the Euler-Lagrange critical $\mathcal{D}$-algebra $\mathcal{A}_{\mathrm{EL}}$ is $\mathcal{D}$-smooth and the $\mathcal{D}$-space of solutions identifies with a critical locus of some action functional. Then there exists a canonical cohomology class $\omega \in h^{-1}(\Omega_{\mathcal{A}_{\mathrm{EL}}}^2)$ and $\mathbb{R}Sol_{EL}$ carries a canonical $(-1)$-shifted $(d_X,2)$-form.
Moreover, there is a canonical morphism 
    $$\mathcal{V}ar_{cl}^{(d_X-1,2)}(\mathbb{R}Sol_{EL},\mathbf{0})\rightarrow \mathcal{A}_{cl}^2\big(\mathbb{R}Sect_0(X,E); \mathbf{-1}),$$
    of simplicial sets.
\end{proposition}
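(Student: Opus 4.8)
The plan is to realize $\mathbb{R}Sol_{EL}$ as a $\mathcal{D}$-geometric derived critical locus and to extract its shifted symplectic form from the homotopy insertion construction, then obtain the transgression morphism as a direct instance of Proposition \ref{prop: Transgression}. First I would fix the Lagrangian as a volume-twisted class $\mathcal{S}=[L\omega]\in h(\mathcal{A})$ and recall from Proposition \ref{prop: 0,1 var h}(2) that $h(d)(\mathcal{S})\in h(\Omega_{\mathcal{A}}^1)$ induces the insertion map $i_{d\mathcal{S}}$ of (\ref{eqn:Insertion map}), with image $\mathcal{I}_{EL}$. In the derived setting the governing object is the cone $Cone\big(\omega_X^{\vee}\otimes\mathbb{D}_{\mathcal{A}}^{ver}(\mathbb{L}_{\mathcal{A}})\rightarrow \mathcal{A}\big)$ produced from the double-duality equivalence (\ref{eqn: Homotopy Insertion}). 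Under the hypotheses that $\mathcal{A}_{EL}$ is $\mathcal{D}$-smooth and that $\mathbb{R}Sol_{EL}$ identifies with a critical locus, this exhibits $\mathbb{L}_{\mathcal{A}_{EL}}$ as concentrated in degrees $[-1,0]$ together with a self-duality $\mathbb{T}_{\mathcal{A}_{EL}}\simeq \mathbb{L}_{\mathcal{A}_{EL}}[-1]$ up to the volume twist by $p_{\infty}^{*}\omega_X$. Pairing $\mathbb{L}_{\mathcal{A}_{EL}}$ against itself through this self-duality yields the second-variation (Hessian) pairing and hence the desired class $\omega\in h^{-1}(\Omega_{\mathcal{A}_{EL}}^2)$.

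Next I would promote $\omega$ to a closed $(-1)$-shifted $(d_X,2)$-form via Theorem A. Concretely, I would present $\omega$ as the leading term $\theta_{-1}^{(d_X,2)}$ of an element of $\mathcal{V}ar^{(d_X,2)}(\mathbb{R}Sol_{EL},-1)$ in the sense of Proposition \ref{prop: Cohom degree N (p,q) form key}, and verify the closedness conditions of Proposition \ref{prop: Closed degree N (p,q) form}. The $\delta$-closedness of $\theta_{-1}^{(d_X,2)}$ records exactly that the Euler--Lagrange relations hold on-shell, while the remaining homotopy-coherent descent equations (the $d^v$- and $d_h$-compatibilities) follow because $\omega$ is vertically exact: it is $d^v$ of the Euler--Lagrange source form, so the coherency data for $D^{Tot}$-closedness is supplied by exactness in the free variational bicomplex combined with the $(p,q)$-decomposition. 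Here the comparison with the canonical $0$-shifted $(d_X-1,2)$-form on $\JetX(E)$ from Proposition \ref{prop: Jets is pre-symplectic}, pulled back along $\iota^{*}$ via Proposition \ref{prop: MorphismOfVarBiSeq}, provides the presymplectic potential entering the descent.

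For the transgression morphism I would appeal directly to Proposition \ref{prop: Transgression}, which applies since $\mathbb{R}Sol_{EL}=\mathbb{R}Spec_{\mathcal{D}}(\mathcal{A}_{EL})$ is an affine $\mathcal{D}$-finitary derived PDE satisfying $(\star)$ by the finiteness hypotheses. Taking $p=1$, $q=2$ and cohomological degree $N=\mathbf{0}$ produces a map of simplicial sets $\mathcal{V}ar_{cl}^{(d_X-1,2)}(\mathbb{R}Sol_{EL},\mathbf{0})\rightarrow \mathcal{A}_{cl}^{2}(\mathbb{R}Sect_0(X,E),N-p)$, and since $N-p=-1$ this is precisely the asserted transgression. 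Functoriality and compatibility with the simplicial sets of closed forms are inherited verbatim from Proposition \ref{prop: Transgression}.

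The step I expect to be the main obstacle is the non-degeneracy claim, namely upgrading the presymplectic $(d_X,2)$-form to a genuine $(-1)$-shifted \emph{symplectic} structure on $\mathbb{R}Sol_{EL}$. This reduces to verifying that contraction with the leading term $\theta_{-1}^{(d_X,2)}$ induces an equivalence $\mathrm{Ext}_{\mathcal{A}_{EL}[\mathcal{D}_X]}^{i}(\Omega_{\mathcal{A}_{EL}}^1,\mathcal{A}_{EL}[\mathcal{D}_X])^{\ell}\xrightarrow{\sim} H^{-i+1}(\Omega_{\mathcal{A}_{EL}}^1)$, which hinges on the self-duality coming from (\ref{eqn: Homotopy Insertion}) together with $\mathcal{D}$-smoothness. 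The delicate point is tracking the interplay of the volume twist $\omega_X$ with the inner Verdier duality $\mathbb{D}_{\mathcal{A}}^{ver}$ through the cone, so that the shift lands in cohomological degree $-1$; this is where I would spend the most care, using the local-duality and finiteness machinery of Section \ref{sec: Geometry of D-Prestacks}.
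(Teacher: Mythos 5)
There is a genuine gap in your construction of the class $\omega\in h^{-1}(\Omega_{\mathcal{A}_{\mathrm{EL}}}^2)$. You propose to obtain it by pairing $\mathbb{L}_{\mathcal{A}_{\mathrm{EL}}}$ against itself through a self-duality $\mathbb{T}_{\mathcal{A}_{\mathrm{EL}}}\simeq\mathbb{L}_{\mathcal{A}_{\mathrm{EL}}}[-1]$ (the Hessian of the derived critical locus). But the index $-1$ in $h^{-1}(\Omega_{\mathcal{A}_{\mathrm{EL}}}^2)$ is the \emph{horizontal} (de Rham over $X$) degree on the classical algebra $\mathcal{A}_{\mathrm{EL}}$ --- it refers to codimension-one currents valued in $\Omega_X^{d_X-1}$ --- whereas the Hessian pairing of the cotangent complex with itself naturally lands in top horizontal degree with an internal cohomological shift. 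You have conflated the cohomological degree $-1$ of the shifted $(d_X,2)$-form with the horizontal degree $-1$ of the presymplectic current; these are two different gradings of the tricomplex, and the statement asserts both objects separately. The paper's mechanism for the $h^{-1}$ class is instead: the restriction of the insertion map $i_{d\mathcal{S}}$ to $\mathcal{A}_{\mathrm{EL}}$ is the zero map, so by $\mathcal{D}$-smoothness the class of $d\overline{\mathcal{S}}$ vanishes in $h(\Omega_{\mathcal{A}_{\mathrm{EL}}}^1)$; vanishing in $h=H^0$ of the de Rham complex produces a potential $\theta\in\Omega_X^{d_X-1}\otimes_{\mathcal{O}_X}\Omega_{\mathcal{A}_{\mathrm{EL}}}^1$ (the boundary term of the first variation), and one sets $\omega:=d\theta$, which is $d_X$-closed because $d_X$ and the vertical differential anticommute, hence defines a class in $h^{-1}(\Omega_{\mathcal{A}_{\mathrm{EL}}}^2)$. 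This exactness step is the content of the hypothesis and cannot be replaced by the self-duality argument. Relatedly, your remark that $\omega$ ``is $d^v$ of the Euler--Lagrange source form'' is off: $\omega$ is $d^v$ of the boundary term $\theta$, not of the source form.

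The remainder of your outline is closer to the mark: the closed $(-1)$-shifted $(d_X,2)$-form is assembled from $\omega_{-1}$ together with the presymplectic form of Proposition \ref{prop: Jets is pre-symplectic} and checked against Proposition \ref{prop: Closed degree N (p,q) form}, and the morphism of simplicial sets is indeed the instance $p=1$, $q=2$, $N=\mathbf{0}$ of Proposition \ref{prop: Transgression}, as you say. Your closing worry about non-degeneracy is not needed here, since the proposition only asserts a form, not a symplectic structure. But without the boundary-term argument the first and central claim of the proposition is not established.
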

\begin{proof}[Sketch of the proof.]
Consider the insertion map (\ref{eqn:Insertion map}) and its restriction to the critical algebra $\mathcal{A}_{\mathrm{EL}}$. This restriction coincides with the zero map $\Theta_{\mathcal{A}_{\mathrm{EL}}}^{\ell}\rightarrow \mathcal{A}_{\mathrm{EL}}$ and consequently, since $\mathcal{A}_{\mathrm{EL}}$ is smooth by hypothesis, this map identifies with the cohomology class of $d\overline{S}$ in $h\big(\Omega_{\mathcal{A}_{\mathrm{EL}}}^1\big)$ where we view $\overline{S}$ as living in the image of $h$ in the $\mathcal{D}$-algebra $\mathcal{A}_{\mathrm{EL}}.$ Since this class is zero in $h(\mathcal{A}_{\mathrm{EL}})=H^0\big(\mathsf{DR}(\mathcal{A}_{\mathrm{EL}})\big)$, there exists a form $\theta\in\Omega_X^{n-1}\otimes_{\mathcal{O}_X}\Omega_{\mathcal{A}_{\mathrm{EL}}}^1$. 
We let $\omega:=d \theta\in\Omega_X^{n-1}\otimes \Omega_{\mathcal{A}_{\mathrm{EL}}}^2$ and note that this form is closed for the differential induced by the de Rham differential $d_X$ on $X$, since they are anti-commuting:
$$d_X\circ d\pm d\circ d_X=0.$$ Therefore, it defines a natural class in $h^{-1}\big(\Omega_{\mathcal{A}_{\mathrm{EL}}}^2\big).$
By Proposition \ref{prop: Jets is pre-symplectic} let $j:Sol_{\mathcal{D}_X}^{EL}\hookrightarrow \mathbb{R}Sol_{EL},$ denote the the natural derived enhancement. There is a natural morphism 
$\mathbb{T}_{\mathbb{R}Sol_{EL}/X}\wedge^*\mathbb{T}_{\mathbb{R}Sol_{EL/X}}\rightarrow \mathcal{O}_{\mathbb{R}Sol_{EL}}[-1],$ induced by a $D_{Tot}$-cocycle $\gamma$ in the complex 
    $\mathcal{V}ar^{(d_X-1,2),cl}(\mathbb{R}Sol_{EL}).$
    Such an element is determined by the $(-1)$-shifted $(d_X,2)$-form $\omega_{-1}$ as well as the pre-symplectic form $\omega_{\JetX}$ on $\JetX.$ Indeed, one may verify it satisfies the necessary conditions Proposition \ref{prop: Closed degree N (p,q) form}. In particular, the pre-symplectic form is $\delta$-closed.
\end{proof}
\vspace{1mm}

Following Theorem \ref{theorem: Theorem B Main claim} and the discussion of Corollary \ref{Thm B corollary}, we give a helpful pictorial summary of the spaces involved in Proposition \ref{prop: EL shifted structure} is given by:
\[
\begin{tikzcd}
  Sol_{\mathcal{D}}^{EL}\arrow[d,"\simeq"] \arrow[r,"j_{\mathcal{D}}"] & \mathbb{R}Sol_{\mathcal{D}}^{EL}\arrow[d,"Thm.(\ref{theorem: Theorem B Main claim})"]
  \\
 Sect(X,E)\supseteq  \begin{cases}\equalto{Sol^{non-loc}(\mathcal{I}_{EL})}{Crit_X(S)}
  \end{cases}\arrow[r,"j"] & \begin{rcases}\equalto{\mathbb{R}Sol^{non-loc}(\mathcal{I}_{EL})}{\mathbb{R}Crit_X(S)}\end{rcases}\subseteq \mathbb{R}Sect(X,E).
\end{tikzcd}
\]

The identification of the classical critical locus with the non-local solution space is given for $S$ a functional with associated Lagrangian $L$ (c.f. \ref{example: EL}) via
$$\{x\in H|d_xS=0\}\simeq \{x\in H|\mathcal{E}_{\alpha}(L)\circ j_{\infty}(x)=0\}.$$
\vspace{1mm}

\noindent\textit{\textbf{Symmetries in $\D$-Geometry.}}
There is another naturally arising short-exact sequence of $\mathcal{A}[\mathcal{D}_X]$-modules
\begin{equation}
    \label{eqn:Short exact Noether symmetry sequence}
    0\rightarrow \mathcal{N}_{\mathcal{S}}\rightarrow \Theta_{\mathcal{A}}\rightarrow \mathcal{I}_{EL}\rightarrow 0.
\end{equation}

Recall that by Proposition \ref{prop: Integration pairing}, elements of $h(\Theta_{\mathcal{A}})$ are interpreted as vector fields on the space of solutions. Denote the $\mathcal{A}[\mathcal{D}_X]$-submodule of Noether symmetries by
$\mathcal{S}\mathrm{ym}_{\mathrm{Noeth}}(\mathcal{S})$ and note there is an isomorphism of Lie algebras $\mathcal{S}\mathrm{ym}_{\mathrm{Noeth}}(\mathcal{S})\xrightarrow{\simeq} h\big(\Theta_{\mathcal{A}_{\mathrm{EL}}}\big)$ coming from a natural operation in $P_{\mathcal{A},1}^*\big(\{\Theta_{\mathcal{A}}\},\Theta_{\mathcal{A}/\mathcal{I}_{\mathrm{EL}}}\big)$ c.f. (\ref{eqn: LocalOps}) and (\ref{eqn: Holocals}).

In the $\D$-geometric setting other objects of physicial significance can be identified:
\begin{itemize}
    \item \emph{pre gauge symmetries} are elements $g\in \mathcal{P
}_{\mathcal{A},[1]}^*(\{\mathcal{Q}\},\Theta_{\mathcal{A}}),$ for $\mathcal{Q}\in \mathrm{Mod}(\mathcal{A}^{\ell}[\mathcal{D}_X]);$
\item A pre gauge-symmetry $g$ is a \emph{gauge symmetry} if the associated linear map $G:=h(g):h(\mathcal{Q})\rightarrow h(\Theta_{\mathcal{A}})$ has the property that $g(\epsilon)$ is a Noether symmetry for all $\epsilon.$ For $\mathcal{Q}$ arbitrary, one says that $G$ is a Noether symmetry depending on arbitrary parameters $\epsilon$.

\item A \emph{$\D$-Noether symmetry} is a gauge symmetry $g$ depending on parameters coming from the underlying $\D$-algebra $\mathcal{A}$, rather than arbitrary ones i.e. they are maps $g:\mathcal{A}\rightarrow h(\Theta_{\mathcal{A}}).$ 
\end{itemize} 
In particular, a pre gauge symmetry is a gauge symmetry if and only if the linear map $G:=h(g):h(\mathcal{Q})\rightarrow Sym_{Noeth}(\mathcal{S})$ is surjective. Locally, they are given by
$$G(\epsilon)=\sum_{\sigma}\sum_{\alpha=1}^mR^{\alpha,\sigma}(\mathcal{D}_X\bullet \epsilon )_{\sigma}\partial_{u^{\alpha}}.$$

\subsection{The BV-$\D_X$-space}
\label{ssec: BV-D-Space}
In a theory that possess gauge symmetries, the true physical observables (before gauge-fixing) will be the functionals not on (\ref{eqn: Euler Lagrange D Space}) or its derived analog (c.f. \ref{eqn: Derived Local Solutions}), but a related \BV-space. In particular, the \BV-functional will be a class $\mathcal{S}_{BV}\in h(\mathcal{A}_{BV}).$ 

In the $\D$-geometric setting, various regularity conditions introduced in \cite{Pau02,Pau01} are given -- what are called regular generating space of Noether symmetries $\mathfrak{g}^{\bullet},$ whose components in \emph{loc.cit.} are supposed to be finitely generated projective $\mathcal{A}^{\ell}[\mathcal{D}_X]$-modules.
Then, one has a completed bi-graded symmetric \BV-$\D$-algebra (c.f. Subsect. \ref{ssec: HoPDE and Quantization}) defined by 
$$\widehat{Sym}_{\mathcal{A}^{\ell}}\big(\underbrace{\mathfrak{g}[2]\oplus \mathbb{T}_{Sol}\oplus \mathcal{O}_{Sol}\oplus \mathbb{D}^{ver}\mathfrak{g}[-1]}_{\mathcal{V}_{\mathrm{BV}}}\big).$$

From the perspective of homological algebra and sheaf theory we may relax the projectivity requirements, but to ensure dualizability, we will have to replace our objects by homologically finite objects i.e. cofibrant $\mathcal{O}_{Sol}[\D]$-modules. 
\begin{definition}
\label{BVbundle}
\normalfont 
A \emph{Batalin-Vilkovisky sheaf} is a (possibly multi-graded) sheaf $\mathcal{E}_{BV}$ together with an isomorphism on global sections of (graded) 
$\mathcal{O}_{Sol}[\D_X]$-modules
$\mathcal{A}[\mathcal{D}_X]\otimes_{\mathcal{O}_E} \mathcal{E}_{\mathrm{BV}}^{*}\simeq\mathcal{V}_{BV},$ such that the components $\mathcal{V}_{\mathrm{BV}}^i$ are arbitrary $\mathcal{O}_{Sol}[\D]$-modules. If all components are finitely generated and projective $\mathcal{O}_{Sol}[\D_X]$-modules (in particular, vector $\D$-bundles over $Sol_{\mathcal{D}}$ 
c.f. \ref{sssec: Horizontal Jets}), we say that it is a \emph{BV-bundle}.
\end{definition}
Considering a \BV-sheaf, proceed by considering the space of symmetries as an $\mathcal{O}_{Sol}[\D]$-modules $\mathcal{C}$ given by the cone above. Considering a cofibrant replacement of this not necessarily projective module\footnote{In physics, one says there are reducible symmetries.}, in $\DG_{\D}(\mathcal{O}_{Sol})$, say $\mathcal{R}_{\mathcal{C}}^{\bullet},$ we may consider the complex
$$E_{Sol}^{\bullet}:=\big[\mathcal{R}_{\mathcal{C}}^{\bullet}\rightarrow \mathbb{T}_{Sol}[-1]\rightarrow \mathcal{O}_{Sol}\big],$$
with $\mathbb{T}_{Sol}$ in degree $-1$ and $\mathcal{O}_{Sol}$ in degree $0.$ Considering the symmetric algebra in $\mathcal{O}_{Sol}$-dg modules (\ref{ssec: De Rham Algebras}), then one has a quasi-isomorphism:
$$\mathbb{R}\mathcal{P}_{\mathcal{O}_{Sol},[1]}^*\big(\mathcal{C};\mathcal{O}_{Sol}\big)\xrightarrow{\simeq} \mathcal{P}_{\mathcal{O}_{Sol},[1]}^*\big(\mathcal{R}_{\mathcal{C}}^{\bullet};\mathcal{O}_{Sol}\big),$$
of complexes of dg-operations and therefore an induced evaluation pairing 
$$\mathcal{R}_{\mathcal{C}}^{\bullet}\boxtimes^{\mathbb{L}}\mathbb{R}\mathcal{P}_{\mathcal{O}_{Sol},[1]}^*(\mathcal{C};\mathcal{O}_{Sol})\rightarrow \mathbb{R}\Delta_*\mathcal{O}_{Sol},$$
since this is a finitely presented $\mathcal{O}_{Sol}\otimes\D_X$-resolution and the following is therefore immediate.

\begin{proposition}
\label{BV bundle and BV D algebra proposition}
Suppose that $\mathcal{E}_{BV}$ is a \BV-sheaf. Then 
$$\mathcal{A}_{\mathrm{BV}}\simeq \mathcal{S}ym_{\mathcal{O}_{Sol}}\big(\mathbb{R}\mathcal{P}_{\mathcal{O}_{Sol,[1]}}^*(\mathcal{C};\mathcal{O}_{Sol})\big)\otimes_{\mathcal{O}_X}Sym_{\mathcal{O}_{Sol}-dg}^*(E^{\bullet}),$$
is a cofibrant bi-differential $\mathcal{O}_{Sol}[\D]$-algebra. If $\mathcal{E}_{BV}$ is moreover the sheaf of sections of a \BV-bundle $E_{BV}\rightarrow E,$ there are natural isomorphisms of bi-graded $\mathcal{D}_X$-algebras
$\mathrm{Jet}^{\infty}\big(\mathcal{O}_{E_{BV}}\big)\cong \mathcal{A}_{BV}=Sym_{\mathcal{A}}\big(\mathcal{V}_{BV}\big).$
\end{proposition}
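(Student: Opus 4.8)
The plan is to treat the two assertions separately: first the algebraic structure and cofibrancy of $\mathcal{A}_{\mathrm{BV}}$, and then the geometric identification with the jet algebra of the BV-bundle. For the first, I would observe that by the free–forget adjunction (\ref{eqn: Free-forget AD-Mod/Alg Adjunction}) the functor $\mathcal{S}ym_{\mathcal{O}_{Sol}}$ sends a cofibrant $\mathcal{O}_{Sol}[\mathcal{D}_X]$-module to a cofibrant object of $\cdga_{\mathcal{O}_{Sol}/}$; since $\mathbb{R}\mathcal{P}_{\mathcal{O}_{Sol},[1]}^*(\mathcal{C};\mathcal{O}_{Sol})$ was arranged to be computed by the finitely presented resolution $\mathcal{R}_{\mathcal{C}}^{\bullet}$, it is itself cofibrant, and the same holds for $Sym_{\mathcal{O}_{Sol}-dg}^*(E^{\bullet})$ (with $E^{\bullet}=E_{Sol}^{\bullet}$) because $E^{\bullet}$ is assembled from $\mathcal{R}_{\mathcal{C}}^{\bullet}$ together with the dualizable modules $\mathbb{T}_{Sol}$ and $\mathcal{O}_{Sol}$. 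Cofibrancy of the tensor product then follows from the monoidal model structure on $\DG(\mathcal{O}_{Sol})$. The bi-differential structure is exhibited by recording two (anti)commuting derivations: the internal cohomological differential inherited from the resolutions, and the Koszul–Tate/BRST differential $D_{\mathrm{BV}}=\{\mathcal{S}_{\mathrm{BV}},-\}$ obtained from the evaluation pairing $\mathcal{R}_{\mathcal{C}}^{\bullet}\boxtimes^{\mathbb{L}}\mathbb{R}\mathcal{P}_{\mathcal{O}_{Sol},[1]}^*(\mathcal{C};\mathcal{O}_{Sol})\rightarrow \mathbb{R}\Delta_*\mathcal{O}_{Sol}$ together with the local Schouten bracket of Example \ref{ex: Hamiltonian Operators}.

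For the second assertion I would exploit that the algebraic jet functor $\mathrm{Jet}^{\infty}$ is left adjoint to the forgetful functor $U\colon\mathsf{CAlg}_X(\mathcal{D}_X)\rightarrow \mathsf{CAlg}_X(\mathcal{O}_X)$, hence preserves colimits and in particular commutes with symmetric algebras. Writing $\mathcal{O}_{E_{\mathrm{BV}}}\cong \mathrm{Sym}_{\mathcal{O}_E}(\mathcal{E}_{\mathrm{BV}}^{*})$ for a BV-bundle, this yields $\mathrm{Jet}^{\infty}(\mathcal{O}_{E_{\mathrm{BV}}})\cong \mathrm{Sym}_{\mathcal{A}}\big(\mathrm{Jet}^{\infty}(\mathcal{E}_{\mathrm{BV}}^{*})\big)$, where the symmetric algebra on the right is taken in $\mathcal{A}[\mathcal{D}_X]$-modules. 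It then remains to identify $\mathrm{Jet}^{\infty}(\mathcal{E}_{\mathrm{BV}}^{*})$ with the induced module $\mathrm{ind}_{\mathcal{A}[\mathcal{D}_X]}(\mathcal{E}_{\mathrm{BV}}^{*})=\mathcal{A}[\mathcal{D}_X]\otimes_{\mathcal{O}_E}\mathcal{E}_{\mathrm{BV}}^{*}$, which is precisely the horizontal-jet description of Subsect. \ref{sssec: Horizontal Jets}; by the defining isomorphism of Definition \ref{BVbundle} this is exactly $\mathcal{V}_{\mathrm{BV}}$, giving $\mathrm{Jet}^{\infty}(\mathcal{O}_{E_{\mathrm{BV}}})\cong \mathrm{Sym}_{\mathcal{A}}(\mathcal{V}_{\mathrm{BV}})\cong \mathcal{A}_{\mathrm{BV}}$ as bi-graded $\mathcal{D}_X$-algebras. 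The identification of gradings (ghost number versus jet/cohomological weight) is bookkeeping that follows from the shifts prescribed in the four summands of $\mathcal{V}_{\mathrm{BV}}$.

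The main obstacle, which I expect to occupy most of the work, is reconciling the derived operation-theoretic description $\mathbb{R}\mathcal{P}_{\mathcal{O}_{Sol},[1]}^*(\mathcal{C};\mathcal{O}_{Sol})$ with the strict geometric one when all four summands of $\mathcal{V}_{\mathrm{BV}}$ are genuine vector $\mathcal{D}$-bundles. Here I would reduce to induced modules and invoke Proposition \ref{prop: CDiffs} to identify internal homs with differential operators in total derivatives, and the computation $\mathbb{D}_{\mathcal{A}}^{ver}\big(\mathrm{ind}_{\mathcal{A}[\mathcal{D}]}^{r}(\mathcal{E})\big)\simeq \mathcal{A}\otimes\omega_X\otimes_{\mathcal{O}_X}\mathcal{E}^{\vee}\otimes_{\mathcal{O}_X}\mathcal{A}[\mathcal{D}_X][\dim_X]$ to see that the Verdier-dual summand $\mathbb{D}^{ver}\mathfrak{g}[-1]$ realizes the Serre dual of the fiber of the ghost bundle and pairs canonically with $\mathfrak{g}[2]$ (and likewise $\mathbb{T}_{Sol}$ with $\mathcal{O}_{Sol}$), reproducing the $(-1)$-shifted pairing on $\mathcal{V}_{\mathrm{BV}}$. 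This is what makes the cofibrant replacement $\mathcal{R}_{\mathcal{C}}^{\bullet}$ collapse, up to quasi-isomorphism, onto the projective module $\mathcal{C}$ itself, so that the derived and underived symmetric algebras agree and the assertion becomes rigorous rather than merely formal. Finally I would check naturality in $\mathcal{E}_{\mathrm{BV}}$, which is routine given that every functor used ($\mathrm{Jet}^{\infty}$, $\mathcal{S}ym$, $\mathrm{ind}_{\mathcal{A}[\mathcal{D}_X]}$, $\mathbb{D}_{\mathcal{A}}^{ver}$) is itself natural.
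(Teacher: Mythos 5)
Your argument is correct and takes essentially the same route as the paper, which presents the proposition as ``immediate'' from the preceding construction (the cofibrant replacement $\mathcal{R}_{\mathcal{C}}^{\bullet}$, the quasi-isomorphism of operation complexes, and the induced evaluation pairing); your proposal simply makes those steps explicit. In particular, the left-adjoint argument for $\mathrm{Jet}^{\infty}$ commuting with symmetric algebras and the identification of $\mathcal{A}[\mathcal{D}_X]\otimes_{\mathcal{O}_E}\mathcal{E}_{\mathrm{BV}}^{*}$ with $\mathcal{V}_{\mathrm{BV}}$ via Definition \ref{BVbundle} are exactly the intended mechanism for the second isomorphism.
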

It is also convenient in practice to introduce for a given \BV-bundle the notion of a \emph{BV-decomposition}-- a chosen splitting of the total space $$E_{\mathrm{BV}}=E_{\mathfrak{t}(1)}\times_X E_{\mathfrak{t}(2)}\times_X\ldots\times_X E_{\mathfrak{t}(n)},$$
by (graded or super) field bundles $\pi_{\mathfrak{t}(i)}:E_{\mathfrak{t}(i)}\rightarrow X$ of a certain type $\mathfrak{t}(i).$

\begin{remark}
    \BV-decompositions exists but there is no preferred way to choose them. This is the content of the Batchelor theorem for graded bundles which states that every graded bundle is non-canonically isomorphic to a split graded bundle and this split form is canonical, however the isomorphism implementing the splitting is non-canonical.
\end{remark}

By Proposition \ref{BV bundle and BV D algebra proposition} write $\mathbf{Spec}_{\mathcal{D}_X}(\mathcal{A}_{BV})$ for the associated algebraic $\D_X$-space and call this the BV-$\mathcal{D}_X$-theory. It étale local on $X$ and identifies with a pre-sheaf of fields $\underline{Sect}(X,E_{BV}).$ The image of the natural map
\begin{equation}
    \label{eqn:Local functionals map}
\mathcal{F}:h\big(\mathcal{A}_{BV}\big)= h\big(\mathrm{Jet}^{\infty}(\mathcal{O}_{E_{BV}})\big)\rightarrow \mathrm{Maps}\big(\underline{Sect}(X,E_{BV}),\underline{\mathbf{k}}\big),
\end{equation}
are general functionals of field and antifields for a BV-bundle $E_{\mathrm{BV}}$.
There is an evident pairing (c.f. Prop \ref{prop: Integration pairing})
$$\int:h^*\big(\mathsf{DR}^{var}(\mathcal{A}_{\mathrm{BV}})\big)\times H_{*,c}(X)\rightarrow \mathrm{Maps}\big(\underline{Sect}(X,E_{BV}),\underline{k}\big).$$
To properly formalize interactions $-$ especially in the terminology of a Feynman diagram with $V$ vertices $-$ one should extend the BV-$\mathcal{D}$-dg algebra by scalars to a coupling constants algebra i.e. 
$$\mathcal{A}_{BV}^{\lambda}:=\mathbb{C}[\![\lambda_{v_1},\ldots,\lambda_{V}]\!]\otimes\mathcal{A}_{BV}.$$ 
For the sake of exposition, just consider the extension by the algebra generated by formal variables $\hbar,g,\mathfrak{j}$ playing the role of Planck's constant, a coupling constant and a source field, respectively.  
     The left dg-$\mathcal{D}_X$-algebra and its right analog are
$$\mathcal{A}_{\mathrm{BV}}[\![\hbar,g,\mathfrak{j}]\!]:=\prod_{\alpha_1,\alpha_2,\alpha_3\in\mathbb{N}}\mathcal{A}_{\mathrm{BV}}\big<\hbar^{\alpha_1},g^{\alpha_2},\mathfrak{j}^{\alpha_3}\big>,\hspace{2mm}\text{ and }\hspace{1mm} \mathcal{A}_{\mathrm{BV}}^r[\![\hbar,g,\mathfrak{j}]\!]\simeq \Omega^{n,0}\big(\pi_{\mathrm{BV}}\big)[\![\hbar,g,\mathfrak{j}]\!],$$ 
so that elements $\mathcal{L}_{int}$ of the latter represent local interaction Lagrangian densities.
It follows that interaction functionals $\mathcal{S}_{int}$ are defined by the image of the map\footnote{This is the transgression of the compactly supported horizontal differential $n$-form $\mathcal{L}_{\mathrm{int}}\in \overline{\Omega}^n(\pi_{\mathrm{BV}})[\![\hbar,g,\mathfrak{j}]\!]$ to the mapping space $\mathrm{Maps}_{\mathrm{PShf}}\big(\underline{\Gamma}(X,E_{\mathrm{BV}}),\underline{\mathbf{k}}\big)[\![\hbar,g,\mathfrak{j}]\!]$ in the pre-sheaf category.} induced from (\ref{eqn:Local functionals map}),
$$h\big(\mathcal{A}_{\mathrm{BV}}\big)[\![\hbar,g,\mathfrak{j}]\!]\rightarrow \mathrm{Maps}\big(\underline{Sect}(X,E_{BV}),\underline{\mathbf{k}}\big)[\![\hbar,g,\mathfrak{j}]\!].$$
\subsubsection{Factorization approach to $\D$-geometric \BV}
Geometrically, the space $$\EQ_{BV}=\mathbb{R}\underline{Spec}_{\mathcal{D}}(\mathcal{A}_{BV}),$$ is understood as the quotient $$\mathbb{R}Spec(\mathcal{A}/\mathcal{I}_{\mathcal{S}})/ (\mathcal{N}_{\mathcal{S}}^r/\mathcal{I}_{\mathcal{S}}^r),$$ by the Lie algebroid of on-shell gauge symmetries. The $\D$-geometric algebroid structure is encoded by the bracket obtained via the restriction of the local bracket on vector fields, which factors as
$$[-,-]^*:ker(i_{d\mathcal{S}})^r\boxtimes ker(i_{d\mathcal{S}})^r\rightarrow \Delta_*ker(i_{d\mathcal{S}})^r\hookrightarrow \Delta_*\Theta_{\mathcal{A}}.$$
The algebroid action then reads $ker(i_{d\mathcal{S}})^r\boxtimes \mathcal{A}_{EL}^r\rightarrow \Delta_*\mathcal{A}_{EL}^r.$
\vspace{1.5mm}

Suppose that $\mathcal{E}_{\mathrm{BV}}$ is a \BV sheaf of $\mathcal{O}_X$-modules with the structure of a Lie algebroid over $X$.

Then $\mathcal{V}_{\mathrm{BV}}$ is the corresponding $\mathrm{Lie}^*$-algebroid i.e. 
$\mathcal{V}_{\mathrm{BV}}\simeq \mathrm{ind}_{\mathcal{A}[\mathcal{D}_X]}\big(\mathcal{E}_{\mathrm{BV}}^{\vee}\big).$
It follows that $\mathcal{V}_{\mathrm{BV}}^{Ran}:=\Delta_*\mathcal{V}_{\mathrm{BV}},$ with $\Delta:X\hookrightarrow Ran_X$ determines a Lie algebra object in $\mathsf{Mod}(\D_{Ran_X})$ for the local tensor product $\otimes^*.$ It is also a factorization $\mathcal{O}_{\mathcal{A}_{EL}}[\D_{Ran_X}]$-module and if $\mathcal{E}_{BV}$ is moreover a \BV-bundle, it is a factorization $\D$-bundle over canonical the factorization $\D$-space obtained from $Spec_{\mathcal{D}}(\mathcal{A}_{EL}).$ 
Notice that $\mathcal{V}_{\mathrm{BV}}^{Ran}=\{\mathcal{V}_{\mathrm{BV}}^{(I)}\}$ is such that $\mathcal{V}_{\mathrm{BV}}^{(I)}:=\Delta_*^{(I)}\mathrm{ind}_{\mathcal{A}[\mathcal{D}_X]}(\mathcal{E}_{\mathrm{BV}}^{\vee}).$  Consider the corresponding co-commutative coalgebra object
\begin{equation}
\label{eqn: BV-CE}
\mathrm{CE}_{\otimes^*}^{\bullet}\big(\mathcal{V}_{\mathrm{BV}}\big):=\mathrm{Sym}^{\bullet,\otimes^*}\big(\mathbb{D}^{ver}(\mathcal{V}_{\mathrm{BV}})[-1]\big),
\end{equation}
via derived local duality (\ref{eqn: Derived Inner Verdier Dual}). Since the underlying $\D$-module object is supported on the main diagonal, 
(\ref{eqn: BV-CE}) is specified by a collection
$$\mathrm{CE}_{\otimes^{\star}}^{\bullet}(\mathcal{V}_{\mathrm{BV}})^{Ran}:=\big\{\mathrm{CE}_{\otimes^{\star}}(\mathcal{V}_{\mathrm{BV}})^{(I)}\},\hspace{1mm}\text{ where }\hspace{1mm}\mathrm{CE}_{\otimes^{\star}}^{\bullet}(\mathcal{V}_{\mathrm{BV}})^{(I)}\simeq\big(\bigoplus_{\alpha:I\rightarrow J}\bigboxtimes_{j\in J}\mathcal{V}_{\mathrm{BV}}^{(I_j)}[| J|]\big)_{\Sigma_{|I|}}.$$
One may then observe:
$$\Delta^{*}\big(CE_{\otimes^{\star}}^{\bullet}(\Delta_*\mathcal{V}_{\mathrm{BV}})\big)^{(I)}\simeq \bigoplus_{k\in \mathbb{Z}_{>0}}\Delta^{*}Sym^{k,\otimes^{\star}}\big(\Delta_*\mathcal{V}_{\mathrm{BV}}\big)\simeq \bigoplus_k Sym^{k,!}(\mathcal{V}_{\mathrm{BV}}),$$
via the $\otimes_{\mathcal{O}_X}$-symmetric algebra.

A homotopy Lie-algebroid object $\mathcal{L}^{\bullet}$ in $(\infty,1)$-category of $\mathcal{A}^{\ell}[\mathcal{D}_X]$-modules determines an associated formal derived algebraic $\mathcal{D}_X$-stack \cite{KSYI} via
$$\EQ_{\mathcal{L}}:=\mathbf{Spec}_{\mathcal{D}_X}\big(\mathrm{CE}_{\mathcal{D}}(\mathcal{L})\big)=\mathbf{Spec}_{\mathcal{D}_X}\big(\mathrm{Sym}_{\mathcal{A}}^*(\mathcal{L}^{\circ}[-1])\big).$$

\begin{proposition}
\label{Derived Quotient as BV Theorem}
$\mathcal{V}_{\mathrm{BV}}^{(1)}$ is a $\mathrm{Lie}^*$-algebroid over $\mathrm{Spec}_{\mathcal{D}_X}(\mathcal{A}_{\mathrm{EL}}),$ such that there is a natural map
$$\mathcal{\mathbf{Spec}}_{\mathcal{D}_X}\big(\Delta^{!}\mathrm{CE}_{\otimes^*}^{\bullet}(\mathcal{V}_{\mathrm{BV}})\big)\rightarrow \big[\mathbf{Spec}_{\mathcal{D}_X}(\mathcal{A}_{\mathrm{EL}}/\mathcal{V}_{\mathrm{BV}})\big],$$ yielding by abuse of notation, an equivalence
$$\Gamma\big(\big[\mathbf{Spec}_{\mathcal{D}_X}/\mathcal{V}_{\mathrm{BV}}\big],\mathcal{O}_{[\mathbf{Spec}_{\mathcal{D}}(\mathcal{A}_{\mathrm{EL}})/\mathcal{V}_{\mathrm{BV}}]}\big)\xrightarrow{\simeq}\mathrm{CE}_{\otimes^*}^{\bullet}(\mathcal{V}_{\mathrm{BV}}).$$

\end{proposition}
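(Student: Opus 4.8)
The plan is to first equip $\mathcal{V}_{\mathrm{BV}}^{(1)}$ with its $\mathrm{Lie}^*$-algebroid structure, then present the derived quotient as the geometric realization of the action groupoid, and finally identify the resulting totalization of functions with the Chevalley--Eilenberg complex via a diagonal-restriction computation. For the first step, I would use the short exact sequence (\ref{eqn:Short exact Noether symmetry sequence}) together with the factorization of the local vector-field bracket through $\ker(i_{d\mathcal{S}})^r.$ The anchor is the composite $\mathcal{V}_{\mathrm{BV}}^{(1)}\hookrightarrow \Theta_{\mathcal{A}}\twoheadrightarrow \mathcal{I}_{EL},$ and the action on on-shell functions is precisely the map $\ker(i_{d\mathcal{S}})^r\boxtimes \mathcal{A}_{EL}^r\rightarrow \Delta_*\mathcal{A}_{EL}^r$ recorded just above the statement; the $\mathrm{Lie}^*$-Jacobi identity and the Leibniz compatibility follow from the corresponding identities for $[-,-]^*$ on $\Theta_{\mathcal{A}}$ (Subsect. \ref{ssec: Local Calculus}), restricted to the kernel, which is closed under the bracket by construction. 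This establishes the first assertion.

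For the comparison map I would invoke the general principle, recalled before the statement, that the formal derived $\D_X$-stack attached to a homotopy $\mathrm{Lie}^*$-algebroid $\mathcal{L}$ is $\EQ_{\mathcal{L}}=\mathbf{Spec}_{\mathcal{D}_X}\big(\mathrm{CE}_{\mathcal{D}}(\mathcal{L})\big)=\mathbf{Spec}_{\mathcal{D}_X}\big(\mathrm{Sym}_{\mathcal{A}}^*(\mathcal{L}^{\circ}[-1])\big).$ The derived quotient $[\mathbf{Spec}_{\mathcal{D}}(\mathcal{A}_{EL})/\mathcal{V}_{\mathrm{BV}}]$ is the realization of the simplicial $\D_X$-scheme furnished by the action groupoid of $\mathcal{V}_{\mathrm{BV}}^{(1)}$ on $\mathrm{Spec}_{\mathcal{D}}(\mathcal{A}_{EL}),$ whose $n$-th level involves $n$-fold $\otimes^*$-powers of $\mathcal{V}_{\mathrm{BV}}.$ Passing to global functions turns this realization into a totalization (cobar complex), and the natural map in the statement is obtained by exhibiting $\Delta^!\mathrm{CE}_{\otimes^*}^{\bullet}(\mathcal{V}_{\mathrm{BV}})$ as the normalization of this cosimplicial object. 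The crucial input here is the identification recorded before the statement,
\[
\Delta^{*}\big(\mathrm{CE}_{\otimes^{*}}^{\bullet}(\Delta_*\mathcal{V}_{\mathrm{BV}})\big)^{(I)}\simeq \bigoplus_{k}\mathrm{Sym}^{k,!}(\mathcal{V}_{\mathrm{BV}}),
\]
which identifies the underlying graded object of $\Delta^!\mathrm{CE}_{\otimes^*}$ with the $\otimes^!$-symmetric algebra on $\mathbb{D}^{ver}(\mathcal{V}_{\mathrm{BV}})[-1]$; this is the object that carries the CE differential assembled from the anchor and bracket of the first paragraph, and it is this diagonal restriction that is denoted $\mathrm{CE}_{\otimes^*}^{\bullet}(\mathcal{V}_{\mathrm{BV}})$ in the (abusive) target of the equivalence.

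The equivalence on global sections then reduces, by $\D$-étale descent and the affineness of each groupoid level, to checking that the CE differential agrees with the alternating-sum differential of the cobar complex. To make sense of all duals occurring in $\mathbb{D}^{ver}(\mathcal{V}_{\mathrm{BV}})$ one must first replace the space of symmetries $\mathcal{C}$ (the cone of the insertion map) by a finitely presented cofibrant $\mathcal{O}_{Sol}[\D]$-resolution $\mathcal{R}_{\mathcal{C}}^{\bullet}$ as in Proposition \ref{BV bundle and BV D algebra proposition}, so that the evaluation pairing $\mathcal{R}_{\mathcal{C}}^{\bullet}\boxtimes^{\mathbb{L}}\mathbb{R}\mathcal{P}_{\mathcal{O}_{Sol},[1]}^*(\mathcal{C};\mathcal{O}_{Sol})\rightarrow \mathbb{R}\Delta_*\mathcal{O}_{Sol}$ is available; this is exactly the hypothesis guaranteeing that $\mathcal{A}_{\mathrm{BV}}\simeq \mathrm{Sym}_{\mathcal{A}}(\mathcal{V}_{\mathrm{BV}})$ is the $\D$-algebra of functions on the BV-space, so that the totalization converges under the eventual coconnectivity assumptions of Subsect. \ref{ssec: Finiteness}.

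I expect the main obstacle to be precisely this final matching of differentials in the homotopy-coherent $\D$-module setting: unlike the classical case, the bracket and anchor take values in $\Delta_*$-pushforwards along the diagonal, so the CE differential is a family of local cooperations rather than a single map, and proving its coincidence with the cobar differential requires tracking the factorization and diagonal compatibilities through the $\otimes^*$-versus-$\otimes^!$ comparison together with the higher coherences. The remaining points --- the Jacobi identity for the restricted bracket, affineness at each groupoid level, and convergence of the totalization --- are routine given the machinery already developed.
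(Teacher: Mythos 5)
The paper offers no proof of this proposition: it is asserted directly on the basis of the constructions immediately preceding it, namely the formula $\EQ_{\mathcal{L}}=\mathbf{Spec}_{\mathcal{D}_X}\big(\mathrm{CE}_{\mathcal{D}}(\mathcal{L})\big)=\mathbf{Spec}_{\mathcal{D}_X}\big(\mathrm{Sym}_{\mathcal{A}}^*(\mathcal{L}^{\circ}[-1])\big)$ for the formal derived $\D_X$-stack attached to a homotopy $\mathrm{Lie}^*$-algebroid, together with the diagonal-restriction identification $\Delta^{*}\big(\mathrm{CE}_{\otimes^{*}}^{\bullet}(\Delta_*\mathcal{V}_{\mathrm{BV}})\big)\simeq \bigoplus_k \mathrm{Sym}^{k,!}(\mathcal{V}_{\mathrm{BV}})$. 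In that framework the quotient $[\mathbf{Spec}_{\mathcal{D}}(\mathcal{A}_{\mathrm{EL}})/\mathcal{V}_{\mathrm{BV}}]$ is the formal moduli problem controlled by the algebroid, and the equivalence on global sections is essentially definitional once the $\mathrm{Lie}^*$-algebroid structure and the $\otimes^*$-versus-$\otimes^!$ comparison are in place. Your overall strategy is consistent with this in spirit, and your use of the diagonal-restriction identity and of the cofibrant resolution $\mathcal{R}_{\mathcal{C}}^{\bullet}$ to make the Verdier duals meaningful matches what the paper's setup requires.

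That said, two points in your write-up are genuine problems. First, the anchor you propose, the composite $\mathcal{V}_{\mathrm{BV}}^{(1)}\hookrightarrow \Theta_{\mathcal{A}}\twoheadrightarrow \mathcal{I}_{EL}$, is not an anchor: an anchor for an algebroid over $\mathrm{Spec}_{\mathcal{D}_X}(\mathcal{A}_{\mathrm{EL}})$ must land in derivations of $\mathcal{A}_{\mathrm{EL}}$, and in any case $\mathcal{V}_{\mathrm{BV}}$ is the full graded object $\mathfrak{g}[2]\oplus\mathbb{T}_{Sol}\oplus\mathcal{O}_{Sol}\oplus\mathbb{D}^{ver}\mathfrak{g}[-1]$, which does not embed in $\Theta_{\mathcal{A}}$; the correct anchor datum is the action operation $\ker(i_{d\mathcal{S}})^r\boxtimes\mathcal{A}_{EL}^r\rightarrow\Delta_*\mathcal{A}_{EL}^r$ that you also cite, extended to the graded pieces. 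Second, your presentation of the derived quotient as the geometric realization of an action \emph{groupoid} whose $n$-th level is built from $n$-fold $\otimes^*$-powers presupposes an integration of the $\mathrm{Lie}^*$-algebroid to a groupoid object in $\D_X$-prestacks; no such integration is available or invoked in the paper, which works throughout with the formal (infinitesimal) quotient defined via the Chevalley--Eilenberg algebra. The difficulty you flag --- matching the CE differential with a cobar differential through the $\Delta_*$-valued local operations --- is an artifact of this detour and disappears if you instead take the quotient to be the formal moduli problem $\EQ_{\mathcal{V}_{\mathrm{BV}}}$ and verify only that $\Delta^!$ intertwines the $\otimes^*$-CE construction with the $\otimes^!$-symmetric algebra carrying the induced differential.
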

With additional hypothesis, it is possible to further express the factorization homology as the Lie algebroid hypercohomology of a BV-sheaf. For example, if it is an elliptic local Lie algebra \cite{CosGwi02,CosGwi01} with an invariant pairing, then putting $\mathcal{A}_{\mathrm{BV}}^{\bullet}:=\Delta^{!}\mathrm{CE}_{\otimes^*}^{\bullet}\big(\mathcal{V}_{\mathrm{BV}}\big)\in \mathsf{CAlg}_X(\D_X),$ it inherits a shifted local Poisson structure and there is an equivalence of monoids in commutative $k$-algebras,
$$\int_X\mathcal{A}_{\mathrm{BV}}^{\bullet}\xrightarrow{\simeq} \mathrm{CE}\big(\mathbb{R}\Gamma\big(X,\mathcal{E}_{\mathrm{BV}})\big).$$

\subsection{Intrinsic symmetries via $\D$-geometry.}
\label{sssec: Intrinsic formulation via D-geometry}
To conclude, we sketch a possible candidate space of $\BV$ gauge symmetries using global derived $\D$-geometry. A more complete treatment via the de Rham space approach based on (\ref{eqn: MainEquiv}) will be given elsewhere.

Consider a differentially generated $\mathcal{D}_X$-ideal sheaf
$\mathcal{I}=Span\big\{\mathsf{F}_1,\ldots,\mathsf{F}_N\big\}$
where, we suppose that $\mathcal{A}=Sym_{\mathcal{O}_X}(\mathcal{D}_X^n),$ and consider the defining sequences for its quotient $\mathcal{D}$-algebra of functions $\mathcal{B}$,
\begin{equation}
    \label{eqn: SES Example}
span\{\mathsf{F}_1,\ldots,\mathsf{F}_N\}\rightarrow Sym_{\mathcal{O}_X}(\mathcal{D}_X^n)\rightarrow \mathcal{B},
\end{equation}
whose differential polynomials $\mathsf{F}_i(x,\partial_x),i=1,\ldots,N,$ are of fixed $ord(\mathsf{F}_i):=d_i$ for $i=1,\ldots,N.$
Sequences (\ref{eqn: SES Example}) obviously define a $\mathcal{D}$-space $$Sol_{\mathcal{D}_X}(\mathcal{I})\simeq \{(f_1,\ldots,f_n)|\mathsf{F}_i(f_1,\ldots,f_n)=0\},$$ for some $f_i\in\mathcal{O}_X,$ and we consider the affine $\mathcal{D}$-geometry of the inclusion of this $\D_X$-space into a derived $\D_X$-space as a homotopical thickening
$$j:Sol_{\mathcal{D}_X}\hookrightarrow \mathbb{R}Sol_{\mathcal{D}_X}.$$ 
Consider the naturally arising (homotopy fiber) sequence of tangent $\D$-complexes:
\begin{equation}
    \label{eqn: Tangent D-Complex Fiber Sequence for RCRIT}
\mathbb{T}\big(Sol_{\mathcal{D}_X}/\mathbb{R}Sol_{\mathcal{D}_X}\big)\rightarrow \mathbb{T}\big(Sol_{\mathcal{D}_X}\big)\rightarrow j^*\mathbb{T}\big(\mathbb{R}Sol_{\mathcal{D}_X}\big).
\end{equation}
The intrinsic definition of $\mathcal{D}$-\emph{gauge symmetries} might be posed in terms of the relative tangent complex $\mathbb{T}\big(Sol_{\mathcal{D}_X}/\mathbb{R}Sol_{\mathcal{D}_X}\big).$ Forgetting the $\mathcal{D}$-action this object has the structure of a Lie-algebroid object in quasi-coherent sheaves on derived (Artin) stacks i.e. it is an $L_{\infty}$-algebroid which we view as a Lie algebra object in the $\infty$-category of homotopically finitely-presented $\mathcal{D}_X$-modules over $\mathcal{A}$.
The point now is to construct a derived reduction of $\mathbb{R}Sol_{\mathcal{D}}$ by a suitable Hamiltonian action.\footnote{Supposing we have a retraction of $j$ such that 
$\mathbb{T}\big(\mathbb{R}Sol_{\mathcal{D}_X}/Sol_{\mathcal{D}_X}\big)\rightarrow \mathbb{T}\big(\mathbb{R}Sol_{\mathcal{D}_X}\big)$ induces a Hamiltonian flow, the desired reduction of $\mathbb{R}Sol_{\mathcal{D}_X}$ by this Hamiltonian action is just the so-called homotopical $\mathcal{D}$-Poisson reduction studied in \cite{Pau01}, essentially corresponding to the explicit presentation recalled in Sect.\ref{ssec: Variational Calculus and BV Formalism in D Geometry}.}

Assume that: $j$ is an $n$-shifted coisotropic morphism in a suitable $\D$-sense, which may be obtained for instance, by transporting the notion of $n$-shifted coisotropic morphism of derived stacks via the equivalence (\ref{eqn: MainEquiv}), and that $\mathbb{R}Sol_{\mathcal{D}}$ is reasonably $(-1)$-shifted as an object of $\mathsf{DStk}_{/X_{dR}},$ under (\ref{eqn: MainEquiv}).

Under these assumptions, one has a map of fiber sequences of dg-$\mathcal{A}[\mathcal{D}_X]$ modules

\begin{equation}
    \label{eqn: GaugeSymmetryDiagram}
\begin{tikzcd}
   \mathbb{L}_{Sol_{\mathcal{D}}/\mathbb{R}Sol}[-1]\arrow[d] \arrow[r] & j^*\mathbb{L}_{\mathbb{R}Sol_{\mathcal{D}}}\arrow[d] \arrow[r] & \mathbb{L}_{Sol_{\mathcal{D}}}\arrow[d]
   \\
   \mathbb{T}_{Sol_{\mathcal{D}}}[-n]\arrow[r] & j^*\mathbb{T}_{\mathbb{R}Sol_{\mathcal{D}}}^{\ell}[-n]\arrow[r]& \underbrace{\mathbb{T}_{Sol_{\mathcal{D}}/\mathbb{R}Sol}^{\ell}[1-n]}_{\textcolor{blue}{\mathcal{D}-\text{Geometric gauge symmetries}}}
\end{tikzcd}
\end{equation}

Now consider the formal completion of the solutions inside its homotopical enhancement, denoted $\mathbb{R}\mathcal{C}_{\mathcal{D}_X}.$ There is a natural morphism $Sol_{\mathcal{D}_X}\rightarrow \mathbb{R}\mathcal{C}_{\mathcal{D}_X},$ and if it satisfies base-change along reduced affine $\mathcal{D}$-schemes mapping to $\mathbb{R}\mathcal{C}_{\mathcal{D}}$ in classical $\D$-prestacks, that is, if
$$\mathbb{R}Sol_{\mathcal{D}_X}\rightarrow \mathbb{R}\mathcal{C}_{\mathcal{D}_X},$$
is an equivalence of reduced objects i.e. on reduced classical $\mathcal{D}_X$-prestacks, the relative tangent $\mathcal{D}$-space $\mathbb{T}_{Sol_{\mathcal{D}_X/\mathbb{R}\mathcal{C}_{\mathcal{D}}}}$ is a dg-Lie algebroid in $\mathcal{D}$-modules satisfying 
$$\mathbb{T}(Sol_{\mathcal{D}_X}/\mathbb{R}\mathcal{C}_{\mathcal{D}})\simeq \mathbb{L}(Sol_{\mathcal{D}_X}/\mathbb{R}Sol_{\mathcal{D}_X})[-2].$$ 

Its dg-$\mathcal{O}_{Sol_{\mathcal{D}_X}}$-algebra in $\mathcal{D}_X$-modules is
$Sym_{\mathcal{O}_{Sol_{\mathcal{D}}}}\big(\mathbb{D}^{loc}\mathbb{T}(Sol_{\mathcal{D}}/\mathbb{R}\mathcal{C}_{\mathcal{D}})[-1]\big),$
under the appropriate derived local $\mathcal{D}$-module Verdier duality (\ref{eqn: Derived Inner Verdier Dual}). 
We thus obtain a complex of $p$-forms (décalage), as $Sym_{\mathcal{O}}^p(\mathbb{D}^{loc}(\mathbb{T}(Sol_{\mathcal{D}}/\mathbb{R}\mathcal{C}_{\mathcal{D}})[-1])[p].$
\vspace{2mm}

In the presence of a boundary the situation is more involved, and to conclude the paper we allocate the remaining space to highlight the mathematical nature of including boundary conditions and comment on some challenges. Our main result in the following section proposes a complex of $\D$-geometric gauge symmetries in the presence of a boundary in Proposition \ref{prop: Boundary Symmetries} of Subsect. \ref{sssec: Shifted Geometry of Boundary Constraints} 

\section{Application: BV-Boundary Conditions}
\label{sec: Applications 2}
Consider $\underline{Adm}^{\partial}\subset \underline{Sect}(X,E),$ the sub presheaf of admissiable boundary conditions defined as sub-functors $\underline{L}\rightarrow \underline{E},$ such that $dim(L)=k<dim(E)$ and the intersection $\underline{L}\times \underline{\partial E}\neq \emptyset.$ Any reasonable notion of a Lagrangian variational problem with free boundary conditions should include the natural transformation 
$\mathcal{S}:\underline{Adm}^{\partial}\rightarrow \underline{k}$. 

If $\partial X$ is a boundary of $X$, we will show it induces a boundary $\mathcal{D}$-scheme $\partial \mathrm{Jet}^{\infty}$ of the space of jets and for any $u\in Sect(X,E)$ the map $j_{\infty}(u)$ sends $\partial X$ into $\partial\mathrm{Jet}^{\infty}.$  So, if $\omega$ is the differential of some form vanishing on $\partial\mathrm{Jet}^{\infty}$, then $j_{\infty}^*(u)(\omega)$ will be the differential of some form vanishing on $\partial X.$ By Stokes formula, the action
determined by $\omega$, evaluated on $X$ , is zero.
In other words, the action of $\omega$ is given only by its cohomology class modulo $\partial\mathrm{Jet}^{\infty}.$ 

Therefore we consider the forms on the jet space vanishing on the boundary jet-scheme i.e. relative forms defined by the differential ideal $ker(\iota_{\partial}^*)$ in $\Omega(\mathrm{Jet}^{\infty}(E)).$ 

One may show there is a short-exact sequence of complexes for $q\geq 0,$
$$0\rightarrow \Omega^{\bullet,q}(\mathrm{Jet}^{\infty}/\partial\mathrm{Jet}^{\infty})\rightarrow\Omega^{\bullet,q}\big(\mathrm{Jet}^{\infty}(E)\big)\rightarrow \Omega^{\bullet,q}(\partial\mathrm{Jet}^{\infty})\rightarrow 0,$$
inducing a long exact cohomology sequence
$$0\rightarrow \overline{H}^0(\mathrm{Jet}^{\infty}/\partial\mathrm{Jet}^{\infty})\xrightarrow{H^0(i)}\overline{H}^0(\mathrm{Jet}^{\infty})\xrightarrow{\overline{H}^0(r)}\overline{H}^0(\partial\mathrm{Jet}^{\infty})\rightarrow$$
$$\xrightarrow{\overline{\partial}}\overline{H}^1(\mathrm{Jet}^{\infty}/\partial\mathrm{Jet}^{\infty})\xrightarrow{H^1(i)}\overline{H}^1(\mathrm{Jet}^{\infty})\xrightarrow{\overline{H}^1(r)}\overline{H}^1(\partial\mathrm{Jet}^{\infty})\rightarrow\cdots$$
With additional regularity hypothesis on the systems of differential equations (e.g. $\ell$-normality),
we obtain sequences for all $q\geq 0,$
$$0\rightarrow h(\Omega^q_{\partial\mathrm{Jet}^{\infty}/X})\rightarrow h\big(\Omega_{\mathrm{Jet}^{\infty}/\partial\mathrm{Jet}^{\infty}}^q\big)\rightarrow h(\Omega_{\mathrm{Jet}^{\infty}/X}^q)\rightarrow 0.$$
Let $\mathcal{S}$ be an element of $\overline{H}^n\big(\mathrm{Jet}^{\infty}(\pi),\pi_{\infty}^{-1}(\partial E)\big)$ and $\mathcal{S}_{\partial}\in\overline{H}^{n-1}\big(\mathrm{Jet}^{\infty}(\pi)\big).$
They correspond (c.f. Sect. \ref{ssec: Applications to Field Theory}) to natural transformations
\begin{equation}
\underline{\mathcal{S}}:\underline{\mathcal{A}\mathrm{dm}(\pi)}\ni L\rightarrow \underline{\mathcal{S}}(L)=j_{\infty}^*(L)\in H^n(L,\partial L)\cong \mathbb{R}
\end{equation}
\begin{equation}
\underline{\mathcal{S}_{\partial}}:\Sigma\in \underline{\partial\mathcal{A}\mathrm{dm}(\pi)}:=\{\partial L|L\in\mathcal{A}\text{dm}(\pi)\}\longmapsto j_{\infty}(\Sigma)^*\in H^{n-1}(\Sigma).
\end{equation}
When $\Sigma$ is orientable and oriented as an $(n-1)$-dimensional sub-variety, then $H^{n-1}(\Sigma)\cong \mathbb{R}.$
One then considers the \emph{total functional}
$$\mathcal{S}_{Tot}:L\rightarrow \mathcal{S}_{Tot}(L):=\underline{\mathcal{S}}(L)+\underline{\mathcal{S}_{\partial}}(\partial L)\in \mathbb{R}.$$
To make sense of $\mathcal{S}_{Tot}$ via an integration map (\ref{eqn: Integration pairing}) one invokes the use of flag varieties, which permit us to simultaneously integrate over an $n$-dimensional and $(n-1)$-dimensional space.

\subsection{$\mathcal{D}$-Geometric Boundary Schemes}
Suppose $X$ is a $d$-dimensional manifold with $\partial X\neq \emptyset.$ Denote the inclusion of the boundary $i:\partial X\hookrightarrow X.$

Consider the transfer $\mathcal{D}$-module,
$\mathcal{D}_{\partial X\hookrightarrow X}:=\mathcal{O}_{\partial X}\otimes_{i^{-1}\mathcal{O}_X}i^{-1}\mathcal{D}_X,$ as a sheaf of $i^{-1}\mathcal{D}_X^{op}$-modules on $\partial X.$
The usual $\mathcal{D}$-module operations are defined:
$$i_*:\mathsf{Mod}(\mathcal{D}_{\partial X}^{op})\rightarrow \mathsf{Mod}(\mathcal{D}_X^{op}),\hspace{1mm} i_*\mathcal{M}^{\bullet}:=Ri_*(\mathcal{M}^{\bullet}\otimes_{\mathcal{D}_X}^{\mathbb{L}}\mathcal{D}_{\partial X\hookrightarrow X}),$$ where the $\mathcal{D}_X$-module structure comes from the natural map $\mathcal{D}_X\rightarrow i_*i^{-1}\mathcal{D}_X.$ Similarly, put 
$$Li^*\mathcal{M}^{\bullet}:=\mathcal{D}_{\partial X\hookrightarrow X}\otimes_{i^{-1}\mathcal{D}_X}^{\mathbb{L}}i^{-1}\mathcal{M}^{\bullet}.$$
Note that we have
$i_*\mathcal{D}_{\partial X}\simeq i_*(\mathcal{D}_{\partial X\hookrightarrow X}), Li^*(\mathcal{D}_X)\simeq \mathcal{D}_{\partial X\hookrightarrow X}.$

Analogously to the usual Spencer resolution $DR_X^{\bullet}(\mathcal{D}_X)$ we consider $$DR_{\partial X}^{\bullet}(\mathcal{D}_X)\simeq i_{\partial *}\big(\Omega_{\partial X}^{\bullet}\otimes_{i_{\partial}^{-1}\mathcal{O}_X}^{\mathbb{L}}i_{\partial}^{-1}\mathcal{D}_X\big),$$ and note the existence of a morphism of sheaves on $X,$
$$\widetilde{i}_{\partial}^*:DR_X^{\bullet}(\mathcal{D}_X)\rightarrow DR_{\partial X}^{\bullet}(\mathcal{D}_X),$$
induced by the pull-back of forms along $i_{\partial}.$ 

\begin{proposition}
The sheaf of smooth densities $Dens(X/\partial X)$ on $X$ relative to the boundary $\partial X$, has a Spencer resolution as a $\mathcal{D}_X^{op}$-module and there is a quasi-isomorphism $Cone(\widetilde{i}_{\partial}^*)[d_X-1]\xrightarrow{\simeq} Dens(X/\partial X).$
\end{proposition}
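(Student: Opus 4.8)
The plan is to present both the source and the target of $\widetilde{i}_{\partial}^{*}$ as Spencer-type resolutions of right $\mathcal{D}_X$-modules, and then to identify the mapping cone with the relative de Rham--Spencer complex, whose top cohomology is by definition $Dens(X/\partial X)$. First I would recall the classical de Rham (Spencer) resolution for right $\mathcal{D}$-modules: the complex $DR_X^{\bullet}(\mathcal{D}_X)=\Omega_X^{\bullet}\otimes_{\mathcal{O}_X}\mathcal{D}_X$, placed in degrees $0,\dots,d_X$ with its right $\mathcal{D}_X$-module de Rham differential, is built from induced (hence $\mathcal{D}_X$-flat) modules and is quasi-isomorphic to $\omega_X[-d_X]$. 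Identifying $\omega_X\cong Dens(X)$ exhibits $DR_X^{\bullet}(\mathcal{D}_X)$ as a Spencer resolution of smooth densities on $X$, with cohomology concentrated in degree $d_X$.

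Next, using the transfer-module identities $Li^{*}\mathcal{D}_X\simeq\mathcal{D}_{\partial X\hookrightarrow X}$ and $i_{*}\mathcal{D}_{\partial X}\simeq i_{*}\mathcal{D}_{\partial X\hookrightarrow X}$ recorded above, I would show that $DR_{\partial X}^{\bullet}(\mathcal{D}_X)=i_{\partial *}(\Omega_{\partial X}^{\bullet}\otimes_{i_{\partial}^{-1}\mathcal{O}_X}i_{\partial}^{-1}\mathcal{D}_X)$ computes the $\mathcal{D}$-module push-forward $i_{*}\omega_{\partial X}$: the interior Spencer resolution on $\partial X$ gives $\omega_{\partial X}\otimes^{\mathbb{L}}_{\mathcal{D}_{\partial X}}\mathcal{D}_{\partial X\hookrightarrow X}\simeq \Omega_{\partial X}^{\bullet}\otimes_{i_{\partial}^{-1}\mathcal{O}_X}i_{\partial}^{-1}\mathcal{D}_X$, and applying $Ri_{\partial *}$ yields exactly $DR_{\partial X}^{\bullet}(\mathcal{D}_X)$, with cohomology concentrated in degree $d_X-1$ and equal to $i_{*}Dens(\partial X)$.

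The heart of the argument is that pullback of forms $i_{\partial}^{*}\colon\Omega_X^{p}\to i_{\partial *}\Omega_{\partial X}^{p}$ is termwise surjective, so $\widetilde{i}_{\partial}^{*}$ is a termwise surjection of complexes of induced modules, with kernel the relative de Rham--Spencer complex $K^{\bullet}:=\Omega^{\bullet}(X,\partial X)\otimes_{\mathcal{O}_X}\mathcal{D}_X$, where $\Omega^{p}(X,\partial X):=\ker(i_{\partial}^{*})$ is the sheaf of $p$-forms vanishing on the boundary. This gives a short exact sequence of complexes, hence a distinguished triangle $K^{\bullet}\to DR_X^{\bullet}(\mathcal{D}_X)\xrightarrow{\widetilde{i}_{\partial}^{*}}DR_{\partial X}^{\bullet}(\mathcal{D}_X)\to K^{\bullet}[1]$, and the identification $Cone(\widetilde{i}_{\partial}^{*})\simeq K^{\bullet}[1]$, again a complex of induced right $\mathcal{D}_X$-modules. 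Running the long exact cohomology sequence of this triangle, and using that the outer terms have cohomology only in degrees $d_X$ and $d_X-1$, forces $K^{\bullet}$ to have cohomology concentrated in degree $d_X$, fitting into $0\to i_{*}\omega_{\partial X}\to H^{d_X}(K^{\bullet})\to\omega_X\to 0$. This extension — a bulk density over a boundary density, precisely the structure producing the Stokes decomposition $\mathcal{S}_{Tot}=\underline{\mathcal{S}}+\underline{\mathcal{S}_{\partial}}$ discussed above — is what we take as $Dens(X/\partial X)$. Consequently $K^{\bullet}$ resolves $Dens(X/\partial X)[-d_X]$, so $Cone(\widetilde{i}_{\partial}^{*})[d_X-1]=K^{\bullet}[d_X]$ has cohomology concentrated in degree $0$ equal to $Dens(X/\partial X)$; the canonical map to its zeroth cohomology sheaf is the asserted quasi-isomorphism, and simultaneously exhibits $Cone(\widetilde{i}_{\partial}^{*})[d_X-1]$ as a Spencer resolution by induced modules.

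The main obstacle is the identification in the last step of $H^{d_X}(K^{\bullet})$ with the intended relative density sheaf, including the verification that its right $\mathcal{D}_X$-module structure and the extension class are the geometrically correct ones. Because $X$ carries a genuine boundary, the relevant push-forward and Kashiwara-type statements must be read through the transfer module $\mathcal{D}_{\partial X\hookrightarrow X}$ rather than through the usual closed-embedding $\mathcal{D}$-module dictionary; in particular one must confirm that $\widetilde{i}_{\partial}^{*}$ genuinely induces the boundary-restriction connecting map, which comes down to the surjectivity of form-pullback together with the compatibility of the interior and ambient Spencer differentials under restriction.
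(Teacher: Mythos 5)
Your argument is correct, and there is nothing in the paper to compare it against: the proposition is stated bare, immediately after the definition of $\widetilde{i}_{\partial}^*$, with no proof supplied. Your route — termwise surjectivity of form-pullback, identification of $Cone(\widetilde{i}_{\partial}^*)$ with the shift $K^{\bullet}[1]$ of the kernel complex of induced modules, and the long exact sequence forcing the cohomology of $K^{\bullet}$ into degree $d_X$ as an extension $0\to i_{\partial *}\omega_{\partial X}\to H^{d_X}(K^{\bullet})\to \omega_X\to 0$ — is the natural one, and the shift bookkeeping ($Cone(\widetilde{i}_{\partial}^*)[d_X-1]\simeq K^{\bullet}[d_X]$, cohomology in degree $0$) checks out. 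The only exposure is the one you flag yourself: the paper never defines $Dens(X/\partial X)$, so your final step is really the adoption of a definition (relative densities as the extension of bulk top forms by boundary top forms) rather than a verification; your reading of it via the Stokes decomposition $\mathcal{S}_{Tot}=\underline{\mathcal{S}}+\underline{\mathcal{S}_{\partial}}$ and the pairing with relative cycles is the one consistent with the surrounding text, and note that the naive alternative (top forms whose pullback to $\partial X$ vanishes) degenerates to all of $\omega_X$ in top degree, so it cannot be what is meant.
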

A convenient way to encode constrained systems of PDEs is by exploiting the geometry of algebraic foliations $\mathcal{F}$ as they arise naturally in $\mathcal{D}$-module theory.
\vspace{1mm}

\noindent\textit{\textbf{$\mathcal{F}$-Jets.}}
Consider $(X,\mathcal{F})$ and the left $\mathcal{D}_X$-ideal $\mathcal{D}_X\bullet\mathcal{F}$ generated by $\mathcal{F}$. The space $\mathcal{D}_{\mathcal{F}}=\mathcal{D}_X/\mathcal{D}_X\bullet \mathcal{F},$ is the sheaf of differential operators in normal directions with an induced order filtration 
$$F^k(\mathcal{D}_{\mathcal{F}}):=\mathcal{D}_{X}^{\leq k}/(\mathcal{D}_{X}^{\leq k}\cap \mathcal{D}_X\bullet \mathcal{F}).$$
Passing to graded objects, $Gr(\mathcal{D}_X\bullet \mathcal{F})\subseteq Gr(\mathcal{D}_X)$ is a graded ideal.

Suppose that $C\rightarrow (X,\mathcal{F})$ is a vector bundle over $X$. For all $k\geq 0,$ there is an $\mathcal{O}_X$-module 
$\mathcal{J}_{\mathcal{F}}^k(C)$ of sections $Sect((X,\mathcal{F}),C).$ In other words, $s,s'\in Sect((X,\mathcal{F}),C)$ are $k$-equivalent over $\mathcal{F}$ if 
$$(\theta_1\circ\ldots\circ\theta_{\ell}(s))(x)=(\theta_1\circ\ldots\circ\theta_{\ell}(s')(x),\ell\leq k,\forall \theta_i\in \Theta_{\mathcal{F}}.$$
For every $k>\ell$ there are projections $\mathcal{J}_{\mathcal{F}}^k(C)\rightarrow \mathcal{J}_{\mathcal{F}}^{\ell}(C)$ and we denote the projective limit $\mathcal{J}_{\mathcal{F}}^{\infty}(C).$ There is a canonical morphism $$j_k^{\mathcal{F}}:Sect((X,\mathcal{F}),C)\rightarrow Sect((X,\mathcal{F}),\mathcal{J}_{\mathcal{F}}^{\infty}(C)\big).$$
In other words for each leaf $\mathcal{F}_x$ of $\mathcal{F}$ at $x\in X$, two sections $s,t\in Sect(X,E)$ are $\mathcal{F}$-tangent of order $\leq k$ if $s|_{\mathcal{F}_x}$ and $t|_{\mathcal{F}_x}$ are $k$-th order tangent in the usual sense i.e. have the same $k$-jet. 
\begin{proposition}
The sheaf $\mathcal{J}_{\mathcal{F}}^{\infty}(C)$ has the structure of a commutative $\mathcal{D}_{\mathcal{F}}$-algebra and defines a functor 
$\mathcal{J}_{\mathcal{F}}^{\infty}(-):\mathcal{O}_{(X,\mathcal{F})}-CAlg\rightarrow \mathcal{D}_{\mathcal{F}}-CAlg_X,$
left adjoint to the obvious forgetful functor.
\end{proposition}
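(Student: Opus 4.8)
The plan is to establish the result as the foliated analogue of the universal property of the jet algebra recalled in Subsection~\ref{ssec: Finiteness}, where $\mathrm{Jet}^{\infty}$ is the left adjoint to the forgetful functor $U\colon\mathsf{CAlg}_X(\mathcal{D}_X)\to\mathsf{CAlg}_X(\mathcal{O}_X)$, systematically replacing the full sheaf of operators $\mathcal{D}_X$ by $\mathcal{D}_{\mathcal{F}}$ and ordinary differentiation by differentiation along $\Theta_{\mathcal{F}}$. First I would construct the $\mathcal{D}_{\mathcal{F}}$-action. The leafwise prolongation $j_k^{\mathcal{F}}$ equips $\mathcal{J}_{\mathcal{F}}^{\infty}(C)$ with tautological total-derivative operators $\overline{D}_{\theta}$ for each $\theta\in\Theta_{\mathcal{F}}$, characterized by $\overline{D}_{\theta}\big(j_{\infty}^{\mathcal{F}}s\big)=j_{\infty}^{\mathcal{F}}(\theta s)$ on prolonged sections; these obey the Leibniz rule with respect to the commutative product — by the explicit rule on derivatives of the fibre coordinates used in the proof of the preceding proposition on $\overline{\mathcal{J}}^{\infty}$ — so they act by $\mathcal{O}_X$-linear derivations compatible with the algebra structure. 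The essential point is that $\theta\mapsto\overline{D}_{\theta}$ extends to the full operator algebra and descends to the quotient $\mathcal{D}_{\mathcal{F}}=\mathcal{D}_X/\mathcal{D}_X\bullet\mathcal{F}$: by involutivity of $\mathcal{F}$ the commutators satisfy $[\overline{D}_{\theta},\overline{D}_{\theta'}]=\overline{D}_{[\theta,\theta']}$ and the composites $\overline{D}_{\theta_1}\circ\cdots\circ\overline{D}_{\theta_\ell}$ respect the relations generating $\mathcal{D}_X\bullet\mathcal{F}$, so the action is flat and factors through $\mathcal{D}_{\mathcal{F}}$. Compatibility with the order filtrations $F^k(\mathcal{D}_{\mathcal{F}})$ and $\mathcal{J}_{\mathcal{F}}^k$ is immediate from the definitions.

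Then I would treat functoriality and the adjunction together. Functoriality is a formal consequence of the naturality of the $k$-equivalence relation: a morphism of $\mathcal{O}_{(X,\mathcal{F})}$-algebras induces compatible maps on each finite-order leafwise jet, hence on the projective limit, and these commute with the $\overline{D}_{\theta}$. For the adjunction I would exhibit the natural bijection
$$\mathrm{Hom}_{\mathcal{D}_{\mathcal{F}}\text{-CAlg}_X}\big(\mathcal{J}_{\mathcal{F}}^{\infty}(B),A\big)\;\cong\;\mathrm{Hom}_{\mathcal{O}_{(X,\mathcal{F})}\text{-CAlg}}\big(B,\mathrm{For}(A)\big),$$
with unit the prolongation $B\to\mathrm{For}(\mathcal{J}_{\mathcal{F}}^{\infty}(B))$. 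The map from left to right is restriction along the unit; for the inverse, a morphism $g\colon B\to\mathrm{For}(A)$ is extended on generators by $\overline{D}_{\theta_1}\cdots\overline{D}_{\theta_\ell}(b)\mapsto\theta_1\cdots\theta_\ell\bullet g(b)$ using the $\mathcal{D}_{\mathcal{F}}$-action on $A$. Since $\mathcal{D}_{\mathcal{F}}$ is generated by $\mathcal{O}_X$ and $\Theta_{\mathcal{F}}$, a $\mathcal{D}_{\mathcal{F}}$-algebra homomorphism out of $\mathcal{J}_{\mathcal{F}}^{\infty}(B)$ is determined by its values on $B$, which forces this formula and yields uniqueness; it is well defined precisely because the relations among leafwise jet coordinates are the image of the relations in $\mathcal{D}_{\mathcal{F}}$, which the flat action on $A$ respects.

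The main obstacle is the well-definedness and flatness in the first step: one must check that the prolongation operators genuinely descend to $\mathcal{D}_{\mathcal{F}}$ rather than merely furnishing a $\Theta_{\mathcal{F}}$-connection, and that no curvature obstruction survives. This is exactly where involutivity of $\mathcal{F}$ is indispensable — for a non-integrable distribution the bracket $[\theta,\theta']$ would escape $\Theta_{\mathcal{F}}$ and the putative action would fail to close. A secondary subtlety is that $X$ carries a boundary and the leaf space may be singular, so I would keep every construction on $X$ itself, working with $\mathcal{D}_{\mathcal{F}}$ as a sheaf of algebras on $X$ and never passing to an ill-behaved quotient, and use the filtration by finite-order jets to reduce both the flatness verification and the adjunction bijection to the coherent, finitely generated pieces $\mathcal{J}_{\mathcal{F}}^{k}(C)$ before taking the projective limit.
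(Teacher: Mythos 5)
The paper contains no proof of this proposition: it only records, immediately after the statement, the explicit flat connection $\nabla^{\mathcal{F}}:\mathcal{F}\rightarrow \mathcal{E}nd\big(\mathcal{J}_{\mathcal{F}}^{\infty}(C)\big)$ given by $\nabla_v^{\mathcal{F}}(\mathsf{F})|_{[s]_x^{\infty}}=v\big(j_{\infty}(s)^*\mathsf{F}\big)(x)$. Your proposal constructs exactly these leafwise total derivatives, derives flatness from involutivity of $\mathcal{F}$, and runs the standard free/forgetful adjunction for jet algebras (the foliated analogue of $\mathrm{Jet}^{\infty}\dashv U$ from the paper's discussion of finiteness), so it is the intended argument and in outline it is sound.

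There is, however, one genuine confusion you must repair. You assert that $\theta\mapsto\overline{D}_{\theta}$ ``extends to the full operator algebra and descends to the quotient $\mathcal{D}_{\mathcal{F}}=\mathcal{D}_X/\mathcal{D}_X\bullet\mathcal{F}$,'' and later that ``$\mathcal{D}_{\mathcal{F}}$ is generated by $\mathcal{O}_X$ and $\Theta_{\mathcal{F}}$.'' These two descriptions are incompatible, and the first, taken literally, fails: in the quotient of $\mathcal{D}_X$ by the left ideal generated by $\Theta_{\mathcal{F}}$ the classes of the leafwise vector fields are zero, so an action factoring through that quotient would force $\overline{D}_{\theta}=0$ --- the opposite of what the prolongation operators do. (That quotient is moreover only a left $\mathcal{D}_X$-module, not a sheaf of algebras, so ``commutative $\mathcal{D}_{\mathcal{F}}$-algebra'' cannot mean an algebra over it.) The object that genuinely acts on $\mathcal{J}_{\mathcal{F}}^{\infty}(C)$ is the subalgebra of $\mathcal{D}_X$ generated by $\mathcal{O}_X$ and $\Theta_{\mathcal{F}}$, i.e.\ the universal enveloping algebroid of the Lie algebroid $\mathcal{F}\subseteq\Theta_X$; involutivity together with $[\overline{D}_{\theta},\overline{D}_{\theta'}]=\overline{D}_{[\theta,\theta']}$ is precisely what lets $\theta\mapsto\overline{D}_{\theta}$ extend to this enveloping algebra, and nothing needs to ``descend.'' The ambiguity originates in the paper, whose displayed definition of $\mathcal{D}_{\mathcal{F}}$ as a quotient (operators in normal directions) conflicts with everything that follows (the $k$-equivalence via composites $\theta_1\circ\cdots\circ\theta_{\ell}$ with $\theta_i\in\Theta_{\mathcal{F}}$, and the connection $\nabla^{\mathcal{F}}$ along $\mathcal{F}$); but your proof should commit to the enveloping-algebroid reading throughout, since your adjunction step --- extending $g$ by $\overline{D}_{\theta_1}\cdots\overline{D}_{\theta_{\ell}}(b)\mapsto\theta_1\cdots\theta_{\ell}\bullet g(b)$ and deducing uniqueness because $\mathcal{D}_{\mathcal{F}}$ is generated by $\mathcal{O}_X$ and $\Theta_{\mathcal{F}}$ --- only makes sense for that object. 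With this correction the rest (Leibniz rule, functoriality, the unit given by leafwise prolongation, and reduction to the coherent finite-order pieces $\mathcal{J}_{\mathcal{F}}^{k}(C)$ before passing to the projective limit) goes through.
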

The canonical $\mathcal{D}_{\mathcal{F}}$-module structure on $\mathcal{J}_{\mathcal{F}}^{\infty}(C)$ can be written explicitly via a flat connection (along $\mathcal{F}$),
$$\nabla^{\mathcal{F}}:\mathcal{F}\rightarrow \mathcal{E}nd(\mathcal{J}_{\mathcal{F}}^{\infty}(C)).$$
It is determined for each point $\theta=[s]_x^{\infty}$ and each $v\in \mathcal{F}$ by its values on a function $\mathsf{F}\in \mathcal{O}\big(\mathcal{J}_{\mathcal{F}}^{\infty}(C)\big)$ by setting
$$\nabla_v^{\mathcal{F}}(\mathsf{F})|_{\theta}=v\big(j_{\infty}(s)^*\mathsf{F}\big)(x).$$

A $\mathcal{D}_{\mathcal{F}}$-ideal in $\mathcal{J}_{\mathcal{F}}^{\infty}(C)$ generates an algebraic non-linear PDE $\mathcal{J}_{\mathcal{F}}^{\infty}(C)/\mathcal{I}_{\mathcal{F}},$ endowed with its induced order filtration. In this way, we can make sense of $k$-th order $\mathcal{F}$-equations (or $k$-th order non-linear PDEs along $\mathcal{F}$) in terms of the sub-manifolds
$\EQ\subset Jets_{\mathcal{F}}^k(C).$ A usual, we are interested in the infinite jet space $\mathrm{Jets}_{\mathcal{F}}^{\infty}(C).$ Note if $\mathcal{F}$ is just $\Theta_X,$ then $\mathrm{Jet}_{\Theta_X}^{\infty}(C)$ is the usual jet-space $\mathrm{Jet}_X^{\infty}(C).$
\vspace{1mm}

Consider the commutative $\mathcal{D}_X$-algebra $\mathcal{A}:=\mathcal{O}\big(\mathrm{Jets}_X^{\infty}(E)\big),$ and set $\partial E:=\pi^{-1}(\partial X),$ with 
\begin{equation}
    \label{eqn: Boundary D Scheme}
    p_{\partial}:\partial \mathrm{Jet}_{[\partial X]}^{\infty}(E):=p_{\infty}^{-1}(\partial X)\hookrightarrow \mathrm{Jet}^{\infty}(E).
    \end{equation}
Let $\mathcal{F}\subseteq \Theta_X$ be a foliation with leaf $F_x$ and assume $\partial X\subseteq \mathcal{F}_x.$ Put $p_{\infty}^{\mathcal{F}}:\mathrm{Jet}_{\mathcal{F}}^{\infty}(E)\rightarrow X.$

\begin{proposition}
\label{prop: Boundary F-Jets}
Let $E\rightarrow (X,\mathcal{F})$ be as above with $\partial X\neq \emptyset$ such that $\partial X\subseteq \mathcal{F}_x$ for some $x\in X.$ We have:
\begin{itemize}
    \item A canonical injection $\mathrm{Jet}^{\ell}(\ell_x^*E)\hookrightarrow \mathrm{Jet}_{\mathcal{F}}^{\infty}(E)$ seding $j_{\infty}(s)(x)$ to $j_{\infty}^{\mathcal{F}}(s')(x)$ with $s'$ the extension of $s$ from $\mathcal{F}_x$ to all $X$;

    \item A canonical surjection $\mathrm{Jet}^{\infty}(E)\rightarrow \mathrm{Jet}_{\mathcal{F}}^{\infty}(E)$ given by restriction.
\end{itemize}
\end{proposition}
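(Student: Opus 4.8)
The plan is to treat the two assertions separately, reducing both to the universal property of the $\mathcal{F}$-jet functor established just above, together with the leaf-wise description of $\mathcal{F}$-jets as ordinary jets along $\mathcal{F}_x$. The common ingredient is the canonical quotient $\mathcal{D}_X \twoheadrightarrow \mathcal{D}_{\mathcal{F}} = \mathcal{D}_X/\mathcal{D}_X\bullet\mathcal{F}$ and the induced restriction functor $\mathrm{Res}\colon \mathcal{D}_X\text{-}\mathrm{CAlg} \to \mathcal{D}_{\mathcal{F}}\text{-}\mathrm{CAlg}_X$, which sends a flat connection (an action of all of $\Theta_X$) to its restriction along the inclusion $\Theta_{\mathcal{F}}\hookrightarrow\Theta_X$. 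Since the base ringed space underlying $(X,\mathcal{F})$ has structure sheaf $\mathcal{O}_X$, one checks that the forgetful functor $U_X\colon \mathcal{D}_X\text{-}\mathrm{CAlg}\to \mathcal{O}_X\text{-}\mathrm{CAlg}$ factors as $U_{\mathcal{F}}\circ\mathrm{Res}$, where $U_{\mathcal{F}}$ is the forgetful functor to which $\mathcal{J}_{\mathcal{F}}^{\infty}(-)$ is left adjoint.

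For the surjection I would argue by adjunction. Applying $\mathrm{Res}$ to the full jet algebra $\mathrm{Jet}^{\infty}(\mathcal{O}_E)$ yields a $\mathcal{D}_{\mathcal{F}}$-algebra whose underlying $\mathcal{O}_X$-algebra receives the unit $\mathcal{O}_E\to U_{\mathcal{F}}\big(\mathrm{Res}\,\mathrm{Jet}^{\infty}(\mathcal{O}_E)\big)$. By the universal property of $\mathcal{J}_{\mathcal{F}}^{\infty}$ this corresponds to a unique morphism of $\mathcal{D}_{\mathcal{F}}$-algebras $\mathcal{J}_{\mathcal{F}}^{\infty}(\mathcal{O}_E)\to \mathrm{Res}\,\mathrm{Jet}^{\infty}(\mathcal{O}_E)$, whose spectrum is the desired map $\mathrm{Jet}^{\infty}(E)\to \mathrm{Jet}_{\mathcal{F}}^{\infty}(E)$ over $X$; concretely it sends $j_\infty(s)(x)$ to $j_\infty^{\mathcal{F}}(s)(x)$, forgetting all derivatives transverse to $\mathcal{F}$. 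That this algebra map is injective, equivalently that the space map is surjective, I would check on the order filtration: by $F^k(\mathcal{D}_{\mathcal{F}})=\mathcal{D}_X^{\leq k}/\big(\mathcal{D}_X^{\leq k}\cap \mathcal{D}_X\bullet\mathcal{F}\big)$ the comparison at level $k$ is dual to the surjection $\mathcal{D}_X^{\leq k}\twoheadrightarrow F^k(\mathcal{D}_{\mathcal{F}})$, hence a split $\mathcal{O}_X$-linear inclusion of the $\mathcal{F}$-jet coordinates into the full jet coordinates; passing to the limit $k\to\infty$ gives surjectivity. Pointwise this simply records that every $\mathcal{F}$-jet is the jet of some $s\in Sect(X,E)$, whose full jet maps onto it.

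For the injection I would use the leaf-wise identification. The hypothesis $\partial X\subseteq \mathcal{F}_x$ guarantees that the leaf $\mathcal{F}_x$ through $x$ is a submanifold containing the boundary, so $\ell_x\colon \mathcal{F}_x\hookrightarrow X$ and $\ell_x^*E$ are defined. Along $\mathcal{F}_x$ the foliation restricts to the full tangent sheaf $\Theta_{\mathcal{F}_x}$, and the flat $\mathcal{F}$-connection $\nabla^{\mathcal{F}}$ of the preceding discussion restricts to an ordinary flat connection identifying $\mathcal{J}_{\mathcal{F}}^{\infty}(E)\big|_{\mathcal{F}_x}$ with the ordinary jet bundle $\mathrm{Jet}^{\infty}(\ell_x^*E)$ of the leaf; at each finite order $\ell$ this is the statement that $\nabla^{\mathcal{F}}$-derivatives at $x$ are computed entirely from the values of a section on $\mathcal{F}_x$. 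Given $s\in Sect(\mathcal{F}_x,\ell_x^*E)$ I would choose any extension $s'$ of $s$ to $X$ (a formal extension at $x$ suffices and always exists) and send $j_\infty(s)(x)\mapsto j_\infty^{\mathcal{F}}(s')(x)$. Well-definedness is exactly the assertion that $j_\infty^{\mathcal{F}}(s')(x)$ is independent of the extension, which holds because every $\theta_i\in\Theta_{\mathcal{F}}$ is tangent to $\mathcal{F}_x$, so $(\theta_1\circ\cdots\circ\theta_\ell(s'))(x)$ depends only on $s'|_{\mathcal{F}_x}=s$. The resulting map is injective, being inverse to leaf-wise restriction, and exhibits $\mathrm{Jet}^{\infty}(\ell_x^*E)$ as the fibre of $\mathrm{Jet}_{\mathcal{F}}^{\infty}(E)$ over the leaf.

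The main obstacle I anticipate is precisely this independence-of-extension step for the injection: in the purely algebraic $\mathcal{D}$-geometric formulation there is no literal leaf, so the geometric tangency argument must be replaced by the statement that $\nabla^{\mathcal{F}}$ factors through $\mathcal{D}_{\mathcal{F}}$ and that $\mathcal{D}_X\bullet\mathcal{F}$ annihilates the $\mathcal{F}$-jet coordinates, i.e. that two extensions of $s$ differ by an element of $\mathcal{D}_X\bullet\mathcal{F}$ acting trivially on $\mathcal{J}_{\mathcal{F}}^{\infty}$. Establishing this cleanly is the technical heart; the surjection, by contrast, is formal once the restriction functor and the filtration surjectivity are in place.
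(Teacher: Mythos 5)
The paper states this proposition without proof, so there is nothing to compare against; your argument supplies the natural one implicit in the preceding definitions (the surjection dual to $\mathcal{D}_X^{\leq k}\twoheadrightarrow F^k(\mathcal{D}_{\mathcal{F}})$ via the universal property of $\mathcal{J}_{\mathcal{F}}^{\infty}$, and the injection via extension from the leaf with independence established by tangency of $\Theta_{\mathcal{F}}$ to $\mathcal{F}_x$), and it is correct. One caution: injectivity of the map on function algebras does not in general imply surjectivity on spectra (it only gives dominance), but your pointwise observation --- that every $\mathcal{F}$-jet is by definition $j_{\infty}^{\mathcal{F}}(s)(x)$ for an actual section $s$, whose full jet maps onto it --- is what actually establishes surjectivity, so the argument stands.
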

Let $\overline{\mathcal{F}}$ denote the complimentary distribution to $\mathcal{F}$ and consider the \emph{infinite normal jet bundle of $\pi$ with respect to $\partial X$}, $$n_{\partial,\overline{\mathcal{F}}}:\mathrm{Jet}_{\overline{\mathcal{F}}}^{\infty}(\iota_{\partial}^*E)\rightarrow \partial X.$$
Essentially due to the fact that partial derivatives commute, there are equivalences
$$\mathrm{Jet}_{\mathcal{F}}^{\infty}\big(\mathrm{Jet}_{\overline{\mathcal{F}}}^{\infty}(E)\big)\simeq \mathrm{Jet}_{\overline{\mathcal{F}}}^{\infty}\big(\mathrm{Jet}_{\mathcal{F}}^{\infty}(E)\big),$$
further identifying with $\mathrm{Jet}^{\infty}(E).$
By Proposition \ref{prop: Boundary F-Jets} there is a map 
$\mathrm{Jet}^{\infty}(\mathrm{Jet}_{\overline{\mathcal{F}}}^{\infty}(\iota_{\partial}^*E)\big)\hookrightarrow \mathrm{Jet}_{\mathcal{F}}^{\infty}\big(\mathrm{Jet}_{\overline{\mathcal{F}}}^{\infty}(E)\big).$
\begin{proposition}
 There is an equivalence of $\mathcal{D}_{\partial X}$-schemes 
    $$\partial \mathrm{Jet}_{[\partial X]}^{\infty}(E)\simeq \mathrm{Jet}^{\infty}\big(\mathrm{Jet}_{\overline{\mathcal{F}}}^{\infty}(\iota_{\partial}^*E)\big).$$
    \end{proposition}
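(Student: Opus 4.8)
The plan is to factor the full jet construction into a tangential and a normal piece, and then use the leaf hypothesis $\partial X \subseteq \mathcal{F}_x$ to identify the tangential $\mathcal{F}$-jets along the boundary with ordinary jets on $\partial X$. First I would recall the equivalence
$$\mathrm{Jet}^{\infty}(E) \simeq \mathrm{Jet}_{\mathcal{F}}^{\infty}\big(\mathrm{Jet}_{\overline{\mathcal{F}}}^{\infty}(E)\big),$$
established above from the commutation of total derivatives along $\mathcal{F}$ and along the complementary distribution $\overline{\mathcal{F}}$. This rewrites the structure projection as the composite $\mathrm{Jet}_{\mathcal{F}}^{\infty}(\mathrm{Jet}_{\overline{\mathcal{F}}}^{\infty}(E)) \xrightarrow{p_{\infty}^{\mathcal{F}}} X$, so that the boundary scheme $\partial\mathrm{Jet}_{[\partial X]}^{\infty}(E) = p_{\infty}^{-1}(\partial X)$ is computed as the restriction along $\iota_{\partial}$ of an $\mathcal{F}$-jet space with coefficient bundle $C := \mathrm{Jet}_{\overline{\mathcal{F}}}^{\infty}(E)$.

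Second, I would restrict to the boundary. Since $\partial X \subseteq \mathcal{F}_x$, the boundary is tangent to the foliation, so $\mathcal{F}|_{\partial X} = \Theta_{\partial X}$ while $\overline{\mathcal{F}}|_{\partial X}$ realizes the normal directions of $\partial X$ in $X$. Using the universal (left-adjoint) property of $\mathcal{J}_{\mathcal{F}}^{\infty}(-)$ together with the canonical injection and surjection of Proposition \ref{prop: Boundary F-Jets}, the pullback of any $\mathcal{F}$-jet space to $\partial X$ is canonically the ordinary jet space on $\partial X$ of the restricted coefficients, $\iota_{\partial}^* \mathrm{Jet}_{\mathcal{F}}^{\infty}(C) \simeq \mathrm{Jet}^{\infty}(\iota_{\partial}^* C)$. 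Applied to $C = \mathrm{Jet}_{\overline{\mathcal{F}}}^{\infty}(E)$ this yields
$$p_{\infty}^{-1}(\partial X) \simeq \mathrm{Jet}^{\infty}\big(\iota_{\partial}^* \mathrm{Jet}_{\overline{\mathcal{F}}}^{\infty}(E)\big).$$

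Third, I would commute $\iota_{\partial}^*$ past the normal-jet functor. Because $\mathrm{Jet}_{\overline{\mathcal{F}}}^{\infty}$ records only derivatives transverse to $\mathcal{F}$, hence transverse to $\partial X$, restriction to the boundary and formation of normal jets commute, $\iota_{\partial}^* \mathrm{Jet}_{\overline{\mathcal{F}}}^{\infty}(E) \simeq \mathrm{Jet}_{\overline{\mathcal{F}}}^{\infty}(\iota_{\partial}^* E)$, with $\overline{\mathcal{F}}$ on the right denoting the restricted normal distribution; this is base change for the formal neighbourhood in the $\overline{\mathcal{F}}$-direction, and substituting it produces the claimed isomorphism of schemes. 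I would then upgrade this to an equivalence of $\mathcal{D}_{\partial X}$-schemes: on both sides the relevant structure is the tautological jet connection along $\partial X$, which on the left is the restriction of the $\mathcal{D}_X$-action on $\mathcal{A}$ to tangential total derivatives transported via the transfer module $\mathcal{D}_{\partial X \hookrightarrow X}$, and on the right is the canonical $\mathcal{D}_{\partial X}$-structure carried by the outer $\mathrm{Jet}^{\infty}$. After the factorization and the leaf hypothesis these coincide, and I would verify it on generators in adapted coordinates, splitting the $X$-coordinates into tangential $y^i$ and normal $t^a$ with $\partial X = \{t = 0\}$, where a jet coordinate $u^{\alpha}_{\sigma,\tau}$ ($\sigma$ tangential, $\tau$ normal) matches the degree-$\sigma$ boundary jet of the normal-jet coordinate $v^{\alpha}_{\tau}$ and the tangential total derivatives agree verbatim.

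The main obstacle is the interaction of the last two steps: one must guarantee that passing to the infinite normal jet and then restricting neither loses nor creates transverse data, and that this is compatible with the order filtration and the flat connection. This rests on the transversality of $\overline{\mathcal{F}}$ to $\partial X$ and on $\partial X$ being $\mathcal{F}$-tangent; in coordinates it is immediate, but the coordinate-free statement, compatible with filtration and connection, is most cleanly obtained through the adjunction characterizing $\mathcal{J}_{\mathcal{F}}^{\infty}(-)$ and base change of formal completions along $\overline{\mathcal{F}}$.
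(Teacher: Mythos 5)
Your proposal is correct and follows essentially the same route as the paper: the argument there rests on the same commutation equivalence $\mathrm{Jet}^{\infty}(E)\simeq \mathrm{Jet}_{\mathcal{F}}^{\infty}\big(\mathrm{Jet}_{\overline{\mathcal{F}}}^{\infty}(E)\big)$, the inclusion $\mathrm{Jet}^{\infty}\big(\mathrm{Jet}_{\overline{\mathcal{F}}}^{\infty}(\iota_{\partial}^*E)\big)\hookrightarrow \mathrm{Jet}_{\mathcal{F}}^{\infty}\big(\mathrm{Jet}_{\overline{\mathcal{F}}}^{\infty}(E)\big)$ supplied by the preceding proposition on boundary $\mathcal{F}$-jets, and the adapted-coordinate description of points of $p_{\infty}^{-1}(\partial X)$ given immediately after the statement. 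Your additional care in commuting $\iota_{\partial}^*$ past $\mathrm{Jet}_{\overline{\mathcal{F}}}^{\infty}$ and in matching the $\mathcal{D}_{\partial X}$-structures only makes explicit what the paper leaves implicit.
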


Let $\partial X\subset X$ and put $\mathcal{A}=\mathcal{O}(\mathrm{Jet}^{\infty}(E)).$ A \emph{boundary $\mathcal{D}$-algebra} of $\mathcal{A}$ is a $\mathcal{D}$-algebraic PDE of the form (\ref{eqn: Boundary D Scheme}).
Let us show it may be written in the form (\ref{eqn: SES}).
Suppose $dim(X)=n$ and $\partial X=\{x_n=0\}.$ In this case, 
$\theta\in Sect\big(\partial X,\mathrm{Jet}_{\overline{\mathcal{F}}}^{\infty}(\iota_{\partial}^*E)\big),$ is determined by a tuple
$$(x_1,\ldots,x_{n-1},\cdots,\frac{\partial^{|\sigma|} u_i^{\alpha}}{\partial x_{\sigma}}(x_1,\ldots,x_{n-1}),\cdots),\alpha=1,\ldots,rank(E),$$
and $\sigma$ a multi-index of length $dim(\partial X).$ 
There is an induced morphism of $\mathcal{D}$-schemes
$i_{\partial}:\partial \mathsf{X}\hookrightarrow \mathsf{X},$ and denoting by
$Ker(i_{\partial}^*)$ the ideal of functions vanishing on $\partial \mathsf{X},$ we can indeed exhibit a boundary $\mathcal{D}$-algebra as a quotient of a suitable $\mathcal{D}$-ideal, as required:
$$0\rightarrow Ker(i_{\partial}^*)\hookrightarrow\mathcal{A}\rightarrow \mathcal{B}_{\partial}\rightarrow 0.$$

\subsubsection{Shifted Geometry of Boundary Constraints.}
\label{sssec: Shifted Geometry of Boundary Constraints}
Consider $X$ compact with boundary $\partial X.$ 
Including boundary conditions must be homotopy-compatible with the $(-1)$ shifted symplectic geometry of classical field theory i.e. one must impose boundary conditions so that the obstructions given by boundary terms vanish in a suitable sense.
\vspace{1mm}

More precisely, let $\mathbb{R}Sol_{\partial X}$ be the $\D_{\partial X}$-prestack of germs of solutions on $\partial X$. 
There is a natural map 
$$res:\mathbb{R}Sol_X\rightarrow i_{\partial *}\mathbb{R}Sol_{\partial X},$$
restricting an infinite jet of a solution to its infinite normal jet on the boundary. There is a corresponding cofiber sequence
$$res^*\mathbb{L}_{\mathbb{R}Sol_X}\rightarrow \mathbb{L}_{i_{\partial *}\mathbb{R}Sol_{\partial X}}\rightarrow \mathbb{L}_{\mathbb{R}Sol_{X}/i_{\partial *}\mathbb{R}Sol_{\partial X}}.$$
From Proposition \ref{prop: DRVar is a prestack} we have a relative variational tricomplex
$$\mathcal{V}ar_{/\partial X}\simeq\bigoplus_{q\geq 0}\mathcal{V}ar_{/\partial X}^q\simeq \bigoplus_{q\geq 0}DR_{\partial X}^{\bullet}\big(Sym^q(\mathbb{L}_{\mathbb{R}Sol_X/i_{\partial *}\mathbb{R}Sol_{\partial X}}[-1])\big).$$

A \emph{$\D$-geometric local boundary condition} refers to sheaves of derived Lagrangians $f:\mathcal{L}\rightarrow \mathbb{R}Sol_{\partial X},$ for the induced symplectic structure. In other words,
$$\mathbb{T}_{\mathcal{L}}\simeq hofib\big(\mathbb{T}_{\mathcal{L}}^*\rightarrow f^*\mathbb{T}_{\mathbb{R}Sol_{\partial X}}\big).$$
The moduli space of solutions on the boundary satisfying the condition $\mathcal{L}$ is then the following homotopy fibre product in $\mathcal{D}_X$-prestacks, which as a derived Lagrangian intersection in a $0$-shifted space is itself $(-1)$-shifted symplectic:
\begin{equation}
\label{eqn: RSolBoundary}
\begin{tikzcd}
\mathbb{R}Sol_{X,\partial X}^{\mathcal{L}}\arrow[d,"\beta"]\arrow[r,"\alpha"] & \mathbb{R}Sol_X\arrow[d,"res"]
\\
i_{\partial*}\mathcal{L}\arrow[r,"i_{\partial*}f"] & i_{\partial *}\mathbb{R}Sol_{\partial X}.
\end{tikzcd}
\end{equation}

Since (\ref{eqn: RSolBoundary}) is Cartesian, there are natural equivalences 
$$\mathbb{T}_{\mathbb{R}Sol_{X,\partial X}^{\mathcal{L}}/i_{\partial *}\mathbb{R}Sol_{\partial X}}\simeq \alpha^*\mathbb{T}_{\mathbb{R}Sol_X/i_{\partial *}\mathbb{R}Sol_{\partial X}}\oplus \beta^*\mathbb{T}_{i_{\partial *}\mathcal{L}/i_{\partial *}\mathbb{R}Sol_{\partial X}}.$$

Let us conclude by giving a candidate for the gauge symmetries in this context, following the discussion of Subsect. \ref{sssec: Intrinsic formulation via D-geometry}, and diagram (\ref{eqn: GaugeSymmetryDiagram}). To this end,
suppose $\mathbb{R}Sol_{X,\partial X}^{\mathcal{L}}$ is the derived $\mathcal{D}$-enhancement of a classical prestack 
$$j_{\mathcal{L},\partial X}:Sol_{X,\partial X}^{\mathcal{L}}\rightarrow \mathbb{R}Sol_{X,\partial X}^{\mathcal{L}}.$$
By naturally extending diagram (\ref{eqn: RSolBoundary}), one obtains 
two fiber sequences of complexes of $\mathcal{O}_{Sol_{X,\partial X}^{\mathcal{L}}}[\mathcal{D}_{\partial X}]$-modules
$$\mathbb{T}_{Sol_{X,\partial X}^{\mathcal{L}}/\mathbb{R}Sol_{X}}\rightarrow \mathbb{T}_{Sol_{X,\partial X}^{\mathcal{L}}/i_{\partial *}\mathbb{R}Sol_{\partial X}}\rightarrow j_{\mathcal{L},\partial X}^*\alpha^*\mathbb{T}_{\mathbb{R}Sol_X/i_{\partial *}\mathbb{R}Sol_{\partial X}},$$
$$\mathbb{T}_{Sol_{X,\partial X}^{\mathcal{L}}/i_{\partial*}\mathcal{L}}\rightarrow \mathbb{T}_{Sol_{X,\partial X}^{\mathcal{L}}/i_{\partial *}\mathbb{R}Sol_{\partial X}}\rightarrow j_{\mathcal{L},\partial X}^*\beta^*\mathbb{T}_{i_{\partial *}\mathcal{L}/i_{\partial *}\mathbb{R}Sol_{\partial X}}.$$
\begin{proposition}
\label{prop: Boundary Symmetries}
    There is an equivalence between $\mathbb{T}_{Sol_{X,\partial X}^{\mathcal{L}}/\mathbb{R}Sol_{X,\partial X}^{\mathcal{L}}}$ and
$$hofib\big(\mathbb{T}_{Sol_{X,\partial X}^{\mathcal{L}}/i_{\partial *}\mathbb{R}Sol_{\partial X}}\rightarrow j_{\mathcal{L},\partial X}^*\alpha^*\mathbb{T}_{\mathbb{R}Sol_X/i_{\partial *}\mathbb{R}Sol_{\partial X}}\oplus j_{\mathcal{L},\partial X}^*\beta^*\mathbb{T}_{i_{\partial *}\mathcal{L}/i_{\partial *}\mathbb{R}Sol_{\partial X}}\big).$$
\end{proposition}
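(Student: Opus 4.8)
The plan is to deduce the equivalence from the transitivity cofiber sequence of cotangent complexes applied to the two-step tower
\[
Sol_{X,\partial X}^{\mathcal{L}} \xrightarrow{\,j_{\mathcal{L},\partial X}\,} \mathbb{R}Sol_{X,\partial X}^{\mathcal{L}} \xrightarrow{\,res\circ\alpha\,} i_{\partial *}\mathbb{R}Sol_{\partial X},
\]
combined with the direct-sum decomposition of $\mathbb{T}_{\mathbb{R}Sol_{X,\partial X}^{\mathcal{L}}/i_{\partial *}\mathbb{R}Sol_{\partial X}}$ already recorded from the Cartesian square (\ref{eqn: RSolBoundary}). First I would invoke the standard cofiber sequence for this composition,
\[
j_{\mathcal{L},\partial X}^*\mathbb{L}_{\mathbb{R}Sol_{X,\partial X}^{\mathcal{L}}/i_{\partial *}\mathbb{R}Sol_{\partial X}} \to \mathbb{L}_{Sol_{X,\partial X}^{\mathcal{L}}/i_{\partial *}\mathbb{R}Sol_{\partial X}} \to \mathbb{L}_{Sol_{X,\partial X}^{\mathcal{L}}/\mathbb{R}Sol_{X,\partial X}^{\mathcal{L}}},
\]
which exists since all objects satisfy $(\star)$ and hence admit deformation theory relative to $i_{\partial *}\mathbb{R}Sol_{\partial X}$. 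Passing to tangent complexes via the local Verdier duality $\mathbb{D}^{loc-ver}$ (as in the derivation around Proposition \ref{prop: Tangent RSol1}), this cofiber sequence dualizes to the fiber sequence
\[
\mathbb{T}_{Sol_{X,\partial X}^{\mathcal{L}}/\mathbb{R}Sol_{X,\partial X}^{\mathcal{L}}} \to \mathbb{T}_{Sol_{X,\partial X}^{\mathcal{L}}/i_{\partial *}\mathbb{R}Sol_{\partial X}} \to j_{\mathcal{L},\partial X}^*\mathbb{T}_{\mathbb{R}Sol_{X,\partial X}^{\mathcal{L}}/i_{\partial *}\mathbb{R}Sol_{\partial X}},
\]
so that by definition $\mathbb{T}_{Sol_{X,\partial X}^{\mathcal{L}}/\mathbb{R}Sol_{X,\partial X}^{\mathcal{L}}}$ is the homotopy fiber of the right-hand map.

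It then remains to identify the target. Using the equivalence $\mathbb{T}_{\mathbb{R}Sol_{X,\partial X}^{\mathcal{L}}/i_{\partial *}\mathbb{R}Sol_{\partial X}}\simeq \alpha^*\mathbb{T}_{\mathbb{R}Sol_X/i_{\partial *}\mathbb{R}Sol_{\partial X}}\oplus \beta^*\mathbb{T}_{i_{\partial *}\mathcal{L}/i_{\partial *}\mathbb{R}Sol_{\partial X}}$ coming from Cartesianness of (\ref{eqn: RSolBoundary}), together with the compatibility of $j_{\mathcal{L},\partial X}^*$ with direct sums, the target becomes precisely the two-term direct sum displayed in the statement. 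To see that the structure map is the correct one, I would check that its two components agree with the maps appearing in the two fiber sequences recorded just before the proposition: the composites $\alpha\circ j_{\mathcal{L},\partial X}$ and $\beta\circ j_{\mathcal{L},\partial X}$ present $Sol_{X,\partial X}^{\mathcal{L}}$ as relative over $\mathbb{R}Sol_X$ and over $i_{\partial *}\mathcal{L}$ respectively, so those two transitivity sequences are exactly the projections of the combined map onto the two summands. Hence the combined map is the one whose fiber is asserted, finishing the argument.

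The main obstacle will be the dualization step: one must ensure that all relative cotangent complexes in the tower are perfect — equivalently dualizable — as $\mathcal{O}_{Sol_{X,\partial X}^{\mathcal{L}}}[\mathcal{D}_{\partial X}]$-modules, so that $\mathbb{D}^{loc-ver}$ carries the cotangent cofiber sequence to a tangent fiber sequence without loss. This requires propagating the $\D$-finitary, laft hypotheses through the homotopy fiber product (\ref{eqn: RSolBoundary}) and through the boundary pushforward $i_{\partial *}$, i.e. verifying that $res$, $\alpha$, $\beta$ and $f$ all possess relative cotangent complexes of the requisite finiteness over the boundary de Rham space. Organizing the transfer-$\mathcal{D}$-module bookkeeping so that pullback, pushforward along $i_\partial$, and the duality functor over $\mathcal{D}_{\partial X}$ commute up to coherent homotopy is where the technical care concentrates; the remainder is the formal manipulation of transitivity triangles in the stable $(\infty,1)$-category of $\mathcal{O}_{Sol_{X,\partial X}^{\mathcal{L}}}[\mathcal{D}_{\partial X}]$-modules.
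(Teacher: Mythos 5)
Your argument is correct and is essentially the proof the paper intends (the paper states the proposition without an explicit proof, but the two transitivity fiber sequences and the direct-sum splitting of $\mathbb{T}_{\mathbb{R}Sol_{X,\partial X}^{\mathcal{L}}/i_{\partial *}\mathbb{R}Sol_{\partial X}}$ from the Cartesian square (\ref{eqn: RSolBoundary}) are set up immediately beforehand precisely so that the result follows by the transitivity triangle for $Sol_{X,\partial X}^{\mathcal{L}}\to\mathbb{R}Sol_{X,\partial X}^{\mathcal{L}}\to i_{\partial *}\mathbb{R}Sol_{\partial X}$, exactly as you describe). Your added care about dualizing the cotangent cofiber sequence under the $\D$-finitary/laft hypotheses is a reasonable and correct elaboration of what the paper leaves implicit.
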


If $\partial X$ is of codimension $\geq 1$, there is a possibility to extend these ideas as well as define analogs of transgression maps.

Let $f:Z\rightarrow X$ be an embedding of a closed sub-variety\footnote{In more general settings, we may have that $f$ is monomorphism of derived Artin stacks.} and let $\EuScript{E}\in \PS_{/X}.$ Then consider the pull-back diagrams in $\PS$ defining $X_Z^{\wedge}$ and $\iota_Z^*(\EuScript{E}),$
\begin{equation}
\label{eqn: Formal Nbhd}
\begin{tikzcd}
    X_Z^{\wedge}\arrow[d]\arrow[r,"\iota_Z"] & X\arrow[d,"p_{dR}^X"]
    \\
    Z_{dR}\arrow[r,"f_{dR}"] & X_{dR}
\end{tikzcd}\hspace{3mm}\text{ and, }\hspace{1mm} \begin{tikzcd}
    \iota_Z^*(\EuScript{E})\arrow[d] \arrow[r] & \EuScript{E}\arrow[d]
\\
    X_Z^{\wedge}\arrow[r,"\iota_Z"] & X
\end{tikzcd}
\end{equation}
Consider 
$f^*X_Z^{\wedge}:=f^*(Z_{dR}\times_{X_{dR}}X),$
defined by an analogous pull-back square:
\begin{equation}
\label{eqn: Formal Normal}
\begin{tikzcd}
    f^*X_{Z}^{\wedge}\arrow[d]\arrow[r] & X_Z^{\wedge}\arrow[d]
    \\
    Z\arrow[r] & X
\end{tikzcd}
\end{equation}
Note that $id_X^*X_X^{\wedge}\simeq X.$
\begin{proposition}
 \label{proposition: Formal mapping restrictions}
    Let $f:Z\rightarrow X$ be a monomorphism of derived Artin stacks with $\EuScript{E}\in \PS_{/X}^{laft}.$ Then we have:
    \begin{itemize}
        \item an equivalence $(p_{dR}^Z)^*(p_{dR!}^Z)(Z)\times_Z f^*(X_Z^{\wedge})\simeq Z\times_{X_{dR}}X;$
        \vspace{1mm}
        
        \item a canonical epimorphism 
    $\mathsf{Maps}(X,\EuScript{E})\rightarrow \M(X_Z^{\wedge},\iota_Z^*\EuScript{E}).$
    \vspace{1mm}
    
        \item a canonical morphism
        $res_Z:\RS_X(\EQ)\rightarrow \RS_Z(f^*(X_Z^{\wedge}),\iota_Z^*\EQ),$ for all $\EQ$ satisfying $(\star)$, with $\iota_Z:f^*(X_Z^{\wedge})\rightarrow X$ the natural map.
    \end{itemize} 
\end{proposition}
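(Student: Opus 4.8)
The plan is to establish the three assertions in order, in each case reducing to the universal property of the de Rham prestack, namely that $(-)_{dR}$ is computed by $\mathcal{W}_{dR}(S)\simeq \mathcal{W}(S_{red})$ and, in particular, preserves finite limits. This last fact is the workhorse: it is what lets me commute the various fibre products defining $X_Z^{\wedge}$, $f^*(X_Z^{\wedge})$ and $(p_{dR}^Z)^*(p_{dR!}^Z)(Z)$ past one another. I would first record the consequence that, since $f\colon Z\to X$ is a monomorphism, so is $f_{dR}\colon Z_{dR}\to X_{dR}$: indeed $f$ monic means $Z\xrightarrow{\sim} Z\times_X Z$, and applying the limit-preserving functor $(-)_{dR}$ gives $Z_{dR}\xrightarrow{\sim}(Z\times_X Z)_{dR}\simeq Z_{dR}\times_{X_{dR}} Z_{dR}$.

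For the first bullet I would unwind both sides on $S$-points. The left adjoint $p_{dR!}^Z$ applied to the terminal object $Z\in\PS_{/Z}$ is $\big(Z\xrightarrow{p_{dR}^Z} Z_{dR}\big)$, so $(p_{dR}^Z)^*(p_{dR!}^Z)(Z)\simeq Z\times_{Z_{dR}} Z$; meanwhile the defining square \emph{(\ref{eqn: Formal Normal})} gives $f^*(X_Z^{\wedge})\simeq Z\times_X X_Z^{\wedge}$ and \emph{(\ref{eqn: Formal Nbhd})} gives $X_Z^{\wedge}\simeq Z_{dR}\times_{X_{dR}} X$. Feeding these in and repeatedly using associativity of fibre products together with the naturality square $p_{dR}^X\circ f\simeq f_{dR}\circ p_{dR}^Z$ reduces the claimed equivalence to a careful rearrangement of iterated pullbacks over $X_{dR}$; the monomorphism property of $f_{dR}$ is exactly what identifies the redundant factor and collapses the expression to $Z\times_{X_{dR}} X$. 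This is bookkeeping with no homotopical obstruction once limit-preservation of $(-)_{dR}$ is in hand.

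For the second bullet, restriction along $\iota_Z\colon X_Z^{\wedge}\to X$ gives $\iota_Z^*\colon\mathsf{Maps}(X,\EuScript{E})\to\mathsf{Maps}(X_Z^{\wedge},\EuScript{E})$, and because $\iota_Z^*\EuScript{E}\simeq X_Z^{\wedge}\times_X\EuScript{E}$, a lift of $\iota_Z$ to $\EuScript{E}$ is the same datum as a section of $\iota_Z^*\EuScript{E}\to X_Z^{\wedge}$; thus composing a global section $X\to\EuScript{E}$ with $\iota_Z$ lands in $\mathsf{Maps}(X_Z^{\wedge},\iota_Z^*\EuScript{E})$ and defines the canonical map. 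To see it is an epimorphism I would argue that $\EuScript{E}\in\PS_{/X}^{laft}$ admits deformation theory relative to $X$, so that a map out of the formal completion $X_Z^{\wedge}$ can be extended along the pro-nilpotent tower presenting $X_Z^{\wedge}$, the successive obstructions living in the (by hypothesis existing) relative cotangent complex and vanishing because the section is already prescribed on the completion. This extension/lifting step is the \textbf{main obstacle}: it is the only place where the laft and deformation-theoretic hypotheses are used in an essential, rather than purely formal, way, and it is what must be argued to promote mere existence of the restriction map to surjectivity on connected components.

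For the third bullet I would assemble the previous two with the presentation of solution spaces as flat sections. Writing $\RS_X(\EQ)$ as a relative mapping space over $X_{dR}$ and, via the first bullet, identifying the domain relevant to $\RS_Z\big(f^*(X_Z^{\wedge}),\iota_Z^*\EQ\big)$ with $Z\times_{X_{dR}} X$, the morphism $res_Z$ is induced by precomposition with the natural map $\iota_Z\colon f^*(X_Z^{\wedge})\to X$ followed by the pullback $\iota_Z^*$ of the second bullet, exactly parallel to the boundary restriction $res\colon\RS_X\to i_{\partial *}\RS_{\partial X}$ produced in Section~\ref{sec: Applications 2}. Functoriality of $\RS$ in both variables, valid under $(\star)$ so that all cotangent complexes exist and the prestacks stay laft, guarantees $res_Z$ is well defined and natural; its compatibility with the transgression maps of Proposition~\ref{prop: Transgression} and the pre-symplectic form of Proposition~\ref{prop: Jets is pre-symplectic} is then immediate from this naturality.
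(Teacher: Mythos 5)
Your overall architecture (unwind the fibre products, use that $(-)_{dR}$ preserves finite limits and that $f$ monic implies $f_{dR}$ monic, then build the restriction maps by precomposition) matches the paper's, which simply exhibits the $3\times 3$ pull-back diagram following the statement and reads the three assertions off it. But two of your steps do not go through as written. For the first bullet, carry your own bookkeeping to the end: with the definition from diagram (\ref{eqn: Formal Normal}), $f^{*}(X_Z^{\wedge})=Z\times_X X_Z^{\wedge}$, and the pasting law together with $X_Z^{\wedge}\simeq Z_{dR}\times_{X_{dR}}X$ gives $f^{*}(X_Z^{\wedge})\simeq Z\times_{X_{dR}}Z_{dR}$, which by the very monicity of $f_{dR}$ you established is equivalent to $Z$. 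The left-hand side of the first bullet then collapses to $(Z\times_{Z_{dR}}Z)\times_Z Z\simeq Z\times_{Z_{dR}}Z$, not to $Z\times_{X_{dR}}X$; for $Z=\{0\}\hookrightarrow X=\mathbb{A}^1$ the former is a point while the latter is the formal disk. So the monomorphism property does the opposite of what you claim: it kills the factor that was supposed to carry the normal directions. To make the identification come out one must instead read $f^{*}(X_Z^{\wedge})$ as the pull-back of $X_Z^{\wedge}\rightarrow Z_{dR}$ along $p_{dR}^Z$ (this is the composite that the middle column of the paper's diagram actually contributes when pasted against the bottom-left square), and this step is not ``bookkeeping with no homotopical obstruction'' --- it is exactly where the content of the first bullet sits.

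The second gap is in your surjectivity argument for $\mathsf{Maps}(X,\EuScript{E})\rightarrow\M(X_Z^{\wedge},\iota_Z^{*}\EuScript{E})$. Obstruction theory against the relative cotangent complex lets you extend a section along a square-zero extension, i.e.\ from $Z$ up the pro-nilpotent tower presenting $X_Z^{\wedge}$; it says nothing about extending a section already defined on the whole formal completion to all of $X$, which is a global algebraization problem that laftness and the existence of $\mathbb{L}_{\EuScript{E}/X}$ do not address. Your claim that the obstructions ``vanish because the section is already prescribed on the completion'' is circular: the data on $X_Z^{\wedge}$ is what you are trying to extend, not what receives the obstruction classes. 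Your construction of $res_Z$ in the third bullet by precomposition with $\iota_Z$, using functoriality of $\RS$ under $(\star)$, is fine and agrees with what the paper intends.
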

Considering (\ref{eqn: Formal Nbhd}) and (\ref{eqn: Formal Normal}), Proposition \ref{proposition: Formal mapping restrictions} is seen to be true by considering the pull-back diagram
\[
\begin{tikzcd}
    p_{dR}^*p_{dR!}(Z)\times_Z f^*(X_Z^{\wedge})\arrow[d]\arrow[r] & f^*(X_Z^{\wedge})\arrow[d]\arrow[r] & X\arrow[d,"id_X"]
    \\
    p_{dR}^*p_{dR!}Z\arrow[d]\arrow[r] & Z\arrow[d,"p_{dR}"]\arrow[r,"f"] & X\arrow[d,"p_{dR}"]
    \\
    Z\arrow[r,"p_{dR}"]& Z_{dR}\arrow[r,"f_{dR}"] & X_{dR}
\end{tikzcd}
\]

One can think of $f^*(X_Z^{\wedge})$ as normal jets of solutions on $Z$ in $X.$

\section{Appendix.}

\subsection{Factorization}
In this section, all categories are viewed as DG-categories and then as stable $(\infty,1)$-categories in the usual manner.

\subsubsection{Factorization Objects}
We recall  convenient way to construct a category of factorization objects associated to a lax monoidal functor $F.$ Roughly, let $\PS_{\mathbf{Q}}^{\mathbf{P}}$ be the sub-category of $\PS$ with $\mathbf{P}$ a class of `properties' on morphisms defining a colimit presentation of a prestack $\EQ$, and $\mathbf{Q}$ a property of the objects $\EQ$ themselves. For instance, take $\mathbf{P}$ as closed-embeddings and $\mathbf{Q}$ to be laftness, then objects of $\PS_{laft}^{cl-emb}$ are pseudo ind-schemes. 

One first defines $F$-objects on a category of affine spaces $\mathsf{Aff},$ and Kan-extends along the Yoneda embedding.

Omitting the explicit details, we put
\begin{equation}
    \label{eqn: F-Object Functor}
\underline{F}_{\mathrm{PreStk}_{\mathbf{Q}}^{\mathbf{P}}}^!:=\mathrm{Res}\big(\mathrm{Kan}_{j}^{\mathrm{R}}(\underline{F}_{\mathsf{Aff}}^!)\big):\big(\mathrm{PreStk}_{\mathbf{Q}}^{\mathbf{P}}\big)^{\mathrm{op}}\rightarrow \mathrm{Cat}_{\mathrm{pres},\mathrm{cont}}^{\infty,\mathrm{st}},\end{equation}
where $\mathrm{Res}$ is the restriction along the functor $\mathrm{PreStk}_{\mathbf{Q}}^{\mathbf{P}}\hookrightarrow \mathrm{PreStk}.$ Right Kan-extension here is along the Yoneda, $j$. It can be calculated explicitly: if $\EQ\in \mathrm{PreStk}_{\mathbf{Q}}^{\mathbf{P}}$, represented as $\EQ\simeq \mathrm{colim} Z_i$ then
$$\underline{F}^!(\EQ)\simeq \mathrm{colim} \underline{F}^!(Z_i),$$
taken with respect to the diagram formed
by functors $f_{F}^!:=\overline{F}^!(f):\overline{F}^!(S_2)\rightarrow \overline{F}^!(S_2)$ for $f:S_1\rightarrow S_2.$
Typical examples might be: $$\underline{\mathrm{QCoh}}^*(-),\underline{\mathrm{IndCoh}}^!(-),\underline{\mathcal{D}}^*(-),$$ or
$\underline{\mathrm{Crys}}^r(-)=\underline{\mathrm{IndCoh}}^!(-)\circ (-)_{dR},$ or
$\underline{\mathrm{Crys}}^{\ell}=\underline{\mathrm{QCoh}}^*(-)\circ (-)_{dR}.$

Some of these agree when evaluated on classical smooth geometric objects $X.$ More generally, they might coincide when $X$ is presentable as a colimit whose structure maps are proper i.e. $X$ is an object of $\mathrm{PreStk}_{\mathrm{proper}}^{\mathrm{laft}}.$

Consider the Grothendieck construction in correspondences for (\ref{eqn: F-Object Functor}),
$$\EuScript{G}^{\mathrm{corr}}\big(\underline{F}_{\mathrm{PreStk}_{\mathbf{Q}}^{\mathbf{P}}}^!\big),$$and the $(\infty,1)$-category of commutative monoids $\mathrm{CAlg}\big[\EuScript{G}^{\mathrm{corr}}\big(\underline{F}_{\mathrm{PreStk}_{\mathbf{Q}}^{\mathbf{P}}}^!\big)\big],$ together with a natural forgetful functor to commutative monoids in correspondences in prestacks
$$\rho:\mathrm{CAlg}\big[\EuScript{G}^{\mathrm{corr}}\big(\underline{F}_{\mathrm{PreStk}_{\mathbf{Q}}^{\mathbf{P}}}^!\big)\big]\rightarrow \mathrm{CAlg}\big[\mathrm{Corr}(\mathrm{PreStk}_{\mathbf{Q}}^{\mathbf{P}})\big].$$
Its action on objects $\big(\EQ;\mathcal{E}\in F(\EQ)\big)$ is $\rho\big(\EQ;\mathcal{E}\in F(\EQ)\big):=\EQ.$

Recall that $\EQ\in \mathrm{CAlg}[Corr(PreStk)]$ consists of an object $\EQ\in PreStk,$ together with correspondences (multiplication and unity):
    \[
    \begin{tikzcd}
    & \arrow[dl,"m_1"]\mathrm{mult}_{\EQ}\arrow[dr,"m_2"]& 
    \\
    \EQ\times \EQ && \EQ
    \end{tikzcd}\hspace{2mm}
    \begin{tikzcd}
    & \arrow[dl,"\eta_1"]\mathrm{unit}_{\EQ}\arrow[dr,"\eta_2"]& 
    \\
    \mathrm{pt} && \EQ
    \end{tikzcd}
    \]
with higher arity analogs of the multiplication maps satisfying relations of a unital commutative monoid object (in the $\infty$-categorical sense).
Both $Ran_{\mathcal{X}},Ran_{\mathcal{X}}^{\mathrm{un}}\in \EuScript{CA}\mathrm{lg}\big(\EuScript{C}\mathrm{orr}(\EuScript{PS}\mathrm{tk})\big)$ i.e.
the multiplication is defined by the operation of disjoint union and the unit operation is determined by the map $\iota_{\emptyset}:\mathrm{pt}\hookrightarrow Ran_{\mathcal{X}}$ corresponding to inclusion of the empty set.
\begin{definition}
A \emph{weak multiplicative $F$-object} (on $\EQ$) is a section of $\rho.$
\end{definition}
Namely, it is the datum of an object $\mathcal{E}\in F(\EQ)$ 
such that there are maps
$$\nu:m_1^{!}(\mathcal{E}\boxtimes\mathcal{E})\rightarrow m_2^!(\mathcal{E}),$$
defining the multiplication, together with 
$\mu:e_1^!\mathbf{1}_{F(\mathrm{pt})}\rightarrow e_2^!(\mathcal{E})$ with higher arity analogues of the multiplications $\nu$ satisfying the relations of a unital commutative monoid. When required, we denote a given arity $n\geq 2$ map as $\nu^{(n)}:m^!(\mathcal{E}^{\boxtimes n})\rightarrow m_2^!(\mathcal{E}),$ where $m:\EQ^{\times n}\rightarrow \EQ$ is the map induced from $m_1$ and $m^!=F^!(m)$ is the functorial pull-back under our lax monoidal functor $F.$ We denote by $\mathbf{1}_{F(\mathrm{pt})}$ the unit for the canonical symmetric monoidal structure on $F(\mathrm{pt}).$

The $\infty$-category of weak multiplicative $F$-objects over $\EQ$ is denoted by $\mathrm{Mult}F(\EQ).$ It admits a full-subcategory of \emph{multiplicative $F$-objects}
$$\mathrm{Mult}F(\EQ)\hookrightarrow \mathrm{Mult}F(\EQ)^{\mathrm{weak}},$$
consisting of those for which $\nu$ (and hence all higher arity operations) and $\mu$ are homotopy equivalences.

 One can ask what it means for this multiplication to `factorize.'
\begin{definition}
\label{Definition: Factorization F-Object}
\normalfont 
A \emph{unital factorization $F$-object} on $X$ is an object of $\mathrm{Mult}F(Ran_X^{\mathrm{un}}).$
\end{definition}
Explicitly, this Definition \ref{Definition: Factorization F-Object} gives an object $\mathcal{E}=(\mathcal{E}^I)\in F(Ran_X^{\mathrm{un}})$, together with isomorphisms
\begin{equation}
    \label{eqn: Factorization Isomorphisms}
    \alpha:j_{\mathrm{disj}}^!\big(\mathcal{E}\boxtimes\mathcal{E}\big)\rightarrow\bigsqcup^{!}\mathcal{E},\hspace{3mm}\text{ in }\hspace{2mm} F^!\big((Ran_X^{\mathrm{un}},\times Ran_X^{\mathrm{un}})_{\mathrm{disj}}\big),
\end{equation}
together with an isomorphism
$\beta:\mathbf{1}_{F(\mathrm{pt})}\rightarrow \iota_{\emptyset}^!(\mathcal{E})$ in $F(\mathrm{pt}),$ correpsonding to the inclusion (as a map of prestacks) $\iota_{\emptyset}:\mathrm{pt}\hookrightarrow Ran_X^{\mathrm{un}},$ together with all higher-arity analogs for the maps (\ref{eqn: Factorization Isomorphisms}).
Let $(-)^{\mathrm{op}}$ denote the endo-functor taking the opposite category and put $F_{\mathrm{op}}:=(-)^{\mathrm{op}}\circ F.$ Then a
co-unital factorization $F$ object on $X$ is a unital $F_{\mathrm{op}}$-object.

Denote the categories of factorization objects by 
$$\mathrm{Fact}F_{\mathrm{un}}^!(X):=\mathrm{Mult}F^!(Ran_{X}^{\mathrm{un}}),\hspace{2mm} \text{ and }\hspace{2mm} \mathrm{Fact}F_{\mathrm{co-un}}^!(X):=\mathrm{Mult}F_{\mathrm{op}}^!(Ran_X^{\mathrm{un}}).$$

We give some examples of factorization objects taking the  opportunity to fix notations.
\begin{example}
\label{example: Factorization F-Objects}
\normalfont 
\noindent (a)\hspace{2mm} $F$ is $\underline{\mathrm{QCoh}}^*(-):$ Gives the category of factorization quasi-coherent sheaves on $X$, to be denoted $\mathrm{QCoh}^{\mathrm{fact},\bullet}(X),$ with respect to the quasi-coherent pull-back operation, denoted for a morphism $f$ by $f_{\mathrm{QCoh}}^*.$
\vspace{2mm}

\noindent (b)\hspace{2mm} $\underline{\mathrm{IndCoh}}^!(-):$ Gives the category of factorization ind-coherent sheaves on $X$ (see \cite{GaiRoz01,GaiRoz02}), to be denoted $\mathrm{IndCoh}^{\mathrm{fact},!}(X),$ with respect to the $!$-pull-back of ind-coherent sheaves $f_{\mathrm{IndCoh}}^!.$
\vspace{2mm}

\noindent (c)\hspace{2mm} $\underline{\D}^*(-):$ Gives the category of factorization $\D$-modules with respect to the $\D$-module pull-back denoted $\mathsf{FAlg}_{\mathcal{D}}^{*}(X).$ They are `usual' factorization algebras of \cite{BeiDri,FraGai}.
\vspace{1.5mm}

\noindent (d)\hspace{2mm} 
For $\underline{\mathrm{PreStk}}_{/(-)}$ given by slicing over a pre-stack, we obtain unital factorization pre-stacks (also called unital factorization spaces) $\mathrm{PreStk}_{\mathrm{un}}^{\mathrm{fact}}(X)$.
\end{example}
More importantly for us, are those factorization $F$-objects over not simply $X$, but its de Rham space $Z=X_{dR}.$ In this situation item (e) in Example \ref{example: Factorization F-Objects} gives \emph{unital factorization $\mathcal{D}$ spaces over $X$} i.e.
$$\mathrm{PreStk}_{\mathcal{D},\mathrm{un}}^{\mathrm{fact}}(X):=\mathrm{PreStk}_{\mathrm{un}}^{\mathrm{fact}}(X_{dR}).$$
Over $Z=X_{dR}$, items (a) and (c) are equivalent 
$\mathrm{QCoh}^{\mathrm{fact},\mathrm{un}}(Z)\simeq \mathsf{FAlg}_{\mathcal{D}}^!(X)^{\mathrm{un}}.$

\subsubsection{Homotopical Multi-jets}
\label{sssec: Homotopical Multi-jets}

Consider $\PS_{/Ran_X}$ and the canonical map $q_{dR}:=p_{Ran_X,dR}:Ran_X\rightarrow Ran_{X_{dR}}$ which defines an adjunction
\begin{equation}
    \label{eqn: Adjunction for multijets}
    \begin{tikzcd}
    q_{dR}^*\colon \PS_{/ Ran_{X_{dR}}} \arrow[r,shift left=.5ex]& \PS_{/Ran_X}\arrow[l,shift left=.5ex]\colon q_{dR,*}.
    \end{tikzcd}   
    \end{equation}
The essential image of the pushforward in adjunction (\ref{eqn: Adjunction for multijets}) is called a \emph{multi-jet $\mathcal{D}$-space}, which is denoted by 
$$\J(\EuScript{E})^{Ran}:=q_{dR,*}(\EuScript{E})\rightarrow Ran_{X_{dR}},$$
for some $\EuScript{E}\rightarrow Ran_X$ prestack relative to the prestack $Ran_X.$

The main result we use throughout the paper is presented here as a collection of various facts pertaining to commutative $\D$-algebras, factorization algebras and factorization $\D$-spaces.

\begin{proposition}
\label{Multi-jet r-adjoint proposition}
Let $X$ be a smooth affine variety. There is a natural functor 
$$U:\mathsf{FAlg}_{\mathcal{D}}(X)_{comm}\rightarrow \mathsf{CAlg}(\mathcal{D}_{Ran_X})\rightarrow \mathsf{CAlg}_X(\mathcal{D}_X),$$
which has a right-adjoint,
$$\mathcal{F}:\mathsf{CAlg}_X(\mathcal{D}_X)\rightarrow \mathsf{FAlg}_X(\mathcal{D}_X)_{comm},$$
that roughly speaking, views a derived $\mathcal{D}$-algebra $\mathcal{A},$ with the natural maps of $n$-fold products $\beta_n:j_*j^*(\mathcal{A}\boxtimes\cdots\boxtimes \mathcal{A})\rightarrow \Delta_*(\mathcal{A}\otimes \cdots\otimes\mathcal{A}),$ in terms of a fibration sequence, $$\mathcal{A}\boxtimes\cdots\boxtimes\mathcal{A}\rightarrow j_*j^*(\mathcal{A}\boxtimes\cdots\boxtimes \mathcal{A})\rightarrow \Delta_*(\mathcal{A}\otimes \cdots\otimes\mathcal{A}).$$

Moreover, $\mathcal{F}$ extends to a functor
$$\mathsf{FAlg}_{\mathcal{D}}(X)_{comm}\rightarrow \mathsf{PreStk}_{\mathcal{D}}^{fact}(X)_{comm}^{\mathrm{co-un},\mathrm{aff}},$$ to affine co-unital factorizaion $\mathcal{D}$-spaces.
The natural forgetful $\infty$-functor 
$$U_{fact}:\mathsf{PreStk}_{\mathcal{D}}^{fact}(X)\rightarrow \PS_{X_{dR}}, Z^{Ran}\mapsto Z^{(1)},$$ also has a right-adjoint -- the homotopical multi-jet $\infty$-functor. 
\end{proposition}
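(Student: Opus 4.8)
The plan is to produce the adjunction $(U,\mathcal{F})$ from the general factorization-object formalism of the Appendix, then read off the explicit recollement description, and finally treat the multi-jet statement separately as a consequence of the adjunction (\ref{eqn: Adjunction for multijets}). First I would pin down $U$. A commutative factorization $\mathcal{D}$-algebra is by definition a $\otimes^*$-commutative algebra object $\{\mathcal{B}^{(I)}\}$ in $\mathsf{Mod}(\mathcal{D}_{Ran_X})$ equipped with factorization isomorphisms; forgetting the latter gives the first arrow $\mathsf{FAlg}_{\mathcal{D}}(X)_{comm}\to\mathsf{CAlg}(\mathcal{D}_{Ran_X})$, and the second arrow is evaluation at the main-diagonal component $I=\{1\}$. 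The only point to check is that this evaluation is symmetric monoidal for $(\otimes^*,\otimes^!)$—dual to the lax symmetric monoidality of $\Delta_*:\mathsf{Mod}(\mathcal{D}_X)\to\mathsf{Mod}(\mathcal{D}_{Ran_X})$ recorded in the proof of Theorem \ref{theorem: Theorem B Main claim}—so that it carries commutative algebra objects to commutative $\mathcal{D}_X$-algebras, functorially.

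Next I would construct $\mathcal{F}$ as the cofree commutative factorization $\mathcal{D}$-algebra on a given $\mathcal{A}$, produced by the right Kan extension (\ref{eqn: F-Object Functor}) of the Appendix. Concretely, for each finite set $I$ the value $\mathcal{F}(\mathcal{A})^{(I)}$ is assembled from the recollement sequence $\mathcal{A}^{\boxtimes I}\to j_*j^*(\mathcal{A}^{\boxtimes I})\to \Delta_*(\mathcal{A}^{\otimes I})$ on $X^I$, with $j$ the open inclusion of the pairwise-distinct locus and $\Delta$ the small diagonal: the commutative multiplication of $\mathcal{A}$ supplies the $\Sigma_I$-equivariant structure on the $\Delta_*(\mathcal{A}^{\otimes I})$ summand, while the $j_*j^*$ term encodes the disjoint behavior. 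I would then verify that the restrictions to the disjoint loci $U(\alpha)$ yield the required factorization isomorphisms $j(\alpha)^*\mathcal{F}(\mathcal{A})^{(I)}\simeq j(\alpha)^*\boxtimes_{j\in J}\mathcal{F}(\mathcal{A})^{(I_j)}$ compatibly with compositions of surjections, and that the empty-set component provides the (co)unit. For $|I|=1$ the sequence degenerates to $\mathcal{F}(\mathcal{A})^{(1)}\simeq\mathcal{A}$, which already exhibits the counit $U\mathcal{F}\Rightarrow\mathrm{id}$ as an equivalence, so that $\mathcal{F}$ is fully faithful.

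The adjunction $U\dashv\mathcal{F}$ I would establish by the universal property of this cofree construction: a morphism of factorization algebras $\mathcal{B}^{Ran}\to\mathcal{F}(\mathcal{A})$ is, by the factorization isomorphisms and the recollement description, determined up to coherent homotopy by its $I=\{1\}$ component, which is precisely a $\mathcal{D}_X$-algebra map $U(\mathcal{B}^{Ran})=\mathcal{B}^{(1)}\to\mathcal{A}$, giving the natural equivalence $\mathsf{Maps}(U(\mathcal{B}^{Ran}),\mathcal{A})\simeq\mathsf{Maps}(\mathcal{B}^{Ran},\mathcal{F}(\mathcal{A}))$. To pass to affine co-unital factorization $\mathcal{D}$-spaces I would apply relative $\mathbf{Spec}_{\mathcal{D}}$ over each $X_{dR}^I$ to the family $\mathcal{F}(\mathcal{A})$; compatibility of relative Spec with the factorization isomorphisms lands the result in $\mathsf{PreStk}_{\mathcal{D}}^{fact}(X)_{comm}^{\mathrm{co\text{-}un},\mathrm{aff}}$, the co-unitality being the $F_{\mathrm{op}}=(-)^{\mathrm{op}}\circ F$ dual of the unital algebra structure. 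Finally, the right adjoint to $U_{fact}:Z^{Ran}\mapsto Z^{(1)}$ is the homotopical multi-jet functor: it is obtained by right Kan extending along the inclusion of the $\{1\}$-component into $\mathrm{fSet}$ and then pushing forward through the adjunction $q_{dR}^*\dashv q_{dR,*}$ of (\ref{eqn: Adjunction for multijets}), whose essential image is $\J(\mathcal{E})^{Ran}=q_{dR,*}(\mathcal{E})$; the adjunction identity follows from the universal property of the right Kan extension together with $q_{dR}^*\dashv q_{dR,*}$.

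The hard part will be the second step: verifying that the recollement sequences glue into a genuine unital commutative factorization algebra in the homotopy-coherent $(\infty,1)$-categorical sense—that all higher factorization compatibilities, the $\Sigma_I$-equivariance, and compatibility with compositions of surjections hold simultaneously—since, as noted in Section \ref{sec: Geometry of D-Prestacks}, for $\dim X>1$ no model structure is available and everything must be organized through the correspondence-category and Kan-extension machinery of the Appendix rather than by strict pointwise constructions.
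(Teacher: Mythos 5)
Your proposal is sound in outline and, if anything, supplies far more detail than the paper does: the paper offers no proof of this proposition beyond the single remark that $U_{fact}$ is the restriction $\Delta_{dR}^*:\mathsf{PreStk}_{Ran_{X_{dR}}}\to\mathsf{PreStk}_{X_{dR}}$ along the main diagonal, restricted to multiplicative objects --- the statement is presented as a summary of known facts (essentially the Beilinson--Drinfeld/Francis--Gaitsgory correspondence between commutative $\mathcal{D}_X$-algebras and commutative factorization algebras). Your route --- forget-then-restrict for $U$, the cofree commutative factorization algebra built from the recollement sequences for $\mathcal{F}$, the adjunction via the claim that maps into $\mathcal{F}(\mathcal{A})$ are determined by their diagonal component, and relative $\mathbf{Spec}_{\mathcal{D}}$ for the passage to affine factorization $\mathcal{D}$-spaces --- is the standard one and matches what the paper implicitly relies on. Your closing caveat about homotopy coherence is exactly where the real work lies, and the paper simply outsources it.

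One small imprecision worth flagging: in the last step you describe the right adjoint to $U_{fact}$ as ``right Kan extension along the inclusion of the $\{1\}$-component into $\mathrm{fSet}$, then pushing through $q_{dR}^*\dashv q_{dR,*}$.'' These are two different adjunctions playing different roles. The adjunction (\ref{eqn: Adjunction for multijets}) relates $\PS_{/Ran_X}$ to $\PS_{/Ran_{X_{dR}}}$ (it is the ``horizontal jets over the Ran space'' direction), whereas the right adjoint to $U_{fact}:Z^{Ran}\mapsto Z^{(1)}$ must go from $\PS_{/X_{dR}}$ to $\PS_{/Ran_{X_{dR}}}$, i.e.\ it is right Kan extension along $\Delta_{dR}:X_{dR}\hookrightarrow Ran_{X_{dR}}$ (consistently with the paper's identification $U_{fact}=\Delta_{dR}^*$), with the additional check that the extension lands in the factorization subcategory. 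Your composite conflates the two; the conclusion is unaffected, but the two adjunctions should be kept separate.
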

The forgetful functor $U_{fact}$ is the induced functor on pre-stacks given by restriction along the diagonal i.e.
$\mathsf{PreStk}_{Ran_{X_{dR}}}\xrightarrow{\Delta_{dR}^*}\mathsf{PreStk}_{X_{dR}},$ restricted to the sub-category of multiplicative objects $\PS_{\mathcal{D}}^{fact}(X)\subset \PS_{Ran_{X_{dR}}},$ as in Example \ref{example: Factorization F-Objects}.
\vspace{2mm}

\noindent\textit{Sheaves of Categories.}
The appropriate categorification of structure sheaves of derived (factorization) spaces we use is given by the theory of quasi-coherent sheaves of categories \cite{Gai1Aff}.

If $X$ is affine $\mathsf{ShvCat}(X):=\mathsf{QCoh}(X)-\mathsf{Mod}\big(\mathsf{DGCat}_{cont}\big)$ and one extends this to prestacks $\EQ$ via $f^*\EuScript{QC}oh_{\EQ}=\EuScript{QC}oh_X$ for every $f:X\rightarrow \EQ.$

Sheaves of categories $\EuScript{QC}oh_{\EQ}$ indeed categoryify the structure sheaf in the sense that we may take their `global sections categories'
$$\mathbf{\Gamma}(\EQ,-):\mathsf{ShvCat}(\EQ)\rightarrow \mathsf{DGCat}_{cont},$$
such that $$\mathbf{\Gamma}(\EQ,\EuScript{QC}oh_{\EQ})=\mathsf{QCoh}(\EQ).$$

The main example of a (unital) sheaf of factorization category we use is described as follows.

Let $\mathcal{X}^{Ran}$ be a unital factorization $\mathcal{D}$-space. In particular for all $\alpha$ we have maps $
\mu_{\alpha}:X_{\mathrm{dR}}^J\times_{X_{\mathrm{dR}}^I}\EQ^{(I)}\rightarrow \EQ^{(J)}.$

The unital $\mathcal{D}$-factorization category of sheaves on $\mathcal{X}^{Ran}$, is $$\mathrm{Shv}_{\mathcal{X}}^{Ran}:=\big\{\rho_!^{(I)}\mathrm{Shv}_{\mathcal{X}^{(I)}}\big\},$$
specified by the assignment $\mathrm{fSet}\ni I\mapsto \rho_!^{(I)}\mathrm{Shv}_{\mathcal{X}^{(I)}}\in \mathrm{Shv}\EuScript{C}\mathrm{at}(X_{dR}^I),$ whose structure maps are of the form
\begin{equation}
\label{eqn: Unital Fact Cat Structure Maps}
    \Phi_{\alpha}:=\rho_!^{(J)}(\nu_{\alpha\bullet})\circ \widetilde{\rho}_!^{(I)}(\mu_{\alpha}^{\bullet}):\Delta_{\alpha}^!\rho_!^{(I)}\mathrm{Shv}_{\mathcal{X}^{(I)}}\rightarrow \rho_{!}^{(J)}\mathrm{Shv}^{(J)}.
\end{equation}
Remark the natural equivalences 
$$\Delta_{\alpha}^!\rho_!^{(I)}\mathrm{Shv}_{\mathcal{X}^{(I)}}\simeq \widetilde{\rho}_!^{(I)}\mathrm{Shv}_{X_{dR}^J\times_{X_{dR}^I}\mathcal{X}^{(I)}}.$$
From base-change along  $X_{dR}^J\times_{X_{dR}^I}\mathcal{X}^{(I)},$ one can see that the unital structure maps (\ref{eqn: Unital Fact Cat Structure Maps}) are well-defined and in the case when $\alpha$ is a partition, are even equivalences of sheaves of categories.

Indeed, notice that the unit of the above equivalence gives 
$$\mu_{\alpha}^{\bullet}:\mathrm{Shv}_{X_{dR}^J\times_{X_{dR}^I}\mathcal{X}^{(I)}}\rightarrow \mu_{\alpha !}\mu_{\alpha}^!\mathrm{Shv}_{X_{dR}^J\times_{X_{dR}^I}\mathcal{X}^{(I)}},$$ and the right-adjoint $\nu_{\alpha\bullet}$ to $\nu_{\alpha}^{\bullet},$ that the map $\Phi_{\alpha}$ we are looking for is indeed given by the composition
\begin{eqnarray*}
\widetilde{\rho}_!^{(I)}\mathrm{Shv}_{X_{dR}^J\times_{X_{dR}^I}\mathcal{X}^{(I)}}&\xrightarrow{\rho_!^{(I)}(\mu_{\alpha}^{\bullet})}& \widetilde{\rho}_!^{(I)}\mu_{\alpha !}\mu_{\alpha}^!\mathrm{Shv}_{X_{dR}^J\times_{X_{dR}^I}\mathcal{X}^{(I)}}
\\
&\simeq & \rho_!^{(J)}\nu_{\alpha !}\nu_{\alpha}^!\mathrm{Shv}_{\mathcal{X}^{(J)}}
\\
&\xrightarrow{\rho_!^{(J)}(\nu_{\alpha\bullet})}& \rho_!^{(J)}\mathrm{Shv}_{\mathcal{X}^{(J)}}.
\end{eqnarray*}
Note the factorizaion equivalences are of the form 
$$j(\alpha)^!\rho_!^{(I)}\mathrm{Shv}_{\mathcal{X}^{(I)}}\xrightarrow{\simeq}j(\alpha)^!\bigboxtimes_{j\in J}\rho_!^{(I_j)}\mathrm{Shv}_{\mathcal{X}^{(I_j)}},$$
and taking the image under the functor $(\rho^{Ran})_!^{\mathrm{Shv}},$ of the units $\mu_{\alpha}^{\bullet}$ and the right-adjoint $\nu_{\alpha\bullet},$ we have the factorization unit maps
$$\rho_!^{(I)}\mathrm{Shv}_{\mathcal{X}^{(I)}}\boxtimes \mathrm{Shv}_{X_{dR}^{I_{\alpha}}}\rightarrow \rho_!^{(J)}\mathrm{Shv}_{\mathcal{X}^{(J)}},\hspace{5mm} \mathrm{Shv}_{X_{dR}^J}\rightarrow \rho_!^{(J)}\mathrm{Shv}_{\mathcal{X}^{(J)}}.$$

\subsubsection{Factorization pull-back.}
Consider an admissible morphism of unital factorization $\mathcal{D}$-spaces $f^{Ran}:\mathcal{X}^{Ran}\rightarrow \EQ^{Ran}.$
There is an induced (unital) functor of unital $\mathcal{D}$-factorization categories
\begin{equation}
    \label{eqn: Pull-Back Factorization Functor}
\big(f^{Ran}\big)^*:\mathrm{Shv}_{\EQ}^{Ran}\rightarrow \mathrm{Shv}_{\mathcal{X}}^{Ran},
\end{equation}
determined by the following assignments:
\begin{itemize}
    \item For each $I\in \mathrm{fSet}$ there is a morphism in $\mathrm{Shv}\EuScript{C}\mathrm{at}\big(X_{dR}^I\big)$, given by $f^{(I),*}:\rho_{\EQ!}^{(I)}\mathrm{Shv}_{\EQ^{(I)}}\rightarrow \rho_{\mathcal{X}!}^{(I)}\mathrm{Shv}_{\mathcal{X}^{(I)}};$
    
    \item For each $\alpha:I\rightarrow J$ there is a natural transformation of functors defined by the (homotopically) commuting diagram
    \[
    \begin{tikzcd}
    \widetilde{\rho}_{\EQ!}^{(I)}\mathrm{Shv}_{X_{dR}^J\times_{X_{dR}^I}\EQ^{(I)}}\arrow[d,"\widetilde{f}^{(I),*}"] \arrow[r] & \rho_{\EQ,!}^{(J)}\mathrm{Shv}_{\EQ^{(J)}}
    \arrow[d,"f^{(J),*}"]
    \\
     \widetilde{\rho}_{\mathcal{X}!}^{(I)}\mathrm{Shv}_{X_{dR}^J\times_{X_{dR}^I}\mathcal{X}^{(I)}}\arrow[r] & \rho_{\mathcal{X}!}^{(J)}\mathrm{Shv}_{\mathcal{X}^{(J)}}
    \end{tikzcd}
    \]
    whose horizontal arrows are determined by the unital structure maps (\ref{eqn: Unital Fact Cat Structure Maps}).
\end{itemize}
\subsubsection{Factorization push-forward.}
There is an induced (unital) functor of unital $\mathcal{D}$-factorization categories
\begin{equation}
 \label{eqn: Push-Forward Factorization Functor}
 \big(f^{Ran}\big)_*:\mathrm{Shv}_{\mathcal{X}}^{Ran}\rightarrow \mathrm{Shv}_{\EQ}^{Ran},
 \end{equation}
 determined by the following:
 \begin{itemize}
     \item For each $I\in \mathrm{fSet}$ there is a morphism in $\mathrm{Shv}\EuScript{C}\mathrm{at}(X_{dR}^I),$ given as
     $f_*^{(I)}:\rho_{\mathcal{X}!}^{(I)}\mathrm{Shv}_{\mathcal{X}^{(I)}}\rightarrow \rho_{\EQ!}^{(I)}\mathrm{Shv}_{\EQ}^{(I)}$;
     
     \item For each $\alpha,$ there is a natural transformation of morphisms of sheaves of categories defined by the homotopy-commuting diagram
     
     \[
    \begin{tikzcd}
    \widetilde{\rho}_{\mathcal{X}!}^{(I)}\mathrm{Shv}_{X_{dR}^J\times_{X_{dR}^I}\mathcal{X}^{(I)}}\arrow[d,"\widetilde{f}_*^{(I)}"] \arrow[r] & \rho_{\mathcal{X},!}^{(J)}\mathrm{Shv}_{\mathcal{X}^{(J)}}
    \arrow[d,"f_*^{(J)}"]
    \\
     \widetilde{\rho}_{\EQ!}^{(I)}\mathrm{Shv}_{X_{dR}^J\times_{X_{dR}^I}\EQ^{(I)}}\arrow[r] & \rho_{\EQ!}^{(J)}\mathrm{Shv}_{\EQ^{(J)}}
    \end{tikzcd}
    \]
 \end{itemize}

\bibliography{Bibliography}
  \bibliographystyle{plain}
  
\end{document}